\theoremstyle{plain}
\newtheorem{Theorem}{Theorem}[section]
\newtheorem{Lemma}[Theorem]{Lemma}
\newtheorem{Corollary}[Theorem]{Corollary}
\newtheorem{Proposition}[Theorem]{Proposition}
\theoremstyle{remark}
\newtheorem{Remark}[Theorem]{Remark}
\newtheorem{Example}[Theorem]{Example}
\newtheorem{Definition}[Theorem]{Definition}
\newtheorem{Question}[Theorem]{Question}
\address{Jeremiah Horrocks Institute, University of Central Lancashire, Preston PR1 2HE, United Kingdom}
\email{sanscombe@uclan.ac.uk}
\address{Fachbereich Mathematik und Statistik, University of Konstanz, 78457 Konstanz, Germany}
\email{arno.fehm@uni-konstanz.de}
\begin{document}

\title[Characterizing diophantine henselian valuation rings and valuation ideals]{Characterizing diophantine henselian\\ valuation rings and valuation ideals}
\author{Sylvy Anscombe and Arno Fehm}
\thanks{\today\\During this research the first author was funded by EPSRC grant EP/K020692/1.}

\begin{abstract}
We give a characterization, in terms of the residue field, of those henselian valuation rings 
and those henselian valuation ideals that are diophantine.
This characterization gives a common generalization of all the positive and negative results  
on diophantine henselian valuation rings and diophantine valuation ideals in the literature.
We also treat questions of uniformity and we apply the results to show that a given field can carry at most one
diophantine nontrivial equicharacteristic henselian valuation ring or valuation ideal.
\end{abstract}

\maketitle

\section{Introduction}

\noindent
We study diophantine subsets of a field $K$,
that is, sets $X\subseteq K$ that are the projection of the common zero set $Z\subseteq K^n$ of finitely many polynomials 
$$
 f_1,\dots,f_r\in\mathbb{Z}[x_1,\dots,x_n]
$$
onto the first coordinate.
Diophantine subsets of fields have been studied for a long time in various areas, like number theory, arithmetic geometry and mathematical logic, 
and a wide variety of methods has been developed.
(From a model theoretic point of view, a subset of $K$ is diophantine if it is definable by an existential parameter-free formula
in the language $\mathcal{L}_{\rm ring}$ of rings.)
See for example \cite{Denef, Shlapentokh, Kollar, Koenigsmann}
and the references therein
for problems and results on diophantine subsets of number fields, function fields, and certain infinite algebraic extensions of $\mathbb{Q}$.

The subsets $X\subseteq K$ we are mainly concerned with are the valuation ring $X=\mathcal{O}_v$ 
and the valuation ideal $X=\mathfrak{m}_v$ of some henselian valuation $v$ on $K$.
As an example, already Julia Robinson observed in \cite{Robinson} that the valuation ring $\mathbb{Z}_p$ inside the field of $p$-adic numbers $\mathbb{Q}_p$ is diophantine:
For $p>2$, it is the projection of the zero set
$$
 Z=\left\{ (x,y)\in\mathbb{Q}_p^2 : 1+px^2-y^2=0 \right\}
$$
onto the first coordinate.
The analogous result for finite extensions of $\mathbb{Q}_p$ requires more care and was worked out much later in \cite{Cluckersetal}.
Similarly, an analogous definition for the local fields $\mathbb{F}_p((t))$ is more complicated and was obtained only recently in \cite{AnscombeKoenigsmann}.
In \cite{Fehm} it was observed that this last result does not make use of the fact that the value group of the valuation is discrete but can be extended to arbitrary henselian valuations with residue field $\mathbb{F}_p$.
Moreover, the methods were extended to certain henselian valued fields with residue field pseudo-algebraically closed (PAC) or pseudo-real closed (PRC).

Our first result is that the observation in \cite{Fehm} reflects in fact a general principle:
The question of whether a henselian valuation ring is diophantine never depends on its value group, but only on its residue field
(see below for the precise statement).
Moreover, we are able to isolate a condition that characterizes those residue fields for which the valuation ring, respectively the valuation ideal,
of the henselian valuation is diophantine:

\begin{Theorem}\label{thm:introthm}
Let $F$ be a field. Then the following are equivalent:
\begin{enumerate}
\item There is an $\exists$-$\mathcal{L}_{\mathrm{ring}}$-formula that defines $\mathcal{O}_v$ [respectively, $\mathfrak{m}_v$] in $K$
for {\em some} equicharacteristic henselian nontrivially valued field $(K,v)$ with residue field $F$.
\item There is an $\exists$-$\mathcal{L}_{\mathrm{ring}}$-formula that defines $\mathcal{O}_v$ [respectively, $\mathfrak{m}_v$] in $K$
for {\em every} henselian valued field $(K,v)$ with residue field elementarily equivalent to $F$.
\item There is no elementary extension $F\preceq F^*$ with a nontrivial valuation $v$ on $F^*$
for which the residue field $F^*v$ embeds into $F^*$
[respectively, with a nontrivial henselian valuation $v$ on a subfield $E$ of $F^*$ with $Ev\cong F^*$].
\end{enumerate}
\end{Theorem}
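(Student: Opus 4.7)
I plan to establish the equivalences cyclically as $(2) \Rightarrow (1) \Rightarrow (3) \Rightarrow (2)$, handling the valuation ring and valuation ideal cases in parallel. The implication $(2) \Rightarrow (1)$ is immediate, since any equicharacteristic henselian nontrivially valued field with residue field $F$---such as the Laurent series field $F((t))$---witnesses (1) given the uniform formula supplied by (2).

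For $(1) \Rightarrow (3)$, I would argue contrapositively. Assuming (3) fails, one is given an elementary extension $F \preceq F^*$ with a nontrivial valuation $v^*$ on $F^*$ satisfying $F^*v^* \hookrightarrow F^*$ (respectively, a nontrivial henselian valuation $v^*$ on a subfield $E \subseteq F^*$ with $Ev^* \cong F^*$). Given any existential formula $\varphi$ and any equicharacteristic henselian nontrivial $(K, v)$ with $Kv = F$, I would pass to a sufficiently saturated elementary extension $(K^\sharp, v^\sharp) \succeq (K, v)$ into whose residue field $K^\sharp v^\sharp$ the field $F^*$ embeds elementarily. Using the valuation $v^*$ (extended to $K^\sharp v^\sharp$), I would then form a composed valuation on $K^\sharp$ refining $v^\sharp$, and exploit the preservation of existential formulas under ring embeddings---together with the henselian lifting available in $(K^\sharp, v^\sharp)$---to produce an element witnessing the failure of $\varphi$ to cut out $\mathcal{O}_{v^\sharp}$, contradicting (1).

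For $(3) \Rightarrow (2)$, the positive direction, I would use compactness. The nonexistence of the bad configuration over $\mathrm{Th}(F)$ translates, via a standard compactness argument, into a finitary obstruction: an existential condition over the language of rings that must fail in any residue field elementarily equivalent to $F$ under the described valuation-theoretic setup. I would then shape this obstruction into an explicit existential formula $\varphi(x)$ whose witnesses are supplied by applying Hensel's lemma to lift residue-field solutions back to $K$, while elements outside $\mathcal{O}_v$ (resp.\ $\mathfrak{m}_v$) fail $\varphi$ precisely because a witness would induce the configuration forbidden by (3).

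The principal obstacle lies in $(3) \Rightarrow (2)$: producing an explicit existential formula that defines the target set uniformly across every henselian valued field whose residue field is elementarily equivalent to $F$---notably, independently of the value group. The interplay between the compactness-based extraction of a finitary obstruction and the henselianity-based lifting of witnesses is the technical heart of the argument.
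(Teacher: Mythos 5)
Your cyclic decomposition $(2)\Rightarrow(1)\Rightarrow(3)\Rightarrow(2)$ is a reasonable skeleton, and $(2)\Rightarrow(1)$ is indeed immediate. But there is a genuine gap in $(1)\Rightarrow(3)$, and you have misidentified where the technical weight of the theorem lies.

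In $(1)\Rightarrow(3)$ you assume (3) fails, pass to a saturated elementary extension $(K^\sharp,v^\sharp)\succeq(K,v)$, embed $F^*$ elementarily into $K^\sharp v^\sharp$, and form a composed valuation $u$ on $K^\sharp$ that properly refines $v^\sharp$. The problem is that this construction does not, by itself, contradict the hypothesis that $\varphi$ defines $\mathcal{O}_{v^\sharp}$. The refinement $u$ satisfies $\mathcal{O}_u\subsetneq\mathcal{O}_{v^\sharp}$; if you then build a henselian extension $(K_2,v_2)\supseteq(K^\sharp,u)$ with $K_2v_2\equiv F$, the ``pass-up'' of existential formulas shows $\mathcal{O}_{v^\sharp}\subseteq\varphi(K_2)$, so $\varphi$ cannot define $\mathcal{O}_{v_2}$ in $K_2$. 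But that is a statement about a \emph{different} valued field $(K_2,v_2)$; it says nothing about $\varphi(K^\sharp)$ versus $\mathcal{O}_{v^\sharp}$, and hence nothing about the original $(K,v)$ asserted in (1). To extract a contradiction you would need to know that $\varphi$ defining $\mathcal{O}_v$ in one member of $\mathcal{H}_e'(F)$ forces it to define the valuation ring in $(K_2,v_2)$ as well---and this ``transfer'' from a single equicharacteristic henselian model to all of them is precisely the deepest part of the paper: it is the content of Section~\ref{sec:uniform} (Proposition~\ref{prp:GDM.version.1} / Corollary~\ref{Cor:KtoHeF}), built on the existential transfer principle of \cite{AnscombeFehm}, and the argument is not a routine saturation trick but involves a careful manipulation of the formula itself (the formulas $\Phi(x^n)$, $\psi'(x^m)$ etc.\ in the proof of Proposition~\ref{prp:GDM.version.2}), perfect hulls, Krasner's lemma, and Holly's swiss-cheese description. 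Your sketch compresses all of this into ``henselian lifting available in $(K^\sharp,v^\sharp)$,'' which does not capture the argument.

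Relatedly, you single out $(3)\Rightarrow(2)$ as the principal obstacle. In the paper that direction is actually the comparatively tame step: it is Lemma~\ref{lem:no.ER.implies.uniform} (resp.~\ref{lem:not.large.implies.uniform}), a short argument that verifies the substructure criterion of Prestel's Characterization Theorem (Proposition~\ref{thm:Prestel}) using the trichotomy of comparable/incomparable valuations and Lemma~\ref{lem:residue.of.sep.closed}. Your description in terms of ``lifting residue-field solutions via Hensel's lemma'' to build an explicit formula is not how this works; Prestel's theorem is non-constructive, and no explicit witness-lifting is needed. In short: the step you flag as hard is handled cleanly by a known characterization theorem, while the step you treat as a saturation exercise is where the paper's real work (and the reliance on \cite{AnscombeFehm}) lives.
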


In fact, we get further interesting equivalent statements
and also treat 
definitions with parameters from a subfield of $F$,
as well as questions of uniform definitions for classes of fields $F$.
See Section \ref{section:main.theorem} for a summary of the main results, including a proof of \ref{thm:introthm}.

Using \ref{thm:introthm}, we can easily reprove all the definability results mentioned above (namely where the residue field
is finite, PAC or PRC), get new definability results (for example in the case where the residue field is P$p$C or equals $\mathbb{Q}$),
and also acquire several new negative results (\ref{cor:positive.applications}, \ref{cor:negative.applications}, \ref{cor:positive.large}, \ref{cor:negative.applications.large}).
Using the more general results, we for example gain new insight into the question
for which sets of prime numbers $P$ there are uniform $\exists$-$\emptyset$-definitions
for the valuation rings in the families $\{\mathbb{Q}_p:p\in P\}$ and $\{\mathbb{F}_p((t)):p\in P\}$ (\ref{cor:QpZp}).
We finally combine \ref{thm:introthm} and the negative results to conclude that
a given field admits at most one nontrivial equicharacteristic henselian valuation with
diophantine valuation ring or diophantine valuation ideal (\ref{thm:classification}).

The paper is organized as follows:
After some preliminaries in Section \ref{sec:prelim},
we start in Section \ref{sec:embedded} with the proof of $(2)\Leftrightarrow(3)$,
which is based on a simple non-constructive but powerful characterization result of Prestel. 
The proof of $(1)\Leftrightarrow(2)$, which we present in Section \ref{sec:uniform}, is more technical and builds on 
our work \cite{AnscombeFehm} on the existential theory of equicharacteristic henselian valued fields.
After putting everything together in Section \ref{section:main.theorem},
we give the above mentioned applications and a few further corollaries in Section \ref{section:examples}.

\section{Notation and preliminaries}
\label{sec:prelim}

\subsection{Valued fields}

Let $K$ be a field
and $v:K\rightarrow\Gamma\cup\{\infty\}$ a valuation on $K$.
We denote by $vK=v(K^\times)$ the value group of $v$,
by $\mathcal{O}_v=\{x\in K:v(x)\geq0\}$ the valuation ring of $v$,
by $\mathfrak{m}_v=\{x\in K:v(x)>0\}$ the valuation ideal of $v$,
and by $Kv=\mathcal{O}_v/\mathfrak{m}_v$ the residue field of $v$.
For $\gamma\in vK$ and $a\in K$ we denote by $B_K(\gamma,a):=\{x\in K:v(x-a)\geq\gamma\}$
the ball of radius $\gamma$ around $a$.
We denote by $K^{\mathrm{alg}}$ an algebraic closure of $K$ and by $\mathrm{Gal}(K)$ the absolute Galois group of $K$, i.e.~the group of automorphisms of $K^{\mathrm{alg}}$ that fix $K$ pointwise.

We will make use of the following well-known valuation theoretic facts:

\begin{Lemma}\label{lem:complete.the.square}
Let $(K,v)$ be a valued field and let $F/Kv$ be any field extension. Then there is an extension of valued fields $(L,w)/(K,v)$ such that
\begin{enumerate}
\item $L/K$ is separable,
\item $Lw/Kv$ is isomorphic to the extension $F/Kv$,
\item $w$ is nontrivial, and
\item $w$ is henselian.
\end{enumerate}
\end{Lemma}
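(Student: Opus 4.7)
The plan is to build $L$ over $K$ by a transfinite sequence of purely transcendental field extensions, each designed to enlarge the residue field, and to henselize at the very end. First, if $v$ is trivial on $K$, I would replace $(K,v)$ by $(K(t),v_t)$, the $t$-adic valuation on the rational function field; this extension is separable, nontrivial, and has the same residue field. So I may assume $v$ is nontrivial.

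The key sublemma is the following: given a nontrivially valued field $(M,u)$ with residue field $k$, and any element $\alpha$ in an extension of $k$, there exists an extension $(M(y),u')/(M,u)$ with $y$ transcendental over $M$, residue $\overline{y} = \alpha$, and $M(y)u' = k(\alpha)$. To prove it, when $\alpha$ is transcendental over $k$, I would use the Gauss extension of $u$ to $M(y)$ (with $u'(y)=0$), whose residue field $k(\overline{y})$ I identify with $k(\alpha)$. When $\alpha$ is algebraic over $k$ with minimal polynomial $\mu$, I would lift $\mu$ to a monic $g \in \mathcal{O}_u[y]$ and define $u'$ via $g$-adic expansions: every $f \in M[y]$ has a unique expansion $f = \sum_i b_i g^i$ with $\deg b_i < \deg g$, and I set $u'(f) := \min_i(u_{\mathrm{Gauss}}(b_i) + i\gamma)$, where $\gamma$ is a new positive infinitesimal in the extended value group $uM \oplus_{\mathrm{lex}} \mathbb{Z}\gamma$. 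A routine check then shows $u'$ is a valuation with $\overline{y} = \alpha$ and residue field exactly $k(\alpha)$.

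Given the sublemma, I would enumerate a generating set of $F$ over $Kv$ as $\{\alpha_\xi : \xi<\kappa\}$ and construct a chain $\{(L_\xi,w_\xi)\}_{\xi \le \kappa}$ with $(L_0,w_0)=(K,v)$: at successor stages, apply the sublemma with $\alpha = \alpha_\xi$ to get $L_{\xi+1} = L_\xi(y_\xi)$ (skipping if $\alpha_\xi$ is already in $L_\xi w_\xi$); at limit stages, take unions. Each non-trivial successor step is purely transcendental, so $L_\kappa/K$ is separable; the residue field $L_\kappa w_\kappa$ contains every $\alpha_\xi$ and hence equals $F$; and $w_\kappa$ remains nontrivial. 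Finally, I set $(L,w) := (L_\kappa^h, w_\kappa^h)$, the henselization: this is a separable algebraic extension of $L_\kappa$, so $L/K$ remains separable, and henselization preserves both residue field and nontriviality, yielding all four conditions.

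The hard part is the algebraic case of the sublemma: verifying that the $g$-adic construction really defines a valuation (multiplicativity across the extended value group) and that its residue field is exactly $k(\alpha)$ rather than some larger extension, even when $\mu$ is inseparable. The point of routing the inseparable case through a transcendental $y$ (rather than adjoining an algebraic root of $g$) is precisely to keep $L/K$ separable; once the sublemma is secured, the rest is essentially bookkeeping.
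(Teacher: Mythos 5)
Your proposal takes a genuinely different route from the paper. The paper's proof is essentially a one-liner: embed $vK$ into a nontrivial ordered abelian group $\Gamma$, invoke Theorem 2.14 of Kuhlmann's \emph{Value groups, residue fields and bad places of rational function fields} (which directly produces a separable extension $(L,w)/(K,v)$ with $Lw/Kv\cong F/Kv$ and $wL=\Gamma$), and then pass to the henselization. You instead reprove the needed special case of Kuhlmann's result from scratch: a transfinite tower of purely transcendental extensions, enlarging the residue field one generator at a time, followed by a single henselization. The scaffolding is sound --- the Gauss extension handles transcendental generators; the initial reduction to nontrivial $v$ via $(K(t),v_t)$ is harmless; purely transcendental steps (hence the whole tower) and the final henselization preserve separability over $K$; and the residue field and nontriviality are preserved at limit stages and under henselization.

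The problem is the algebraic case of your sublemma, which you call ``a routine check'' in one paragraph and ``the hard part'' in the last; the latter is correct, and the gap is real. What you are describing is the residue-algebraic step of the MacLane--Vaqui\'e key-polynomial construction, and it genuinely requires proof. Multiplicativity of $u'$ comes down to showing that if $b,c$ have $y$-degree less than $\deg g$ and $bc=qg+r$ with $\deg q,\deg r<\deg g$, then $u_{\mathrm{Gauss}}(r)=u_{\mathrm{Gauss}}(b)+u_{\mathrm{Gauss}}(c)$ exactly and $u_{\mathrm{Gauss}}(q)\geq u_{\mathrm{Gauss}}(b)+u_{\mathrm{Gauss}}(c)$; this rests on a degree count in the associated graded ring together with the irreducibility of $\mu$ over $k$ (otherwise $\mu$ would divide a product of residue polynomials each of degree $<\deg\mu$), and on the fact that the fresh summand $\mathbb{Z}\gamma$ makes the minimizing index in any $g$-adic expansion unique, which is what keeps cross-terms from interfering. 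Likewise the claim that $M(y)u'=k(\alpha)$ on the nose (rather than a proper, possibly transcendental, extension of it) needs the same analysis: one must show that residues of value-zero elements come only from the degree-zero coefficient $b_0$ of the $g$-adic expansion, that $\bar y$ satisfies $\mu$ and no polynomial of smaller degree, and that quotients contribute nothing new. None of this is carried out in the proposal. To close the gap you would need either to supply this verification in full or to cite the relevant result --- at which point citing Kuhlmann's Theorem 2.14 directly, as the paper does, is the shorter path.
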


\begin{proof}
Embed $vK$ into a nontrivial ordered abelian group $\Gamma$.
By Theorem 2.14 from \cite{Kuhlmann04a}, there is an extension $(L,w)/(K,v)$ such that $L/K$ is separable, $Lw/Kv$ is isomorphic to $F/Kv$, and $wL=\Gamma$ (thus $w$ is nontrivial).
By passing to the henselisation, which is an immediate separable extension, we may also suppose that $(L,w)$ is henselian.
\end{proof}

\begin{Remark}\label{rem:henselian.ext}
We will on several occasions have to construct a nontrivial henselian valued extension $(K,v)$ of a trivially valued field $F$
with $Kv=F$.
One way to obtain such an extension is via \autoref{lem:complete.the.square},
but we could as well take a concrete extension like $K=F((t))$ with $v=v_t$ the $t$-adic valuation.
\end{Remark}

\begin{Lemma}\label{lem:extend.with.alg.res.field}
Let $(K,v)$ be a valued field and $L/K$ a field extension.
Then there exists an extension of $v$ to a valuation $w$ on $L$ such that
$Lw/Kv$ is algebraic.
\end{Lemma}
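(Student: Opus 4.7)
The plan is to split $L/K$ into a purely transcendental part followed by an algebraic part. Fix a transcendence basis $B$ of $L/K$, so that $L$ is algebraic over $K(B)$. It suffices to produce an extension $v'$ of $v$ to $K(B)$ with $K(B)v' = Kv$: once that is done, I can extend $v'$ further to a valuation $w$ on $L$ by the standard extension theorem for valuations (valuations extend to every overfield), and since algebraic extensions of valued fields produce algebraic residue extensions, $Lw/Kv$ will then be algebraic, as required.

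To extend $v$ across the purely transcendental extension $K(B)/K$, I use a Gauss-type construction. Enlarge the value group to $\Gamma := vK \oplus \mathbb{Z}^{(B)}$, ordered lexicographically with the $\mathbb{Z}^{(B)}$-summand (itself lex-ordered via a well-ordering of $B$) placed above $vK$; in particular every standard basis vector $e_b$ is positive and infinitely larger than every element of $vK$. On polynomials $f = \sum_\alpha a_\alpha b^\alpha \in K[B]$, I set
\[
 v'(f) := \min_\alpha \Bigl( v(a_\alpha) + \sum_{b \in B} \alpha_b\, e_b \Bigr),
\]
the minimum taken in $\Gamma$. The chosen ordering guarantees that distinct monomials have pairwise distinct $\Gamma$-values, so this minimum is uniquely attained, and the usual leading-term argument shows that $v'$ is a well-defined valuation on $K[B]$ extending $v$. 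It extends uniquely to the fraction field $K(B)$, and because every nonconstant monomial has strictly positive value, the residue field is exactly $Kv$.

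The main obstacle, such as it is, lies in this first step: one must arrange the ordering on $\Gamma$ carefully enough that the Gauss-type formula really defines a multiplicative valuation, which the lexicographic ordering achieves by forcing pairwise distinct monomial values. The second step, extending $v'$ to a valuation $w$ on $L$ and observing that the residue extension $Lw/K(B)v'$ is algebraic (hence $Lw/Kv$ is algebraic), is then entirely standard.
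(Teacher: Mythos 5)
Your proof is correct, but takes a genuinely different route from the paper's. The paper dispatches the lemma in one line by invoking Chevalley's extension theorem (in the form cited from Lang): compose the residue map $\mathcal{O}_v \to Kv$ with an embedding $Kv \hookrightarrow (Kv)^{\rm alg}$, then extend the resulting place to a place of $L$ into the algebraically closed field $(Kv)^{\rm alg}$; the residue field of the extension contains $Kv$ and sits inside $(Kv)^{\rm alg}$, so is automatically algebraic over $Kv$. Your argument instead constructs the extension by hand: you split $L/K$ into a purely transcendental piece $K(B)/K$ followed by the algebraic piece $L/K(B)$, handle the first step with a Gauss-type valuation whose value group $vK\oplus\mathbb{Z}^{(B)}$ is engineered (by putting the new free summand lexicographically above $vK$) so that the minimum in the Gauss formula is uniquely attained and every nonconstant monomial has strictly positive value, and then let the algebraic step finish the job via the standard fact that algebraic extensions of valued fields induce algebraic residue extensions. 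The paper's route buys brevity at the cost of a strong black box; yours buys an elementary, self-contained construction at the cost of verifying multiplicativity of the Gauss functional (which your unique-minimizer observation handles) and checking that the residue field of $K(B)$ really is $Kv$ rather than just containing it (a quick clearing-denominators argument that you state but do not spell out). Both proofs are valid, and yours essentially reproves the special case of Chevalley's theorem that is needed here.
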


\begin{proof}
This follows for example from the version of Chevalley's theorem in
\cite[p.~8 Thm.~1]{LangAlgebraicGeometry}.
\end{proof}

\begin{Lemma}\label{lem:residue.of.sep.closed}
Let $F$ be separably closed and let $v$ be a nontrivial valuation on $F$. Then $Fv$ is algebraically closed.
\end{Lemma}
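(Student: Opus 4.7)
The plan is to show that $Fv$ is both separably closed and perfect, since these two conditions together imply that $Fv$ is algebraically closed (every irreducible polynomial over a perfect field is separable). Separable closedness of $Fv$ will come from that of $F$ by a straightforward lifting argument; perfectness of $Fv$ will follow from separable closedness of $F$ combined with the nontriviality of $v$, via a perturbation trick.

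For the first part, I take a monic separable polynomial $\bar{f}\in Fv[X]$ of degree $n$ and lift its coefficients to $\mathcal{O}_v$ to obtain a monic $f\in\mathcal{O}_v[X]$ of the same degree. Since the discriminant is a polynomial expression in the coefficients with $\mathbb{Z}$-coefficients, its reduction satisfies $\overline{\mathrm{disc}(f)} = \mathrm{disc}(\bar{f}) \neq 0$, so $f$ itself is separable. Separable closedness of $F$ then splits $f$ into linear factors in $F[X]$, and each root is integral over $\mathcal{O}_v$, hence lies in $\mathcal{O}_v$, since valuation rings are integrally closed in their fraction fields. Reducing modulo $\mathfrak{m}_v$ shows that $\bar{f}$ splits over $Fv$.

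For the second part, only needed when $p := \mathrm{char}(Fv) > 0$, given $\bar{a}\in Fv$ lifted to $a\in\mathcal{O}_v$, I choose $\pi\in F$ with $v(\pi)>0$ (available by nontriviality of $v$) and consider $g(X) := X^p + \pi X - a \in F[X]$. Its derivative $g'(X) = \pi$ is a nonzero constant, so $g$ is separable and therefore has a root $\beta \in F$ by hypothesis. A brief case analysis on the sign of $v(\beta)$, balancing the two sides of $\beta^p = a - \pi\beta$, forces $v(\beta)\geq 0$; reducing modulo $\mathfrak{m}_v$ then yields $\bar{\beta}^p = \bar{a}$. The main obstacle is precisely this perfectness step, because a separably closed field need not itself be perfect, so one cannot directly extract a $p$-th root inside $F$; the perturbation $\pi X$, which vanishes upon reduction to $Fv$ but renders $g$ separable over $F$, is exactly what bridges the gap, and this is the point where the nontriviality of $v$ is essential.
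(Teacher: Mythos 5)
Your proof is correct and is a welcome self-contained argument: the paper disposes of this lemma by a one-line reference to Engler and Prestel's \emph{Valued Fields} (Theorem 3.2.11), so there is no internal proof to compare against. The decomposition into ``$Fv$ separably closed'' and ``$Fv$ perfect'' is the natural one; the discriminant argument for the first part is standard, and the perturbed polynomial $g(X)=X^{p}+\pi X-a$, together with the valuation estimate forcing $v(\beta)\ge 0$, is a clean way to extract $p$-th roots in $Fv$ from the separable closedness of $F$, with the nontriviality of $v$ used exactly where you flag it. One small imprecision: the computation $g'(X)=\pi$ is valid only when $\mathrm{char}(F)=p$; in the mixed-characteristic case $\mathrm{char}(F)=0$, $\mathrm{char}(Fv)=p>0$, one has $g'(X)=pX^{p-1}+\pi$, which is not constant. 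This does not affect the conclusion, since in characteristic $0$ a separably closed field is already algebraically closed and $g$ has a root in $F$ for trivial reasons, after which your estimate on $v(\beta)$ goes through unchanged; but it is worth making the case split explicit.
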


\begin{proof}
See \cite[Theorem 3.2.11]{EP}.
\end{proof}

\subsection{The category of $C$-fields}

In this work,
we always fix a ring $C$ and work in the category of {\em $C$-fields},
i.e.\ fields $F$ together with a fixed (structure) homomorphism $C\rightarrow F$.
All fields are understood to be $C$-fields,
all embeddings of fields are understood to be $C$-embeddings, i.e.\ to commute with the structure homomorphism,
and all valuations on fields are understood to be $C$-valuations, i.e.\ nonnegative on (the image of) $C$.
The residue field of such a valuation naturally admits the structure of a $C$-field, simply by composing the structure homomorphism with the residue map, and we will tacitly view them as such.
Similarly, also any field extension of a $C$-field is naturally again a $C$-field.
For a $C$-field $F$ we denote by $C_F\subseteq F$ the quotient field of the image of the structure homomorphism $C\rightarrow F$.

An important example is the case $C=\mathbb{Z}$: In this case, any field $F$ can be turned into a $C$-field in a unique way,
and $C_F$ is the prime field of $F$.
Another important example is the case where $C$ is a field:
In this case, the $C$-fields are exactly the field extensions of $C$.

\subsection{Model theory of valued fields}

We now fix the languages in which we are going to work.
Let 
$$
 \mathcal{L}_{\mathrm{ring}}=\{+,-,\cdot,0,1\}
$$ 
be the language of rings and let 
$$
 \mathcal{L}_{\rm vf}=\{+^K,-^K,\cdot^K,0^K,1^K,+^k,-^k,\cdot^k,0^k,1^k,+^\Gamma,<^\Gamma,0^\Gamma,\infty^\Gamma,v,{\rm res}\}
$$ 
be a three sorted language for valued fields
with a sort $K$ for the field itself, a sort $\Gamma\cup\{\infty\}$ for the value group with infinity, and a sort $k$ for the residue field,
as well as both the valuation map $v$ and the residue map ${\rm res}$,
which we interpret as the constant $0^k$ map outside the valuation ring. 
For a ring $C$, we let $\mathcal{L}_{\mathrm{ring}}(C)$ and $\mathcal{L}_{\rm vf}(C)$ be the languages obtained by adding symbols for elements of $C$,
where in the case of $\mathcal{L}_{\rm vf}(C)$, the constant symbols are added to the field sort $K$.
A valued $C$-field $(K,v)$ gives rise in the usual way to an $\mathcal{L}_{\rm vf}(C)$-structure 
$$
 (K,vK\cup\{\infty\},Kv,v,\mathrm{res},c^K)_{c\in C},
$$
where $vK$ is the value group, $Kv$ is the residue field, and $\mathrm{res}$ is the residue map.
For notational simplicity, we will usually write $(K,v)$ to refer to the $\mathcal{L}_{\rm vf}(C)$-structure it induces. 
The class of $C$-fields forms an elementary class in $\mathcal{L}_{\rm ring}(C)$,
and the class of valued $C$-fields forms an elementary class in $\mathcal{L}_{\rm vf}(C)$.

We remind the reader that most model theoretic constructions, like ultraproducts, can be carried out in a three-sorted language just like for the usual one-sorted languages, see e.g.~\cite[Section 1.7]{Chatzidakis}.
In particular, for ultraproducts of valued fields one has:

\begin{Lemma}\label{lem:residue.ultraproduct}
Let $(K_i,v_i)_{i\in I}$ be a family of valued fields and $\mathcal{F}$ an ultrafilter on $I$.
Then the ultraproduct $(K,v)=\prod_{i\in I}(K_i,v_i)/\mathcal{F}$ is a valued field
with residue field
$Kv=\prod_{i\in I}K_iv_i/\mathcal{F}$.
\end{Lemma}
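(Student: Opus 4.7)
My plan is to deduce the lemma directly from \L o\'s's theorem applied to the three-sorted language $\mathcal{L}_{\rm vf}$. The work therefore splits into two parts: first, axiomatizing ``valued field'' in $\mathcal{L}_{\rm vf}$, and second, reading off the structure of the three sorts of the ultraproduct.

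First I would verify that the class of valued fields is elementary in $\mathcal{L}_{\rm vf}$. Using the conventions from the excerpt, one writes down axioms asserting (a) that the field sort $K$ is a field and the residue sort $k$ is a field, (b) that $\Gamma\cup\{\infty\}$ is a densely or discretely ordered abelian monoid with a distinguished top element $\infty$ satisfying the usual absorbing rules, (c) that $v\colon K\to\Gamma\cup\{\infty\}$ satisfies $v(0^K)=\infty^\Gamma$, $v(xy)=v(x)+v(y)$, $v(x+y)\geq\min\{v(x),v(y)\}$, and $v(x)=\infty$ iff $x=0^K$, and (d) that the map ${\rm res}\colon K\to k$ is the constant $0^k$ outside $\mathcal{O}_v$, is a ring homomorphism on $\mathcal{O}_v$ with kernel $\mathfrak{m}_v$, and is surjective onto $k$. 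All of (a)--(d) are first-order in $\mathcal{L}_{\rm vf}$, and together they force the natural map $\mathcal{O}_v/\mathfrak{m}_v \to k$ induced by ${\rm res}$ to be an isomorphism of fields.

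Now I would apply \L o\'s's theorem in the many-sorted setting (e.g.\ \cite[Section 1.7]{Chatzidakis}) to the family $(K_i,v_i)_{i\in I}$ and the ultrafilter $\mathcal{F}$. Since each $(K_i,v_i)$ satisfies the axioms above, so does the ultraproduct $(K,v)=\prod_{i\in I}(K_i,v_i)/\mathcal{F}$; in particular it is a valued field. The field and value-group sorts of the ultraproduct are $K=\prod_i K_i/\mathcal{F}$ and $\prod_i(v_iK_i\cup\{\infty\})/\mathcal{F}$ by construction, and the residue field sort is $k=\prod_i K_iv_i/\mathcal{F}$, again by construction. By the axiomatization, ${\rm res}$ induces an isomorphism $\mathcal{O}_v/\mathfrak{m}_v\cong k$, so $Kv\cong\prod_{i\in I}K_iv_i/\mathcal{F}$, as required.

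No single step is really an obstacle; the only point requiring a moment's care is the axiomatization in (d), since the residue map is defined globally on $K$ (sent to $0^k$ outside $\mathcal{O}_v$) rather than only on $\mathcal{O}_v$. Once this convention is recorded as a first-order axiom, \L o\'s's theorem takes care of the rest and ensures that the ultraproduct of the residue field sorts coincides, canonically via ${\rm res}$, with the residue field of the ultraproduct valuation.
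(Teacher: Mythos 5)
Your proposal is correct and is exactly the argument the paper intends: the paper itself gives no proof of this lemma, instead pointing to the general fact that ultraproducts behave as expected in the three-sorted language $\mathcal{L}_{\rm vf}$ (citing Chatzidakis for many-sorted model theory), and your write-up simply makes explicit the first-order axiomatization and the application of \L o\'s's theorem, with the key point correctly identified — surjectivity of ${\rm res}$ onto the residue sort $k$ is first-order, so in the ultraproduct $k\cong\mathcal{O}_v/\mathfrak{m}_v$. The only cosmetic quibble is that in item (b) the qualifier ``densely or discretely ordered'' is superfluous (every totally ordered abelian group is one or the other, so it costs nothing, but what is actually needed is just ``totally ordered abelian group with an adjoined top element $\infty$'').
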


\subsection{Definable sets}

Let $\mathcal{L}$ be a first order language.
For an $\mathcal{L}$-formula $\phi(x)$ in one free variable $x$ 
and an $\mathcal{L}$-structure $K$
we denote by $\phi(K)=\{x\in K: K\models\phi(x)\}$ the set defined by $\phi$ in $K$.
A subset $X\subseteq K$ is {\em definable} if there exists some $\phi$ with $X=\phi(K)$,
in which case we also call $\phi$ a {\em definition} for $X$.

\begin{comment}
\begin{Theorem}[Characterization Theorem, \cite{Prestel}]
\label{thm:Prestel}
Let $\mathcal{L}(\mathcal{O})$ be an expansion of a first-order language $\mathcal{L}$ by a unary predicate $\mathcal{O}$ and let $T$ be an $\mathcal{L}(\mathcal{O})$-theory.
\begin{enumerate}
\item There is an $\exists$-$\mathcal{L}$-formula $\phi(x)$ which defines $\mathcal{O}$ in all models $(K,\mathcal{O})$ of $T$ if and only if$$(K_{1}\subseteq K_{2}\implies\mathcal{O}_{1}\subseteq\mathcal{O}_{2}),$$for all models $(K_{1},\mathcal{O}_{1}),(K_{2},\mathcal{O}_{2})\models T$.
\item There is an $\forall$-$\mathcal{L}$-formula $\phi(x)$ which defines $\mathcal{O}$ in all models $(K,\mathcal{O})$ of $T$ if and only if$$(K_{1}\subseteq K_{2}\implies\mathcal{O}_{2}\cap K_{1}\subseteq\mathcal{O}_{1}),$$for all models $(K_{1},\mathcal{O}_{1}),(K_{2},\mathcal{O}_{2})\models T$.
\end{enumerate}
\end{Theorem}
\end{comment}

As usual, we say that an $\mathcal{L}$-formula is an 
{\em $\exists$-$\mathcal{L}$-formula}
(resp.~{\em $\forall$-$\mathcal{L}$-formula})
if it is logically equivalent to a formula in prenex normal form with only existential
(resp.~universal)
quantifiers.
We stress that for the three-sorted language $\mathcal{L}_{\rm vf}(C)$ the existential
(resp.~universal)
quantifiers in the above-mentioned prenex normal form
may quantify over any of the sorts.
We say that an $\mathcal{L}_{\rm vf}(C)$-sentence is an {\em $\forall^{k}\exists$-$\mathcal{L}_{\rm vf}(C)$-sentence} if it is logically equivalent to 
a sentence of the form $\forall\mathbf{x}\,\psi(\mathbf{x})$,
where $\psi$ is an $\exists$-$\mathcal{L}_{\rm vf}(C)$-formula and
the universal quantifiers range over the residue field sort.

If we say that a subset $X$ of a $C$-field $K$ is {\em $\exists$-$C$-definable}
(resp.~{\em $\forall$-$C$-definable}),
we always mean that it is definable
by an
$\exists$-$\mathcal{L}_{\mathrm{ring}}(C)$-formula
(resp.~$\forall$-$\mathcal{L}_{\mathrm{ring}}(C)$-formula);
and if we say that $X$ is {\em $\exists$-$\emptyset$-definable}
(resp.~{\em $\forall$-$\emptyset$-definable}),
we  mean that it is definable
by an
$\exists$-$\mathcal{L}_{\mathrm{ring}}$-formula
(resp.~$\forall$-$\mathcal{L}_{\mathrm{ring}}$-formula).
For a class $\mathcal{K}$ of valued $C$-fields,
we say that the valuation ring resp.~the valuation ideal is
{\em uniformly $\exists$-$C$-definable}
in $\mathcal{K}$
if there is an $\exists$-$\mathcal{L}_{\mathrm{ring}}(C)$-formula
$\phi(x)$ such that
$\mathcal{O}_{v}=\phi(K)$
(resp.~$\mathfrak{m}_v=\phi(K)$)
for each $(K,v)\in\mathcal{K}$.
The term {\em uniformly $\forall$-$C$-definable} is used analogously.

\begin{Lemma}\label{lem:EAOm}
Let $\mathcal{K}$ be a class of valued $C$-fields.
Then the valuation ring is uniformly $\exists$-$C$-definable in $\mathcal{K}$ 
if and only if the valuation ideal is uniformly $\forall$-$C$-definable in $\mathcal{K}$,
and the valuation ring is uniformly $\forall$-$C$-definable in $\mathcal{K}$ if and only if 
the valuation ideal is uniformly $\exists$-$C$-definable in $\mathcal{K}$.
\end{Lemma}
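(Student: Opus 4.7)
The plan is to exploit the duality between $\mathcal{O}_v$ and $\mathfrak{m}_v$ under multiplicative inversion. As a first step I would verify that in any valued field $(K,v)$ we have
\begin{align*}
x \in \mathcal{O}_v &\iff \forall y\,(xy = 1 \to y \notin \mathfrak{m}_v) \iff x = 0 \vee \exists y\,(xy = 1 \wedge y \notin \mathfrak{m}_v),\\
x \in \mathfrak{m}_v &\iff \forall y\,(xy = 1 \to y \notin \mathcal{O}_v) \iff x = 0 \vee \exists y\,(xy = 1 \wedge y \notin \mathcal{O}_v).
\end{align*}
For $x = 0$ the universal clauses hold vacuously and the existential versions are covered by the explicit disjunct; for $x \neq 0$, the unique inverse $y = x^{-1}$ satisfies $v(y) = -v(x)$, which reduces each statement to an equivalent inequality on $v(x)$.

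The main step is then a routine syntactic substitution. Given a uniform $\exists$-$C$-definition $\phi(y)$ of $\mathcal{O}_v$ in $\mathcal{K}$, the negation $\neg\phi$ is equivalent to a $\forall$-$\mathcal{L}_{\mathrm{ring}}(C)$-formula; substituting $\neg\phi(y)$ for ``$y \notin \mathcal{O}_v$'' in the universal identity for $\mathfrak{m}_v$ and pulling the universal quantifiers to the front yields a $\forall$-$\mathcal{L}_{\mathrm{ring}}(C)$-formula that uniformly defines $\mathfrak{m}_v$. Conversely, from a uniform $\forall$-$C$-definition $\psi(y)$ of $\mathfrak{m}_v$, the negation $\neg\psi$ is existential, so substituting it into the existential identity for $\mathcal{O}_v$ produces an $\exists$-$\mathcal{L}_{\mathrm{ring}}(C)$-formula uniformly defining $\mathcal{O}_v$. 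The second equivalence of the lemma is handled by the symmetric substitutions, this time using the universal identity for $\mathcal{O}_v$ and the existential identity for $\mathfrak{m}_v$.

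All manipulations take place at the level of formulas and are independent of the specific $(K,v) \in \mathcal{K}$, so uniformity is preserved automatically. I do not foresee any genuine obstacle; the only subtle point is the asymmetric treatment of $x = 0$, which must appear as an explicit disjunct in the existential identities since $0 \in \mathfrak{m}_v \subseteq \mathcal{O}_v$ has no multiplicative inverse, but is handled vacuously in the universal ones.
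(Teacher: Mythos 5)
Your proposal is correct and takes essentially the same approach as the paper: both rely on the two formulas $x=0\vee\exists y\,(xy=1\wedge\neg\phi(y))$ and $x=0\vee\forall y\,(xy=1\rightarrow\neg\phi(y))$ to pass between definitions of $\mathcal{O}_v$ and $\mathfrak{m}_v$ while swapping quantifier type (the paper keeps the redundant $x=0$ disjunct in the universal form as well, which you correctly observe is unnecessary).
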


\begin{proof}
Let $(K,v)\in\mathcal{K}$.
If $\phi(x)$ defines $\mathcal{O}_v$
(resp.~$\mathfrak{m}_v$), then both
$$
 x=0\vee\exists y\;(xy=1\wedge\neg\phi(y))
$$ 
and
$$
 x=0\vee\forall y\;(xy=1\rightarrow\neg\phi(y))
$$ 
define $\mathfrak{m}_v$
(resp.~$\mathcal{O}_v$).
\end{proof}

\begin{Remark}\label{rem:EAOm}
Due to this observation we can either talk about $\exists$-$C$-definable valuation rings and valuation ideals
(which we refer to as the {\em $\mathcal{O}$-case} and the {\em $\mathfrak{m}$-case}),
or, {\em equivalently}, about $\exists$-$C$-definable and $\forall$-$C$-definable valuation rings
(which we refer to as the {\em $\exists$-case} and the {\em $\forall$-case}).
We will frequently switch between these two viewpoints.
\end{Remark}

We will use the following criterion which is a special case of \cite[Characterization Theorem]{Prestel}.

\begin{Proposition}
\label{thm:Prestel}
Let $\mathcal{K}$ be an $\mathcal{L}_{\mathrm{vf}}(C)$-elementary class of valued $C$-fields.
\begin{enumerate}
\item The valuation ring is uniformly $\exists$-$C$-definable in $\mathcal{K}$
if and only if
$$(K_{1}\subseteq K_{2}\implies\mathcal{O}_{v_{1}}\subseteq\mathcal{O}_{v_{2}}),$$
for all $(K_{1},v_{1}),(K_{2},v_{2})\in\mathcal{K}$.
\item The valuation ring is uniformly $\forall$-$C$-definable in $\mathcal{K}$
if and only if
$$(K_{1}\subseteq K_{2}\implies\mathcal{O}_{v_{2}}\cap K_{1}\subseteq\mathcal{O}_{v_{1}}),$$
for all $(K_{1},v_{1}),(K_{2},v_{2})\in\mathcal{K}$.
\end{enumerate}
\end{Proposition}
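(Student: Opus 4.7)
The plan is to prove (1); statement (2) is dual. Write $T$ for the elementary $\mathcal{L}_{\mathrm{vf}}(C)$-theory of $\mathcal{K}$, so that $\mathcal{K}$ coincides with the class of models of $T$.

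The forward implication in (1) is the standard preservation of existential formulas under extensions: if an $\exists$-$\mathcal{L}_{\mathrm{ring}}(C)$-formula $\phi(x)$ satisfies $\phi(K)=\mathcal{O}_v$ for every $(K,v)\in\mathcal{K}$, and $(K_1,v_1),(K_2,v_2)\in\mathcal{K}$ with $K_1\subseteq K_2$ (a $C$-subfield inclusion, equivalently an inclusion of $\mathcal{L}_{\mathrm{ring}}(C)$-substructures), then each $a\in\mathcal{O}_{v_1}$ satisfies $K_1\models\phi(a)$, whence $K_2\models\phi(a)$ and $a\in\mathcal{O}_{v_2}$. The analogous direction of (2) uses instead the preservation of $\forall$-formulas under $\mathcal{L}_{\mathrm{ring}}(C)$-substructures.

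For the non-trivial direction of (1), let $\Phi$ be the set of all $\exists$-$\mathcal{L}_{\mathrm{ring}}(C)$-formulas $\phi(x)$ with $T\models\forall x\,(\phi(x)\rightarrow v(x)\geq 0)$. The aim is to show
$$
T\models\forall x\,\Bigl(v(x)\geq 0\rightarrow\bigvee_{\phi\in\Phi}\phi(x)\Bigr),
$$
after which compactness yields a finite disjunction from $\Phi$ defining $\mathcal{O}_v$ uniformly in $\mathcal{K}$. Suppose this fails: some $(K_1,v_1)\in\mathcal{K}$ contains an $a\in\mathcal{O}_{v_1}$ with $K_1\not\models\phi(a)$ for every $\phi\in\Phi$. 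The key step is to combine $T$ with the atomic $\mathcal{L}_{\mathrm{ring}}(C)$-diagram of $K_1$ and the sentence $v(a)<0$, forming
$$
T':=T\cup\mathrm{Diag}_{\mathcal{L}_{\mathrm{ring}}(C)}(K_1)\cup\{v(a)<0\},
$$
in the language expanded by constants for the elements of $K_1$. If $T'$ is satisfiable, any model furnishes $(K_2,v_2)\in\mathcal{K}$ together with a $C$-field embedding $K_1\hookrightarrow K_2$ and $v_2(a)<0$, directly contradicting the hypothesis $\mathcal{O}_{v_1}\subseteq\mathcal{O}_{v_2}$. Otherwise, compactness yields $b_1,\dots,b_n\in K_1$ and a quantifier-free $\mathcal{L}_{\mathrm{ring}}(C)$-formula $\theta(x,y_1,\dots,y_n)$ with $K_1\models\theta(a,b_1,\dots,b_n)$ and $T\models\forall x\,\forall\mathbf{y}\,(\theta(x,\mathbf{y})\rightarrow v(x)\geq 0)$; then the $\exists$-formula $\exists\mathbf{y}\,\theta(x,\mathbf{y})$ lies in $\Phi$ and holds of $a$ in $K_1$, contradicting the choice of $a$.

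The main obstacle is precisely this diagram-and-compactness dichotomy: everything else (the preservation direction, the reduction to a finite subset of $\Phi$ by compactness, the dual case) is routine. For (2) one runs the analogous argument with $\Phi$ replaced by the set of $\forall$-$\mathcal{L}_{\mathrm{ring}}(C)$-formulas $\psi(x)$ with $T\models\forall x\,(v(x)\geq 0\rightarrow\psi(x))$, constructing a $C$-field extension $K_1\subseteq K_2$ in $\mathcal{K}$ in which some $a\in K_1$ with $v_1(a)<0$ has $v_2(a)\geq 0$, contradicting the hypothesis $\mathcal{O}_{v_2}\cap K_1\subseteq\mathcal{O}_{v_1}$.
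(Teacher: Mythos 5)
Your proof is correct. Note, though, that the paper itself gives no proof of this proposition: it is presented as ``a special case of \cite[Characterization Theorem]{Prestel}'' and cited without argument. What you have written is precisely the standard diagram-and-compactness proof (a {\L}o\'s--Tarski--style preservation argument) that underlies Prestel's theorem, so in effect you have supplied the omitted proof rather than an alternative to it. One micro-point worth making explicit: when you say ``compactness yields a finite disjunction from $\Phi$'', the intermediate step is that $T\cup\{v(c)\geq 0\}\cup\{\neg\phi(c):\phi\in\Phi\}$ is inconsistent (which is what your case analysis establishes, for a fresh constant $c$ of the field sort), so compactness produces a finite $\Phi_0\subseteq\Phi$ with $T\models\forall x\,\bigl(v(x)\geq 0\rightarrow\bigvee_{\phi\in\Phi_0}\phi(x)\bigr)$; the reverse inclusion $\bigvee_{\phi\in\Phi_0}\phi(K)\subseteq\mathcal{O}_v$ then follows because each $\phi\in\Phi_0$ is already in $\Phi$. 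Everything else, including the sorted bookkeeping (the diagram is taken in $\mathcal{L}_{\mathrm{ring}}(C)$, new constants live in the field sort, $\theta$ is quantifier-free in $\mathcal{L}_{\mathrm{ring}}(C)$), and the dual argument for the $\forall$-case, is handled correctly.
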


\begin{Remark}
In subsequent arguments, rather than use the `only if' direction of \ref{thm:Prestel} we will instead use the basic fact that existential sentences `pass up'.
\end{Remark}

We conclude this section with a probably well-known description of sets defined by quantifier-free formulas:

\begin{Lemma}\label{Lemma:qfdefinable}
Let $(K,v)$ be a valued $C$-field and $\phi(x)$ a quantifier-free $\mathcal{L}_{\rm vf}(C)$-formula
with free variable $x$ belonging to the field sort.
Then the set defined by $\phi(x)$ in $K$ is of the form
$\phi(K)=A\cup U$ 
with $A$ finite and algebraic over $C_K$, and $U$ open.
\end{Lemma}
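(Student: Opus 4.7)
The plan is to reduce $\phi$ to a Boolean combination of atomic formulas, list the finitely many polynomials in $C[x]$ appearing in it, and show that outside their common zero set the truth value of $\phi$ is locally constant in the $v$-topology. Taking $A$ to be the intersection of $\phi(K)$ with that exceptional finite set and $U$ the rest will then give the desired decomposition.

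First I would observe that, since $\phi$ is quantifier-free with only the field-sort variable $x$ free, every $K$-sort subterm of $\phi$ is an element of $C[x]$. Hence each atomic subformula has one of three shapes: a field-sort equation $p(x) = 0$ with $p \in C[x]$; a $\Gamma$-sort equation or strict inequality between terms built by $+^{\Gamma}$ from $0^{\Gamma}$, $\infty^{\Gamma}$ and values $v(p_{i}(x))$; or a residue-sort equation between terms built by $+^{k}$, $-^{k}$, $\cdot^{k}$ from $0^{k}$, $1^{k}$ and residues $\mathrm{res}(p_{j}(x))$. I collect all the (nonzero) polynomials $p$, $p_{i}$, $p_{j}$ arising in this way into a finite list $p_{1}, \ldots, p_{N} \in C[x]$ and set $Z := \bigcup_{i}\{x \in K : p_{i}(x) = 0\}$, a finite subset of $K$ whose elements are algebraic over $C_{K}$.

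The crux is the local-constancy claim: for every $x_{0} \in K \setminus Z$ there is a ball $B_{K}(\gamma, x_{0})$ on which every atomic subformula of $\phi$ has constant truth value. This rests on two standard ultrametric facts. First, $v \colon K^{\times} \to vK$ is locally constant, since $v(b-a) > v(a)$ forces $v(b) = v(a)$. Second, $\mathrm{res} \colon K \to Kv$ is locally constant: on $\mathcal{O}_{v}$ the coset $a + \mathfrak{m}_{v}$ is a neighbourhood on which $\mathrm{res}$ is constantly $\mathrm{res}(a)$, while on $K \setminus \mathcal{O}_{v}$ the open condition $v(x) < 0$ cuts out a neighbourhood on which $\mathrm{res}$ is identically $0^{k}$. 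Since every $p_{i}(x_{0}) \neq 0$, a common refinement of the resulting balls makes every $v(p_{i}(x))$ and every $\mathrm{res}(p_{j}(x))$ constant and each equation $p_{i}(x) = 0$ constantly false; any $\Gamma$- or $k$-term built from the above is then constant on this ball, and so is the truth value of each atomic subformula.

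Setting $A := \phi(K) \cap Z$ and $U := \phi(K) \setminus Z$ yields $\phi(K) = A \cup U$: by construction $A$ is finite and algebraic over $C_{K}$, and by the claim $U$ is open in $K \setminus Z$, hence open in $K$ since $Z$ is finite and points are closed in the (nontrivial) $v$-topology. The main obstacle, such as it is, is the bookkeeping needed to describe every atomic subformula in the three-sorted language; the remaining steps then reduce to routine applications of the ultrametric inequality.
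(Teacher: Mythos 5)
Your proof is correct, but it takes a genuinely different route from the paper's. The paper passes to the algebraic closure $(K^{\mathrm{alg}},v)$, translates $\phi$ into an $\mathcal{L}_{\mathrm{div}}(C)$-formula, invokes Holly's canonical-form theorem for algebraically closed valued fields (the Swiss cheese decomposition into a boolean combination of balls and singletons), observes that the singletons are algebraic over $C_K$, and then intersects back with $K$. You instead argue directly in $K$: you list the finitely many polynomials $p_1,\dots,p_N\in C[x]$ whose images in $K[x]$ are nonzero and which appear under $v$, $\mathrm{res}$, or in field-sort equations; you let $Z$ be their common zero set, which is finite and algebraic over $C_K$; and you use the ultrametric local constancy of $v$ on $K^\times$ and of $\mathrm{res}$ on $K$ to show that every atomic subformula, and hence $\phi$ itself, has locally constant truth value on $K\setminus Z$. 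Setting $A=\phi(K)\cap Z$ and $U=\phi(K)\setminus Z$ then does the job. This buys you an elementary, self-contained argument that avoids any appeal to quantifier elimination or Holly's classification, at the modest cost of bookkeeping over the three-sorted language; the paper's route is shorter once one already has the Swiss cheese machinery in hand. Two small points to tidy: when you say ``(nonzero) polynomials'' you should require that the polynomials are nonzero as elements of $K[x]$ (a nonzero element of $C[x]$ may map to $0$ since $C$ is only a ring), discarding the degenerate ones because their associated atomics are constantly true or false anyway; and, as your parenthetical ``(nontrivial)'' already hints, both your proof and the paper's tacitly assume $v$ is nontrivial, which is harmless since the lemma is only applied to nontrivially valued fields.
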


\begin{proof}
Let $(K^{\mathrm{alg}},v)$ be an algebraic closure of $(K,v)$, viewed as a valued $C$-field.
Since $\phi(x)$ is quantifier-free, $\phi(K)=\phi(K^{\mathrm{alg}})\cap K$.
Let $\mathcal{L}_{\rm div}=\mathcal{L}_{\rm ring}\cup\{|\}$ be the expansion of $\mathcal{L}_{\mathrm{ring}}$ by the divisibility predicate
$x|y\Leftrightarrow v(x)\leq v(y)$.
There exists an $\mathcal{L}_{\rm div}(C)$-formula $\psi(x)$ such that $\psi(K^{\mathrm{alg}})=\phi(K^{\mathrm{alg}})$.
%In fact, by quantifier-elimination, we may assume that $\psi(x)$ is quantifier-free.
By \cite[Theorem 3.26]{Holly95}, the set $\psi(K^{\mathrm{alg}})$ defined by $\psi(x)$ is a boolean combination of balls and singletons
(in fact, it is a finite union of ``Swiss cheeses'' in Holly's picturesque terminology).
Moreover, the singletons are algebraic over $C_{K^{\mathrm{alg}}}=C_{K}$;
this can be seen either by a straightforward analysis of $\mathcal{L}_{\mathrm{div}}(C)$-formulas in $(K^{\mathrm{alg}},v)$,
or by the fact that `model theoretic algebraic closure' in $(K^{\mathrm{alg}},v)$ is equal to `field theoretic algebraic closure'. 
Therefore $\phi(K^{\mathrm{alg}})=\psi(K^{\mathrm{alg}})$ is equal to $A'\cup U'$ for some open set $U'$ and a finite set $A'$ which is algebraic over $C_{K}$.
Since quantifier-free formulas `pass down', $\phi(K)$ is of the required form.
\end{proof}

\section{Characterizing uniform definitions}
\label{sec:embedded}

\noindent
We fix a ring $C$ and a class of $C$-fields $\mathcal{F}$.

\begin{Definition}
Let
$$
 \mathcal{H}(\mathcal{F})=\left\{(K,v) \;|\; (K,v)\mbox{ henselian valued $C$-field}, Kv\in\mathcal{F} \right\}
$$ 
denote the class of henselian valued $C$-fields with residue field in $\mathcal{F}$,
and
$$
 \mathcal{H}'(\mathcal{F})=\left\{(K,v)\in\mathcal{H}(\mathcal{F}) \;|\; v\mbox{ nontrivial}\right\}
$$ 
the subclass of nontrivially valued fields.
Moreover, let
$$
 \mathcal{H}_e(\mathcal{F})=\{(K,v)\in\mathcal{H}(\mathcal{F})\;|\;\mathrm{char}(K)=\mathrm{char}(Kv)\}
$$ 
be the subclass of equicharacteristic fields,
and
$$
 \mathcal{H}_0(\mathcal{F})=\{(K,v)\in\mathcal{H}(\mathcal{F})\;|\;\mathrm{char}(K)=0\}
$$ 
the subclass of fields of characteristic zero,
and define $\mathcal{H}_e'(\mathcal{F})$ and $\mathcal{H}_0'(\mathcal{F})$ accordingly.
For a single $C$-field $F$, we let
$\mathcal{H}(F)$ be $\mathcal{H}(\mathcal{F})$ with $\mathcal{F}$ the class of $C$-fields elementarily equivalent to $F$.
Again we define $\mathcal{H}'(F)$, $\mathcal{H}_{e}(F)$, $\mathcal{H}_{0}(F)$, $\mathcal{H}_{e}'(F)$, and $\mathcal{H}_{0}'(F)$ accordingly.
\end{Definition}

\begin{Remark}
We have that
$\mathcal{H}(\mathcal{F})=\mathcal{H}_e(\mathcal{F})\cup\mathcal{H}_0(\mathcal{F})$
and
$\mathcal{H}'(\mathcal{F})=\mathcal{H}_e'(\mathcal{F})\cup\mathcal{H}_0'(\mathcal{F})$.
Note that if $\mathcal{F}$ is an $\mathcal{L}_{\rm ring}(C)$-elementary class of $C$-fields,
then the classes $\mathcal{H}(\mathcal{F})$, $\mathcal{H}_e(\mathcal{F})$, $\mathcal{H}_0(\mathcal{F})$,
$\mathcal{H}'(\mathcal{F})$, $\mathcal{H}_e'(\mathcal{F})$ and $\mathcal{H}_0'(\mathcal{F})$
are $\mathcal{L}_{\rm vf}(C)$-elementary classes of valued $C$-fields.
\end{Remark}

\begin{Remark}
In \cite{Prestel}, \ref{thm:Prestel} is used to reprove \cite[Theorem 1.1]{Fehm}: there is a uniform $\exists$-$\emptyset$-definition of the valuation ring for henselian fields with residue field elementarily equivalent to a fixed finite or PAC field (not containing an algebraically closed subfield).
We extend this idea to characterize those elementary classes of $C$-fields $\mathcal{F}$ for which the valuation ring and the valuation ideal are uniformly $\exists$-$C$-definable in $\mathcal{H}(\mathcal{F})$, in $\mathcal{H}_e(\mathcal{F})$, and (under certain conditions on $C$) in $\mathcal{H}_0(\mathcal{F})$.
\end{Remark}

\begin{Remark}
Most definitions and statements in this section can be phrased both in an ``existential'' and in a ``universal'' setting
(see also \ref{rem:EAOm}), 
and both for ``arbitrary'' valuations and for ``equicharacteristic'' valuations. 
The ``existential'' and ``universal'' definitions and statements show a huge formal similarity, but the proofs are sometimes different for both cases.
On the other hand, the proofs for the  ``arbitrary'' and ``equicharacteristic'' versions are so similar that we combine them by putting 
the necessary changes in the equicharacteristic case in brackets. 
\end{Remark}

\subsection{Embedded residue and large classes of fields}

\begin{Definition}\label{def:embedded.residue.large}
\begin{enumerate}
\item
We say that $\mathcal{F}$ has \em [equicharacteristic] embedded residue \rm 
if there exist $F_{1},F_{2}\in\mathcal{F}$ and a nontrivial [equicharacteristic] valuation $w$ on $F_{1}$ with an embedding of $F_1w$ into $F_2$.
For a single $C$-field $F$, we say $F$ has \em embedded residue \rm if the class of $C$-fields elementarily equivalent to $F$ has embedded residue.
\item
We say that $\mathcal{F}$ is \em [equicharacteristic] large \rm if there exist $F_{1},F_{2}\in\mathcal{F}$, a $C$-subfield $E\subseteq F_{2}$, and a nontrivial [equicharacteristic] henselian valuation $w$ on $E$ such that
$Ew$ is isomorphic to $F_{1}$.
For a single $C$-field $F$, we say $F$ is 
{\em large\footnote{This notion of a large $C$-field is related to the notion of a large field in the sense of Pop \cite{Pop96}. We discuss this connection in Section \ref{sec:large}.}} 
if the class of $C$-fields elementarily equivalent to $F$ is large.
\end{enumerate}
\end{Definition}

\begin{Remark}\label{rem:unmixed}
Note that if $\mathcal{F}$ has equicharacteristic embedded residue,
then $\mathcal{F}$ has embedded residue.
Moreover, note that if $\mathcal{F}$ is {\em unmixed}, i.e.~
$\mathcal{F}$ does not contain both fields of characteristic zero and of positive characteristic,
then $\mathcal{F}$ has embedded residue if and only if it has equicharacteristic embedded residue.
This applies in particular if $\mathcal{F}$ is a class of $C$-fields elementarily equivalent to a fixed $C$-field $F$;
in other words, if $\mathcal{F}$ is an elementary class of $C$-fields and one $F\in\mathcal{F}$ has embedded residue,
then $\mathcal{F}$ has equicharacteristic embedded residue.
In the case that $C$ itself has positive characteristic or is a field, every class of $C$-fields is unmixed.
The analogous statements hold for ``large'' instead of ``embedded residue''.
\end{Remark}

\begin{Lemma}\label{lemma:Schoutens}
Let $F$ be a $C$-field.
\begin{enumerate}
\item $F$ has embedded residue iff there is an elementary extension $F\preceq F^*$
and a nontrivial valuation $v$ on $F^*$ 
with an embedding of $F^*v$ into $F^*$.
%such that $F^*v$  is isomorphic to a subfield of $F^*$.
\item $F$ is large iff there is an elementary extension $F\preceq F^*$,
a subfield $E\subseteq F^*$ and a nontrivial henselian valuation $v$ on $E$
such that $Ev$ is isomorphic to $F^*$.
\end{enumerate}
\end{Lemma}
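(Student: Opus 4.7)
The direction $(\Leftarrow)$ of both (1) and (2) is immediate. Given $F\preceq F^{*}$ together with a nontrivial [henselian] valuation $v$ on [a subfield $E\subseteq$]\,$F^{*}$ whose residue field embeds into [is isomorphic to] $F^{*}$, the choice $F_{1}=F_{2}=F^{*}$ witnesses embedded residue [largeness], since $F^{*}\equiv F$.

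For the direction $(\Rightarrow)$ of (1), the plan is to argue via compactness. I will expand $\mathcal{L}_{\mathrm{vf}}(C)$ by a function symbol $\iota\colon k\to K$ (from the residue sort to the field sort) and by constants naming every element of $F$, and consider the theory $T$ consisting of the elementary diagram of $F$, axioms expressing that $v$ is nontrivial, and axioms expressing that $\iota$ is a unital ring homomorphism. A model of $T$ supplies exactly the $F^{*}$, $v$, and embedding $F^{*}v\hookrightarrow F^{*}$ required by the right-hand side. To show $T$ consistent, I will use the hypothesis: given witnesses $F_{1},F_{2}\equiv F$, $w$ nontrivial on $F_{1}$, and $\iota_{0}\colon F_{1}w\hookrightarrow F_{2}$, take a common elementary extension $K$ of $F,F_{1},F_{2}$ (using joint elementary extendability of elementarily equivalent $C$-fields), extend $w$ to a nontrivial valuation $v$ on $K$ with $Kv/F_{1}w$ algebraic via \ref{lem:extend.with.alg.res.field}, and extend the composite $F_{1}w\hookrightarrow F_{2}\preceq K$ along the algebraic extension $F_{1}w\subseteq Kv$ to get $Kv\hookrightarrow K^{\mathrm{alg}}$. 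Since this image may fall outside $K$, I will iterate along an elementary $\omega$-chain $F=K_{0}\preceq K_{1}\preceq\cdots$ with compatible valuations and partial embeddings $\iota_{n}\colon K_{n}v_{n}\to K_{n+1}$, invoking the hypothesis (preserved under $\equiv$) together with further elementary amalgamations at each stage to absorb the new algebraic witnesses. The union $F^{*}=\bigcup K_{n}$, with $v_{\omega}=\bigcup v_{n}$ and $\iota_{\omega}=\bigcup\iota_{n}$, is the desired model, since $F^{*}v_{\omega}=\bigcup K_{n}v_{n}$ by a directed-union variant of \ref{lem:residue.ultraproduct}.

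For (2), the argument is parallel, but at each stage I invoke \ref{lem:complete.the.square} in place of \ref{lem:extend.with.alg.res.field} to produce a \emph{henselian} valuation on a prescribed subfield $E_{n}\subseteq K_{n}$ whose residue field is \emph{isomorphic} (not merely embedded) to the target copy of $F^{*}$; this ensures that henselianness and the isomorphism $E_{\omega}v_{\omega}\cong F^{*}$ propagate to the limit, as required by the strengthened conclusion in~(2).

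The main obstacle will be the inductive step: realising $K_{n}v_{n}$ (algebraic over the copy of $F_{1}w$ inside $K_{n}$) inside an elementary extension of $K_{n}$ is not automatic, because the minimal polynomials of elements of $K_{n}v_{n}$ over $F_{1}w$, once pushed into $K_{n}$ via $\iota_{0}$, need not have roots in $K_{n}$ itself nor (by elementarity) in any elementary extension of $K_{n}$. It is precisely here that the embedded-residue/large hypothesis must be reused to supply elementarily equivalent copies of $F$ containing the required algebraic witnesses, which are then glued into $K_{n}$ via an elementary amalgamation; the bookkeeping of these iterated amalgamations, and checking that the chain of valuations and embeddings matches up at the limit, is the other technical point requiring care.
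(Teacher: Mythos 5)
Your $(\Leftarrow)$ direction is correct in both parts and matches the paper.

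The $(\Rightarrow)$ direction, however, has a genuine gap that you yourself flag but do not close. After taking a common elementary extension $K$ of $F,F_1,F_2$ and extending $w$ to a valuation $v$ on $K$ with $Kv/F_1w$ algebraic, you want to extend $\iota_0\colon F_1w\hookrightarrow F_2\subseteq K$ to an embedding $Kv\hookrightarrow K$ (or at least into some elementary extension of $K$). But the elements of $Kv$ algebraic over $F_1w$ have minimal polynomials whose images under $\iota_0$ need not split in $K$, and, as you observe, whether an irreducible polynomial over $K$ has a root is first-order with parameters in $K$, so no elementary extension of $K$ can supply the missing roots. An elementary $\omega$-chain $K_0\preceq K_1\preceq\cdots$ therefore cannot absorb these witnesses. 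Your proposed repair --- ``reuse the embedded-residue hypothesis to supply elementarily equivalent copies of $F$ containing the required algebraic witnesses, glued in by elementary amalgamation'' --- is not substantiated: the hypothesis gives an embedding of $F_1w$ into a field $\equiv F$, not embeddings of arbitrary finite algebraic extensions of $F_1w$ into such a field, and there is no reason the latter should exist. So the construction, as described, does not close, and the same difficulty recurs in part (2).

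The paper sidesteps the whole issue by a different idea: Shelah's isomorphic-ultrapowers theorem. Choose $\lambda$ and an ultrafilter $D$ on $\lambda$ such that $F_1^\lambda/D\cong F_2^\lambda/D$ and both are $|F|^+$-saturated, and set $(F^*,v^*)=(F_1,v)^\lambda/D$. Then $F\preceq F^*$ because $F^*\equiv F$ is sufficiently saturated, and the ultrapower of the given embedding $F_1v\hookrightarrow F_2$ is already an embedding $F^*v^*=(F_1v)^\lambda/D\hookrightarrow F_2^\lambda/D\cong F^*$. There is no need to extend $v$ over a common elementary extension of $F,F_1,F_2$, and no algebraic-witness problem arises because the residue-field embedding is transported along the ultrapower rather than constructed anew. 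Part (2) is proved the same way after adding a unary predicate for the subfield $E$ to the structure $(F_1,E,v)$ before taking ultrapowers, so that henselianness and the isomorphism $E^*v^*\cong F^*$ are preserved automatically. I recommend abandoning the $\omega$-chain construction and using Shelah's theorem instead.
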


\begin{proof}
(1)
Let $F_1\equiv F_2\equiv F$ and $v$ a nontrivial valuation on $F_1$ such that $F_1v$ embeds into $F_2$.
By \cite{Shelah} there exists a cardinal $\lambda$ and an ultrafilter $D$ on $\lambda$ such that
the ultrapowers $F_1^\lambda/D$ and $F_2^\lambda/D$ are isomorphic and $|F|^+$-saturated.
Thus, if we let $(F^*,v^*)=(F_1,v)^\lambda/D$, then $F$ can be embedded elementarily into $F^*$,
and $F^*v^*\cong (F_1v)^\lambda/D$ embeds into $F_2^\lambda/D\cong F^*$.

(2)
Let $F_1\equiv F_2\equiv F$, $E$ a subfield of $F_1$ and $v$ a nontrivial henselian valuation on $E$ with $Ev\cong F_2$.
Again there exist $\lambda$ and $D$ such that $F_1^\lambda/D\cong F_2^\lambda/D$ is $|F|^+$-saturated.
If we equip $F_1$ with a predicate for $E$ and let $(F^*,E^*,v^*)=(F_1,E,v)^\lambda/D$, 
then $E^*$ is a subfield of $F^*$ and $v^*$ is a nontrivial henselian valuation on $E^*$
such that $E^*v^*=(Ev)^\lambda/D\cong F_2^\lambda/D\cong F^*$.
\end{proof}

\begin{Lemma}\label{lem:sep.cl}
Let $F$ be a $C$-field
that contains an algebraically closed $C$-field $D$.
Then
\begin{enumerate}
\item $F$ has embedded residue.
\item $F$ is large.
\end{enumerate}
\end{Lemma}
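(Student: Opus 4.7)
The plan is to treat both parts by reducing, via \ref{lemma:Schoutens}, to constructions inside elementary extensions of $F$ that exploit the algebraically closed $D \subseteq F$.

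For (1), I aim at \ref{lemma:Schoutens}(1): produce $F \preceq F^*$ with a nontrivial valuation $w$ on $F^*$ such that $F^*w$ embeds into $F^*$. If $F \neq D$, I take $F^* = F$; otherwise $F = D$ is algebraically closed, and by L\"owenheim--Skolem upwards I pick a proper elementary extension $F \preceq F^*$, still algebraically closed and therefore containing an element transcendental over $D$. Fix such a $t \in F^*$ and put the $t$-adic valuation $v_t$ on $D(t) \subseteq F^*$, whose residue field is $D$. By \ref{lem:extend.with.alg.res.field} I extend $v_t$ to a valuation $w$ on $F^*$ with $F^*w/D$ algebraic; since $D$ is algebraically closed, $F^*w = D$, and this embeds into $F^*$ via $D \subseteq F \subseteq F^*$. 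Because $w(t) > 0$, the valuation is nontrivial.

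For (2), I verify the definition of largeness directly, with $F_1 := F$ and $E_0 := F((t))$ equipped with the henselian $t$-adic valuation $v_t$, so that $E_0 v_t \cong F$ as $C$-fields (\ref{rem:henselian.ext}). It suffices to embed $E_0$ as a $C$-field into some $F_2 \equiv F$: setting $E := \iota(E_0) \subseteq F_2$ and transporting $v_t$ to $v := v_t \circ \iota^{-1}$ then yields a nontrivial henselian valuation on the $C$-subfield $E$ of $F_2$ with $Ev \cong F = F_1$. By a standard diagram-plus-compactness argument, the existence of this $C$-embedding reduces to the transfer claim: \emph{every $\exists$-$\mathcal{L}_{\mathrm{ring}}(C)$-sentence satisfied in $E_0$ is satisfied in $F$}.

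To prove the claim, I would distribute the quantifier-free body of the sentence into a disjunction of conjunctions of polynomial equations and inequations over $C$, reducing to the question of whether a single constructible set $X \subseteq \mathbb{A}^n_{C_0}$ has an $F$-point, where $C_0 := C_F = C_D$ is the common fraction field of the image of $C$. Since $X$ has a point in $E_0$, it is non-empty as a subscheme of finite type over the field $C_0$; by the Nullstellensatz it then has a closed point whose residue field is finite over $C_0$, and since $D$ is algebraically closed with $C_0 \subseteq D$, this residue field embeds into $D \subseteq F$, producing the required $F$-point. The main obstacle is precisely isolating this transfer claim and handling the $C$-field bookkeeping (ensuring that the prime ideal captured from the $E_0$-point really restricts over $C$ to $\ker(C \to F)$, so that the argument takes place over the common base $C_0$); once this is done, the construction of the embedding and the transport of the valuation are routine.
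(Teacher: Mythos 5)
Your proof is correct, and part (1) is essentially the paper's argument: pass (if necessary) to an elementary extension containing an element transcendental over $D$, produce by \ref{lem:extend.with.alg.res.field} a valuation whose residue field is algebraic over $D$ (hence equals $D$ since $D$ is algebraically closed), and note the valuation is nontrivial. The paper extends the trivial valuation on $D$ itself and argues nontriviality from $Fv = D \subsetneq F$, whereas you route through the $t$-adic valuation on $D(t)$; these are the same idea. For part (2) you also hit the same target — embedding $(F((t)), v_t)$ as a valued $C$-subfield of something elementarily equivalent to $F$ — but your route is genuinely different in execution. The paper's proof observes $D \preceq_\exists E$ (because $D$ is algebraically closed), passes to an $|E|^+$-saturated elementary extension of the pair $(F,D)$, and embeds $E$ directly into $D^* \subseteq F^*$. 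You instead establish the transfer ``every $\exists$-$\mathcal{L}_{\rm ring}(C)$-sentence holding in $F((t))$ holds in $F$'' by an explicit Nullstellensatz argument (a nonempty constructible set over $C_F$ has a closed point with residue field finite over $C_F$, hence embedding into $C_F^{\rm alg} \subseteq D \subseteq F$), and then apply compactness to the elementary diagram to produce $F_2 \equiv F$ with a $C$-embedding of $F((t))$. Both ultimately rest on the same core fact (algebraically closed fields are existentially closed in extensions), but your version reproves that fact geometrically and avoids the two-sorted saturated-pair construction, at the cost of handling constructible sets and the base-change bookkeeping by hand; the paper's version is shorter given the model-theoretic machinery. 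One small remark: the ``concern'' you flag about the prime ideal restricting to $\ker(C\to F)$ is actually automatic, since all the algebras in your argument are $C_F$-algebras via the canonical factorization $C \to C/\ker \hookrightarrow C_F$, so that bookkeeping step is not a genuine gap.
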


\begin{proof}
(1) By passing to an elementary extension, we may assume that $F$ is transcendental over $D$. 
By \ref{lem:extend.with.alg.res.field} there exists an extension of 
the trivial valuation on $D$ to a valuation $v$ on $F$ with $Fv= D\subset F$.
It is necessarily nontrivial.

(2) Let $(E,v)$ be a nontrivial henselian extension of the trivially valued field $F$ with $Ev\cong F$ (\ref{rem:henselian.ext}).
Since $D$ is algebraically closed, $D\preceq_\exists E$. 
Let $(F^*,D^*)$ be an $|E|^+$-saturated elementary extension of $F$ with a predicate for $D$.
Then there exists a $D$-embedding $E\rightarrow D^*\subseteq F^*$.
\end{proof}

\subsection{Existential definitions}

\begin{Lemma}\label{lem:ER.implies.not.uniform}
Suppose that $\mathcal{F}$ has [equicharacteristic] embedded residue. 
Then the valuation ring \rm\bf is not \rm\em uniformly $\exists$-$C$-definable in $\mathcal{H}'(\mathcal{F})$ [respectively, $\mathcal{H}_e'(\mathcal{F})$].
\end{Lemma}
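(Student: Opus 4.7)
The plan is to invoke Prestel's criterion in its existential pass-up form: I will construct two members $(K_1,v_1),(K_2,v_2)$ of $\mathcal{H}'(\mathcal{F})$ (respectively $\mathcal{H}_e'(\mathcal{F})$) together with a $C$-field inclusion $K_1\subseteq K_2$ and an element $a\in K_1$ satisfying $a\in\mathcal{O}_{v_1}$ but $a\notin\mathcal{O}_{v_2}$. Any $\exists$-$\mathcal{L}_{\mathrm{ring}}(C)$-formula $\phi$ uniformly defining $\mathcal{O}_v$ on the class would then satisfy $\phi(a)$ in $K_1$, hence also in $K_2$ by the pass-up of existential sentences, forcing $a\in\mathcal{O}_{v_2}$---a contradiction.

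For the construction, let $F_1,F_2\in\mathcal{F}$, $w$ a nontrivial [equicharacteristic] valuation on $F_1$, and $\iota\colon F_1w\hookrightarrow F_2$ be witnesses to the embedded residue assumption. Set $(K_1,v_1):=(F_1((t)),v_t)$, the formal Laurent series field with the $t$-adic valuation: this is nontrivial, henselian, and equicharacteristic with residue field $F_1\in\mathcal{F}$, so $(K_1,v_1)\in\mathcal{H}_e'(\mathcal{F})\subseteq\mathcal{H}'(\mathcal{F})$. Now form the composite valuation $v:=w\bullet v_t$ on the same underlying field $K_1$ (first the $t$-adic, with residue $F_1$, then $w$ on $F_1$); it has value group $\mathbb{Z}\oplus_{\mathrm{lex}}wF_1$ with $\mathbb{Z}$-coordinate dominant and residue field $F_1w$. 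Apply \ref{lem:complete.the.square} to $(K_1,v)$ with the residue extension $F_2/F_1w$ given by $\iota$ to obtain $(K_2,v_2)/(K_1,v)$ with $K_2v_2\cong F_2$, $v_2$ nontrivial and henselian, and $v_2|_{K_1}=v$. Since $F_2\in\mathcal{F}$ and---in the equicharacteristic case---$\mathrm{char}(K_2)=\mathrm{char}(K_1)=\mathrm{char}(F_1)=\mathrm{char}(F_1w)=\mathrm{char}(F_2)$, this places $(K_2,v_2)$ in $\mathcal{H}_e'(\mathcal{F})\subseteq\mathcal{H}'(\mathcal{F})$.

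To exhibit the violating element, pick any $b\in\mathfrak{m}_w\setminus\{0\}$ and set $a:=1/b\in F_1^\times$, so $w(a)=-w(b)<0$. Then $a\in F_1\subseteq K_1\subseteq K_2$, and as $a$ is a unit of $F_1$ we have $v_1(a)=v_t(a)=0$, giving $a\in\mathcal{O}_{v_1}$. On the other hand, $v_2(a)=v(a)=(0,w(a))$ in $\mathbb{Z}\oplus_{\mathrm{lex}}wF_1$, which is strictly negative because $w(a)<0$; hence $a\notin\mathcal{O}_{v_2}$. The pass-up argument above then yields the lemma.

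The principal subtlety is that the composite valuation $v$ need not itself be henselian---we have only assumed $w$ to be nontrivial, not henselian on $F_1$. This is precisely what \ref{lem:complete.the.square} repairs: it simultaneously henselises $v$ and enlarges its residue field from $F_1w$ (which need not lie in $\mathcal{F}$) to $F_2\in\mathcal{F}$, producing a legitimate member of $\mathcal{H}'(\mathcal{F})$ without disturbing the values already computed on $K_1$.
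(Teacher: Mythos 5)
Your proposal is correct and follows essentially the same route as the paper: both construct $(K_1,v_1)$ as a nontrivial equicharacteristic henselization of the trivially valued $F_1$ (the paper's Remark \ref{rem:henselian.ext} explicitly offers $F_1((t))$ as the concrete choice), form the composite $w\circ v_1$, use Lemma \ref{lem:complete.the.square} to extend to a henselian $(K_2,v_2)$ with residue field $F_2$, and conclude by the pass-up of existential formulas from $\mathcal{O}_{v_1}\not\subseteq\mathcal{O}_{v_2}$. You merely add two harmless pieces of explicit bookkeeping the paper leaves implicit — the lexicographic description of the composite value group and the concrete witness $a=1/b$ with $v_1(a)=0$ but $v_2(a)<0$.
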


\begin{proof}
By assumption (in both cases), there exist $F_{1},F_{2}\in\mathcal{F}$ and a nontrivial valuation $w$ on $F_{1}$ such that 
$F_{1}w\subseteq F_{2}$. 
Let $(K_1,v_1)$ be a nontrivial henselian extension of the trivially valued field $F_1$ with $K_1v_1= F_{1}$ (\ref{rem:henselian.ext}).
Then $(K_1,v_1)\in\mathcal{H}_e'(\mathcal{F})$.
$$
\xymatrix{
 K_2\ar@{-}[d]\ar@{.>}[rr]^{v_2} && F_2\ar@{-}[d] \\
 K_1\ar@{.>}[r]^{v_1}\ar@/^1.6pc/@{.>}[rr]^{u} & F_1\ar@{.>}[r]^w & F_1w\\
 &C\ar@{->}[ul]\ar@{->}[u]\ar@{->}[ur]&
}
$$
Let $u$ denote the composition $w\circ v_1$ on $K_1$. 
Observe that $u$ is a proper refinement of $v_1$ and that $K_1u=F_{1}w$ is a subfield of $F_{2}$. 
By \ref{lem:complete.the.square} there exists a henselian extension $(K_2,v_2)$ of $(K_1,u)$ such that $K_2v_2$ is isomorphic to $F_{2}$. 

Then $(K_2,v_2)\in\mathcal{H}'(\mathcal{F})$,
but $v_2$ restricted to $K_1$ is a proper refinement of $v_1$,
hence $\mathcal{O}_{v_1}\not\subseteq\mathcal{O}_{v_2}$.
Since existential formulas `pass up', the valuation ring is not uniformly $\exists$-$C$-definable in $\mathcal{H}'(\mathcal{F})$.

If in addition $w$ is equicharacteristic, then 
so is $u$, and therefore also $v_2$,
hence
$(K_2,v_2)\in\mathcal{H}_e'(\mathcal{F})$, showing that
the valuation ring is not uniformly $\exists$-$C$-definable in $\mathcal{H}_e'(\mathcal{F})$.
\end{proof}

\begin{Lemma}\label{lem:no.ER.implies.uniform}
Suppose that $\mathcal{F}$ is an elementary class and that $\mathcal{F}$ does not have [equicharacteristic] embedded residue. 
Then the valuation ring \rm\bf is \rm\em uniformly $\exists$-$C$-definable in $\mathcal{H}(\mathcal{F})$ [respectively, $\mathcal{H}_e(\mathcal{F})$].
\end{Lemma}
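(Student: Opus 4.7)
The plan is to argue the contrapositive via Prestel's criterion \ref{thm:Prestel}(1). Since $\mathcal{F}$ is $\mathcal{L}_{\rm ring}(C)$-elementary, the remark preceding this subsection guarantees that $\mathcal{H}(\mathcal{F})$ and $\mathcal{H}_e(\mathcal{F})$ are $\mathcal{L}_{\rm vf}(C)$-elementary, so \ref{thm:Prestel}(1) applies. I would suppose that there exist $(K_1,v_1),(K_2,v_2)$ in the relevant class with $K_1\subseteq K_2$ but $\mathcal{O}_{v_1}\not\subseteq\mathcal{O}_{v_2}$, and aim to produce from this data a witness of (equicharacteristic) embedded residue for $\mathcal{F}$. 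Setting $w:=v_2|_{K_1}$, the hypothesis gives that $w$ is nontrivial and that the inclusion $\mathcal{O}_w\hookrightarrow\mathcal{O}_{v_2}$ descends to an embedding $K_1 w\hookrightarrow K_2 v_2\in\mathcal{F}$, so it will suffice to realize $K_1 w$ (or a subfield) as the residue field of a nontrivial valuation on some element of $\mathcal{F}$.

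I would then split into three cases by the interaction of $v_1$ and $w$ on $K_1$. If $v_1$ is trivial, then $K_1=K_1 v_1\in\mathcal{F}$ and $w$ itself witnesses embedded residue with $(F_1,F_2)=(K_1,K_2 v_2)$. If $v_1$ is nontrivial and $w$ is comparable with $v_1$, then $\mathcal{O}_{v_1}\not\subseteq\mathcal{O}_w$ forces $w$ to strictly refine $v_1$, so the quotient $w/v_1$ is a nontrivial valuation on $K_1 v_1\in\mathcal{F}$ whose residue field $K_1 w$ embeds into $K_2 v_2\in\mathcal{F}$, again yielding embedded residue.

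The main obstacle is the remaining case, where $v_1$ and $w$ are incomparable. Here I would let $u$ be the finest common coarsening of $v_1$ and $w$ (so $\mathcal{O}_u=\mathcal{O}_{v_1}\cdot\mathcal{O}_w$); by construction $v_1/u$ and $w/u$ are nontrivial independent valuations on $K_1 u$, and $v_1/u$ inherits henselianity from $v_1$. Passing to the henselization $(L,\hat w)$ of $(K_1 u,w/u)$ — an immediate separable algebraic extension — the unique extension $\hat v$ of the henselian $v_1/u$ to $L$ is again henselian, and independence of $v_1/u$ and $w/u$ transfers to $\hat v$ and $\hat w$ since $L/K_1 u$ is algebraic. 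F.\,K.\ Schmidt's theorem then forces $L$ to be separably closed, so by \ref{lem:residue.of.sep.closed} the residue field $L\hat w=K_1 w$ is algebraically closed. Since $K_1 w\hookrightarrow K_2 v_2\in\mathcal{F}$, \ref{lem:sep.cl}(1) supplies embedded residue for $K_2 v_2$, and hence for $\mathcal{F}$ by elementarity.

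For the equicharacteristic refinement one only needs to track characteristics through the above argument: from $K_1\subseteq K_2$ together with $(K_2,v_2)\in\mathcal{H}_e(\mathcal{F})$ and the embedding $K_1 w\hookrightarrow K_2 v_2$ one gets $\mathrm{char}(K_1)=\mathrm{char}(K_2 v_2)=\mathrm{char}(K_1 w)$, so both $w$ and $w/v_1$ are equicharacteristic, while the valuation supplied by \ref{lem:sep.cl}(1) is equicharacteristic by construction, having an algebraically closed residue field of the same characteristic as its base.
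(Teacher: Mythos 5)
Your proposal is correct and follows essentially the same strategy as the paper's proof: apply Prestel's criterion, split into cases on the comparability of $v_1$ with $v_2|_{K_1}$, and invoke F.\,K.\ Schmidt together with \ref{lem:residue.of.sep.closed} and \ref{lem:sep.cl} in the incomparable case. The only organizational difference is that the paper henselizes $K_1$ inside $K_2$ at the outset so that the restriction of $v_2$ to the henselization is henselian from the start, whereas you delay henselization and, in the incomparable case, pass to an abstract henselization of $K_1u$ with respect to $w/u$ — both routes lead to the same conclusion.
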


\begin{proof}
We test the hypothesis of \ref{thm:Prestel} for the class
$\mathcal{K}=\mathcal{H}(\mathcal{F})$ [respectively, $\mathcal{K}=\mathcal{H}_e(\mathcal{F})$]. 

Let $(K_1,v_1),(K_2,v_2)\in\mathcal{H}(\mathcal{F})$
and suppose that $K_1$ is an $\mathcal{L}_{\mathrm{ring}}(C)$-substructure of $K_2$. 
Let $L\subseteq K_2$ be the henselisation of $K_1$ with respect to the restriction $u$ of $v_2$ to $K_1$. 
Let $w_1$ be the unique extension of $v_1$ to $L$ and $w_2$ the restriction of $v_2$ to $L$.
We now have a trichotomy by comparing $v_1$ and $u$: either $v_1$ and $u$ are incomparable,
or $v_{1}$ is a proper coarsening of $u$, or $v_{1}$ is a refinement of $u$.
$$
\xymatrix{
 &K_2\ar@{-}[d] & & v_2\ar@{-}[d] \\
 &L\ar@{-}[d] & w_1\ar@{-}[d] & w_2\ar@{-}[d]\\
 C\ar@{->}[r]&K_1 & v_1 & u 
 %C\ar@{->}[u]&&
}
$$
If $v_1$ and $u$ are incomparable,
then so are $w_1$ and $w_2$, hence the residue fields of $w_1$, $w_2$, and their finest common coarsening $w$ are all separably closed. 
Furthermore $w_2$ induces a nontrivial valuation $\bar{w}_2$ on the separably closed field $Lw$. 
By \ref{lem:residue.of.sep.closed}, $(Lw)\bar{w}_2=Lw_2$ is algebraically closed,
hence, by \ref{lem:sep.cl}, $K_2v_2\supseteq Lw_2$ has embedded residue.
Thus, $\mathcal{F}$ has equicharacteristic embedded residue, see \ref{rem:unmixed}. 
This contradicts our assumption (in both cases). 
Therefore $v_1$ and $u$ are comparable.

If $v_{1}$ is a proper coarsening of $u$ then $u$ induces a nontrivial valuation $\bar{u}$ on $K_1v_1$ and 
$(K_1v_1)\bar{u}=K_1u\subseteq K_2v_2$. 
Thus $\mathcal{F}$ has embedded residue.
If in addition, 
$(K_2,v_2)\in\mathcal{H}_e(\mathcal{F})$, 
then $u$ is equicharacteristic,
and then so is $\bar{u}$,
hence $\mathcal{F}$ has equicharacteristic embedded residue.
Thus, in both cases this is a contradiction.

Therefore $v_{1}$ is a refinement of $u$, i.e. $\mathcal{O}_{v_{1}}\subseteq\mathcal{O}_{u}$, as required.
Applying \ref{thm:Prestel}, we are done.
\end{proof}

\begin{Theorem}[Characterisation Theorem, $\exists$-case]\label{thm:existential.characterisation}
Let $\mathcal{F}$ be an elementary class of $C$-fields. The following are equivalent:
\begin{enumerate}
\item[$(1^{'\exists})$] The valuation ring is uniformly $\exists$-$C$-definable in $\mathcal{H}'(\mathcal{F})$.
\item[$(1^{\exists})$] The valuation ring is uniformly $\exists$-$C$-definable in $\mathcal{H}(\mathcal{F})$.
\item[$(2^{\exists})$] $\mathcal{F}$ does not have embedded residue.
\end{enumerate}
Moreover, also the following are equivalent:
\begin{enumerate}
\item[$(1^{'\exists}_e)$] The valuation ring is uniformly $\exists$-$C$-definable in $\mathcal{H}'_e(\mathcal{F})$.
\item[$(1^{\exists}_e)$] The valuation ring is uniformly $\exists$-$C$-definable in $\mathcal{H}_e(\mathcal{F})$.
\item[$(2^{\exists}_e)$] $\mathcal{F}$ does not have equicharacteristic embedded residue.
\end{enumerate}
\end{Theorem}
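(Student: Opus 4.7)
The plan is to assemble the theorem from the two preceding lemmas (\ref{lem:ER.implies.not.uniform} and \ref{lem:no.ER.implies.uniform}) together with a trivial class inclusion, treating the general and the equicharacteristic versions in parallel.

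For the first equivalence, I would close the cycle $(1^{\exists}) \Rightarrow (1^{'\exists}) \Rightarrow (2^{\exists}) \Rightarrow (1^{\exists})$. The implication $(1^{\exists}) \Rightarrow (1^{'\exists})$ is immediate from $\mathcal{H}'(\mathcal{F}) \subseteq \mathcal{H}(\mathcal{F})$: the very same $\exists$-$\mathcal{L}_{\rm ring}(C)$-formula that uniformly defines $\mathcal{O}_v$ across the larger class also does so across the subclass. The implication $(1^{'\exists}) \Rightarrow (2^{\exists})$ is the contrapositive of \ref{lem:ER.implies.not.uniform}, and $(2^{\exists}) \Rightarrow (1^{\exists})$ is \ref{lem:no.ER.implies.uniform}. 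The assumption that $\mathcal{F}$ is elementary enters only in this last step, where it guarantees that $\mathcal{H}(\mathcal{F})$ is $\mathcal{L}_{\rm vf}(C)$-elementary and hence that \ref{thm:Prestel} is applicable.

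For the second equivalence I would run the analogous cycle $(1^{\exists}_e) \Rightarrow (1^{'\exists}_e) \Rightarrow (2^{\exists}_e) \Rightarrow (1^{\exists}_e)$, using $\mathcal{H}'_e(\mathcal{F}) \subseteq \mathcal{H}_e(\mathcal{F})$ for the trivial step and invoking the bracketed versions of \ref{lem:ER.implies.not.uniform} and \ref{lem:no.ER.implies.uniform} for the other two. The two lemmas were engineered precisely so that the equicharacteristic and general cases come out of a single argument, so nothing new needs to be done.

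There is no genuine obstacle at this stage: the real work has already been packaged into \ref{lem:ER.implies.not.uniform} (forcing a proper refinement via \ref{lem:complete.the.square} and invoking ``existential formulas pass up'') and into \ref{lem:no.ER.implies.uniform} (the trichotomy on the comparison between $v_1$ and the restriction of $v_2$, together with \ref{lem:residue.of.sep.closed}, \ref{lem:sep.cl}, \ref{rem:unmixed}, and \ref{thm:Prestel}). The only care point worth flagging when writing this out is that the elementarity hypothesis on $\mathcal{F}$ is needed only in the ``sufficient'' direction, so the ``necessary'' direction of each cycle holds in greater generality; and that the passage from the general to the bracketed versions of the lemmas really is literal, so that the two equivalences can be written up simultaneously.
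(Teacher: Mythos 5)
Your proposal is correct and follows exactly the cycle the paper itself uses: the trivial inclusion $\mathcal{H}'(\mathcal{F})\subseteq\mathcal{H}(\mathcal{F})$ for $(1^{\exists})\Rightarrow(1^{'\exists})$, the contrapositive of \autoref{lem:ER.implies.not.uniform} for $(1^{'\exists})\Rightarrow(2^{\exists})$, and \autoref{lem:no.ER.implies.uniform} for $(2^{\exists})\Rightarrow(1^{\exists})$, with the equicharacteristic cycle run in parallel via the bracketed versions. Your side remarks (where elementarity enters, the care point about the bracketed statements) are accurate and consistent with the paper.
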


\begin{proof}
$(1^{\exists})\implies(1^{'\exists})$ and $(1^{\exists}_{e})\implies(1^{'\exists}_{e})$:
This follows from the inclusions $\mathcal{H}^{'}(\mathcal{F})\subseteq\mathcal{H}(\mathcal{F})$
and $\mathcal{H}_e^{'}(\mathcal{F})\subseteq\mathcal{H}_e(\mathcal{F})$.

$(1^{'\exists})\implies(2^{\exists})$ and $(1^{'\exists}_{e})\implies(2^{\exists}_e)$:
Apply \ref{lem:ER.implies.not.uniform}.

$(2^{\exists})\implies(1^{\exists})$ and $(2^{\exists}_{e})\implies(1^{\exists}_{e})$:
Apply \ref{lem:no.ER.implies.uniform}.
\end{proof}

\begin{Corollary}\label{cor:existential.characterisation.one.characteristic}
Let $\mathcal{F}$ be an elementary class of $C$-fields which is unmixed.
Then all six conditions $(1^{'\exists})$, $(1^{\exists})$, $(2^\exists)$, $(1^{'\exists}_{e})$, $(1^{\exists}_{e})$, and $(2^{\exists}_{e})$ are equivalent.
\end{Corollary}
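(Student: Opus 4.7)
The plan is to leverage \ref{thm:existential.characterisation}, which already establishes the two chains of equivalences $(1^{'\exists})\Leftrightarrow(1^{\exists})\Leftrightarrow(2^\exists)$ and $(1^{'\exists}_e)\Leftrightarrow(1^{\exists}_e)\Leftrightarrow(2^\exists_e)$. Thus it suffices to link the two chains, and the natural bridge is between the two ``embedded residue'' conditions $(2^\exists)$ and $(2^\exists_e)$.

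For this, I would invoke \ref{rem:unmixed} directly. That remark observes that whenever $\mathcal{F}$ is unmixed, having embedded residue is the same as having equicharacteristic embedded residue: the forward direction is trivial, and the reverse holds because a witnessing valuation $w$ on some $F_1\in\mathcal{F}$ must be equicharacteristic since $F_1$ and $F_1w$ (the latter embedding into some $F_2\in\mathcal{F}$) must share characteristic by the unmixed hypothesis. Hence $(2^\exists)\Leftrightarrow(2^\exists_e)$, and combining this with the two chains from \ref{thm:existential.characterisation} yields the equivalence of all six conditions.

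There is essentially no obstacle here; the corollary is a direct bookkeeping consequence of \ref{thm:existential.characterisation} together with \ref{rem:unmixed}. The only point worth stating explicitly in the write-up is the implication $(2^\exists)\Rightarrow(2^\exists_e)$ in the unmixed case, since the other direction is immediate from the definitions.
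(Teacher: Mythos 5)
Your approach is exactly the intended one: the corollary has no written proof in the paper precisely because it follows by chaining \ref{thm:existential.characterisation} with the observation in \ref{rem:unmixed} that for unmixed $\mathcal{F}$, ``embedded residue'' and ``equicharacteristic embedded residue'' coincide, and your explanation of \emph{why} the witnessing valuation must be equicharacteristic is correct. One small bookkeeping slip in your final sentence: since $(2^\exists)$ and $(2^\exists_e)$ are \emph{negations} of the respective embedded-residue conditions, the direction that is immediate from the definitions is $(2^\exists)\Rightarrow(2^\exists_e)$ (contrapositive of ``equicharacteristic embedded residue implies embedded residue''), and the one needing the unmixed hypothesis is $(2^\exists_e)\Rightarrow(2^\exists)$ -- you have these swapped.
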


\begin{Remark}
The equivalence between $(1^{'\exists})$ and $(1^{\exists})$
(resp.~$(1^{'\exists}_{e})$ and $(1^{\exists}_{e})$)
does not seem obvious.
In general we may require a different definition:
an $\exists$-$\mathcal{L}_{\mathrm{ring}}(C)$-formula that uniformly defines the valuation ring in $\mathcal{H}^{'}(\mathcal{F})$ may not also define the trivial valuation ring in the trivially valued $C$-fields with residue field in $\mathcal{F}$.
For example, let $C=\mathbb{Z}$ and $\mathcal{F}=\{\mathbb{F}_{p}\}$, for a prime $p$.
As discussed below in \ref{prop:positive.applications}(1), $\mathcal{F}$ does not have embedded residue.
Therefore there is an $\exists$-$\mathcal{L}_{\mathrm{ring}}(C)$-formula $\phi(x)$ that uniformly defines the valuation ring in $\mathcal{H}(\mathcal{F})$.
However, it is easy to see that the formula
$$
 \phi(x)\wedge\exists y_{0},...,y_{p}\;\bigwedge_{i\neq j}y_{i}\neq y_{j}
$$
also uniformly defines the valuation ring in $\mathcal{H}^{'}(\mathcal{F})$ but does not define the trivial valuation ring in $\mathbb{F}_{p}$.

Also note that as explained in \ref{rem:unmixed}, the assumption of the corollary is satisfied in particular 
if $\mathcal{F}$ consists of $C$-fields elementarily equivalent to a fixed $C$-field $F$,
and also if $C$ has positive characteristic or is a field.
If we put a different condition on $C$, then we get a further equivalent characterization:
\end{Remark}

\begin{Corollary}\label{cor:existential.characterisation.characteristic.zero}
Let $C$ be a Dedekind domain\footnote{It would suffice that for every $\mathfrak{p}\in{\rm Spec}(C)$ the local ring $(C_\mathfrak{p},\mathfrak{p}C_\mathfrak{p})$
is dominated by a valuation ring $(\mathcal{O},\mathfrak{m})$ with $\mathcal{O}/\mathfrak{m}=C_\mathfrak{p}/\mathfrak{p}C_\mathfrak{p}$.
This condition is satisfied more generally for Pr\"ufer domains (like the ring of algebraic integers $C=\mathbb{Z}^{\rm alg}$) and for regular Noetherian domains (like $C=\mathbb{Z}[t]$).} 
of characteristic zero
and $\mathcal{F}$ an elementary class of $C$-fields.
Then $(1^{'\exists})$, $(1^{\exists})$, and $(2^{\exists})$ are each equivalent to the following:
\begin{enumerate}
\item[$(1^{\exists}_0)$] The valuation ring is uniformly $\exists$-$C$-definable in $\mathcal{H}_0(\mathcal{F})$.
\end{enumerate}
\end{Corollary}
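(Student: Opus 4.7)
The plan is to add $(1^{\exists}_0)$ to the equivalences of \ref{thm:existential.characterisation} by establishing $(1^{\exists})\Rightarrow(1^{\exists}_0)\Rightarrow(2^{\exists})$. The first implication is immediate from $\mathcal{H}_0(\mathcal{F})\subseteq\mathcal{H}(\mathcal{F})$: any uniform $\exists$-$C$-definition valid over the larger class restricts to one over the smaller. The substance lies in the converse, which I would prove contrapositively: assuming $\mathcal{F}$ has embedded residue, I adapt the counterexample construction of \ref{lem:ER.implies.not.uniform} so that both witnesses $(K_1,v_1)$ and $(K_2,v_2)$ acquire characteristic zero, i.e.\ lie in $\mathcal{H}_0(\mathcal{F})$.

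Recall that in \ref{lem:ER.implies.not.uniform}, starting from embedded residue data $F_1,F_2\in\mathcal{F}$ and a nontrivial valuation $\omega$ on $F_1$ with $F_1\omega\hookrightarrow F_2$, one first produces a nontrivial henselian valued field $(K_1,v_1)$ with $K_1v_1\cong F_1$, then refines $v_1$ by $\omega$ to obtain $u$, and finally extends $(K_1,u)$ to $(K_2,v_2)$ with $K_2v_2\cong F_2$ via \ref{lem:complete.the.square}. Since $K_2/K_1$ is a field extension, $K_2$ has characteristic zero as soon as $K_1$ does; so the whole problem collapses to producing an initial $(K_1,v_1)\in\mathcal{H}_0^{'}(\mathcal{F})$ with $K_1v_1\cong F_1$.

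This initial construction is the main obstacle, because when $F_1$ has positive characteristic the default choice $F_1((t))$ from \autoref{rem:henselian.ext} lies outside $\mathcal{H}_0(\mathcal{F})$. This is exactly where the Dedekind hypothesis enters. Let $\mathfrak{p}=\ker(C\to F_1)$, so that $C_{F_1}=\mathrm{Frac}(C/\mathfrak{p})\subseteq F_1$. Since $C$ is Dedekind of characteristic zero, $C_\mathfrak{p}$ is a valuation ring of characteristic zero (a field when $\mathfrak{p}=0$, a DVR when $\mathfrak{p}$ is maximal); its fraction field $K_0$ therefore carries a valuation $v_0$ with residue field $C/\mathfrak{p}=C_{F_1}\subseteq F_1$, and $K_0$ has characteristic zero. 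Applying \ref{lem:complete.the.square} to $(K_0,v_0)$ with residue extension $F_1/C_{F_1}$ produces the desired nontrivially henselian valued extension $(K_1,v_1)$ of characteristic zero with $K_1v_1\cong F_1$ as $C$-fields.

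The remainder is routine: form $u=\omega\circ v_1$ on $K_1$ and apply \ref{lem:complete.the.square} once more to obtain a henselian $(K_2,v_2)$ extending $(K_1,u)$ with $K_2v_2\cong F_2$, automatically in $\mathcal{H}_0(\mathcal{F})$. Since $v_2|_{K_1}=u$ is a proper refinement of $v_1$, we have $\mathcal{O}_{v_1}\not\subseteq\mathcal{O}_{v_2}$. As existential formulas pass up from $K_1$ to $K_2$, no $\exists$-$\mathcal{L}_{\mathrm{ring}}(C)$-formula can define the valuation ring uniformly across $\mathcal{H}_0(\mathcal{F})$, contradicting $(1^{\exists}_0)$.
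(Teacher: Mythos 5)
Your proposal is correct and takes essentially the same route as the paper: the paper also reduces to showing $(1^{\exists}_0)\Rightarrow(2^{\exists})$ by rerunning the construction of Lemma~\ref{lem:ER.implies.not.uniform}, replacing the initial step by the observation that the localization $C_{\mathfrak p}$ at $\mathfrak p=\ker(C\to F_1)$ is a characteristic-zero valuation ring with residue field $\mathrm{Quot}(C/\mathfrak p)=C_{F_1}\subseteq F_1$, and then invoking Lemma~\ref{lem:complete.the.square} twice. The only cosmetic slip in your write-up is identifying the residue field of $v_0$ with $C/\mathfrak p$ rather than its fraction field; for $C$ Dedekind this makes no difference (either $\mathfrak p=0$ and $v_0$ is trivial, or $\mathfrak p$ is maximal and $C/\mathfrak p$ is already a field), but the more careful statement, matching the footnote's general hypothesis, is $\mathrm{Quot}(C/\mathfrak p)$.
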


\begin{proof}
$(1^{'\exists})$ $(1^{\exists})$, and $(2^{\exists})$ are equivalent by \ref{thm:existential.characterisation}, and trivially $(1^{\exists})\implies (1^{\exists}_0)$.
We repeat and adapt the proof of $(1^{\exists})\implies(2^{\exists})$ (i.e. the proof of \ref{lem:ER.implies.not.uniform}) to obtain a proof of $(1^{\exists}_0)\implies(2^{\exists})$: 

Suppose there exist $F_{1},F_{2}\in\mathcal{F}$ and a nontrivial valuation $w$ on $F_{1}$ such that 
$F_{1}w\subseteq F_{2}$. 
Instead of taking an extension $K_1$ of $F_1$,
we note that the localization of $C$ at the kernel $\mathfrak{p}$ of the structure homomorphism $C\rightarrow F_1$
is the valuation ring of a valuation $v_0$ on a $C$-field $K_0$ of characteristic zero
with residue field $K_0v_0\cong {\rm Quot}(C/\mathfrak{p})\subseteq F_1$.
By \ref{lem:complete.the.square}, there is a henselian extension $(K_1,v_1)$ of $(K_0,v_0)$ with $K_1v_1\cong F_1$,
so $(K_1,v_1)\in\mathcal{H}_0(\mathcal{F})$.

Let $u$ denote the composition $w\circ v_1$ on $K_1$. 
Observe that $u$ is a proper refinement of $v_1$ and that $K_1u=F_{1}w$ is a subfield of $F_{2}$. 
By \ref{lem:complete.the.square} there exists a henselian extension $(K_2,v_2)$ of $(K_1,u)$ such that $K_2v_2$ is isomorphic to $F_{2}$.
Then $(K_2,v_2)\in\mathcal{H}_0(\mathcal{F})$,
but $v_2$ restricted to $K_1$ is a proper refinement of $v_1$,
hence $\mathcal{O}_{v_1}\not\subseteq\mathcal{O}_{v_2}$.
%; thus $u$ is not a coarsening of $v$, equivalently the valuation ring of $u$ does not contain the valuation ring of $v$. 
Since existential formulas `pass up', the valuation ring is not uniformly $\exists$-$C$-definable in $\mathcal{H}_0(\mathcal{F})$.
\end{proof}

\begin{Remark}
Note that \ref{cor:existential.characterisation.characteristic.zero} applies for example to $C=\mathbb{Z}$,
and $\exists$-$\mathbb{Z}$-definable is the same as $\exists$-$\emptyset$-definable.
Moreover, if we apply \ref{cor:existential.characterisation.one.characteristic} to $C=\mathbb{Z}/n\mathbb{Z}$ for $n\in\mathbb{N}$,
then $\mathcal{F}$ can be any elementary class of fields $F$ with ${\rm char}(F)$ dividing $n$,
and here again, $\exists$-$\mathbb{Z}/n\mathbb{Z}$-definable is the same as $\exists$-$\emptyset$-definable.
\end{Remark}

\subsection{Universal definitions}

\begin{Lemma}\label{lem:large.implies.not.uniform}
Suppose that $\mathcal{F}$ is [equicharacteristic] large. 
Then the valuation ring \rm\bf is not \rm\em uniformly $\forall$-$C$-definable in $\mathcal{H}'(\mathcal{F})$
[respectively, $\mathcal{H}_e'(\mathcal{F})$].
\end{Lemma}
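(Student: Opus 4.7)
The plan is to mirror the structure of the proof of Lemma \ref{lem:ER.implies.not.uniform}, but now exploiting largeness instead of embedded residue, and working in the dual ($\forall$/$\mathfrak{m}$) setting. By Lemma \ref{lem:EAOm}, it is equivalent to show that the valuation ideal is \emph{not} uniformly $\exists$-$C$-definable in $\mathcal{H}'(\mathcal{F})$ (resp.~$\mathcal{H}_e'(\mathcal{F})$). Following the Remark after \ref{thm:Prestel}, I will establish this by exhibiting $(K_1,v_1),(K_2,v_2)$ in the relevant class with $K_1\subseteq K_2$ and some $x\in K_1$ which is in $\mathfrak{m}_{v_1}$ but not in $\mathfrak{m}_{v_2}$; since $\exists$-formulas pass up, no $\exists$-$\mathcal{L}_{\mathrm{ring}}(C)$-formula can uniformly define $\mathfrak{m}_{v}$ in the class.

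Unpacking the hypothesis of \ref{def:embedded.residue.large}(2), fix $F_1,F_2\in\mathcal{F}$, a $C$-subfield $E\subseteq F_2$, and a nontrivial [equicharacteristic] henselian valuation $w$ on $E$ with $Ew\cong F_1$. The natural choice for the smaller structure is $(K_1,v_1):=(E,w)$, which already lies in $\mathcal{H}'(\mathcal{F})$ (resp.~$\mathcal{H}_e'(\mathcal{F})$) because $K_1v_1=Ew\cong F_1\in\mathcal{F}$.

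For $(K_2,v_2)$, I would apply \ref{lem:complete.the.square} to the trivially valued field $F_2$ with the trivial extension $F/Kv=F_2/F_2$ (or, equivalently, use the concrete model $F_2(\!(t)\!)$ as in \ref{rem:henselian.ext}). This produces a nontrivial henselian valued extension $(K_2,v_2)$ of $F_2$ with $K_2v_2\cong F_2$, so $(K_2,v_2)\in\mathcal{H}'(\mathcal{F})$, and since $F_2\subseteq K_2$ is an extension of the trivially valued $F_2$, we automatically get $\mathrm{char}(K_2)=\mathrm{char}(F_2)=\mathrm{char}(K_2v_2)$, placing $(K_2,v_2)$ in $\mathcal{H}_e'(\mathcal{F})$ as well. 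Crucially, $v_2|_{F_2}$ is trivial, and we have the chain $K_1=E\subseteq F_2\subseteq K_2$.

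To finish, pick any nonzero $x\in\mathfrak{m}_w$, which exists because $w$ is nontrivial. Then $v_1(x)=w(x)>0$, so $x\in\mathfrak{m}_{v_1}$, whereas $x\in F_2$ and $v_2|_{F_2}$ trivial gives $v_2(x)=0$, so $x\notin\mathfrak{m}_{v_2}$. If $\phi(x)$ were an $\exists$-$\mathcal{L}_{\mathrm{ring}}(C)$-formula uniformly defining $\mathfrak{m}_v$ in the class, then $K_1\models\phi(x)$ would force $K_2\models\phi(x)$, contradicting $x\notin\mathfrak{m}_{v_2}$; apply \ref{lem:EAOm} to conclude. The only subtle point, and where I expect the main bookkeeping, is verifying that the construction of $(K_2,v_2)$ respects the [equicharacteristic] requirement in the bracketed case; this is automatic from the way $(K_2,v_2)$ is built over a trivially valued $F_2$, since then $v_2$ has the same characteristic on both top and residue side.
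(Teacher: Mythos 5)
Your proof is correct and in fact streamlines the paper's argument. The paper first replaces $E$ by a nontrivial henselian extension $(K_1,u)$ of the trivially valued field $E$ (so $K_1u=E$), sets $v_1:=w\circ u$, and then extends $(K_1,u)$ to $(K_2,v_2)$ with $K_2v_2\cong F_2$, so that $v_2|_{K_1}=u$ is a proper coarsening of $v_1$. You instead take $K_1:=E$ and $v_1:=w$ directly, and let $K_2$ be any nontrivial henselian extension of the trivially valued $F_2$ with residue field $F_2$, so that $v_2$ is trivial on $E\subseteq F_2$ and hence a proper coarsening of $w$. Both routes produce $(K_1,v_1)\subseteq(K_2,v_2)$ in the relevant class with $\mathcal{O}_{v_2}\cap K_1\not\subseteq\mathcal{O}_{v_1}$, and the conclusion follows since $\forall$-formulas pass down (equivalently, an $\exists$-formula defining $\mathfrak{m}_v$ would pass up). Your shortcut is available here precisely because the ``large'' hypothesis hands you a henselian valued field $(E,w)$ whose residue field is already isomorphic to a member of $\mathcal{F}$ --- unlike in \autoref{lem:ER.implies.not.uniform}, where $F_1w$ is merely a subfield of some $F_2$ and need not itself lie in $\mathcal{F}$, so one cannot simply take $K_1=F_1$ with valuation $w$. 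The paper forgoes this economy in favor of a visibly parallel presentation of the two lemmas. One small remark: the ``subtle point'' you flag at the end concerns only $(K_2,v_2)$, which as you note is automatically equicharacteristic; the place the bracketed hypothesis (``$w$ equicharacteristic'') actually does its work is to put $(K_1,v_1)=(E,w)$ into $\mathcal{H}_e'(\mathcal{F})$, which you already handle earlier in the proof --- so nothing is missing.
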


\begin{proof}
By assumption (in both cases), there exist $F_{1},F_{2}\in\mathcal{F}$, a subfield $E\subseteq F_{2}$ and a nontrivial henselian valuation $w$ on $E$ such that $Ew$ is isomorphic to $F_{1}$. 
Let $(K_1,u)$ be a nontrivial henselian extension of the trivially valued field $E$ with $K_1u=E$ (\ref{rem:henselian.ext}).
Let $v_1$ denote the composition $w\circ u$, so that $K_1v_1=F_{1}$. 
Then $(K_1,v_1)\in\mathcal{H}'(\mathcal{F})$. 
$$
\xymatrix{
 K_2\ar@{-}[d]\ar@{.>}[r]^{v_2} & F_2\ar@{-}[d]&\\
 K_1\ar@{.>}[r]^{u}\ar@/^1.6pc/@{.>}[rr]^{\qquad v_{1}} & E\ar@{.>}[r]^w & F_1\\
 &C\ar@{->}[ul]\ar@{->}[u]\ar@{->}[ur]
}
$$
Using \ref{lem:complete.the.square}, we may construct a henselian extension $(K_2,v_2)$ of $(K_1,u)$ of valued fields so that $K_2v_2$ is isomorphic to $F_{2}$. 
Thus $(K_2,v_2)\in\mathcal{H}_e'(\mathcal{F})$ and $K_1\subseteq K_2$. 
Since $w$ is nontrivial, the restriction of $v_2$ to $K_1$ is a proper coarsening of $v_1$.
Since universal formulas `go down', the valuation ring is not uniformly $\forall$-$C$-definable in $\mathcal{H}'(\mathcal{F})$.

If in addition $w$ is equicharacteristic, then so is $v_1$, hence $(K_1,v_1)\in\mathcal{H}_e'(\mathcal{F})$,
showing that
the valuation ring is not uniformly $\forall$-$C$-definable in $\mathcal{H}_e'(\mathcal{F})$.
\end{proof}

\begin{Lemma}\label{lem:not.large.implies.uniform}
Suppose that $\mathcal{F}$ is an elementary class and that $\mathcal{F}$ is not [equicharacteristic] large. Then the valuation ring \rm\bf is \rm\em uniformly $\forall$-$C$-definable in $\mathcal{H}(\mathcal{F})$
[respectively, in $\mathcal{H}_e(\mathcal{F})$].
\end{Lemma}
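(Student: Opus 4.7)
My plan is to mirror the proof of \ref{lem:no.ER.implies.uniform} almost exactly, applying part (2) of \ref{thm:Prestel} instead of part (1). Concretely, for $(K_1,v_1),(K_2,v_2)\in\mathcal{H}(\mathcal{F})$ [resp.\ $\mathcal{H}_e(\mathcal{F})$] with $K_1\subseteq K_2$, I need to derive $\mathcal{O}_{v_2}\cap K_1\subseteq\mathcal{O}_{v_1}$. Writing $u=v_2|_{K_1}$, this amounts to $v_1$ being a coarsening of $u$ (or equal to it), so I must rule out the other two legs of the trichotomy on comparing $v_1$ and $u$: incomparable, and $v_1$ a proper refinement of $u$.

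As in \ref{lem:no.ER.implies.uniform}, let $L\subseteq K_2$ be the henselisation of $(K_1,u)$, and let $w_1,w_2$ be the unique extensions of $v_1,u$ to $L$; both are henselian. In the incomparable case, $w_1$ and $w_2$ are also incomparable on $L$, so their finest common coarsening $w$ yields independent nontrivial henselian valuations on $Lw$, forcing $Lw$ to be separably closed. Applying \ref{lem:residue.of.sep.closed} to the induced valuation $\bar{w}_2$ then shows that $Lw_2$ is algebraically closed. Since $Lw_2\subseteq K_2v_2\in\mathcal{F}$, part (2) of \ref{lem:sep.cl} makes $K_2v_2$ large, and by \ref{rem:unmixed} the class $\mathcal{F}$ is large, and in fact equicharacteristic large -- contradicting the hypothesis in either version.

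In the remaining case, $v_1$ is a proper refinement of $u$, so $v_1$ induces a nontrivial valuation $\bar{v}_1$ on $E:=K_1u\subseteq K_2v_2$ with residue field $(K_1u)\bar{v}_1=K_1v_1\in\mathcal{F}$; this $\bar{v}_1$ is henselian because $v_1$ is. Taking $F_1=K_1v_1$, $F_2=K_2v_2$, the subfield $E$, and the valuation $w=\bar{v}_1$ in \ref{def:embedded.residue.large}(2) directly witnesses that $\mathcal{F}$ is large. For the equicharacteristic version, the fact that $v_1$ is equicharacteristic is inherited first by its coarsening $u$, hence by $\bar{v}_1$, so $\mathcal{F}$ is equicharacteristic large -- again a contradiction. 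Thus $\mathcal{O}_u\subseteq\mathcal{O}_{v_1}$, and \ref{thm:Prestel}(2) concludes.

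The main obstacle is the incomparable case: this is where the proof is not purely formal, since one needs to convert a clash of two independent henselian valuations on $L$ into an algebraically closed subfield of the residue via F.~K.~Schmidt's theorem together with \ref{lem:residue.of.sep.closed}. By contrast, once one recalls that $\bar{v}_1$ inherits henselianity from $v_1$, the proper refinement case reduces at once to the definition of \emph{large}.
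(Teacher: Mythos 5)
Your proof is correct and follows the paper's argument essentially verbatim: you test Prestel's criterion \ref{thm:Prestel}(2), rule out the incomparable case by the same henselisation/F.\,K.\ Schmidt/\ref{lem:residue.of.sep.closed} chain leading to an algebraically closed subfield of $K_2v_2$ and hence largeness via \ref{lem:sep.cl}(2), and rule out the proper-refinement case by observing that $\bar v_1$ on $K_1u\subseteq K_2v_2$ is a nontrivial henselian valuation directly witnessing largeness (equicharacteristic where applicable). The paper merely abbreviates the incomparable case by referring back to \ref{lem:no.ER.implies.uniform}, whereas you spell it out, but the substance is identical.
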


\begin{proof}
We test the hypothesis of \ref{thm:Prestel}. 
Let $(K_1,v_1),(K_2,v_2)\in\mathcal{H}(\mathcal{F})$ and suppose that $K_1$ is an $\mathcal{L}_{\mathrm{ring}}(C)$-substructure of $K_2$. 
Let $u$ denote the restriction of $v_2$ to $K_1$.
As in the proof of \ref{lem:no.ER.implies.uniform} we see that if $v_1$ and $u$ are incomparable, then
$K_2v_2$ would contain an algebraically closed $C$-field.
Since by \ref{lem:sep.cl} this would also imply that $K_2v_2$ is large,
and therefore $\mathcal{F}$ is equicharacteristic large  (\ref{rem:unmixed}), which is a contradiction (in both cases),
we conclude that $v_1$ and $u$ are comparable.

If $v_{1}$ is a proper refinement of $u$ then $v_1$ induces a nontrivial henselian valuation $\bar{v}_1$ on $K_1u\subseteq K_2v_2$ and $(K_1u)\bar{v}_1=K_1v_1$. %Note that $\bar{v}$ is trivial on $C$. 
Thus $\mathcal{F}$ is large.
If in addition $(K_1,v_1)\in\mathcal{H}_e(\mathcal{F})$,
then also $\bar{v}_1$ is equicharacteristic, so $\mathcal{F}$ is equicharacteristic large.
Thus, in both cases this is a contradiction.
Therefore $v_{1}$ is a coarsening of $u$, i.e. $\mathcal{O}_{v_{1}}\supseteq\mathcal{O}_{u}$, as required.
Applying \ref{thm:Prestel}, we are done.
\end{proof}

\begin{Theorem}[Characterisation Theorem, $\forall$-case]\label{thm:universal.characterisation}
Let $\mathcal{F}$ be an elementary class of $C$-fields. The following are equivalent:
\begin{enumerate}
\item[$(1^{'\forall})$] The valuation ring is uniformly $\forall$-$C$-definable in $\mathcal{H}'(\mathcal{F})$.
\item[$(1^{\forall})$] The valuation ring is uniformly $\forall$-$C$-definable in $\mathcal{H}(\mathcal{F})$.
\item[$(2^{\forall})$] $\mathcal{F}$ is not large.
\end{enumerate}
Moreover, also the following are equivalent:
\begin{enumerate}
\item[$(1^{'\forall}_e)$] The valuation ring is uniformly $\forall$-$C$-definable in $\mathcal{H}'_e(\mathcal{F})$.
\item[$(1^{\forall}_e)$] The valuation ring is uniformly $\forall$-$C$-definable in $\mathcal{H}_e(\mathcal{F})$.
\item[$(2^{\forall}_e)$] $\mathcal{F}$ is not equicharacteristic large.
\end{enumerate}
\end{Theorem}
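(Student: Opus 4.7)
The plan is to follow exactly the same cyclic template as in the proof of \ref{thm:existential.characterisation}, since the two lemmas \ref{lem:large.implies.not.uniform} and \ref{lem:not.large.implies.uniform} have already been established as the ``universal'' analogues of \ref{lem:ER.implies.not.uniform} and \ref{lem:no.ER.implies.uniform}. All the real work has been done; the theorem is just a packaging statement.

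Concretely, I would prove the two three-way equivalences by closing a triangle of implications. For the first triangle, the implication $(1^\forall)\Rightarrow(1^{'\forall})$ is immediate from the inclusion $\mathcal{H}'(\mathcal{F})\subseteq\mathcal{H}(\mathcal{F})$: a formula that uniformly $\forall$-$C$-defines the valuation ring on the larger class certainly does so on the subclass. Next, $(1^{'\forall})\Rightarrow(2^\forall)$ is the contrapositive of \ref{lem:large.implies.not.uniform}: if $\mathcal{F}$ were large, then the valuation ring would fail to be uniformly $\forall$-$C$-definable in $\mathcal{H}'(\mathcal{F})$. Finally, $(2^\forall)\Rightarrow(1^\forall)$ is precisely the content of \ref{lem:not.large.implies.uniform}, which produces the required uniform $\forall$-$C$-definition from the assumption that $\mathcal{F}$ (which is elementary by hypothesis) is not large.

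The second triangle, involving $(1^{'\forall}_e)$, $(1^\forall_e)$, and $(2^\forall_e)$, is handled in exactly the same way, simply replacing $\mathcal{H}(\mathcal{F})$ and $\mathcal{H}'(\mathcal{F})$ by $\mathcal{H}_e(\mathcal{F})$ and $\mathcal{H}_e'(\mathcal{F})$, and ``large'' by ``equicharacteristic large''. The inclusion $\mathcal{H}_e'(\mathcal{F})\subseteq\mathcal{H}_e(\mathcal{F})$ gives $(1^\forall_e)\Rightarrow(1^{'\forall}_e)$, and the equicharacteristic halves of \ref{lem:large.implies.not.uniform} and \ref{lem:not.large.implies.uniform} (which are stated in brackets in those lemmas) respectively give $(1^{'\forall}_e)\Rightarrow(2^\forall_e)$ and $(2^\forall_e)\Rightarrow(1^\forall_e)$.

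There is no substantial obstacle: the whole point of having built \ref{lem:large.implies.not.uniform} and \ref{lem:not.large.implies.uniform} in ``arbitrary'' and ``equicharacteristic'' versions simultaneously is precisely so that both triangles close by direct citation. If anything deserves a moment's care, it is checking that the elementarity hypothesis on $\mathcal{F}$ is invoked correctly (it is needed in \ref{lem:not.large.implies.uniform} in order to apply \ref{thm:Prestel}, which demands an $\mathcal{L}_{\rm vf}(C)$-elementary class; the class $\mathcal{H}(\mathcal{F})$ or $\mathcal{H}_e(\mathcal{F})$ is elementary exactly when $\mathcal{F}$ is, as noted after the definition), but this is already folded into the statements of the relevant lemmas.
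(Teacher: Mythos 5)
Your proposal is correct and matches the paper's proof exactly: both close the two triangles via the inclusions $\mathcal{H}'(\mathcal{F})\subseteq\mathcal{H}(\mathcal{F})$, $\mathcal{H}_e'(\mathcal{F})\subseteq\mathcal{H}_e(\mathcal{F})$ for $(1^\forall)\Rightarrow(1^{'\forall})$, the contrapositive of \ref{lem:large.implies.not.uniform} for $(1^{'\forall})\Rightarrow(2^\forall)$, and \ref{lem:not.large.implies.uniform} for $(2^\forall)\Rightarrow(1^\forall)$, with the equicharacteristic bracketed versions handling the second triangle.
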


\begin{proof}
$(1^{\forall})\implies(1^{'\forall})$ and $(1^{\forall}_{e})\implies(1^{'\forall}_{e})$:
This follows from the inclusions $\mathcal{H}'(\mathcal{F})\subseteq\mathcal{H}(\mathcal{F})$
and $\mathcal{H}_e'(\mathcal{F})\subseteq\mathcal{H}_e(\mathcal{F})$.

$(1^{'\forall})\implies(2^{\forall})$ and $(1^{'\forall}_{e})\implies(2^{\forall}_{e})$:
Apply \ref{lem:large.implies.not.uniform}.

$(2^{\forall})\implies(1^{\forall})$ and $(2^{\forall}_e)\implies(1^{\forall}_e)$:
Apply \ref{lem:not.large.implies.uniform}.
\end{proof}

\begin{Corollary}\label{cor:universal.characterisation.one.characteristic}
Let $\mathcal{F}$ be an elementary class of $C$-fields which is unmixed.
Then all six conditions $(1^{'\forall})$, $(1^{\forall})$, $(2^{\forall})$, $(1^{'\forall}_{e})$, $(1^{\forall}_{e})$, and $(2^{\forall}_{e})$ are equivalent.
\end{Corollary}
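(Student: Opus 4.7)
The proof is essentially a straightforward reduction to the two ``halves'' of Theorem \ref{thm:universal.characterisation}, together with the unmixedness observation from Remark \ref{rem:unmixed}. The plan is as follows.

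First, I would invoke Theorem \ref{thm:universal.characterisation} twice: once to obtain the equivalences $(1^{'\forall}) \Leftrightarrow (1^{\forall}) \Leftrightarrow (2^{\forall})$, and once to obtain $(1^{'\forall}_{e}) \Leftrightarrow (1^{\forall}_{e}) \Leftrightarrow (2^{\forall}_{e})$. This reduces the corollary to the single equivalence $(2^{\forall}) \Leftrightarrow (2^{\forall}_{e})$, i.e.\ to showing that under the unmixedness hypothesis on $\mathcal{F}$, being large is the same as being equicharacteristic large.

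The direction ``equicharacteristic large implies large'' is immediate from Definition \ref{def:embedded.residue.large}, since an equicharacteristic henselian valuation witnessing largeness is in particular a henselian valuation witnessing largeness. For the converse, suppose $\mathcal{F}$ is large, witnessed by fields $F_{1},F_{2}\in\mathcal{F}$, a $C$-subfield $E\subseteq F_{2}$ and a nontrivial henselian valuation $w$ on $E$ with $Ew\cong F_{1}$. Since $\mathcal{F}$ is unmixed, $F_{1}$ and $F_{2}$ have the same characteristic, and because $Ew\cong F_{1}$ is a subfield of $F_2$ we also have $\mathrm{char}(Ew)=\mathrm{char}(E)$; hence $w$ is equicharacteristic and $\mathcal{F}$ is equicharacteristic large. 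This is precisely the analogue for ``large'' of the observation in Remark \ref{rem:unmixed}, which one may simply quote.

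I do not expect any real obstacle here; the only thing to check carefully is that unmixedness genuinely forces the witness valuation $w$ in the definition of largeness to be equicharacteristic, and this is immediate because $\mathrm{char}(E)=\mathrm{char}(F_{2})=\mathrm{char}(F_{1})=\mathrm{char}(Ew)$. Combining $(2^{\forall}) \Leftrightarrow (2^{\forall}_{e})$ with the two chains of equivalences from Theorem \ref{thm:universal.characterisation} then yields the corollary.
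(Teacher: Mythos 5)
Your proof is correct and matches the paper's intended argument exactly: the paper states the corollary without explicit proof, relying on Theorem \ref{thm:universal.characterisation} together with the observation in Remark \ref{rem:unmixed} that unmixedness makes ``large'' and ``equicharacteristic large'' coincide, which is precisely the reduction you carry out. One small slip in exposition: the phrase ``because $Ew\cong F_{1}$ is a subfield of $F_2$'' is not quite right (it is $E$, not $Ew$, that sits inside $F_2$), but your concluding chain $\mathrm{char}(E)=\mathrm{char}(F_{2})=\mathrm{char}(F_{1})=\mathrm{char}(Ew)$ is the correct justification and makes the argument sound.
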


\begin{Corollary}\label{cor:universal.characterisation.characteristic.zero}
Let $C$ be a Dedekind domain of characteristic zero
and $\mathcal{F}$ an elementary class of $C$-fields.
Then $(1^{'\forall})$, $(1^{\forall})$, and $(2^{\forall})$ are each equivalent to the following:
\begin{enumerate}
\item[$(1^{\forall}_0)$] The valuation ring is uniformly $\forall$-$C$-definable in $\mathcal{H}_0(\mathcal{F})$.
\end{enumerate}
\end{Corollary}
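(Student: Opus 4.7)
The plan is to leverage Theorem \ref{thm:universal.characterisation}, which gives the equivalence of $(1^{'\forall})$, $(1^\forall)$, and $(2^\forall)$, together with the trivial inclusion $\mathcal{H}_0(\mathcal{F})\subseteq\mathcal{H}(\mathcal{F})$ which yields $(1^\forall)\Rightarrow(1^\forall_0)$. The only new content is therefore the implication $(1^\forall_0)\Rightarrow(2^\forall)$, and the strategy is to prove its contrapositive by adapting the argument of \ref{lem:large.implies.not.uniform} so that the witness valued fields have characteristic zero, exactly along the lines of the adaptation of \ref{lem:ER.implies.not.uniform} carried out in \ref{cor:existential.characterisation.characteristic.zero}.

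So assume $\mathcal{F}$ is large and fix witnesses $F_1,F_2\in\mathcal{F}$, a $C$-subfield $E\subseteq F_2$, and a nontrivial henselian valuation $w$ on $E$ with $Ew\cong F_1$. First I would use the Dedekind hypothesis to promote $E$ to a residue field of a characteristic-zero valued $C$-field: letting $\mathfrak{p}$ be the kernel of the structure map $C\to E$, the localization $C_\mathfrak{p}$ is a valuation ring of $K_0:={\rm Quot}(C)$ whose residue field ${\rm Quot}(C/\mathfrak{p})$ embeds into $E$, giving a $C$-valuation $v_0$ on the characteristic-zero $C$-field $K_0$ (trivial if $\mathfrak{p}=0$) with $K_0v_0\subseteq E$. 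Applying \ref{lem:complete.the.square} to $(K_0,v_0)$ and the extension $E/K_0v_0$ yields a nontrivial henselian extension $(K_1,u)$ with $K_1u\cong E$; since $K_1/K_0$ is separable, $K_1$ has characteristic zero. Setting $v_1:=w\circ u$ produces a henselian valuation on $K_1$ (the composition of a henselian valuation with a henselian valuation on its residue field is henselian) with residue field $Ew\cong F_1$, so $(K_1,v_1)\in\mathcal{H}_0(\mathcal{F})$. A second application of \ref{lem:complete.the.square}, this time to $(K_1,u)$ and the extension $F_2/E$, then furnishes a henselian extension $(K_2,v_2)$ of $(K_1,u)$ with $K_2v_2\cong F_2$, and again $K_2$ has characteristic zero, hence $(K_2,v_2)\in\mathcal{H}_0(\mathcal{F})$.

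By construction $K_1\subseteq K_2$ and $v_2|_{K_1}=u$, which is a proper coarsening of $v_1$ because $w$ is nontrivial; in particular $\mathcal{O}_{v_2}\cap K_1\not\subseteq\mathcal{O}_{v_1}$. Since universal formulas pass down to substructures, this precludes a uniform $\forall$-$C$-definition of the valuation ring in $\mathcal{H}_0(\mathcal{F})$, negating $(1^\forall_0)$. The main obstacle is ensuring that $K_1$ is of characteristic zero while simultaneously carrying the two-step henselian structure $v_1=w\circ u$ that presents $E$ as an intermediate residue field; this is precisely what the Dedekind hypothesis on $C$ buys us, by furnishing the initial characteristic-zero valued $C$-field $(K_0,v_0)$ whose residue field embeds into $E$.
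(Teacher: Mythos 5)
Your proof is correct and follows essentially the same route as the paper: reduce to showing $(1^\forall_0)\Rightarrow(2^\forall)$ by adapting the argument of Lemma~\ref{lem:large.implies.not.uniform}, using the Dedekind hypothesis to obtain a characteristic-zero $(K_0,v_0)$ whose residue field embeds into $E$, and then applying Lemma~\ref{lem:complete.the.square} twice. One small remark: the claim that $K_1$ has characteristic zero has nothing to do with separability of $K_1/K_0$ — it is simply because any field extension of the characteristic-zero field $K_0$ has characteristic zero.
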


\begin{proof}
$(1^{'\forall})$, $(1^{\forall})$, and $(2^{\forall})$ are equivalent by \ref{thm:universal.characterisation}, and trivially $(1^{\forall})\implies(1^{\forall}_0)$.
We repeat and adapt the proof of $(1^{\forall})\implies(2^{\forall})$
(i.e. the proof of \ref{lem:large.implies.not.uniform}) to obtain a proof of $(1^{\forall}_0)\implies(2^{\forall})$: 

Suppose there exist $F_{1},F_{2}\in\mathcal{F}$, a subfield $E\subseteq F_{2}$ and a nontrivial henselian valuation $w$ on $E$ such that $Ew$ is isomorphic to $F_{1}$. 
The localization of $C$ at the kernel $\mathfrak{p}$ of the structure homomorphism $C\rightarrow E$
is the valuation ring of a valuation $v_0$ on a $C$-field $K_0$ of characteristic zero
with residue field $K_0v_0\cong {\rm Quot}(C/\mathfrak{p})\subseteq E$.
By \ref{lem:complete.the.square}, there is a henselian extension $(K_1,u)$ of $(K_0,v_0)$ with $K_1u\cong E$.
Let $v_1$ denote the composition $w\circ u$, so that $K_1v_1=F_{1}$. Then $(K_1,v_1)\in\mathcal{H}_0(\mathcal{F})$. 

Using \ref{lem:complete.the.square}, we may construct a henselian extension $(K_2,v_2)$ of $(K_1,u)$ of valued fields so that $K_2v_2$ is isomorphic to $F_{2}$. 
Thus $(K_2,v_2)\in\mathcal{H}_0(\mathcal{F})$ and $K_1\subseteq K_2$. 
Since $w$ is nontrivial, the restriction of $v_2$ to $K_1$ is a proper coarsening of $v_1$.
Since universal formulas `go down', the valuation ring is not uniformly $\forall$-$C$-definable in $\mathcal{H}_0(\mathcal{F})$.
\end{proof}

%%%%%%%%%%%%%%%%%%%%%%%%%%%%%%%%%%%%%%%%%%%%%%%%%%%%%%%%%%%%%%%%%%%%%%%%%%%%%%%%%%%%%%%%%%%%%%%%%%%%%%
%%%%%%%%%%%%%%%%%%%%%%%%%%%%%%%%%%%%%%%%%%%%%%%%%%%%%%%%%%%%%%%%%%%%%%%%%%%%%%%%%%%%%%%%%%%%%%%%%%%%%%

\section{Making definitions uniform for equicharacteristic fields}
\label{sec:uniform}

\noindent
The goal of this section is to show that existential definitions of the valuation ring or valuation ideal
of an equicharacteristic henselian nontrivially valued field can be modified to work
for all such valued fields with elementarily equivalent residue field.
Depending on the parameters $C$, we may have to restrict the residue fields that we consider:

\begin{Definition}
We say that a $C$-field $F$ satisfies $(*)$ if
\renewcommand{\theequation}{$*$}
\begin{equation}\label{star}
\begin{minipage}{11cm}
{\it \begin{enumerate} 
      \item[(a)] $C$ is integral over its prime ring, \underline{or} 
      \item[(b)] $C$ is a perfect field and $F$ is perfect.
     \end{enumerate}
}
\end{minipage}
\end{equation}
We say that a class of $C$-fields $\mathcal{F}$ satisfies $(*)$ if each $F\in\mathcal{F}$ satisfies $(*)$.
\end{Definition}

Note that in the case $C=\mathbb{Z}$ (that is, when we consider $\exists$-$\emptyset$-definitions),
all classes of $C$-fields satisfy $(*)$.

\begin{Proposition}\label{prp:GDM.version.1}
Let $\mathcal{F}$ be an elementary class of $C$-fields that satisfies $(*)$.
Let $\phi(x)$ be an $\exists$-$\mathcal{L}_{\mathrm{ring}}(C)$-formula.
\begin{enumerate}
\item  If for all $F\in\mathcal{F}$ there exists $(K,v)\in\mathcal{H}_e'(F)$ with $\phi(K)=\mathcal{O}_v$,
then there is an $\exists$-$\mathcal{L}_{\mathrm{ring}}(C)$-formula $\psi(x)$ with 
$\psi(K)=\mathcal{O}_v$
for all $(K,v)\in\mathcal{H}_e'(\mathcal{F})$.
\item  If for all $F\in\mathcal{F}$ there exists $(K,v)\in\mathcal{H}_e'(F)$ with $\phi(K)=\mathfrak{m}_v$,
then there is an $\exists$-$\mathcal{L}_{\mathrm{ring}}(C)$-formula $\chi(x)$ with 
$\chi(K)=\mathfrak{m}_v$
for all $(K,v)\in\mathcal{H}_e'(\mathcal{F})$.
\end{enumerate}
\end{Proposition}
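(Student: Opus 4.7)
The plan is to deduce both statements from the Characterisation Theorems \ref{thm:existential.characterisation} and \ref{thm:universal.characterisation}, using as a bridge the main transfer theorem of \cite{AnscombeFehm}. That theorem---whose applicability is exactly what hypothesis \ref{star} guarantees---provides an Ax--Kochen--Ershov style principle for the existential theory of members of $\mathcal{H}_e'$: under \ref{star}, whether a fixed $\exists$-$\mathcal{L}_{\mathrm{ring}}(C)$-formula $\phi$ defines the valuation ring, respectively the valuation ideal, of a member of $\mathcal{H}_e'$ depends only on the $\mathcal{L}_{\mathrm{ring}}(C)$-elementary theory of the residue field. Granted this, both parts reduce, by contraposition, to showing that the hypothesis is incompatible with $\mathcal{F}$ having equicharacteristic embedded residue (for part~(1)) or being equicharacteristic large (for part~(2)); the required uniform formulas $\psi$, $\chi$ then come from \ref{thm:existential.characterisation} and \ref{thm:universal.characterisation} respectively.

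For part~(1), suppose for contradiction that $\mathcal{F}$ has equicharacteristic embedded residue, witnessed by $F_1, F_2 \in \mathcal{F}$ and a nontrivial equicharacteristic valuation $w$ on $F_1$ with $F_1 w \hookrightarrow F_2$. The hypothesis applied to $F_1$ and to $F_2$ supplies witnesses $(\tilde K_i, \tilde v_i) \in \mathcal{H}_e'(F_i)$ with $\phi(\tilde K_i) = \mathcal{O}_{\tilde v_i}$ for $i = 1, 2$. We next mimic the construction of \ref{lem:ER.implies.not.uniform}: using \ref{rem:henselian.ext} pick $(K_1, v_1) \in \mathcal{H}_e'(F_1)$ with $K_1 v_1 = F_1$, form the proper refinement $u := w \circ v_1$ of $v_1$, and extend $(K_1, u)$ via \ref{lem:complete.the.square} to a henselian $(K_2, v_2) \in \mathcal{H}_e'(F_2)$ with $v_2|_{K_1} = u$. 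Since $K_i v_i \equiv F_i \equiv \tilde K_i \tilde v_i$, the transfer theorem of \cite{AnscombeFehm} yields $\phi(K_i) = \mathcal{O}_{v_i}$ for $i = 1, 2$. Choosing $a \in \mathcal{O}_{v_1} \setminus \mathcal{O}_u$, we get $a \in \phi(K_1) \subseteq \phi(K_2)$ by existential pass-up, while $v_2(a) = u(a) < 0$ forces $a \notin \mathcal{O}_{v_2} = \phi(K_2)$---a contradiction. So $\mathcal{F}$ has no equicharacteristic embedded residue, and \ref{thm:existential.characterisation} delivers the desired $\psi$.

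Part~(2) is entirely analogous: one replaces \ref{lem:ER.implies.not.uniform} by \ref{lem:large.implies.not.uniform} and \ref{thm:existential.characterisation} by \ref{thm:universal.characterisation}, and produces a witness $a \in \mathfrak{m}_{v_1} \setminus \mathfrak{m}_{v_2}$ satisfying $\phi$, contradicting the transferred statement that $\phi$ defines $\mathfrak{m}_{v_2}$ in $K_2$. The main obstacle---and the reason \cite{AnscombeFehm} is needed at all---is the transfer step: the statement that $\phi$ defines $\mathcal{O}_v$, respectively $\mathfrak{m}_v$, is $\Pi_2$ in character, so its preservation between members of $\mathcal{H}_e'$ with elementarily equivalent residue fields does not follow from bare existential equivalence of the two valued fields; it requires the finer results of \cite{AnscombeFehm}, which is precisely the source of the restriction \ref{star} in the statement.
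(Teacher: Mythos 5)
Your reduction skeleton—prove that the hypothesis rules out $\mathcal{F}$ having equicharacteristic embedded residue (resp.\ being equicharacteristic large) and then invoke \ref{thm:existential.characterisation} (resp.\ \ref{thm:universal.characterisation})—is formally sound, but the step you call ``the transfer step'' is not a consequence of the results of \cite{AnscombeFehm}, and this is where the argument breaks. You assert that, under $(*)$, whether a fixed $\exists$-$\mathcal{L}_{\rm ring}(C)$-formula $\phi$ defines $\mathcal{O}_v$ (resp.\ $\mathfrak{m}_v$) in a member of $\mathcal{H}_e'(\mathcal{F})$ depends only on the elementary theory of the residue field, and in the contradiction you use this to pass from the given $(\tilde{K}_i,\tilde{v}_i)$ to the constructed $(K_i,v_i)$. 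That assertion is false: the remark following \ref{prp:GDM.version.1} in the paper exhibits an $\exists$-$\mathcal{L}_{\rm ring}(C)$-formula $\phi$ that defines $\mathfrak{m}_v$ in every $(K,v)\in\mathcal{H}_e'(F)$ with divisible value group but fails to define $\mathfrak{m}_{v_t}$ in $(F((t)),v_t)$, even though both are in $\mathcal{H}_e'(F)$ with identical residue field. The same phenomenon occurs in the $\mathcal{O}$-case.

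The reason \cite{AnscombeFehm} does not deliver the transfer you want is precisely the point made in \ref{rem:strategy}: the transfer principle (\ref{cor:transfer}) only applies to $\forall^{k}\exists$-$\mathcal{L}_{\rm vf}(C)$-sentences, where the universal quantifiers range over the \emph{residue} sort. The containment statements $({\rm CO}_\phi)$ and $({\rm CM}_\phi)$ (i.e.\ ``$\mathcal{O}_v\subseteq\phi(K)$'' and ``$\mathfrak{m}_v\subseteq\phi(K)$'') have their universal quantifier over the \emph{field} sort and therefore do not transfer. Only $({\rm SO}_\phi)$, $({\rm SM}_\phi)$, $({\rm IM}_\phi)$ and $({\rm R}_\phi)$ are in the right form, and they only carry partial information. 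For this reason the paper's proof (\ref{prp:GDM.version.2}) does not simply transfer ``$\phi$ defines the set'' between models; instead it transfers these weaker statements and then \emph{modifies} $\phi$—forming $\Phi(x) := \exists y\exists z\,(x=y-z\wedge\phi(y)\wedge\phi(z))$, passing to $\Phi(x^n)$ via \ref{prp:monkey.lemma}, \ref{lem:embedding.t} and the compactness argument \ref{lem:bounded.roots.compactness}, and in the $\mathcal{O}$-case further combining with a section-of-the-residue-map argument via $({\rm R}_\phi)$. The passage from the given $\phi$ to a genuinely new formula is the heart of the proposition, not a technicality, and your outline skips it entirely.
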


\begin{Remark}
We note that in general we cannot take $\psi(x)$ (resp.~$\chi(x)$) to be simply $\phi(x)$, as the following example 
in the $\mathfrak{m}$-case shows:
If the $\exists$-$\mathcal{L}_{\rm ring}(C)$-formula $\chi(x)$ 
uniformly defines the valuation ideal in $\mathcal{H}_e'(F)$, then the $\exists$-$\mathcal{L}_{\rm ring}(C)$-formula $\phi(x)$ given by
$$
 \exists y \exists z (x=yz\wedge\chi(y)\wedge\chi(z))
$$
defines the valuation ideal in any $(K,v)\in\mathcal{H}_e'(F)$ with divisible value group,
but for example not in $(F((t)),v_t)\in\mathcal{H}_e'(F)$.
The proof of \ref{prp:GDM.version.1} below however does give a quite explicit construction of $\psi$ and $\chi$ from $\phi$.
\end{Remark}

\begin{Remark}
Note that the conclusion of (1) (respectively, (2)) is $(1'^{\exists}_e)$ (resp., $(1'^{\forall}_e)$) from \ref{thm:existential.characterisation} (resp.\ \ref{thm:universal.characterisation}), see also \ref{rem:EAOm}.
In order to prove this proposition, we first break the statement $\phi(K)=\mathcal{O}_v$ (respectively $\phi(K)=\mathfrak{m}_v$)
into several parts which we then treat separately:
\end{Remark}

\begin{Definition}\label{def:important.sentences}
For an $\mathcal{L}_{\rm vf}(C)$-formula $\phi(x)$ 
with free variable $x$ belonging to the field sort
we define the following
$\mathcal{L}_{\rm vf}(C)$-sentences:
\begin{enumerate}
\item[$({\rm SO}_\phi)$]  $\forall x\;(\phi(x)\rightarrow v(x)\geq0)$
\item[$({\rm CO}_\phi)$]  $\forall x\;(v(x)\geq0 \rightarrow \phi(x))$
\item[$({\rm SM}_\phi)$]  $\forall x\;(\phi(x)\rightarrow v(x)>0)$
\item[$({\rm CM}_\phi)$] $\forall x\;(v(x)>0\rightarrow\phi(x))$
\item[$({\rm IM}_\phi)$]  $\exists x\;(v(x)>0\wedge x\neq0\wedge\phi(x))$
\item[$({\rm R}_\phi)$] $\forall^{k}x\exists y\;(\mathrm{res}(y)=x\wedge\phi(y))$
\end{enumerate}
\end{Definition}

In the following table we summarize what it means for a valued $C$-field $(K,v)$ to satisfy one of these sentences,
as well as the quantifier complexity for the case that $\phi(x)$ is an $\exists$-$\mathcal{L}_{\rm vf}(C)$-formula:

\begin{center}
\begin{tabular}{r|c|r|}
%\hline
 & holds in $(K,v)$ iff & quantifiers \\ \hline
$({\rm SO}_\phi)$ & $\phi(K)\subseteq\mathcal{O}_v$ & $\forall$ \\ \hline
$({\rm CO}_\phi)$ & $\phi(K)\supseteq\mathcal{O}_v$ & $\forall\exists$ \\\hline
$({\rm SM}_\phi)$ & $\phi(K)\subseteq\mathfrak{m}_v$  & $\forall$ \\ \hline
$({\rm CM}_\phi)$ & $\phi(K)\supseteq\mathfrak{m}_v$ & $\forall\exists$ \\\hline
$({\rm IM}_\phi)$ & $\phi(K)\cap(\mathfrak{m}_v\setminus\{0\})\neq\emptyset$  & $\exists$\\ \hline
$({\rm R}_\phi)$ & ${\rm res}(\phi(K))=Kv$ & $\forall^k\exists$ \\ \hline
\end{tabular}
\end{center}

With these interpretations, the following lemma is obvious:

\begin{Lemma}\label{lem:relationships.between.important.sentences}
Let $\phi(x)$ be an $\mathcal{L}_{\rm vf}(C)$-formula and $(K,v)$ a nontrivially valued $C$-field.
\begin{enumerate}
\item $\phi(K)=\mathcal{O}_v$ $\Leftrightarrow$
 $(K,v)\models({\rm SO}_\phi)\wedge({\rm CO}_\phi)$
 $\Rightarrow$
 $(K,v)\models({\rm SO}_\phi)\wedge({\rm R}_\phi)\wedge({\rm IM}_\phi)$
\item $\phi(K)=\mathfrak{m}_v$ $\Leftrightarrow$ $(K,v)\models({\rm SM}_\phi)\wedge({\rm CM}_\phi)$
$\Rightarrow$  $(K,v)\models({\rm SM}_\phi)\wedge({\rm IM}_\phi)$
\end{enumerate}
\end{Lemma}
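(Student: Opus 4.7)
The plan is to verify the table entries first and then chase the equivalences and implications by elementary set-theoretic reasoning; nontriviality of $v$ only enters at one point, namely to witness $(\mathrm{IM}_\phi)$.

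First, I would record the translations in the table, which are immediate from the definitions of the sentences: $(\mathrm{SO}_\phi)$ literally says every $x$ satisfying $\phi$ lies in $\mathcal{O}_v$, i.e.\ $\phi(K)\subseteq\mathcal{O}_v$, and $(\mathrm{CO}_\phi)$ is the reverse inclusion; similarly $(\mathrm{SM}_\phi)$ and $(\mathrm{CM}_\phi)$ encode $\phi(K)\subseteq\mathfrak{m}_v$ and $\phi(K)\supseteq\mathfrak{m}_v$; $(\mathrm{IM}_\phi)$ says $\phi(K)$ meets $\mathfrak{m}_v\setminus\{0\}$; and $(\mathrm{R}_\phi)$ says that the residue map sends $\phi(K)$ onto $Kv$ (the quantifier over the residue sort together with the existence of a preimage under $\mathrm{res}$ lying in $\phi(K)$).

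Granting the table, the two equivalences in (1) and (2) are then merely the statement that set equality is the conjunction of two inclusions. For the remaining implication in (1), assume $\phi(K)=\mathcal{O}_v$. Then $(\mathrm{SO}_\phi)$ is immediate; for $(\mathrm{R}_\phi)$ I would use that $\mathrm{res}\colon\mathcal{O}_v\twoheadrightarrow Kv$ is surjective, so $\mathrm{res}(\phi(K))=\mathrm{res}(\mathcal{O}_v)=Kv$; for $(\mathrm{IM}_\phi)$, since $v$ is nontrivial there is some nonzero $x\in\mathfrak{m}_v\subseteq\mathcal{O}_v=\phi(K)$, which witnesses the sentence. The analogous implication in (2) is even shorter: $(\mathrm{SM}_\phi)$ is trivial, and $(\mathrm{IM}_\phi)$ follows because nontriviality of $v$ supplies a nonzero element of $\mathfrak{m}_v=\phi(K)$.

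There is no real obstacle; the only subtlety worth flagging is that the forward implications $\Rightarrow$ (as opposed to the equivalences) rely essentially on $v$ being nontrivial, for otherwise $\mathfrak{m}_v=\{0\}$ and $(\mathrm{IM}_\phi)$ would fail even when $\phi(K)=\mathcal{O}_v=K$. Everything else is a direct reading of the table, so the proof can be written as a one-line reference to the interpretations together with the observation that nontriviality of $v$ provides the required nonzero element of $\mathfrak{m}_v$.
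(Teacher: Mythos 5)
Your proposal is correct and matches the paper's approach: the paper presents the same interpretation table and then declares the lemma obvious from it, exactly as you argue. You additionally spell out the small point the paper leaves implicit, namely that nontriviality of $v$ is what supplies the nonzero element of $\mathfrak{m}_v$ witnessing $(\mathrm{IM}_\phi)$.
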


\begin{Remark}[cf.~{\cite[Remark 6.6]{AnscombeFehm}}]\label{rem:strategy}
The strategy now is to use the results of \cite{AnscombeFehm},
which allow us to `transfer' the truth of $\forall^k\exists$-$\mathcal{L}_{\rm vf}(C)$-sentences between equicharacteristic henselian nontrivially valued fields with the same residue field (see \ref{cor:transfer}).
According to the table above, if $\phi$ is an $\exists$-$\mathcal{L}_{\rm vf}(C)$-formula, these results can be applied directly to
$({\rm SO}_\phi)$ and $({\rm SM}_\phi)$, 
but unfortunately not to $({\rm CO}_\phi)$ and $({\rm CM}_\phi)$.
Therefore, we 
instead have to work with the weaker statements
$({\rm R}_\phi)$ and $({\rm IM}_\phi)$, 
to which the results can be applied,
and modify the formula $\phi$ suitably.
\end{Remark}

For $\mathcal{F}$ an elementary class of $C$-fields, we recall that $\mathcal{H}_e'(\mathcal{F})$ denotes the class of equicharacteristic henselian nontrivially valued $C$-fields with residue field in $\mathcal{F}$.
So far we have formulated our results (in particular \ref{prp:GDM.version.1}) in terms of the elementary class $\mathcal{H}_e'(\mathcal{F})$. However, for the rest of the section it will be convenient to make the following notational change: instead of $\mathcal{H}_e'(\mathcal{F})$, we formulate our results in terms of the theory $\mathbf{T}_{\mathcal{F}}$, which is defined as follows.

\begin{Definition}\label{def:T_F}
We let $\mathbf{T}_{\mathcal{F}}$ be the $\mathcal{L}_{\rm vf}(C)$-theory of equicharacteristic henselian nontrivially valued $C$-fields with residue field a member of $\mathcal{F}$. For a single $C$-field $F$, we let $\mathbf{T}_{F}$ denote the theory of equicharacteristic henselian nontrivially valued $C$-fields with residue field elementarily equivalent to $F$.
\end{Definition}

Thus $\mathbf{T}_{\mathcal{F}}$ is the theory of $\mathcal{H}_e'(\mathcal{F})$ and $\mathcal{H}_e'(\mathcal{F})$ is the class of models of $\mathbf{T}_{\mathcal{F}}$. The rest of this section is devoted to proving \autoref{prp:GDM.version.2}, which is simply \autoref{prp:GDM.version.1} rewritten using our new notation.

\subsection{Existential transfer principle}

First we recall the `transfer principle' from \cite{AnscombeFehm} which, as we have indicated, will be our main tool in this section. 
Let $F/C'$ be a separable field extension.
In \cite{AnscombeFehm}, $\mathbf{T}_{F/C'}$ denotes the theory of equicharacteristic henselian nontrivially valued fields $(K,v,d_c)_{c\in C'}$ in the language $\mathcal{L}_{\rm vf}(C')$
for which $c\mapsto d_c$ gives a homomorphism $C'\rightarrow K$,
the valuation $v$ is trivial on $D:=\{d_c:c\in C'\}$,
and $(Kv,d_cv)_{c\in C'}\equiv(F,c)_{c\in C'}$.
That is, $\mathbf{T}_{F/C'}$ is the same as $\mathbf{T}_{F}$ where we view $F$ as a $C'$-field.

\begin{Proposition}[{\cite[Corollary 5.6]{AnscombeFehm}}]\label{fact:transfer}
Let $\psi(\mathbf{x})$ be an $\exists$-$\mathcal{L}_{\rm vf}(C')$-formula with free variables $\mathbf{x}$ belonging to the residue field sort. 
Suppose there exists $(K,v)\models\mathbf{T}_{F/C'}\cup\{\forall^{k}\mathbf{x}\;\psi(\mathbf{x})\}$. 
Then there exists $n\in\mathbb{N}$ such that, for all $(L,w)\models\mathbf{T}_{F/C'}$, we have ${}^{\mathbf{x}}Lw\subseteq\psi(L^{p^{-n}})$, where $p\geq1$ is the characteristic exponent of $F$.
\end{Proposition}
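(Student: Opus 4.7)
My plan is a compactness/ultraproduct argument reducing the claim to the existential Ax--Kochen--Ershov (AKE) transfer for equicharacteristic henselian nontrivially valued fields developed in \cite{AnscombeFehm}. In the form I would apply it here, this transfer asserts: if $(M_1, u_1), (M_2, u_2) \models \mathbf{T}_{F/C'}$ and $\sigma(\mathbf{c})$ is an $\exists$-$\mathcal{L}_{\rm vf}(C')$-sentence whose residue-field parameters $\mathbf{c}$ are realized compatibly in both residue fields (which are elementarily equivalent to $F$), then $(M_1, u_1) \models \sigma(\mathbf{c})$ implies $((M_2)^{\mathrm{perf}}, u_2) \models \sigma(\mathbf{c})$.

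Suppose toward contradiction that no finite $n$ works. For each $n$ we obtain $(L_n, w_n) \models \mathbf{T}_{F/C'}$ and a tuple $\mathbf{a}_n \in {}^{\mathbf{x}}(L_n w_n)$ with $\mathbf{a}_n \notin \psi(L_n^{p^{-n}})$. Naming the $\mathbf{a}_n$ by fresh residue-field constants and taking a non-principal ultraproduct over $n$ yields $(L^*, w^*) \models \mathbf{T}_{F/C'}$ together with a distinguished tuple $\mathbf{a}^*$ in $L^* w^*$ admitting no $\psi$-witness in $(L^*)^{p^{-n}}$ for any $n$, and hence none in the perfect hull of $L^*$. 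On the other hand, since $L^* w^* \equiv F \equiv Kv$, passing to a sufficiently saturated $(K^*, v^*) \succeq (K, v)$ lets us realize the $F$-type of $\mathbf{a}^*$ in $K^* v^*$ by some tuple $\mathbf{a}$; by elementarity $(K^*, v^*) \models \forall^k \mathbf{x}\, \psi(\mathbf{x})$, so there exists $\mathbf{b} \in K^*$ with $\mathrm{res}(\mathbf{b}) = \mathbf{a}$ and $(K^*, v^*) \models \psi(\mathbf{b})$ --- an existential statement with residue-field parameters $\mathbf{a}$. Applying the AKE transfer to $(K^*, v^*)$ and $(L^*, w^*)$, identifying $\mathbf{a}$ with $\mathbf{a}^*$ across the common residue-field elementary class, produces a $\psi$-witness in the perfect hull of $L^*$ with residue $\mathbf{a}^*$, contradicting the construction.

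The main obstacle is the asymmetric form of existential AKE in positive characteristic: transfer is only available \emph{up to the perfect hull}, which is precisely what forces a finite $n > 0$ in the conclusion rather than $n = 0$ (in characteristic zero, $n = 0$ always suffices, as the perfect hull is trivial). Uniformity of $n$ across models comes for free from the compactness step: the obstruction at each fixed $n$ is preserved under ultraproducts, so persistence of failure for every $n$ is amalgamated into a single ultraproduct counterexample whose distinguished tuple witnesses failure in the entire perfect hull, to which the AKE transfer can then be applied.
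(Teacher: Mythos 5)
This proposition is not proved in the paper: it is imported verbatim (with a citation label) from the authors' earlier work \cite[Corollary~5.6]{AnscombeFehm}, and no proof appears here for me to compare your argument against. So I can only assess your proposal on its own terms.

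The overall shape of your argument --- negate the conclusion, collect counterexamples $(L_n,w_n,\mathbf{a}_n)$, amalgamate them by a non-principal ultraproduct into a single $(L^*,w^*,\mathbf{a}^*)$ witnessing failure at every level of the perfect hull, and then reach a contradiction via an existential AKE transfer --- is a sensible reconstruction, and the observation that $\psi\bigl((L^*)^{\mathrm{perf}}\bigr)=\bigcup_n\psi\bigl((L^*)^{p^{-n}}\bigr)$ for an $\exists$-formula $\psi$ is correct and is indeed what lets a single ultraproduct tuple obstruct the whole perfect hull at once. But there is a real gap at the point where you ``identify $\mathbf{a}$ with $\mathbf{a}^*$ across the common residue-field elementary class.'' The residue fields $K^*v^*$ and $L^*w^*$ are merely elementarily equivalent to $F$; they need not be isomorphic, and in particular there is no given map sending $\mathbf{a}$ to $\mathbf{a}^*$. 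The existential transfer result you invoke is a transfer of \emph{sentences} (over $C'$), not of formulas with residue-field parameters chosen separately in each model. To make this step honest you would need either to pass both residue fields into a common sufficiently saturated elementary extension of $F$ and realize the relevant type coherently there, or, more in keeping with the methods of \cite{AnscombeFehm}, to use an embedding lemma producing an $\mathcal{L}_{\rm vf}(C')$-embedding of a suitable substructure of $(K^*,v^*)$ containing $\mathbf{b}$ into the perfect hull of a saturated extension of $(L^*,w^*)$, compatibly with the residue tuples. Without such a bridging step, the transfer with parameters is an assertion, not an argument. Related to this, ``by elementarity $(K^*,v^*)\models\forall^k\mathbf{x}\,\psi(\mathbf{x})$'' is fine, but you should also justify that realizing the type of $\mathbf{a}^*$ over $C_F$ in $K^*v^*$ is possible; that requires $(K^*,v^*)$ to be saturated of cardinality at least $|L^*w^*|^+$, which should be stated explicitly since you are simultaneously using its elementary equivalence with $(K,v)$.
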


\begin{Proposition}[{\cite[Corollary 5.7]{AnscombeFehm}}]\label{fact:transfer.perfect}
Suppose that $F$ is perfect. Then for any $\forall^{k}\exists$-$\mathcal{L}_{\rm vf}(C')$-sentence $\phi$, either $\mathbf{T}_{F/C'}\models\phi$ or $\mathbf{T}_{F/C'}\models\neg\phi$.
\end{Proposition}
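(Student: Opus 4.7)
The plan is to derive this completeness-for-$\forall^k\exists$-sentences result from the existential transfer principle \ref{fact:transfer}. I would first write $\phi$ in prenex normal form as $\forall^k\mathbf{x}\,\psi(\mathbf{x})$ with $\psi$ an $\exists$-$\mathcal{L}_{\rm vf}(C')$-formula whose free variables $\mathbf{x}$ belong to the residue field sort, and then split into cases according to whether $\mathbf{T}_{F/C'}\cup\{\phi\}$ is consistent. If it is not, then $\mathbf{T}_{F/C'}\models\neg\phi$ trivially, so the only interesting case is when there exists some $(K_0,v_0)\models\mathbf{T}_{F/C'}\cup\{\phi\}$; here the aim is to show that $(L,w)\models\phi$ for every $(L,w)\models\mathbf{T}_{F/C'}$.

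In that case, applying \ref{fact:transfer} to $(K_0,v_0)$ and $\psi(\mathbf{x})$ produces $n\in\mathbb{N}$ such that ${}^{\mathbf{x}}Lw\subseteq\psi(L^{p^{-n}})$ for every $(L,w)\models\mathbf{T}_{F/C'}$, where $p$ is the characteristic exponent of $F$. In characteristic zero $p=1$, so $L^{p^{-n}}=L$ and $(L,w)\models\phi$ follows immediately, without using perfectness at all. In positive characteristic, perfectness of $F$ enters through a Frobenius substitution: since $Lw\equiv F$ is perfect, every $\mathbf{a}\in{}^{\mathbf{x}}Lw$ admits a $p^n$-th root $\mathbf{a}'\in{}^{\mathbf{x}}Lw$; the transfer applied to $\mathbf{a}'$ supplies witnesses $\mathbf{b}\in L^{p^{-n}}$ for the existential part of $\psi(\mathbf{a}')$, and raising to the $p^n$-th power brings $\mathbf{b}$ back into $L$ while sending $\mathbf{a}'$ to $\mathbf{a}$, so that $\mathbf{b}^{p^n}\in L$ is a candidate witness for $\psi(\mathbf{a})$ in $L$.

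The main obstacle is to make this Frobenius-substitution argument honest in the three-sorted language $\mathcal{L}_{\rm vf}(C')$: the $p^n$-th power operation must interact correctly with the interpretations of the $C'$-constants in the field sort and with existential quantifiers over the value-group sort. The value-group part is handled by the observation $v(L^{p^{-n}})=p^{-n}\cdot vL$, so value-group witnesses in $L^{p^{-n}}$ can be rescaled to witnesses in $vL$. The subtle point is with the $C'$-constants: separability of the residue field extension $F/C'$ is used to ensure that the structure homomorphism $C'\to L$ is compatible with the $p^n$-Frobenius modulo $\mathbf{T}_{F/C'}$, so that the Frobenius twist of $\psi$ is equivalent to $\psi$ on the class of models considered. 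Putting these pieces together yields $(L,w)\models\psi(\mathbf{a})$ for every $\mathbf{a}\in{}^{\mathbf{x}}Lw$, and hence $(L,w)\models\phi$ as required.
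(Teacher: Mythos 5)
The paper does not prove \ref{fact:transfer.perfect}; it cites it from \cite{AnscombeFehm}. Your attempt to rederive it from \ref{fact:transfer} is a sensible plan, and your treatment of the characteristic-zero case is correct. In positive characteristic, however, the Frobenius-substitution step has a genuine gap that your final paragraph does not close. The $p^{n}$-power map $L^{p^{-n}}\to L$ is an isomorphism of $\mathcal{L}_{\rm vf}$-structures (with the value-group component being multiplication by $p^{n}$), but it is \emph{not} an $\mathcal{L}_{\rm vf}(C')$-isomorphism: it carries the interpretation $d_{c}$ of a constant $c\in C'$ to $d_{c}^{p^{n}}$, and in general $d_c^{p^n}\neq d_c$. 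Writing $\psi$ as $\exists\mathbf{y}\,q(\mathbf{x},\mathbf{y},\mathbf{c})$ with $q$ quantifier-free and $\mathbf{c}$ a tuple of $C'$-constants, from $L^{p^{-n}}\models q(\mathbf{a}',\mathbf{b},\mathbf{c})$ you obtain only $L\models q(\mathbf{a},\mathbf{b}^{p^{n}},\mathbf{c}^{p^{n}})$, not $L\models q(\mathbf{a},\mathbf{b}^{p^{n}},\mathbf{c})$. Separability of $F/C'$ does not make the Frobenius twist of $\psi$ equivalent to $\psi$ modulo $\mathbf{T}_{F/C'}$; for instance $\psi(x):\ \exists y\,({\rm res}(y)=x\wedge y=c)$ defines $\{d_{c}w\}$, while its twist defines $\{(d_{c}w)^{p^{n}}\}$, and these differ whenever $c$ is not fixed by the $p^n$-power map.

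The clean way to exploit perfectness is to change the model fed into \ref{fact:transfer}, not to twist afterwards. Since $F$ is perfect and $F/C'$ is separable, $C'$ is perfect, so the image $D$ of $C'$ in $L$ is a perfect subfield; hence $D=D^{p^{n}}\subseteq L^{p^{n}}$, and $L^{p^{n}}$ carries the restricted $C'$-structure. Moreover $L^{p^{n}}$ is henselian (henselianity descends along the purely inseparable extension $L/L^{p^{n}}$), equicharacteristic, nontrivially valued, and its residue field is $(Lw)^{p^{n}}=Lw$ because $Lw\equiv F$ is perfect. Thus $(L^{p^{n}},w)\models\mathbf{T}_{F/C'}$, and applying \ref{fact:transfer} to this model gives ${}^{\mathbf{x}}Lw={}^{\mathbf{x}}(L^{p^{n}})w\subseteq\psi\bigl((L^{p^{n}})^{p^{-n}}\bigr)=\psi(L)$, where the last equality holds because the $C'$-structure inherited by $(L^{p^{n}})^{p^{-n}}=L$ from $L^{p^{n}}$ is the original one. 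This gives $L\models\phi$ directly, with no Frobenius twist and no issue with the constants.
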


For convenience, we collect these results together into one corollary which covers almost all of the applications and is expressed using the new notation.

\begin{Definition}
For a $C$-field $F$ let $C'=C_F$ and let
$\alpha:C\rightarrow C'$ denote the structure homomorphism of $F$.
If $(K,v)\models\mathbf{T}_{F/C'}$, then composing the structure homomorphism $C'\rightarrow K$
with $\alpha$ turns $K$ into a $C$-field $K^\circ$ with $(K^\circ,v)\models\mathbf{T}_F$.
To a $\mathcal{L}_{\rm vf}(C)$-formula $\phi(\mathbf{x})$ we assign a $\mathcal{L}_{\rm vf}(C')$-formula $\phi'(\mathbf{x})$
by applying $\alpha$ to all the constants.
\end{Definition}

\begin{Remark}\label{rem:translate}
Note that for $(K,v)\models\mathbf{T}_{F/C'}$, an $\mathcal{L}_{\rm vf}(C)$-formula $\phi(\mathbf{x})$
and $\mathbf{a}\in K^n$ we trivially have
$(K^\circ,v)\models\phi(\mathbf{a})$ if and only if $(K,v)\models\phi'(\mathbf{a})$.
\end{Remark}

\begin{Lemma}\label{lem:star}
Let $F$ be a $C$-field that satisfies $(*)$.
Then $C_F$ is perfect and if $(K,v)\models\mathbf{T}_F$, 
then $v$ is trivial on $C_K$ and the residue map induces an isomorphism $C_K\rightarrow C_F$.
\end{Lemma}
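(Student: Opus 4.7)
My plan is to prove the two halves of the conclusion by handling the two clauses of $(*)$ separately, using two standard facts as the main tools: in an equicharacteristic valued field the prime field carries only the trivial valuation, and any extension of a trivially valued field to an algebraic field extension is itself trivial.

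I would first check that $C_F$ is perfect. Under clause (b) this is transparent: the structure map $C\to F$ is injective (being a ring map out of a field), so $C_F$ coincides with the image of $C$ in $F$, which is isomorphic to $C$ and hence perfect. Under clause (a), $C$ is integral over its prime ring, so the image of $C\to F$ lies in the integral closure in $F$ of the prime subring, and thus every element of $C_F$ is algebraic over the prime field of $F$. Since prime fields are perfect and algebraic extensions of a perfect field are perfect, $C_F$ is perfect.

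Next, fix $(K,v)\models\mathbf{T}_F$ and show that $v$ is trivial on $C_K$. Under (b), $C_K$ is a subfield of $K$ on which $v$ is non-negative (as $v$ is a $C$-valuation), so inverse-closedness forces $v$ to vanish on $C_K^\times$. Under (a), the equicharacteristic hypothesis ensures that $v$ is trivial on the prime field of $K$ (in characteristic zero, a rational prime of positive value would give $\mathrm{char}(Kv)>0$, contradicting equicharacteristicity, while in positive characteristic the prime field consists of units), and the same integrality argument as in the previous step shows that $C_K$ is algebraic over this prime field; the second standard fact above then yields triviality of $v$ on $C_K$.

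Finally, triviality of $v$ on $C_K\subseteq\mathcal{O}_v$ means the residue map restricts to an injective field homomorphism $C_K\hookrightarrow Kv$, whose image is the quotient field of the image of $C\to Kv$, namely $C_{Kv}$. To conclude I would identify $C_{Kv}$ with $C_F$ canonically as $C$-fields using that $Kv\equiv F$ in $\mathcal{L}_{\mathrm{ring}}(C)$: for each $c\in C$ the atomic sentence $\bar c=0$ has the same truth value in $Kv$ as in $F$, so $\ker(C\to Kv)=\ker(C\to F)$, and the two structure maps present the same quotient of $C$, giving a canonical $C$-isomorphism $C_{Kv}\cong C_F$. I do not expect a genuinely hard step; the main thing to keep straight is the dual role of $C$ (as an abstract ring of constants and as a subring of each $C$-field via its own structure map) and the corresponding case split in $(*)$, since the proof in case (b) uses the field hypothesis to bypass the integrality/equicharacteristicity argument needed in case (a).
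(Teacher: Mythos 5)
Your proof is correct and follows essentially the same approach as the paper: both arguments split on the two clauses of $(*)$, use that $C_F,C_K$ are algebraic over the prime field in case~(a) and that they are copies of the perfect field $C$ in case~(b), and derive triviality of $v$ on $C_K$ from triviality on the prime field (case~(a)) or from inverse-closedness (case~(b)). Your version is slightly more detailed, in particular in the final step identifying $C_{Kv}$ with $C_F$ via the elementary equivalence $Kv\equiv F$ in $\mathcal{L}_{\rm ring}(C)$, which the paper leaves implicit.
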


\begin{proof}
In case (a), $C_F$ and $C_K$ are algebraic extensions of the prime field; in case (b), $C\cong C_F\cong C_K$ is a perfect field.
In particular, $C_F$ is perfect in both cases.
In case (a), the assumption that $v$ is equicharacteristic implies that it is trivial on the prime field, hence also on $C_K$.
In case (b), since $v$ is nonnegative on the image $C_K$ of $C$, which is a field, it is trivial on $C_K$.
\end{proof}

\begin{Lemma}\label{rem:star}
Let $F$ be a $C$-field that satisfies $(*)$.
Then the map $(K,v)\mapsto (K^\circ,v)$ from the models of $\mathbf{T}_{F/C'}$ to the models of $\mathbf{T}_{F}$
has an inverse $(K,v)\mapsto (K',v)$,
and for $(K,v)\models\mathbf{T}_F$, an $\mathcal{L}_{\rm vf}(C)$-formula $\phi(\mathbf{x})$ and $\mathbf{a}\in K^n$ we have
$$
 (K,v)\models\phi(\mathbf{a}) \quad\Longleftrightarrow\quad (K',v)\models\phi'(\mathbf{a}).
$$ 
\end{Lemma}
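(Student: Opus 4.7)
The plan is to construct the inverse map $(K,v)\mapsto(K',v)$ explicitly by transporting a $C'$-structure onto $K$ through the residue-map identification of prime subfields, and then to verify mutual inverseness and the formula transfer by a short bookkeeping argument.

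Given $(K,v)\models\mathbf{T}_F$, \ref{lem:star} supplies a $C$-algebra isomorphism $\rho\colon C_K\to C_F=C'$ induced by the residue map; set $\iota=\rho^{-1}\colon C'\to C_K\hookrightarrow K$ and equip $K$ with $\iota$ as its $C'$-field structure, yielding the candidate $(K',v)$. I would then verify $(K',v)\models\mathbf{T}_{F/C'}$: henselianity, nontriviality and the equicharacteristic property are intrinsic to $(K,v)$; triviality of $v$ on $\iota(C')=C_K$ is already part of \ref{lem:star}; and the residue-field condition $(Kv,\iota(c)v)_{c\in C'}\equiv(F,c)_{c\in C'}$ reduces to the given $Kv\equiv_C F$ once one notes that $\iota(c)v=\rho(\iota(c))=c$ for every $c\in C'$, so the $C'$-structure on the residue field is just the canonical inclusion $C'\hookrightarrow Kv$.

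To check mutual inverseness, I would observe that the original $C$-structure $\beta\colon C\to K$ of $(K,v)\models\mathbf{T}_F$ factors through $C_K$, and that $\rho$ composed with the resulting $C\to C_K$ coincides with $\alpha\colon C\to C'$, because the residue map commutes with the two structure maps. Hence $\beta=\iota\circ\alpha$, which is precisely the $C$-structure of $K'^{\circ}$, so $(K'^{\circ},v)=(K,v)$. The opposite composition is handled symmetrically: for $(K,v)\models\mathbf{T}_{F/C'}$ the image $\gamma(C')$ of the $C'$-structure $\gamma$ coincides with $C_{K^{\circ}}$ (both are the unique $C$-subfield generated by $\gamma(\alpha(C))$), and the induced $\rho$ inverts $\gamma$, so $(K^{\circ})'=(K,v)$.

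The formula transfer then follows at once. Since $\phi'$ differs from $\phi$ only in that each constant $c\in C$ is replaced by $\alpha(c)\in C'$, and since the interpretation of $c$ in $(K,v)$ as a $C$-field is $\beta(c)=\iota(\alpha(c))$, which is exactly the interpretation of $\alpha(c)$ in $(K',v)$ as a $C'$-field, the sentences $\phi(\mathbf{a})$ and $\phi'(\mathbf{a})$ express the same condition on the same tuple. The only real obstacle is the bookkeeping needed to keep the two structure maps $C\to K$ and $C'\to K$ straight and to check their compatibility via $\alpha$; the hypothesis $(*)$ does all of that work through \ref{lem:star}, which ensures that $\rho$ is a genuine isomorphism rather than merely a ring homomorphism.
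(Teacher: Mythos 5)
Your proposal is correct and follows essentially the same route as the paper: use \autoref{lem:star} to extract the residue-induced isomorphism $C_K\to C_F=C'$, take its inverse to endow $K$ with a $C'$-structure, check this is inverse to $(K,v)\mapsto(K^\circ,v)$, and then read off the formula transfer from \autoref{rem:translate} (which your last paragraph reproves). The paper compresses the inverse-verification and formula-transfer steps to ``obviously'' and ``immediate,'' while you spell out the bookkeeping with $\beta=\iota\circ\alpha$; the underlying argument is identical.
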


\begin{proof}
If $(K,v)\models\mathbf{T}_F$, then \autoref{lem:star} gives an isomorphism $\beta:C_K\rightarrow C_F=C'$,
so $\beta^{-1}:C'\rightarrow K$ turns $K$ into a $C'$-field $K'$ with $(K',v)\models\mathbf{T}_{F/C'}$.
Obviously this is inverse to $(K,v)\mapsto (K^\circ,v)$.
The second claim is then immediate from \autoref{rem:translate}.
\end{proof}

\begin{Corollary}\label{cor:transfer}
Let $F$ be a $C$-field of characteristic exponent $p\geq 1$ that satisfies $(*)$.
\begin{enumerate}
\item Let $\psi(x)$ be an $\exists$-$\mathcal{L}_{\rm vf}(C)$-formula with free variable $x$ belonging to the residue field sort. 
If $\mathbf{T}_{F}\cup\{\forall^{k}x\;\psi(x)\}$ is consistent then there exists $n\in\mathbb{N}$ such that for all $(L,w)\models\mathbf{T}_{F}$ we have $(Lw)^{p^{n}}\subseteq\psi(L)$.
\item Let $\phi$ be an $\exists$-$\mathcal{L}_{\rm vf}(C)$-sentence. If $\mathbf{T}_{F}\cup\{\phi\}$ is consistent, then $\mathbf{T}_{F}\models\phi$.
\item Let $\phi$ be an $\forall$-$\mathcal{L}_{\rm vf}(C)$-sentence. If $\mathbf{T}_{F}\cup\{\phi\}$ is consistent, then $\mathbf{T}_{F}\models\phi$.
\end{enumerate}
\end{Corollary}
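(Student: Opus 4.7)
My plan is to derive each of the three parts from the transfer principles of \cite{AnscombeFehm} (recalled as Facts \ref{fact:transfer} and \ref{fact:transfer.perfect}), bridged by Lemma \ref{rem:star}, which turns questions about $\mathbf{T}_F$ into questions about $\mathbf{T}_{F/C'}$ for $C' = C_F$. The hypothesis $(*)$ plays two roles: via Lemma \ref{lem:star} it makes the bridge of Lemma \ref{rem:star} well-defined (ensuring $v$ is trivial on $C_K$ and that the residue map induces an isomorphism $C_K \to C'$); and it controls the Frobenius shift that appears in Fact \ref{fact:transfer}. I split according to the case of $(*)$: in case (b), $F$ is perfect and the completeness result Fact \ref{fact:transfer.perfect} applies directly; in case (a), $C'$ is algebraic over the prime field and a Frobenius trick absorbs the perfect-hull shift.

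For Part (1), translate $\psi(x)$ to $\psi'(x)$ and use Lemma \ref{rem:star} to obtain the consistency of $\mathbf{T}_{F/C'} \cup \{\forall^{k}x\, \psi'(x)\}$. In case (b), apply Fact \ref{fact:transfer.perfect} to the $\forall^{k}\exists$-sentence $\forall^{k}x\, \psi'(x)$ to conclude it holds in every $(L,w) \models \mathbf{T}_{F/C'}$; translating back gives $\psi(L) = Lw$, and since $Lw$ is perfect (as $F$ is), $(Lw)^{p^{n}} \subseteq Lw = \psi(L)$ for any $n$. In case (a), apply Fact \ref{fact:transfer} to obtain $n$ with $Lw \subseteq \psi'(L^{p^{-n}})$ for every $(L,w) \models \mathbf{T}_{F/C'}$. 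Given $(L,w) \models \mathbf{T}_F$ with $C'$-companion $(L',w)$, pick $a \in Lw$ and an existential witness $\mathbf{b} \in (L')^{p^{-n}}$ for $\psi'(a)$. The Frobenius isomorphism $(L')^{p^{-n}} \to L'$, $x \mapsto x^{p^{n}}$, sends this to a tuple $\mathbf{b}^{p^{n}} \in L'$ satisfying the formula obtained from $\psi'$ by replacing each constant $c$ with $c^{p^{n}}$, evaluated at $a^{p^{n}}$; since $C'$ is algebraic over $\mathbb{F}_p$, the finitely many constants occurring in $\psi'$ lie in some $\mathbb{F}_{p^{k}}$, so after enlarging $n$ to a multiple of $k$ we have $c^{p^{n}} = c$ for all those constants. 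The witness then shows $\psi'(a^{p^{n}})$ holds in $L'$, and translating back via Lemma \ref{rem:star} gives $(L,w) \models \psi(a^{p^{n}})$, i.e.\ $(Lw)^{p^{n}} \subseteq \psi(L)$.

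Part (2) is the specialisation of Part (1) to an empty tuple of free variables: the conclusion $(Lw)^{p^{n}} \subseteq \psi(L)$ then degenerates to the assertion that the sentence $\phi$ holds in every $(L,w) \models \mathbf{T}_F$, i.e., $\mathbf{T}_F \models \phi$. Part (3) follows from Part (2) by contrapositive: if $\phi$ is a $\forall$-sentence with $\mathbf{T}_F \cup \{\phi\}$ consistent, then $\mathbf{T}_F \not\models \neg \phi$; applying Part (2) to the $\exists$-sentence $\neg\phi$ forces $\mathbf{T}_F \cup \{\neg\phi\}$ to be inconsistent, hence $\mathbf{T}_F \models \phi$. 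The main technical obstacle is the Frobenius step in Part (1) case (a), where the algebraicity of $C'$ over the prime ring (case (a) of $(*)$) is used essentially to bring the constants of $\psi'$ back to themselves after applying $\Phi^{n}$; the hypothesis $(*)$ is exactly what rules out the problematic scenarios (non-perfect residue field together with transcendental structure constants) where this bookkeeping would fail.
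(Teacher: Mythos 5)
Your proposal is correct and follows essentially the same route as the paper: translate to $\mathbf{T}_{F/C'}$ via Lemma \ref{rem:star}, split according to case (a) or (b) of $(*)$, invoke Fact \ref{fact:transfer.perfect} in case (b) and Fact \ref{fact:transfer} plus a Frobenius shift in case (a), then derive (2) and (3) formally from (1). The only cosmetic difference is in the case (a) bookkeeping: the paper keeps the $n_1$ produced by Fact \ref{fact:transfer} fixed and applies the $n_1$-fold iterate of the Frobenius that fixes the constants (so the final exponent is $p^{mn_1}$), whereas you first enlarge the transfer exponent to a multiple of $k$ and then apply the Frobenius once; both rely on the same (routine, worth one sentence) observation that the conclusion of Fact \ref{fact:transfer} is preserved when $n$ is increased, because $L^{p^{-n}}\subseteq L^{p^{-n'}}$ and existential formulas go up.
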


\begin{proof}
We first show (1).
Let again $C'=C_F$.
If $\mathbf{T}_{F}\cup\{\forall^{k}x\;\psi(x)\}$ is consistent, then by \ref{rem:star} also
$\mathbf{T}_{F/C'}\cup\{\forall^{k}x\;\psi'(x)\}$ is consistent, 
and it suffices to show that there exists $n\in\mathbb{N}$ such that 
$(Lw)^{p^{n}}\subseteq\psi'(L)$
for all $(L,w)\models\mathbf{T}_{F/C'}$.

If $F$ satisfies (b), then \ref{fact:transfer.perfect} shows that the claim holds for $n=0$.
Now suppose instead that $F$ satisfies (a), i.e.~$C$ is integral over its prime ring. 
There exists a quantifier-free $\mathcal{L}_{\rm vf}$-formula $q(x,\mathbf{y},\mathbf{z})$ such that $\psi'(x)$ is equivalent 
to $\exists\mathbf{y}\;q(x,\mathbf{y},\mathbf{c})$ for some finite tuple $\mathbf{c}\subseteq C'$ of parameters.
Let $(L,w)\models\mathbf{T}_{F/C'}$.
By \ref{fact:transfer}, there exists $n_{1}\in\mathbb{N}$ such that $Lw\subseteq\psi'(L^{p^{-n_{1}}})$, 
i.e.\ $Lw$ is contained in the projection onto the $x$-coordinate of the set of realizations $(a,\mathbf{b})$ of
$q(x,\mathbf{y},\mathbf{c})$ in $L^{p^{-n_{1}}}$,
where we view $C'\subseteq L\subseteq L^{p^{-n_1}}$.
There exists $m\in\mathbb{N}$ such that $\mathbf{c}$ is fixed pointwise by the $m$-th power of the Frobenius endomorphism, i.e. by the map $f:x\longmapsto x^{p^{m}}$. 
Note that the $n_{1}$-th iterated composition of the map $f$ with itself is the map $f^{n_{1}}:x\longmapsto x^{p^{mn_{1}}}$. 
Thus $\mathbf{c}=\mathbf{c}^{p^{mn_{1}}}$.

Now let $a\in\mathcal{O}_w$. Then $aw\in\psi'(L^{p^{-n_{1}}})$, so there exists $\mathbf{b}\subseteq L^{p^{-n_{1}}}$ such that
$$
 (L^{p^{-n_1}},w)\models q(aw,\mathbf{b},\mathbf{c}).
$$
Applying the endomorphism $x\longmapsto x^{p^{mn_{1}}}$ of $L^{p^{-n_1}}$, we get
$$
 (L^{p^{-n_1}},w)\models q(aw^{p^{mn_{1}}},\mathbf{b}^{p^{mn_{1}}},\mathbf{c}).
$$
Since $\mathbf{b}^{p^{mn_{1}}}=(\mathbf{b}^{p^{n_{1}}})^{p^{(m-1)n_{1}}}\subseteq L$ and $q$ is quantifier free, we have $aw^{p^{mn_{1}}}\in\psi'(L)$. Setting $n:=mn_{1}$ completes the proof of $(1)$.

Having shown $(1)$, for $(2)$ we simply view the $\exists$-sentence $\phi$ as an $\forall^{k}\exists$-formula. More precisely, let $x$ be a variable that does not appear in $\phi$. Then $\phi$ is logically equivalent to both $\forall^{k}x\;\psi(x)$ and $\exists^{k}x\;\psi(x)$, with $\psi(x)=\phi$. Our assumption can be restated as: $\mathbf{T}_{F}\cup\{\forall^{k}x\;\psi(x)\}$ is consistent. Now let $(L,w)\models\mathbf{T}_{F}$. Part $(1)$ implies that $(Lw)^{p^{n}}\subseteq\psi(L)$. In particular $\psi(L)$ is non-empty, i.e. $(L,w)\models\exists^{k}x\;\psi(x)$. Therefore $(L,w)\models\phi$ as required.

Finally, $(3)$ follows immediately from $(2)$, since if $\mathbf{T}_F\not\models\phi$, then
$\mathbf{T}_F\cup\{\neg\phi\}$ is consistent, so $\mathbf{T}_F\models\neg\phi$, hence $\mathbf{T}_F\cup\{\phi\}$ is inconsistent.
\end{proof}

\subsection{The valuation ideal}

As before, $\mathcal{F}$ denotes an elementary class of $C$-fields that satisfies $(*)$. 
In this subsection we focus on the property $({\rm CM}_{\phi})$. 
As remarked above,
we cannot simply apply \autoref{cor:transfer}
to conclude from $(K,v)\models({\rm CM}_{\phi})$ that $(L,w)\models({\rm CM}_{\phi})$
for all $(L,w)\models\mathbf{T}_F$.
Instead we make use of the weaker property $({\rm IM}_{\phi})$ and proceed as follows. 
First we show in \autoref{lem:embedding.t} that
$$
(F(t)^{h},v_{t})\models({\rm CM}_{\phi})\implies\mathbf{T}_{F}\models({\rm CM}_{\phi}).
$$
Then, for each $F\in\mathcal{F}$, we use \autoref{cor:transfer} to `transfer' $({\rm IM}_{\phi})$, so that $(F(t)^{h},v_{t})\models({\rm IM}_{\phi})$. This allows us to identify a formula $\phi_{n}$ such that $(F(t)^{h},v_{t})\models({\rm CM}_{\phi_{n}})$. Finally, we use a compactness argument to find $\phi_{n}$ uniformly across $\mathcal{F}$.

\begin{Lemma}\label{lem:embedding.t}
Let $F$ be a $C$-field satisfying $(*)$ and
let $\phi(x)$ be an $\exists$-$\mathcal{L}_{\rm vf}(C)$-formula
with free variable $x$ belonging to the field sort. 
Suppose that $(F(t)^{h},v_{t})\models\phi(t)\wedge\phi(0)$. 
Then $\mathbf{T}_{F}\models({\rm CM}_\phi)$.
\end{Lemma}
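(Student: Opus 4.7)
The aim is to show $(L,w)\models\phi(a)$ for each $(L,w)\models\mathbf{T}_{F}$ and each $a\in L$ with $w(a)>0$.

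The case $a=0$ is immediate: $\phi(0)$ is an $\exists$-$\mathcal{L}_{\rm vf}(C)$-sentence (as $0$ is a constant of $\mathcal{L}_{\rm vf}$) that holds in $(F(t)^{h},v_{t})\models\mathbf{T}_{F}$, hence by \autoref{cor:transfer}(2) it holds in every model of $\mathbf{T}_{F}$.

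For $a\neq 0$ the plan is to produce an $\mathcal{L}_{\rm vf}(C)$-embedding $\iota\colon(F(t)^{h},v_{t})\hookrightarrow(L^{*},w^{*})$ with $\iota(t)=a$, where $(L^{*},w^{*})\succeq(L,w)$ is a sufficiently saturated elementary extension. Since $\phi$ is existential it passes upward along $\mathcal{L}_{\rm vf}(C)$-embeddings, so the hypothesis $(F(t)^{h},v_{t})\models\phi(t)$ will give $(L^{*},w^{*})\models\phi(a)$, from which $(L,w)\models\phi(a)$ follows by elementarity.

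The construction of $\iota$ proceeds in three stages. First, since $L^{*}w^{*}\equiv F$ and $(L^{*},w^{*})$ is sufficiently saturated, saturation of the residue sort yields a $C$-embedding $\bar{\sigma}\colon F\hookrightarrow L^{*}w^{*}$. Second, using hypothesis $(*)$ I lift $\bar{\sigma}$ to a $C$-embedding $\sigma\colon F\hookrightarrow\mathcal{O}_{w^{*}}$ whose composition with the residue map recovers $\bar{\sigma}$: by \autoref{lem:star} the subfield $C_{F}$ lifts canonically; a transcendence basis of $F$ over $C_{F}$ is lifted fiberwise through the surjective residue map (and such lifts are automatically algebraically independent, as their residues are); and algebraic elements are obtained by Hensel's Lemma in the henselian valued field $(L^{*},w^{*})$, with separability automatic in case (*)(b) by perfectness of $F$, and inseparability in case (*)(a) handled by a Frobenius-pullback argument analogous to the proof of \autoref{cor:transfer}. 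Third, $w^{*}$ is trivial on $\sigma(F)$ because $\bar{\sigma}$ is injective; this makes $a\in\mathfrak{m}_{w^{*}}\setminus\{0\}$ automatically transcendental over $\sigma(F)$, for any algebraic relation $\sum_{i}\sigma(c_{i})a^{i}=0$ with $c_{0}\neq 0$ would force $0=w^{*}(\sigma(c_{0}))$ to equal $\min_{i\geq 1}iw^{*}(a)>0$. Hence $\iota_{0}\colon F(t)\to L^{*}$ defined by $\iota_{0}|_{F}=\sigma$ and $\iota_{0}(t)=a$ is a well-defined field embedding, the restricted valuation on $\iota_{0}(F(t))$ coincides with $v_{t}$, and henselianity of $w^{*}$ extends $\iota_{0}$ uniquely to the desired $\iota\colon(F(t)^{h},v_{t})\hookrightarrow(L^{*},w^{*})$.

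The main obstacle is the lifting in the second stage: securing a $C$-embedding $F\hookrightarrow\mathcal{O}_{w^{*}}$ that realizes $\bar{\sigma}$ upon residuing. This is where hypothesis $(*)$ is essential, both through \autoref{lem:star} (to pin down the $C$-structure on the residue) and for handling inseparability of algebraic lifts via the Frobenius arguments already used in the proof of \autoref{cor:transfer}.
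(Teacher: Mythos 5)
Your proof is correct and takes essentially the same route as the paper: lift $F$ into a sufficiently saturated model as a partial section of the residue map (using $(*)$ and saturation, as in \cite[Lemma 2.3]{AnscombeFehm}), note that $a$ is automatically transcendental over the lifted copy, extend to an $\mathcal{L}_{\rm vf}(C)$-embedding of $(F(t)^{h},v_{t})$ with $t\mapsto a$ via henselianity, and transfer $\phi(t)\wedge\phi(0)$ upward along the embedding. One clarification: the inseparability you anticipate in case (*)(a) does not actually arise, since $C_F$ is perfect in both cases of $(*)$ (being an algebraic extension of the prime field in case (a)) and hence $F/C_F$ is separable --- this is precisely what makes the lifting (and the cited Lemma 2.3 of \cite{AnscombeFehm}) go through, so no Frobenius-pullback is needed here.
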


\begin{proof}
Let $(K,v)\models\mathbf{T}_{F}$. We aim to show that $(K,v)\models({\rm CM}_{\phi})$, i.e $\mathfrak{m}_{v}\subseteq\phi(K)$. 

By passing, if necessary, to an elementary extension, we may assume that $(K,v)$ is $|F|^{+}$-saturated. 
Since $F\equiv Kv$, there is an $\mathcal{L}_{\mathrm{ring}}(C)$-embedding $F\longrightarrow Kv$. 
%Let $C_K$ resp.\ $C_F$ denote the image of the structure homomorphism $C\rightarrow K$ resp.\ $C\rightarrow F$.
By $(*)$, the residue map induces an isomorphism $C_K\rightarrow C_F$ (see \ref{lem:star}).
Let $f:C_F\rightarrow C_K$ denote its inverse.
By \cite[Lemma 2.3]{AnscombeFehm}, 
we can extend $f$ as a partial section of the residue map to any finitely generated subextension of $F/C_F$. 
By saturation, $f$ extends to an $\mathcal{L}_{\rm vf}(C)$-embedding $f:(F,v_{0})\longrightarrow(K,v)$, which is also a partial section of the residue map, where $v_{0}$ denotes the trivial valuation on $F$.

Let $s\in\mathfrak{m}_{v}\setminus\{0\}$ and note that $s$ is transcendental over $f(F)$. By sending $t\longmapsto s$, we may extend $f$ to an $\mathcal{L}_{\rm vf}(C)$-embedding $(F(t),v_{t})\longrightarrow(K,v)$. Since $(K,v)$ is henselian, we may again extend $f$ to an $\mathcal{L}_{\rm vf}(C)$-embedding $(F(t)^{h},v_{t})\longrightarrow(K,v)$. Since existential sentences `go up', we have that $s\in\phi(K)$, and also that $0\in\phi(K)$. This shows that $\mathfrak{m}_{v}\subseteq\phi(K)$.
\end{proof}

\begin{Lemma}\label{lem:henselian.approx}
Let $(K,v)$ be a henselian nontrivially valued field, let $E\subseteq K$ be a subfield, and let $b\in K$ be separably algebraic over $E$. 
Suppose that $(K,v)$ is $|E|^{+}$-saturated. 
Then there exists $b'\in K$ which is transcendental over $E$ such that $b\in E(b')^{h}$, the henselisation of $E(b')$ with respect to $v$.
\end{Lemma}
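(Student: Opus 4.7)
The plan is to choose $b'\in K$ transcendental over $E$ but so close to $b$ in the $v$-topology that Hensel's lemma applied to the minimal polynomial of $b$ recovers $b$ itself as a Hensel-root inside $E(b')^{h}$. Let $f(X)\in E[X]$ be the separable minimal polynomial of $b$ over $E$, so that $f'(b)\neq 0$. A direct computation with the formal Taylor expansion of $f$ and of $f'$ around $b$ produces a threshold $\gamma\in vK$ (depending on $f$ and the values $v(f^{(i)}(b))$) such that for every $b'\in K$ with $v(b'-b)>\gamma$ one has
\[
 v(f'(b'))=v(f'(b))\quad\text{and}\quad v(f(b'))=v(f'(b))+v(b'-b);
\]
in particular $v(f(b'))>2v(f'(b'))$ and $v(b-b')>v(f'(b'))$, so both the existence and the uniqueness clause of Hensel's lemma at $b'$ will be available.

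Next, I would use the nontriviality of $v$ together with $|E|^{+}$-saturation to locate such a $b'$ that is additionally transcendental over $E$. Since $v$ is nontrivial, pick $a\in K^{\times}$ with $v(a)>\gamma$. Let $\Sigma(x)$ be the partial type over $E\cup\{a,b\}$ consisting of the formula $v(x-b)>v(a)$ together with $p(x)\neq 0$ for each nonzero $p(X)\in E[X]$. This type has cardinality at most $|E|$, and it is finitely satisfiable: the ball $\{x\in K : v(x-b)>v(a)\}$ is an infinite coset of an additive subgroup (because $v$ is nontrivial, so $\{x\in K : v(x)>v(a)\}$ is infinite), while each nonzero $p\in E[X]$ has only finitely many roots in $K$. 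Hence by $|E|^{+}$-saturation some $b'\in K$ realises $\Sigma$; such a $b'$ is transcendental over $E$ and satisfies $v(b'-b)>\gamma$.

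Finally, I would apply Hensel's lemma in the henselian field $E(b')^{h}$, which embeds canonically into $K$ via the universal property of the henselisation, to obtain some $\beta\in E(b')^{h}$ with $f(\beta)=0$ and $v(\beta-b')>v(f'(b'))$. But $b\in K$ also satisfies $f(b)=0$ and $v(b-b')>v(f'(b'))$, so the uniqueness clause of Hensel's lemma, applied inside the henselian field $K$, forces $\beta=b$. Hence $b\in E(b')^{h}$, as required. The most delicate step is the saturation argument: it must simultaneously secure transcendence of $b'$ over $E$ and the valuation estimate $v(b'-b)>\gamma$, and this works precisely because $v$ is nontrivial, so that the ball around $b$ is genuinely infinite and can avoid the finitely many polynomial zeros ruled out in each finite subtype.
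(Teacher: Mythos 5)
Your argument is correct, but it follows a genuinely different technical route from the paper's. The paper invokes Krasner's Lemma: it sets $\gamma:=\max\{v(\sigma b-b):\sigma\in{\rm Gal}(E),\,\sigma b\neq b\}$, uses cofinality of $vE$ in $vE^{\rm alg}$ to find $\gamma'\in vE$ with $\gamma\leq\gamma'$, picks a transcendental $b'$ with $v(b'-b)\geq\gamma'$ by the same saturation argument you give, and then concludes $b\in E(b')^h(b')=E(b')^h$ directly from Krasner. You instead pass to the minimal polynomial $f$ of $b$, estimate the Taylor coefficients to obtain a Hensel-compatible threshold, and then apply Hensel's Lemma twice (existence in the henselian field $E(b')^h$, uniqueness in $K$) to identify $b$ as the Hensel lift of $b'$. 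The two strategies are close cousins, since Krasner and Hensel are largely interchangeable in such approximation arguments; yours avoids mentioning Galois conjugates at all, at the price of needing the explicit Taylor estimates and a normalization step (to bring $f$ and $b'$ into $\mathcal{O}_v$) before Hensel's Lemma in the form of \cite[Theorem 4.1.3]{EP} applies. Two small points worth making explicit in a final write-up: in positive characteristic the ``Taylor coefficients'' should be read as the Hasse divided derivatives $c_i$ in $f(X)=\sum_i c_i(X-b)^i$ rather than $f^{(i)}(b)/i!$, and the threshold $\gamma$ should be taken $\geq 0$ so that $v(b'-b)>\gamma$ forces $v(b'-b)>0$, which you tacitly use in the estimates.
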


\begin{proof}
Let $v$ denote the unique extension of $v$ to $K^{\rm alg}$ and let
$$
 \gamma:=\max\{v(\sigma b-b): \sigma\in{\rm Gal}(E),\sigma b\neq b\} \in v E^{\rm alg}.
$$ 
Since $vE\subseteq vK$ is cofinal in $v E^{\rm alg}$, there exists $\gamma'\in vE$ such that $\gamma\leq\gamma'$. 
Then ${\rm B}_K(\gamma',b)=\{x\in K\;|\;v(x-b)\geq\gamma'\}$ is an infinite definable subset of $K$, %In fact $|B(\gamma';a)|=|K|$. 
so by saturation, there exists $b'\in {\rm B}_K(\gamma',b)$ which is transcendental over $E$. 
Then $b$ is also separably algebraic over $E(b')^h$, and
$$
 \max\{v(\sigma b -b):\sigma\in{\rm Gal}(E(b')^h) ,\sigma b\neq b\} \leq \gamma\leq\gamma'\leq v(b'-b),
$$
so Krasner's Lemma \cite[Theorem 4.1.7]{EP} implies that $b\in E(b')^h(b')=E(b')^h$.
\end{proof}

\begin{Lemma}\label{lem:relative.inseparable.closure.of.singleton.over.perfect.subfield}
Let $C'\subseteq F\subseteq K$ be a tower of fields of characteristic exponent $p$ and let $a\in K$. 
Suppose that $C'$ is perfect and $F/C'$ is finitely generated. 
Then there exists $m\in\mathbb{N}$ such that $a^{p^{-m}}\in F(a)$ and $F(a)$ is separable over $C'(a^{p^{-m}})$.
\end{Lemma}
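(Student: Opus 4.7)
My plan is to dispose first of the trivial case $p=1$ (characteristic zero, where $a^{p^{-m}}=a$ and every finitely generated field extension is separable) and of the case that $a$ is algebraic over $C'$: since $C'$ is perfect, $a$ is separable algebraic over $C'$, so $C'(a)$ is a finite separable, hence perfect, extension of $C'$; then $F(a)/C'(a)$ is finitely generated over the perfect field $C'(a)$ and therefore separable, so $m=0$ works. So assume $p>1$ and that $a$ is transcendental over $C'$.

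Set $L=F(a)$ and $r=\mathrm{tr.deg}(L/C')$. Since $L/C'$ is finitely generated over the perfect field $C'$, the standard formula gives $[L:L^p]=p^r$. The strategy is to take $m$ to be the maximum of the (nonempty) set $M:=\{n\in\mathbb{N}\;|\;a^{p^{-n}}\in L\}$ and to set $b:=a^{p^{-m}}$. The key step is to show that $M$ is bounded above. Suppose otherwise. Then $L$ would contain $\bigcup_n C'(a^{p^{-n}})=C'(a)^{1/p^{\infty}}$, which is a perfect subfield of $L$ of transcendence degree $1$ over $C'$. Since $L$ is finitely generated over this perfect subfield as well (the same generators work), the same formula would give $[L:L^p]=p^{r-1}$, a contradiction.

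With $m$ thus well-defined, the maximality forces $b^{1/p}=a^{p^{-(m+1)}}\notin L$; since $p$-th roots are unique in characteristic $p$, this means $b\notin L^p$, so $\{b\}$ is $p$-independent in $L$. I extend it to a $p$-basis $\{b,t_2,\ldots,t_r\}$ of $L$. By the standard correspondence between $p$-bases and separating transcendence bases for finitely generated extensions of perfect fields, this set is algebraically independent over $C'$ and $L/C'(b,t_2,\ldots,t_r)$ is finite separable. Since $C'(b,t_2,\ldots,t_r)/C'(b)$ is purely transcendental, hence separable, composing gives that $F(a)=L$ is separable over $C'(b)=C'(a^{p^{-m}})$, completing the proof. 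The main obstacle is the imperfection-degree computation bounding $M$; once this is done and $b$ is correctly identified, the rest is a routine application of the $p$-basis/separating-transcendence-basis dictionary.
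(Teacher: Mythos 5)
Your proof is correct, and it follows a genuinely different route from the paper's. The paper works through the tower $C'(a)\subseteq E_s\subseteq E_a\subseteq F(a)$, where $E_a$ (resp.~$E_s$) is the relative algebraic (resp.~separable algebraic) closure of $C'(a)$ in $F(a)$: it invokes \cite[2.7.2, 2.7.5]{FriedJarden} to get that $F(a)/E_a$ is regular, invokes \cite[3.10.2]{Sti} (a statement about one-variable function fields) to get $E_s=E_a^{p^m}$ for some $m$, and then transports the separability of $E_s/C'(a)$ across the Frobenius isomorphism to conclude that $E_a/C'(a^{p^{-m}})$ is separable. You instead argue intrinsically with the $p$-structure of $L=F(a)$: you use the imperfection-degree formula $[L:L^p]=p^{\mathrm{tr.deg}(L/C')}$ to show the set $\{n : a^{p^{-n}}\in L\}$ is finite, take $m$ to be its maximum, observe that $b=a^{p^{-m}}$ is then $p$-independent, extend $\{b\}$ to a $p$-basis, and use the $p$-basis/separating-transcendence-basis dictionary over the perfect field $C'$. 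Your approach is a bit longer on paper but is essentially self-contained (it only uses standard facts about $p$-bases and imperfection degree, with no appeal to the machinery of function fields of curves), and it has the pleasant side benefit of producing the canonical, maximal choice of $m$; the paper's approach is terser but outsources the two nontrivial steps to Fried--Jarden and Stichtenoth. One small remark: you don't strictly need to separate out the case where $a$ is algebraic over $C'$, since your main argument handles it (there $r$ is the transcendence degree of $F$, $\{a^{p^{-n}}\}$ stays inside the perfect field $C'(a)^{\mathrm{alg}}\cap L$, but the boundedness argument still goes through because the imperfection degree over a perfect subfield of the same transcendence degree is $1$); but treating it separately costs nothing and is perfectly fine.
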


\begin{proof}
Let $E_{a}$ (respectively, $E_{s}$) denote the relative algebraic (resp., separable algebraic) closure of $C'(a)$ in $F(a)$. 
Since $F/C'$ is finitely generated, so is $E_a/C'$,
hence \cite[2.7.2 and 2.7.5]{FriedJarden} shows that $F(a)/E_{a}$ is regular, in particular separable. 
By \cite[3.10.2]{Sti}, $E_{s}=E_{a}^{p^{m}}$, for some $m\geq 0$. 
Since $E_{s}$ is separable over $C'(a)$, by the Frobenius isomorphism we have that $E_{a}$ is separable over $C'(a)^{p^{-m}}=C'(a^{p^{-m}})$.
$$
\xymatrix{
 F\ar@{-}[rr]\ar@{-}[dd] & & F(a)\ar@{-}[d]^{\rm reg.} \\
 & E_s\ar@{-}[r]^{\rm p.i.}\ar@{-}[d]^{\rm sep.} & E_a\ar@{-}[d]^{\rm sep.} \\
 C'\ar@{-}[r] & C'(a)\ar@{-}[r]^{\rm p.i.} & C'(a^{p^{-m}}) \\
}
$$
Therefore $F(a)$ is separable over $C'(a^{p^{-m}})$.
\end{proof}

\begin{Definition}
For a field $K$, subsets $X,Y\subseteq K$ and $n\in\mathbb{N}$ we use the notations
\begin{eqnarray*}
 X^{(n)} &:=& \{ x^n : x\in X\},\\
 X\pm Y &:=& \{x\pm y:x\in X,y\in Y\}.
\end{eqnarray*}
\end{Definition}

\begin{Proposition}\label{prp:monkey.lemma} %[{\color{magenta}\bf Monkey Lemma, version V}]
Let $F$ be a $C$-field that satisfies $(*)$
and let $(K,v)$ be an equicharacteristic henselian valued $C$-field with value group $vK=\mathbb{Z}$ and residue field $Kv=F$. 
For any $\exists$-$\mathcal{L}_{\rm vf}(C)$-formula $\phi(x)$ with free variable from the field sort and $(K,v)\models({\rm IM}_\phi)$
there exists $n\in\mathbb{N}$ such that
$$
 \mathfrak{m}_{v}^{(n)}\subseteq\phi(K)-\phi(K).
$$ 
\end{Proposition}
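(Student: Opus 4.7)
The plan is to produce, possibly after passing to a sufficiently saturated elementary extension $(K^*,v^*)\succeq(K,v)$, an element $c'\in\mathfrak{m}_{v^*}\cap\phi(K^*)$ such that $\phi(K^*)$ contains an entire open ball $c'+\mathfrak{m}_{v^*}^{\geq N}$ for some $N\in\mathbb{Z}$. Translating by $c'$ then gives $\mathfrak{m}_{v^*}^{\geq N}\subseteq\phi(K^*)-\phi(K^*)$, and since $v^*(x^n)\geq n$ for $x\in\mathfrak{m}_{v^*}$, any $n\geq N$ yields $\mathfrak{m}_{v^*}^{(n)}\subseteq\phi(K^*)-\phi(K^*)$. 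Because for each fixed $x\in\mathfrak{m}_v$ the statement ``$x^n\in\phi(K)-\phi(K)$'' is an existential formula with parameter $x\in K$, elementarity passes it down from $(K^*,v^*)$ to $(K,v)$, delivering the proposition.

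To locate such a $c'$, use $({\rm IM}_\phi)$ to fix $c\in\mathfrak{m}_v\setminus\{0\}$ and a tuple $\mathbf{d}$ with $q(c,\mathbf{d})$, writing $\phi(x)=\exists\mathbf{y}\,q(x,\mathbf{y})$ with $q$ quantifier-free, and set $E:=C_K(\mathbf{d}_{\mathrm{field}})$. Expand $C$ by $\mathbf{d}_{\mathrm{field}}$ as constants and apply \ref{Lemma:qfdefinable} to the quantifier-free formula $q(x,\mathbf{d})$: its solution set in $(K^*,v^*)$ is $A\cup U$, with $A$ finite and algebraic over $E$ and $U$ open. If $c$ is transcendental over $E$, then $c\notin A$, so $c\in U$, and some ball $c+\mathfrak{m}_{v^*}^{\geq N}\subseteq U\subseteq\phi(K^*)$ is of the desired form.

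The remaining case is that $c$ is algebraic over $E$. Here one perturbs $c$ to a nearby transcendental $c_1\in K^*$ via \ref{lem:henselian.approx}, and then must reconstruct a witness $\mathbf{d}_1$ for $q(c_1,\mathbf{d}_1)$ by combining henselianity with a Krasner-type approximation of the witness tuple; the inseparability analysis of \ref{lem:relative.inseparable.closure.of.singleton.over.perfect.subfield} controls the purely inseparable factor $p^m$ in the extension, and is precisely where hypothesis $(\ast)$ enters. The resulting $p^m$-th power is absorbed into the final exponent $n$, which is why the conclusion is phrased in terms of $\mathfrak{m}_v^{(n)}$ rather than $\mathfrak{m}_v^{\geq n}$. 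This rewitnessing is the main obstacle: the existential witnesses of $\phi$ may depend algebraically on the field element, so moving $c$ off its algebraic locus requires transporting the whole witness tuple coherently with the henselian structure and with inseparable bookkeeping, and it is here that the delicate technical work of the section is concentrated.
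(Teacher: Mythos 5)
Your high-level strategy is right: locate an $a\in\mathfrak{m}_v\cap\phi(K)$ such that $\phi$ holds on a whole ball around $a$ (after a Frobenius twist), deduce that $\phi(K)-\phi(K)$ contains $n$-th powers of $\mathfrak{m}_v$, and use \ref{Lemma:qfdefinable} to extract the ball. The ``transcendental case'' of your argument is essentially a degenerate version of what the paper does, and your passage down from $(K^*,v^*)$ to $(K,v)$ is fine.

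However, the algebraic case is the generic case (the witness tuple $\mathbf{d}$ typically determines $c$ algebraically), and there you have a genuine gap. You propose to perturb $c$ to a nearby transcendental $c_1$ and then ``reconstruct a witness $\mathbf{d}_1$ for $q(c_1,\mathbf{d}_1)$ by combining henselianity with a Krasner-type approximation of the witness tuple.'' That is not what happens, and it is not clear it can be made to work: if you just move $c$ off its algebraic locus you have no guarantee that any $\mathbf{d}_1$ exists at all, and Krasner gives you no handle on the full tuple. The paper's actual maneuver is different. First the inseparability is dealt with \emph{up front}, not during the approximation step: one passes from $a$ to $a'=a^{p^{-m}}$ using \ref{lem:relative.inseparable.closure.of.singleton.over.perfect.subfield}, so that $C'(a',\mathbf{b})/C'(a')$ is separable. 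One then splits $\mathbf{b}=(\mathbf{b}_1,\mathbf{b}_2)$ with $\mathbf{b}_1$ a transcendence base over $C'(a')$, chooses a primitive element $b$ for the remaining separable algebraic extension over $E:=C'(a',\mathbf{b}_1)$, and applies \ref{lem:henselian.approx} to perturb $b$ --- not $a$ --- to a transcendental $b'$ with $b\in E(b')^h$. The payoff is that $a'$ is now transcendental over $D:=C'(\mathbf{b}_1,b')$. The witness is not ``reconstructed'' by approximation; it is \emph{transported}: for any $\hat{a}$ realizing the quantifier-free $\mathcal{L}_{\rm vf}(D)$-type of $a'$, the isomorphism $D(a')^h\to D(\hat{a})^h$ (coming from the universal property of henselization) carries $\mathbf{b}\subseteq D(a')^h$ to a new tuple inside $K^*$, yielding $\phi(\hat{a}^{p^m})$. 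Only then does compactness plus \ref{Lemma:qfdefinable} give a ball around $a'$ in the set defined by $\phi(x^{p^m})$. So the key missing idea in your sketch is this re-parametrization of the problem (perturb a primitive element of the witness extension to make $a'$ transcendental over a new base $D$, then transport the witness through a henselization isomorphism), and your placement of the inseparability bookkeeping inside the approximation step, rather than before it, would not actually let you invoke \ref{lem:henselian.approx}, which requires separable algebraicity.
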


\begin{proof}
By the assumptions,
$\phi(x)$ is logically equivalent to $\exists\mathbf{y}\;q(x;\mathbf{y})$
with a quantifier-free $\mathcal{L}_{\rm vf}(C)$-formula $q(x;\mathbf{y})$,
and 
there exist $0\neq a\in\mathfrak{m}_{v}$ and a $\mathbf{y}$-tuple $\mathbf{b}\subset K$ such that
$(K,v)\models q(a;\mathbf{b})$.

Let $C'=C_F$ and note that $a$ is transcendental over $C'$, by $(*)$.
Since $C'$ is perfect, we may apply \autoref{lem:relative.inseparable.closure.of.singleton.over.perfect.subfield} 
to the tower $C'\subseteq C'(\mathbf{b})\subseteq K$ of field extensions and the element $a\in K$
to find $m\in\mathbb{N}$ such that $a':=a^{p^{-m}}\in C'(a,\mathbf{b})$ and $C'(a',\mathbf{b})$ is separable over $C'(a')$,
where $p$ denotes the characteristic exponent of $F$.
By separability and reordering the tuple $\mathbf{b}$ if necessary, we may write $\mathbf{b}=(\mathbf{b}_{1},\mathbf{b}_{2})$ such that $\mathbf{b}_{1}$ is algebraically independent over $C'(a')$ and $C'(a',\mathbf{b})$ is separably algebraic over $E:=C'(a',\mathbf{b}_{1})$. 
%Since $\mathbf{b}_{2}$ is a finite tuple, there exists a singleton 
Let $b\in C'(a',\mathbf{b})$ be a primitive element of this extension, so that $C'(a',\mathbf{b})=E(b)$.

Applying \autoref{lem:henselian.approx} to $E$, there is an elementary extension
$(K,v)\preceq(K^{*},v^{*})$ and an element $b'\in K^{*}$ which is transcendental over $E$ such that $b\in E(b')^{h}$, 
where the henselisation is taken with respect to the restriction of $v^{*}$. 
Consequently $C'(a',\mathbf{b})\subseteq E(b')^{h}$.
$$
\xymatrix{
  D\ar@{-}[r]\ar@{-}[d] & E(b')\ar@{-}[r]\ar@{-}[d]^{\rm p.tr.} & E(b')^h\ar@{-}[r]\ar@{-}[d] & K^*\ar@{-}[d] \\
  C'(\mathbf{b}_1)\ar@{-}[r]\ar@{-}[d] & E\ar@{-}[r]^{\rm sep. alg.}\ar@{-}[d]^{\rm p.tr.} & E(b)\ar@{-}[r] & K \\
  C'\ar@{-}[r]^{\rm p.tr.} & C'(a') & &
}
$$
Let $D:=C'(\mathbf{b}_{1},b')$ and note that $a'$ is transcendental over $D$. 
Let $\Psi(x)$ be the quantifier-free $\mathcal{L}_{\rm vf}$-type of $a'$ over $D$,
i.e.\ the set of quantifier-free $\mathcal{L}_{\rm vf}(D)$-formulas $\psi(x)$ in one free variable $x$ belonging to the field sort
that satisfy $(K^*,v^*)\models \psi(a')$. 
If some $\hat{a}\in K^{*}$ realises $\Psi(x)$ then there is an $\mathcal{L}_{\rm vf}(D)$-isomorphism $D(a')\longrightarrow D(\hat{a})$ with $a'\longmapsto\hat{a}$. 
By the universal property of the henselisation, it extends to an $\mathcal{L}_{\rm vf}(D)$-isomorphism $D(a')^{h}\longrightarrow D(\hat{a})^{h}$. 
Since $C'(a',\mathbf{b})\subseteq E(b')^{h}=D(a')^{h}$ and $D(\hat{a})^{h}\subseteq K^{*}$, this implies $(K^{*},v^{*})\models\phi(\hat{a}^{p^{m}})$. Thus, by compactness, there is a single quantifier-free $\mathcal{L}_{\rm vf}(D)$-formula $\psi(x)\in\Psi(x)$ such that 
$$
 (K^{*},v^{*})\models\;\forall x\;(\psi(x)\longrightarrow\phi(x^{p^{m}})).
$$
By \ref{Lemma:qfdefinable}, $\psi(K^*)=A\cup U$ with $A$ algebraic over $D$ and $U$ open.
As $a'\in \psi(K^*)$ is transcendental over $D$, we get that
the set defined by $\phi(x^{p^m})$ in $K^*$ contains 
a ball ${\rm B}_{K^*}(\gamma,a')$
for some $\gamma\in v^*K^*$.
As $(K,v)\prec(K^*,v^*)$, the set defined by $\phi(x^{p^m})$ in $K$
therefore contains a ball ${\rm B}_K(l,a')$
for some $l\in vK=\mathbb{Z}$. 
Without loss of generality, $l\in\mathbb{N}$.

Finally, let $n:=lp^{m}$ and let $c\in\mathfrak{m}_{v}^{(n)}$.
Then 
$$
 c':=c^{p^{-m}}\in\mathfrak{m}_{v}^{(l)}\subseteq {\rm B}_K(l,a')-a',
$$
and so $(K,v)\models\phi((c'+a')^{p^m})$,
i.e.~$(K,v)\models\phi(c+a)$. %, since $c+a=c'^{p^{m}}+a'^{p^{m}}$. 
Therefore $c\in\phi(K)-a\subseteq\phi(K)-\phi(K)$.
\end{proof}

\begin{Remark}\label{rem:phi_n}
In the setting of \autoref{prp:monkey.lemma},
if we denote by $\phi_n(x)$ the $\exists$-$\mathcal{L}_{\rm vf}(C)$-formula
$$
 \exists y \exists z\; (x^n=y-z\wedge\phi(y)\wedge\phi(z)),
$$
then
$$
 (K,v)\models({\rm CM}_{\phi_{n}}).
$$ 
Furthermore, $\phi_n(x)$ defines the set of $n$-th roots of the set of differences between elements in the set defined by $\phi(x)$. 
Thus if $\phi(x)$ defines a subset of $\mathcal{O}_{v}$ (respectively, $\mathfrak{m}_{v}$) then so does $\phi_{n}(x)$, 
i.e. for all $n\in\mathbb{N}$ the theory of valued fields entails
$$
 ({\rm SO}_{\phi})\longrightarrow({\rm SO}_{\phi_{n}})
$$
and
$$
 ({\rm SM}_{\phi})\longrightarrow({\rm SM}_{\phi_{n}}).
$$
This fact is used several times in the proof of \autoref{prp:GDM.version.2}.
\end{Remark}

\subsection{Making a definition uniform}

In the rest of this section we allow ourselves to write $({\rm CM}_{\phi(x)})$ (rather than simply $({\rm CM}_{\phi})$), when needed, to highlight the r\^{o}le of the variable $x$ and to allow us the flexibility to substitute other terms in its place.

\begin{Lemma}\label{lem:bounded.roots.compactness}
Let $\mathcal{F}$ be an elementary class of $C$-fields.
Let $\phi(x)$ be an $\exists$-$\mathcal{L}_{\rm vf}(C)$-formula
with free variable $x$ belonging to the field sort. 
Suppose that for all $F\in\mathcal{F}$ there exists $n_{F}\in\mathbb{N}$ such that for all $(K,v)\models\mathbf{T}_{F}$ we have
$$
 \mathfrak{m}_{v}^{(n_{F})}\subseteq\phi(K)\qquad\left(\text{resp. }\mathcal{O}_{v}^{(n_{F})}\subseteq\phi(K)\right).
$$
Then there exists $n\in\mathbb{N}$ such that for all $(K,v)\models\mathbf{T}_{\mathcal{F}}$ we have
$$
 \mathfrak{m}_{v}^{(n)}\subseteq\phi(K)\qquad\left(\text{resp. }\mathcal{O}_{v}^{(n)}\subseteq\phi(K)\right).
$$
\end{Lemma}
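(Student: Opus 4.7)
The plan is a standard compactness argument; I'll do the $\mathfrak{m}$-case, the $\mathcal{O}$-case being entirely analogous with $v(x) > 0$ replaced by $v(x) \geq 0$ throughout. Suppose for contradiction that no uniform $n$ exists, so that for every $n \in \mathbb{N}$ there is some $(K_n, v_n) \models \mathbf{T}_{\mathcal{F}}$ and $a_n \in \mathfrak{m}_{v_n}$ with $a_n^n \notin \phi(K_n)$. For each $n$, define the $\mathcal{L}_{\rm vf}(C)$-sentence
$$
 \psi_n \;:=\; \exists x\,\bigl(v(x)>0 \;\wedge\; \neg\phi(x^n)\bigr),
$$
and consider the $\mathcal{L}_{\rm vf}(C)$-theory $\Sigma := \mathbf{T}_{\mathcal{F}} \cup \{\psi_n : n \in \mathbb{N}\}$.

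Next I verify that $\Sigma$ is finitely satisfiable, which is the only step where one must be a little careful. Given indices $n_1, \dots, n_k \in \mathbb{N}$, set $N := \mathrm{lcm}(n_1, \dots, n_k)$. By the contradiction hypothesis applied to $N$, there exist $(K,v) \models \mathbf{T}_{\mathcal{F}}$ and $a \in \mathfrak{m}_v$ with $a^N \notin \phi(K)$. For each $i$ let $x_i := a^{N/n_i}$; then $v(x_i) = (N/n_i) v(a) > 0$, so $x_i \in \mathfrak{m}_v$, and $x_i^{n_i} = a^N \notin \phi(K)$, so $(K,v) \models \psi_{n_i}$. Hence $(K,v) \models \mathbf{T}_{\mathcal{F}} \cup \{\psi_{n_1}, \dots, \psi_{n_k}\}$, establishing finite satisfiability.

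By compactness, $\Sigma$ has a model $(K^*, v^*)$. Since $\mathcal{F}$ is an elementary class, $F := K^* v^* \in \mathcal{F}$, so $(K^*, v^*) \models \mathbf{T}_F$. Applying the hypothesis to $F$ yields some $n_F \in \mathbb{N}$ with $\mathfrak{m}_{v^*}^{(n_F)} \subseteq \phi(K^*)$. But $(K^*, v^*) \models \psi_{n_F}$ provides $b \in \mathfrak{m}_{v^*}$ with $b^{n_F} \notin \phi(K^*)$, contradicting $b^{n_F} \in \mathfrak{m}_{v^*}^{(n_F)} \subseteq \phi(K^*)$. The only genuinely nontrivial step is the lcm trick used to establish finite satisfiability; everything else is a direct application of compactness and the elementarity of $\mathcal{F}$.
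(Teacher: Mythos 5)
Your argument is correct and is essentially the same compactness proof as in the paper, reorganized into contrapositive form: the paper shows $\mathbf{T}_{\mathcal{F}}\cup\{\neg({\rm CM}_{\phi(x^n)}):n\in\mathbb{N}\}$ is inconsistent directly via the residue-field argument, extracts a finite bound $M$ by compactness, and then obtains the uniform $n:=M!$ using that $m\mid n$ implies $\mathfrak{m}_v^{(n)}\subseteq\mathfrak{m}_v^{(m)}$; you instead assume no uniform $n$ exists, use the same divisibility observation (your lcm trick) to show the same theory is finitely satisfiable, and then derive the contradiction from the residue field of the resulting model. Both proofs hinge on the same two ingredients — compactness against the sentences $\neg({\rm CM}_{\phi(x^n)})$ and the monotonicity of $\mathfrak{m}_v^{(n)}$ in the divisibility order — merely applied at opposite ends of the argument.
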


\begin{proof}
This is a straightforward compactness argument. We first note that, for a valued field $(K,v)$, we have $\mathfrak{m}_{v}^{(n)}\subseteq\phi(K)$ if and only if $(K,v)\models({\rm CM}_{\phi(x^{n})})$. Next we consider the $\mathcal{L}_{\rm vf}(C)$-theory
$$
 \mathbf{T}_{\rm CM}:=\left\{\neg({\rm CM}_{\phi(x^{n})})\;\Big|\;n\in\mathbb{N}\right\}.
$$
Now let $(K,v)\models\mathbf{T}_{\mathcal{F}}$. Then $F:=Kv\in\mathcal{F}$ and, by assumption, we have $\mathfrak{m}_{v}^{(n_{F})}\subseteq\phi(K)$, i.e. $(K,v)\models({\rm CM}_{\phi(x^{n_{F}})})$. In particular we have $(K,v)\not\models\mathbf{T}_{\rm CM}$. This shows that $\mathbf{T}_{\mathcal{F}}\cup\mathbf{T}_{\rm CM}$ is inconsistent. 
By compactness, $\mathbf{T}_{\rm CM}$ is finitely inconsistent; thus there exists $M\in\mathbb{N}$ such that
$$\mathbf{T}_{\mathcal{F}}\models\bigvee_{m<M}({\rm CM}_{\phi(x^{m})}).$$
Finally, we let $n:=M!$. 
For $(K,v)\models\mathbf{T}_{\mathcal{F}}$ there exists $m<M$ such that $(K,v)\models({\rm CM}_{\phi(x^{m})})$,
so since $m|n$, we have that
$$
 \mathfrak{m}_{v}^{(n)}\subseteq\mathfrak{m}_{v}^{(m)}\subseteq\phi(K),
$$
as required to complete the proof of the first statement.

The second statement (i.e. the `respectively' version) is entirely analogous and the proof is exactly the same except we exchange $\mathfrak{m}$ for $\mathcal{O}$, and $({\rm CM})$ for $({\rm CO})$, etc.
\end{proof}

\begin{comment}

\begin{Lemma}\label{lem:maximal.ideal}
Let $\mathcal{F}$ be an elementary class of $C$-fields that satisfies $(*)$. 
Let $\phi(x)$ be an $\exists$-$\mathcal{L}_{vf}(C)$-formula. Suppose that for all $F\in\mathcal{F}$ we have
$$
 (F(t)^{h},v_{t})\models({\rm IM}_\phi).
$$
Then there exists $n<\omega$ such that
$$
 \mathbf{T}_{\mathcal{F}}\models({\rm CM}_{\phi_{n}}).
$$
\end{Lemma}
\end{comment}

Now we prove the main result of this section, \autoref{prp:GDM.version.2}, by repeatedly using \autoref{cor:transfer} to `transfer' several key properties of $\phi$ which were introduced in \autoref{def:important.sentences}. We will frequently use \autoref{lem:relationships.between.important.sentences} without further comment. As noted earlier, the following is simply a rephrasing of \autoref{prp:GDM.version.1} using different notation.

\begin{Proposition}\label{prp:GDM.version.2}
Let $\mathcal{F}$ be an elementary class of $C$-fields that satisfies $(*)$. 
Let $\phi(x)$ be an $\exists$-$\mathcal{L}_{\mathrm{ring}}(C)$-formula.
\begin{enumerate}
\item {\bf ($\mathcal{O}$-case)} If for all $F\in\mathcal{F}$ the theory $\mathbf{T}_{F}\cup\{({\rm SO}_{\phi}),({\rm CO}_{\phi})\}$ is consistent, then there is an $\exists$-$\mathcal{L}_{\mathrm{ring}}(C)$-formula $\psi(x)$ such that
$$
 \mathbf{T}_{\mathcal{F}}\models({\rm SO}_{\psi})\wedge({\rm CO}_{\psi}).
$$
\item {\bf ($\mathfrak{m}$-case)} If for all $F\in\mathcal{F}$ the theory $\mathbf{T}_{F}\cup\{({\rm SM}_{\phi}),({\rm CM}_{\phi})\}$ is consistent, then there is an $\exists$-$\mathcal{L}_{\mathrm{ring}}(C)$-formula $\chi(x)$ such that
$$
 \mathbf{T}_{\mathcal{F}}\models({\rm SM}_{\chi})\wedge({\rm CM}_{\chi}).
$$
\end{enumerate}
\end{Proposition}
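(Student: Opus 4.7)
Following the strategy indicated in Remark \ref{rem:strategy}, the plan is to reduce both statements to transferring consequences of $({\rm CO}_\phi)$ / $({\rm CM}_\phi)$ that fit the quantifier complexity of Corollary \ref{cor:transfer}. The crux is that $({\rm CO}_\phi)$ and $({\rm CM}_\phi)$ are $\forall\exists$-sentences, so instead one transfers the weaker statements $({\rm SO}_\phi)$ / $({\rm SM}_\phi)$ (universal), $({\rm IM}_\phi)$ (existential), and $({\rm R}_\phi)$ ($\forall^k\exists$), and then restores $({\rm CM})$ / $({\rm CO})$ by modifying $\phi$: first locally in $(F(t)^h,v_t)$ via the monkey lemma \ref{prp:monkey.lemma}, then globally in $\mathbf{T}_F$ via Lemma \ref{lem:embedding.t}, then uniformly across $\mathcal{F}$ via the compactness lemma \ref{lem:bounded.roots.compactness}. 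I expect the main obstacle to be the passage through the monkey lemma: bootstrapping from the existential information $({\rm IM}_\phi)$ to $({\rm CM}_{\phi_n})$ requires the valuation-theoretic construction inside $(F(t)^h,v_t)$, and the subsequent compactness step has to absorb the $p$-th power factors introduced by Corollary \ref{cor:transfer}(1) in the imperfect case.

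For the $\mathfrak{m}$-case, since $({\rm SM}_\phi)$ is $\forall$, Corollary \ref{cor:transfer}(3) gives $\mathbf{T}_\mathcal{F}\models({\rm SM}_\phi)$. Because $v$ is nontrivial, $({\rm CM}_\phi)$ implies $({\rm IM}_\phi)$, so Corollary \ref{cor:transfer}(2) yields $\mathbf{T}_F\models({\rm IM}_\phi)$ and in particular $(F(t)^h,v_t)\models({\rm IM}_\phi)$. Proposition \ref{prp:monkey.lemma} then produces $n_F\in\mathbb{N}$ with $(F(t)^h,v_t)\models({\rm CM}_{\phi_{n_F}})$ in the notation of Remark \ref{rem:phi_n}; evaluating at $0$ and $t$ gives $\phi_{n_F}(t)\wedge\phi_{n_F}(0)$, so Lemma \ref{lem:embedding.t} applied to $\phi_{n_F}$ gives $\mathbf{T}_F\models({\rm CM}_{\phi_{n_F}})$. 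Applying Lemma \ref{lem:bounded.roots.compactness} to the $\exists$-$\mathcal{L}_{\rm ring}(C)$-formula $\exists y\exists z(x=y-z\wedge\phi(y)\wedge\phi(z))$ produces a single $n$ with $\mathbf{T}_\mathcal{F}\models({\rm CM}_{\phi_n})$. Setting $\chi:=\phi_n$, the implication $({\rm SM}_\phi)\Rightarrow({\rm SM}_\chi)$ from Remark \ref{rem:phi_n} completes the $\mathfrak{m}$-case.

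For the $\mathcal{O}$-case, the same recipe (using $({\rm CO}_\phi)\Rightarrow({\rm IM}_\phi)$ together with $({\rm SO}_\phi)\Rightarrow({\rm SO}_{\phi_n})$ from Remark \ref{rem:phi_n}) produces a uniform $n_1$ and formula $\chi:=\phi_{n_1}$ with $\chi(K)\subseteq\mathcal{O}_v$ and $\mathfrak{m}_v\subseteq\chi(K)$ in every $(K,v)\models\mathbf{T}_\mathcal{F}$. To cover the residues, use $({\rm CO}_\phi)\Rightarrow({\rm R}_\phi)$ and apply Corollary \ref{cor:transfer}(1) to the $\exists$-formula $\exists y(\phi(y)\wedge\mathrm{res}(y)=x)$: for each $F\in\mathcal{F}$ there is an $n_F$ such that $(Kv)^{p^{n_F}}\subseteq\mathrm{res}(\phi(K))$ for every $(K,v)\models\mathbf{T}_F$, where $p$ is the characteristic exponent of $\mathcal{F}$. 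A compactness argument modelled on Lemma \ref{lem:bounded.roots.compactness} yields a uniform $n_2$ working throughout $\mathbf{T}_\mathcal{F}$. Then
\[\psi(x):=\exists y\exists z\bigl(x^{p^{n_2}}=y+z\wedge\phi(y)\wedge\chi(z)\bigr)\]
is an $\exists$-$\mathcal{L}_{\rm ring}(C)$-formula satisfying $\mathbf{T}_\mathcal{F}\models({\rm SO}_\psi)\wedge({\rm CO}_\psi)$: $({\rm SO}_\psi)$ follows because $y+z\in\mathcal{O}_v$ forces $v(x^{p^{n_2}})\geq0$, hence $v(x)\geq0$; and $({\rm CO}_\psi)$ follows because for $x\in\mathcal{O}_v$ the residue $\mathrm{res}(x)^{p^{n_2}}=\mathrm{res}(x^{p^{n_2}})$ lies in $(Kv)^{p^{n_2}}$ and so lifts to some $y\in\phi(K)$, after which $x^{p^{n_2}}-y\in\mathfrak{m}_v\subseteq\chi(K)$.
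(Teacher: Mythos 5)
Your proof is correct and follows essentially the same strategy as the paper's: transfer $({\rm SM}_\phi)$ resp.~$({\rm SO}_\phi)$, $({\rm IM}_\phi)$, and $({\rm R}_\phi)$ via \ref{cor:transfer}, bootstrap from $({\rm IM}_\phi)$ to $({\rm CM}_{\phi_n})$ via \ref{prp:monkey.lemma} and \ref{lem:embedding.t}, uniformize via \ref{lem:bounded.roots.compactness}, and in the $\mathcal{O}$-case repair residues by adding $\chi$ to $\phi$. One small slip in the $\mathcal{O}$-case: the characteristic exponent $p$ depends on the individual $F\in\mathcal{F}$ (in case (a) of $(*)$ the class may be mixed), so your compactness step can only produce a single $N\in\mathbb{N}$ rather than a power $p^{n_2}$ of a fixed $p$, and $\psi$ should accordingly be written with $x^{N}$; the paper avoids this issue by applying \ref{lem:bounded.roots.compactness} directly to the statement $\mathcal{O}_v^{(n)}\subseteq\psi'(K)$ with $\psi'(x)=\exists y\exists z(x=y+z\wedge\phi(y)\wedge\chi(z))$, which is precisely the form that lemma handles, whereas your variant applies compactness to the residue-covering statement and implicitly requires the analogous (but equally valid) compactness argument for $(Kv)^{(n)}\subseteq\phi(K)v$.
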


\begin{proof}
\bf Step 1. \rm For the moment we work in both cases. Our aim is to find a formula $\chi(x)$ (by a simple adaptation of the formula $\phi(x)$) such that $\mathbf{T}_{\mathcal{F}}\models({\rm CM}_{\chi})$.

Let $F\in\mathcal{F}$. Recall the $\exists$-$\mathcal{L}_{\rm vf}(C)$-sentence $({\rm IM}_{\phi})$. 
In each case our assumption entails that $\mathbf{T}_{F}\cup\{({\rm IM}_{\phi})\}$ is consistent,
hence
$\mathbf{T}_{F}\models({\rm IM}_{\phi})$ by \autoref{cor:transfer}(2).
In particular we have $(F(t)^{h},v_{t})\models({\rm IM}_{\phi})$. 
By applying \autoref{prp:monkey.lemma} we find $n_{F}\in\mathbb{N}$ such that
$$
 \mathfrak{m}_{v_{t}}^{(n_{F})}\subseteq\Phi(F(t)^{h}),
$$
where $\Phi(x)$ denotes the $\exists$-$\mathcal{L}_{\rm ring}(C)$-formula 
$$
 \exists y\exists z\left(x=y-z\wedge\phi(y)\wedge\phi(z)\right).
$$ 
In particular, we have
$$
 (F(t)^{h},v_{t})\models\Phi(t^{n_{F}})\wedge\Phi(0^{n_{F}}).
$$
Therefore, by \autoref{lem:embedding.t},
$$
 \mathbf{T}_{F}\models\left({\rm CM}_{\Phi(x^{n_{F}})}\right),
$$
i.e.~for all $(K,v)\models\mathbf{T}_{F}$ we have
$$
 \mathfrak{m}_{v}^{(n_{F})}\subseteq\Phi(K).
$$
Next, by applying \autoref{lem:bounded.roots.compactness}, %we are able to choose the natural numbers $n_{F}<\omega$ uniformly over $F\in\mathcal{F}$:
there exists $n\in\mathbb{N}$ such that for all $(K,v)\models\mathbf{T}_{\mathcal{F}}$ we have
$$\mathfrak{m}_{v}^{(n)}\subseteq\Phi(K).$$
Finally, we let $\chi(x)$ be the $\exists$-$\mathcal{L}_{\rm ring}(C)$-formula $\Phi(x^{n})$ and rewrite the previous statement as
$$\mathbf{T}_{\mathcal{F}}\models\left({\rm CM}_{\chi}\right),$$
which is the desired conclusion.

\bf Step 2 ($\mathfrak{m}$-case). \rm Let $F\in\mathcal{F}$. 
Consider the $\forall$-$\mathcal{L}_{\rm vf}(C)$-sentence $({\rm SM}_{\phi})$. 
Trivially, our assumption entails that $\mathbf{T}_{F}\cup\{({\rm SM}_{\phi})\}$ is consistent,
hence
$\mathbf{T}_{F}\models({\rm SM}_{\phi})$ by \autoref{cor:transfer}(3).
Since every model of $\mathbf{T}_{\mathcal{F}}$ is a model of $\mathbf{T}_{F}$ for some $F\in\mathcal{F}$, we deduce that $\mathbf{T}_{\mathcal{F}}\models({\rm SM}_{\phi})$.

As noted in \autoref{rem:phi_n}, the theory of valued fields entails
$({\rm SM}_{\phi})\longrightarrow({\rm SM}_{\chi})$.
Therefore $\mathbf{T}_{\mathcal{F}}\models({\rm SM}_{\chi})$. This completes the ($\mathfrak{m}$-case).

\bf Step 2 ($\mathcal{O}$-case). \rm To begin with, this step is similar to the ($\mathfrak{m}$-case). Let $F\in\mathcal{F}$. 
Consider the $\forall$-$\mathcal{L}_{\rm vf}(C)$-sentence $({\rm SO}_{\phi})$. Trivially, our assumption entails that $\mathbf{T}_{F}\cup\{({\rm SO}_{\phi})\}$ is consistent,
hence
$\mathbf{T}_{F}\models({\rm SO}_{\phi})$ by \autoref{cor:transfer}(3).
Since every model of $\mathbf{T}_{\mathcal{F}}$ is a model of $\mathbf{T}_{F}$ for some $F\in\mathcal{F}$, we deduce that $\mathbf{T}_{\mathcal{F}}\models({\rm SO}_{\phi})$.
As noted in \autoref{rem:phi_n}, the theory of valued fields entails
$({\rm SO}_{\phi})\longrightarrow({\rm SO}_{\chi})$,
hence $\mathbf{T}_{\mathcal{F}}\models({\rm SO}_{\chi})$. Combining this with Step 1, we have that
$$
 \mathbf{T}_{\mathcal{F}}\models({\rm SO}_{\chi})\wedge({\rm CM}_{\chi}).
$$
Let $F\in\mathcal{F}$ and let $p\geq1$ be the characteristic exponent of $F$. Consider the $\forall^{k}\exists$-$\mathcal{L}_{\rm vf}(C)$-sentence $({\rm R}_{\phi})$. Our assumption entails that $\mathbf{T}_{F}\cup\{({\rm R}_{\phi})\}$ is consistent. 
By \autoref{cor:transfer}(1) there exists $m_{F}\in\mathbb{N}$ such that for all $(K,v)\models\mathbf{T}_{F}$ we have
$$(Kv)^{(p^{m_{F}})}\subseteq\phi(K)v.$$

Let $\psi'(x)$ be the $\exists$-$\mathcal{L}_{\rm ring}(C)$-formula
$$
 \exists y\exists z\;(x=y+z\wedge\phi(y)\wedge\chi(z)).
$$
We will show that $\mathcal{O}_{v}^{(p^{m_{F}})}\subseteq\psi'(K)$ for all $(K,v)\models\mathbf{T}_{F}$. Let $a\in\mathcal{O}_{v}$. Since $(Kv)^{(p^{m_{F}})}\subseteq\phi(K)v$, there exists $b\in\phi(K)$ such that $(av)^{p^{m_{F}}}=bv$. Let $c:=a^{p^{m_{F}}}-b$. Then $c\in\mathfrak{m}_{v}\subseteq\chi(K)$. Thus $a^{p^{m_{F}}}=b+c\in\phi(K)+\chi(K)=\psi'(K)$, as required.

By applying \autoref{lem:bounded.roots.compactness}, 
there exists $m\in\mathbb{N}$ such that
$$\mathcal{O}_{v}^{(m)}\subseteq\psi'(K),$$
for all $(K,v)\models\mathbf{T}_{\mathcal{F}}$.

If we let $\psi(x)$ be the formula $\psi'(x^{m})$ then we may rewrite the previous statement as
$\mathbf{T}_{\mathcal{F}}\models({\rm CO}_{\psi})$.
Finally, since valuation rings are integrally closed and we have already seen that $\mathbf{T}_{\mathcal{F}}\models({\rm SO}_{\phi})\wedge({\rm SO}_{\chi})$, we deduce that $\mathbf{T}_{\mathcal{F}}\models({\rm SO}_{\psi})$, as required.
\end{proof}

For a class $\mathcal{C}$ of $C$-fields or valued $C$-fields we denote by $U\mathcal{C}$ the closure of $\mathcal{C}$ under ultraproducts
and by $E\mathcal{C}$ the closure of $\mathcal{C}$ under elementary equivalence,
and in the case of valued $C$-fields we let $\mathcal{C}v:=\{Kv:(K,v)\in\mathcal{C}\}$.

\begin{Corollary}\label{Cor:KtoHeF}
Let $\mathcal{K}$ be a class of equicharacteristic henselian nontrivially valued $C$-fields and let $\mathcal{F}$ be the smallest elementary class of $C$-fields that contains $\mathcal{K}v$. Suppose that $\mathcal{F}$ satisfies $(*)$.
Let $\phi(x)$ be an $\exists$-$\mathcal{L}_{\mathrm{ring}}(C)$-formula.
\begin{enumerate}
\item If $\phi(K)=\mathcal{O}_v$ for all $(K,v)\in\mathcal{K}$, 
then there exists an $\exists$-$\mathcal{L}_{\mathrm{ring}}(C)$-formula $\psi(x)$ with
$\psi(K)=\mathcal{O}_v$ for all $(K,v)\in\mathcal{H}_e'(\mathcal{F})$.
\item If $\phi(K)=\mathfrak{m}_v$ for all $(K,v)\in\mathcal{K}$, 
then there exists an $\exists$-$\mathcal{L}_{\mathrm{ring}}(C)$-formula $\chi(x)$ with
$\chi(K)=\mathfrak{m}_v$ for all $(K,v)\in\mathcal{H}_e'(\mathcal{F})$.
\end{enumerate}
\end{Corollary}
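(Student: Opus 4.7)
The plan is to reduce Corollary~\ref{Cor:KtoHeF} directly to Proposition~\ref{prp:GDM.version.1}. Since $\mathcal{F}$ is by construction an elementary class of $C$-fields and is assumed to satisfy $(*)$, it suffices to verify the hypothesis of that proposition: for every $F \in \mathcal{F}$ one must produce some $(K, v) \in \mathcal{H}_e'(F)$ with $\phi(K) = \mathcal{O}_v$ in case~(1), respectively $\phi(K) = \mathfrak{m}_v$ in case~(2).

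The key input is the standard model-theoretic fact that, since $\mathcal{F}$ is the smallest elementary class containing $\mathcal{K}v$, every $F \in \mathcal{F}$ is elementarily equivalent to some ultraproduct $\prod_{i \in I} F_i/\mathcal{U}$ with all $F_i \in \mathcal{K}v$. This is a routine compactness argument: since $F$ models $\bigcap_{G \in \mathcal{K}v}\mathrm{Th}(G)$, every finite subset of $\mathrm{Th}(F)$ is satisfied by some member of $\mathcal{K}v$, and assembling these witnesses via a suitable ultrafilter on the finite subsets of $\mathrm{Th}(F)$ yields an ultraproduct elementarily equivalent to $F$. Given such a representation, I would choose $(K_i, v_i) \in \mathcal{K}$ with $K_iv_i \cong F_i$ for each $i$ and form the ultraproduct of valued $C$-fields $(K, v) := \prod_{i \in I}(K_i, v_i)/\mathcal{U}$. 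The properties of being equicharacteristic, henselian and nontrivially valued are first-order expressible in $\mathcal{L}_{\rm vf}(C)$, so $(K, v)$ retains them; by~\ref{lem:residue.ultraproduct} its residue field is $\prod_i F_i/\mathcal{U}$, which is elementarily equivalent to $F$, and therefore $(K, v) \in \mathcal{H}_e'(F)$.

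Finally, the hypothesis that $\phi$ defines $\mathcal{O}_{v_i}$ (respectively $\mathfrak{m}_{v_i}$) in each $K_i \in \mathcal{K}$ is encoded by the $\mathcal{L}_{\rm vf}(C)$-sentences $(\mathrm{SO}_\phi) \wedge (\mathrm{CO}_\phi)$ (respectively $(\mathrm{SM}_\phi) \wedge (\mathrm{CM}_\phi)$), and {\L}o{\'s}'s theorem transfers these to $(K, v)$. With the hypothesis of Proposition~\ref{prp:GDM.version.1} verified, that proposition immediately yields the required $\exists$-$\mathcal{L}_{\mathrm{ring}}(C)$-formula $\psi$ (respectively $\chi$). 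I do not expect any serious obstacle here: the only mildly delicate point is the ultraproduct representation of an arbitrary $F \in \mathcal{F}$, but this is a textbook application of compactness, and the remaining steps are direct invocations of {\L}o{\'s}'s theorem and Proposition~\ref{prp:GDM.version.1}.
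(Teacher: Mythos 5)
Your proposal is correct and follows essentially the same approach as the paper: both reduce to Proposition~\ref{prp:GDM.version.1} by producing, for each $F\in\mathcal{F}$, a suitable $(K,v)\in\mathcal{H}_e'(F)$ in which $\phi$ defines $\mathcal{O}_v$ (resp.~$\mathfrak{m}_v$), using an ultraproduct construction together with Lemma~\ref{lem:residue.ultraproduct} and \L o\'s's theorem. The paper packages the argument slightly differently, introducing the elementary class $\mathcal{K}^\dagger\supseteq\mathcal{K}$ of all valued $C$-fields satisfying $(\dagger)$ and then observing $\mathcal{F}\subseteq\mathcal{F}^\dagger=EU(\mathcal{K}^\dagger v)=E((U\mathcal{K}^\dagger)v)=E(\mathcal{K}^\dagger v)$ (citing the Chang--Keisler characterization of the smallest elementary class as $EU$), whereas you unpack the same content directly via the compactness/ultraproduct argument; the two are interchangeable.
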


\begin{proof}
We show how to deduce (1) from \autoref{prp:GDM.version.2}(1). 
The proof of (2) from \autoref{prp:GDM.version.2}(2) is completely analogous.

Let $\mathcal{K}^{\dagger}$ be the class of equicharacteristic henselian nontrivially valued $C$-fields $(K,v)$ for which
\begin{equation*}\label{eqn:1}
(\dagger)\qquad\mathcal{O}_{v}=\phi(K)
\end{equation*}
holds, and let $\mathcal{F}^{\dagger}$ denote the smallest elementary class of $C$-fields containing $\mathcal{K}^{\dagger}v$. 
Then $\mathcal{K}\subseteq\mathcal{K}^{\dagger}$ and $\mathcal{F}\subseteq\mathcal{F}^{\dagger}=EU(\mathcal{K}^{\dagger}v)$, cf.~\cite[Exercise 4.1.18]{CK}.
Note that $\mathcal{K}^\dagger$ is closed under ultraproducts (i.e. $U\mathcal{K}^\dagger=\mathcal{K}^\dagger$) since $(\dagger)$ is an
$\mathcal{L}_{\rm vf}(C)$-elementary property and $\mathcal{K}^{\dagger}$ is an $\mathcal{L}_{\rm vf}(C)$-elementary class.
Thus,
$$
\mathcal{F}\subseteq\mathcal{F}^{\dagger}=EU(\mathcal{K}^{\dagger}v)\stackrel{\autoref{lem:residue.ultraproduct}}{=}E((U\mathcal{K}^{\dagger})v)=E(\mathcal{K}^{\dagger}v).
$$
That is, for every $F\in\mathcal{F}$ there exists $(K,v)\in\mathcal{K}^{\dagger}$ such that $F\equiv Kv$. 
Then $(K,v)\models\mathbf{T}_{F}\cup\{{\rm(SO_{\phi}))},{\rm(CO_{\phi})}\}$,
so the hypotheses of \autoref{prp:GDM.version.2}(1) are satisfied, the conclusion of which is $(1)$. 
\end{proof}

\section{Main Theorem}
\label{section:main.theorem}

\noindent
In this section we bring together the results developed so far into one main theorem
and draw some first conclusions. 
We state both cases (i.e. $\exists$- and $\forall$-definitions of the valuation ring) simultaneously.
Recall that $\forall$-definitions of the valuation ring correspond to $\exists$-definitions of the valuation ideal, cf.~\ref{rem:EAOm}.

\begin{Theorem}[Main Theorem]\label{thm:main}
Let $\mathcal{K}$ be a class of equicharacteristic henselian nontrivially valued $C$-fields,
let $\mathcal{F}$ be the smallest elementary class of $C$-fields that contains $\mathcal{K}v$,
and suppose that
\renewcommand{\theequation}{$*$}
\begin{equation}\label{star}
\begin{minipage}{11cm}
{\it \begin{enumerate} 
      \item[(a)] $C$ is integral over its prime ring, \underline{or} 
      \item[(b)] $C$ is a perfect field and every $F\in\mathcal{F}$ is perfect.
     \end{enumerate}
}
\end{minipage}
\end{equation}
Then for $Q\in\{\exists,\forall\}$ the following holds:
\begin{enumerate}
\item[(i)] The following properties are equivalent:
\begin{enumerate}
\item[$(0^{Q}_{e})$] The valuation ring is uniformly $Q$-$C$-definable in $\mathcal{K}$.
\item[$(1^{'Q}_{e})$] The valuation ring is uniformly $Q$-$C$-definable in $\mathcal{H}^{'}_{e}(\mathcal{F})$.
\item[$(1^{Q}_{e})$] The valuation ring is uniformly $Q$-$C$-definable in $\mathcal{H}_{e}(\mathcal{F})$.
\item[$(2^{Q}_{e})$] $\mathcal{F}$ does not have equicharacteristic embedded residue, if $Q=\exists$ (resp.~is not equicharacteristic large, if $Q=\forall$).
\end{enumerate}
\item[(ii)] Also the following properties are equivalent:
\begin{enumerate}
\item[$(1^{'Q})$] The valuation ring is uniformly $Q$-$C$-definable in $\mathcal{H}^{'}(\mathcal{F})$.
\item[$(1^{Q})$] The valuation ring is uniformly $Q$-$C$-definable in $\mathcal{H}(\mathcal{F})$.
\item[$(2^{Q})$] $\mathcal{F}$ does not have embedded residue, if $Q=\exists$ (resp.~is not large, if $Q=\forall$).
\end{enumerate}
\item[(iii)] Moreover, if $\mathcal{F}$ is unmixed, i.e.\ does not both contain fields of characteristic zero and fields of positive characteristic,
then all seven conditions
$(0^{Q}_{e})$, $(1^{'Q}_{e})$, $(1^{'Q})$, $(1^{Q}_{e})$, $(1^{Q})$, $(2^{Q}_{e})$, and $(2^{Q})$ are equivalent.
\end{enumerate}
\end{Theorem}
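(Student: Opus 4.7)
The plan is to assemble the Main Theorem from results already proved in Sections \ref{sec:embedded} and \ref{sec:uniform}, with essentially no new content beyond checking that the pieces fit together.

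I would handle part (ii) first, as it is immediate: applying \autoref{thm:existential.characterisation} when $Q=\exists$ and \autoref{thm:universal.characterisation} when $Q=\forall$ to the elementary class $\mathcal{F}$ yields the equivalence of $(1^{'Q})$, $(1^{Q})$, and $(2^{Q})$ without further work.

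For part (i), the equivalence of $(1^{'Q}_{e})$, $(1^{Q}_{e})$, and $(2^{Q}_{e})$ is likewise supplied by the ``equicharacteristic'' portions of the same two Characterisation Theorems, so the only genuine step is to splice $(0^{Q}_{e})$ into this chain. The direction $(1^{'Q}_{e}) \Rightarrow (0^{Q}_{e})$ should be trivial: by construction $\mathcal{K}v \subseteq \mathcal{F}$ and each $(K,v)\in\mathcal{K}$ is equicharacteristic henselian nontrivially valued, so $\mathcal{K} \subseteq \mathcal{H}^{'}_{e}(\mathcal{F})$, and any uniform definition on the latter restricts to one on the former. The converse $(0^{Q}_{e}) \Rightarrow (1^{'Q}_{e})$ is where the heavy machinery enters: for $Q=\exists$ it is precisely \autoref{Cor:KtoHeF}(1), whose hypothesis $(*)$ is exactly the standing assumption of the theorem; for $Q=\forall$ I would pass through the valuation ideal using \autoref{lem:EAOm}, applying \autoref{Cor:KtoHeF}(2) to the resulting uniform $\exists$-$C$-definition of $\mathfrak{m}_v$ on $\mathcal{K}$ and then translating back via \autoref{lem:EAOm} to obtain a uniform $\forall$-$C$-definition of $\mathcal{O}_v$ on all of $\mathcal{H}^{'}_{e}(\mathcal{F})$.

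Part (iii) will then be a short bookkeeping step: by \autoref{rem:unmixed}, in the unmixed case ``embedded residue'' coincides with ``equicharacteristic embedded residue'' and ``large'' coincides with ``equicharacteristic large'', so $(2^{Q}_{e}) \Leftrightarrow (2^{Q})$, and combining with (i) and (ii) yields the equivalence of all seven conditions. The main obstacle is not located in this assembly at all, but has already been surmounted in Section \ref{sec:uniform} --- namely, the task of converting an existential definition valid merely on $\mathcal{K}$ into one that is uniform across the entire elementary class $\mathcal{H}^{'}_{e}(\mathcal{F})$, a task which is packaged in \autoref{Cor:KtoHeF} and ultimately rests on the existential transfer principle of \cite{AnscombeFehm}. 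The only care needed in the present proof is to verify that the passage between $\mathcal{O}_v$ and $\mathfrak{m}_v$ via \autoref{lem:EAOm} preserves ``uniformly on $\mathcal{K}$'', which it manifestly does since the translation acts uniformly on the defining formula.
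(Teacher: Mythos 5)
Your proposal is correct and matches the paper's argument in all essentials: both reduce to the trivial inclusions $\mathcal{K}\subseteq\mathcal{H}'_e(\mathcal{F})\subseteq\mathcal{H}_e(\mathcal{F})\subseteq\mathcal{H}(\mathcal{F})$, invoke \autoref{Cor:KtoHeF} for $(0^Q_e)\Rightarrow(1'^Q_e)$ (with the $\forall$-case routed through \autoref{lem:EAOm}), use the Characterisation Theorems \autoref{thm:existential.characterisation}/\autoref{thm:universal.characterisation} for the remaining links (the paper cites the underlying Lemmas \ref{lem:ER.implies.not.uniform}--\ref{lem:not.large.implies.uniform} directly, which is the same content), and appeal to the unmixed observation of \autoref{rem:unmixed} for part (iii). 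Your explicit handling of the $Q=\forall$ direction via \autoref{lem:EAOm} is, if anything, a bit more careful than the paper's terse citation of \autoref{Cor:KtoHeF}.
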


\begin{proof}
The strategy of proof is summarized in the following diagram:
$$
\xymatrix{
(0^{Q}_{e})
\ar@/^0.5pc/@{=>}[rr]^{\ref{Cor:KtoHeF}}
\ar@/_0.5pc/@{<=}[rr]_{\text{trivial}}
& &
(1^{'Q}_{e})
\ar@/^1.0pc/@{=>}[rrrr]^{\ref{lem:ER.implies.not.uniform}\;/\;\ref{lem:large.implies.not.uniform}}
\ar@/_0.5pc/@{<=}[rr]_{\text{trivial}}
\ar@/^0.0pc/@{<=}[dd]_{\text{trivial}}
& &
(1^{Q}_{e})
\ar@/_0.5pc/@{<=}[rr]_{\ref{lem:no.ER.implies.uniform}\;/\;\ref{lem:not.large.implies.uniform}}
\ar@/^0.0pc/@{<=}[dd]_{\text{trivial}}
& &
(2^{Q}_{e})
\ar@/_0.5pc/@{<=}[dd]_{\text{trivial}}
\ar@/^0.5pc/@{=>}[dd]^{\mathcal{F}\text{ unmixed: }
\ref{cor:existential.characterisation.one.characteristic}\;/\;\ref{cor:universal.characterisation.one.characteristic}}
\\
%%%%%%%%%%%%%%%%%%%%%%%%%%%%%%%%%%%%%
\\
%%%%%%%%%%%%%%%%%%%%%%%%%%%%%%%%%%%%%
& &
(1^{'Q})
\ar@/_1.0pc/@{=>}[rrrr]_{\ref{lem:ER.implies.not.uniform}\;/\;\ref{lem:large.implies.not.uniform}}
\ar@/^0.5pc/@{<=}[rr]^{\text{trivial}}
& &
(1^{Q})
\ar@/^0.5pc/@{<=}[rr]^{\ref{lem:no.ER.implies.uniform}\;/\;\ref{lem:not.large.implies.uniform}}
& &
(2^{Q})
%%%%%%%%%%%%%%%%%%%%%%%%%%%%%%%%%%%%%
%%%%%%%%%%%%%%%%%%%%%%%%%%%%%%%%%%%%%
}
$$
Since $\mathcal{K}\subseteq\mathcal{H}^{'}_{e}(\mathcal{F})\subseteq\mathcal{H}_{e}(\mathcal{F})\subseteq\mathcal{H}(\mathcal{F})$
and $\mathcal{H}^{'}_{e}(\mathcal{F})\subseteq\mathcal{H}'(\mathcal{F})\subseteq\mathcal{H}(\mathcal{F})$, we have five of the trivial implications
$(1^{'Q}_{e})\Longrightarrow(0^{Q}_{e})$,  $(1^{Q}_{e})\Longrightarrow(1^{'Q}_{e})$,
$(1^{Q})\Longrightarrow(1^{Q}_{e})$,
$(1^{'Q})\Longrightarrow(1^{'Q}_{e})$, and $(1^{Q})\Longrightarrow(1^{'Q})$.
Also $(2^{Q})\Longrightarrow(2^{Q}_{e})$ is immediate from the definition (cf.~\ref{rem:unmixed}).
By applying \ref{Cor:KtoHeF} we have the implication
$(0^{Q}_{e})\Longrightarrow(1^{'Q}_{e})$.

For $Q=\exists$ (resp.~$Q=\forall$),
both $(1^{'Q}_{e})\implies(2^{Q}_{e})$ and
$(1^{'Q})\implies(2^{Q})$
follow from
\ref{lem:ER.implies.not.uniform}
(resp.~\ref{lem:large.implies.not.uniform});
and both
$(2^{Q}_{e})\implies(1^{Q}_{e})$ and
$(2^{Q})\implies(1^{Q})$
follows from
\ref{lem:no.ER.implies.uniform}
(resp.~\ref{lem:not.large.implies.uniform}).
Finally,
by \ref{cor:existential.characterisation.one.characteristic}
(resp.~\ref{cor:universal.characterisation.one.characteristic}), if $\mathcal{F}$ is unmixed then we have $(2^{Q}_{e})\Longrightarrow(2^{Q})$.
\end{proof}

%\begin{figure}
%\caption{A summary of the structure of the proof of \autoref{thm:main}}\label{fig:1}
%\end{figure}

\begin{Remark}
Note that the theorem can also be applied if we start with an elementary class of $C$-fields $\mathcal{F}$,
as we can always find a suitable class of valued $C$-fields $\mathcal{K}$ as in the theorem,
e.g.~$\mathcal{K}=\mathcal{H}^{'}_{e}(\mathcal{F})$.
\end{Remark}

\subsection{The case of a single $C$-field $F$}

If we restrict our attention to henselian nontrivially valued fields with residue field elementarily equivalent to a given $F$, the situation becomes particularly nice. The following corollary is immediate from \autoref{thm:main}.

\begin{Corollary}\label{cor:single.F}
Let $F$ be any $C$-field that satisfies $(*)$.\\
The following are equivalent.
\begin{enumerate}
\item[$(0^{\exists}_{e})$] The valuation ring is $\exists$-$C$-definable in some $(K,v)\in\mathcal{H}_{e}'(F)$.
\item[$(1^{\exists})$] The valuation ring is uniformly $\exists$-$C$-definable in $\mathcal{H}(F)$.
\item[$(2^{\exists})$] $F$ does not have embedded residue.
\end{enumerate}
Also the following are equivalent.
\begin{enumerate}
\item[$(0^{\forall}_{e})$] The valuation ring is $\forall$-$C$-definable in some $(K,v)\in\mathcal{H}_{e}'(F)$.
\item[$(1^{\forall})$] The valuation ring is uniformly $\forall$-$C$-definable in $\mathcal{H}(F)$.
\item[$(2^{\forall})$] $F$ is not large. 
\end{enumerate}
\end{Corollary}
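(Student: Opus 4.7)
The plan is to derive the corollary as a direct specialization of Theorem~\ref{thm:main}. Let $\mathcal{F}$ denote the class of all $C$-fields elementarily equivalent to $F$; this is an elementary class. Since elementary equivalence preserves the characteristic, $\mathcal{F}$ is unmixed. Moreover, $\mathcal{F}$ satisfies $(*)$: case (a) is a condition on $C$ alone and therefore passes to every $F'\in\mathcal{F}$, while in case (b) ``being perfect'' is captured by the $\mathcal{L}_{\mathrm{ring}}$-sentence $\forall x\,\exists y\,(y^{p}=x)$ and hence holds throughout $\mathcal{F}$.

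With this choice of $\mathcal{F}$, the data of the corollary matches that of Theorem~\ref{thm:main}: the class $\mathcal{H}(F)$ coincides with $\mathcal{H}(\mathcal{F})$, the condition ``$F$ has embedded residue'' is exactly $(2^{\exists})$ of the theorem, and ``$F$ is large'' is $(2^{\forall})$. Applying Theorem~\ref{thm:main}(iii) with $\mathcal{K}=\mathcal{H}_{e}'(F)$ (which is nonempty by Remark~\ref{rem:henselian.ext}, e.g.~$(F((t)),v_{t})$) and invoking that $\mathcal{F}$ is unmixed, we obtain the equivalence of $(1^{Q})$ and $(2^{Q})$ of the corollary, for both $Q\in\{\exists,\forall\}$.

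It remains to fit $(0^{Q}_{e})$ into the chain. The direction $(1^{Q})\Rightarrow(0^{Q}_{e})$ is immediate: $\mathcal{H}_{e}'(F)\subseteq\mathcal{H}(F)$ is nonempty, so a uniform definition in $\mathcal{H}(F)$ specializes to a definition in any chosen member of $\mathcal{H}_{e}'(F)$. For the converse, I would apply Theorem~\ref{thm:main} a second time, now with the singleton $\mathcal{K}=\{(K,v)\}$, where $(K,v)\in\mathcal{H}_{e}'(F)$ is a witness to $(0^{Q}_{e})$. Since $Kv\equiv F$, the smallest elementary class of $C$-fields containing $\mathcal{K}v=\{Kv\}$ is precisely $\mathcal{F}$, and $(*)$ and unmixedness hold as noted; condition $(0^{Q}_{e})$ of Theorem~\ref{thm:main} for a singleton class reduces to $Q$-$C$-definability in the single valued field $(K,v)$, which is our assumption. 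The theorem then delivers $(1^{Q})$.

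No serious obstacle arises here: the substantive work is already packaged in Theorem~\ref{thm:main}, and what remains is the careful bookkeeping of two applications of that theorem (once with $\mathcal{K}=\mathcal{H}_{e}'(F)$, once with a singleton $\mathcal{K}$) together with the observation that $\mathcal{H}_{e}'(F)$ is nonempty, which allows the existence statement $(0^{Q}_{e})$ to be interleaved with the uniform statements $(1^{Q})$ and $(2^{Q})$.
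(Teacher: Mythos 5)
Your proposal is correct and follows essentially the same route as the paper: the paper's own proof is a one-liner pointing to Theorem~\ref{thm:main} after noting that the class $\mathcal{F}$ of $C$-fields elementarily equivalent to $F$ is unmixed. You simply spell out the bookkeeping that the paper leaves implicit — checking that $\mathcal{F}$ inherits $(*)$ (via perfectness being $\mathcal{L}_{\mathrm{ring}}$-expressible once the characteristic is fixed), and using a second application of Theorem~\ref{thm:main} with a singleton $\mathcal{K}=\{(K,v)\}$ so that the uniform condition $(0^{Q}_{e})$ of the theorem collapses to the ``some $(K,v)$'' condition of the corollary. In fact a single application with the singleton $\mathcal{K}$ already gives all three equivalences at once (together with the trivial $(1^{Q})\Rightarrow(0^{Q}_{e})$), but your two-step presentation is also fine.
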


\begin{proof}
Let $\mathcal{F}$ be the class of $C$-fields elementarily equivalent to $F$. As explained in \ref{rem:unmixed}, $\mathcal{F}$ is unmixed. The result is immediate from \ref{thm:main}.
\end{proof}

Note that Theorem \autoref{thm:introthm} from the introduction follows immediately from \ref{cor:single.F} and \ref{lemma:Schoutens}
in the special case $C=\mathbb{Z}$.
See also \ref{rem:EAOm}.

\subsection{The necessity of the assumptions}

The assumption `unmixed' cannot simply be removed from part (iii) of \autoref{thm:main}, as the following example shows.

\begin{Example}\label{eg:unmixed.necessary.E}
In this example we set $Q:=\exists$ and $C:=\mathbb{Z}$; that is, we are studying the definability of valuation rings by existential formulas without parameters. Let $p$ be a fixed prime. 
Trivially, $\mathbb{F}_p$ does not have embedded residue, see also \autoref{prop:positive.applications}(1) below.
Later, in \ref{prop:positive.applications}(6) we will show that also $\mathbb{Q}$ does not have embedded residue,
i.e.~$\mathcal{F}_0:=\{F:F\equiv\mathbb{Q}\}$ does not have embedded residue.
So, since $\mathbb{F}_p$ and $\mathbb{Q}$ have different characteristics,
the elementary class $\mathcal{F}=\{\mathbb{F}_p\}\cup\mathcal{F}_0$ does not have equicharacteristic embedded residue.
On the other hand, the $p$-adic valuation on $\mathbb{Q}$ has residue field $\mathbb{F}_p$,
so $\mathcal{F}$ does have embedded residue.
Therefore, $\mathcal{F}$ satisfies $(2_e^\exists)$ but not $(2^\exists)$.
\end{Example}

Also the assumption in \autoref{thm:main} that the valued fields in $\mathcal{K}$ are `equicharacteristic' cannot be omitted, 
as the following example shows:

\begin{Example}
Again we let $Q:=\exists$ and $C:=\mathbb{Z}$.
Fix a prime $p>2$ and let $F:=\mathbb{F}_p^{\rm alg}$ be an algebraic closure of $\mathbb{F}_p$.
By \ref{lem:sep.cl}, $F$ has embedded residue,
so the valuation ring is not $\exists$-$\emptyset$-definable in any $(K,v)\in\mathcal{H}_e'(F)$, by \ref{cor:single.F}.
However, it is $\exists$-$\emptyset$-definable in some $(K,v)\in\mathcal{H}'(F)$:
For example, take the maximal unramified extension $K:=\mathbb{Q}_p^{\rm ur}$ of $\mathbb{Q}_p$ with the extension $v_p$ of the $p$-adic valuation.
Then $Kv_{p}=F$ and Julia Robinsons's formula $\exists y\;(y^2=1+px^2)$, which defines the valuation ring in $(\mathbb{Q}_p,v_{p})$ (see the introduction),
also defines the valuation ring in $(K,v_{p})$.
That is, for $\mathcal{K}=\{(K,v_p)\}$, $(0_e^Q)$ holds but $(1_e^Q)$ does not hold.
\end{Example}

\begin{comment}
{\color{red}\begin{Example}\label{eg:unmixed.necessary.A}
In this example we set $C:=\mathbb{Z}[x]$ and $Q:=\forall$; that is, we are studying the definability of valuation rings by universal formulas without parameters. Let $p$ be a fixed prime. Let $\mathbb{Q}_{p,\mathrm{alg}}(x)$ be the rational function field (in the indeterminate $x$) over the algebraic part of the field of $p$-adic numbers. Later, in \autoref{cor:positive.applications} we will show that $\mathbb{Q}_{p,\mathrm{alg}}(x),\mathbb{F}_{p}\in\mathcal{A}_{\mathbb{Z}[x]}$. Let $\mathcal{F}$ denote the smallest $\mathcal{L}_{\mathrm{ring}}(C)$-elementary class containing $\mathbb{Q}_{p,\mathrm{alg}}(x)$ and $\mathbb{F}_{p}$. Then, as in the previous example, the valuation ring is uniformly definable in $\mathcal{H}_{e}(\mathcal{F})$. This shows that $(1^{\forall}_{e})$ holds in this example.

The $p$-adic valuation $v_{p}$ is a nontrivial henselian valuation on a subfield $\mathbb{Q}_{p,\mathrm{alg}}$ of $\mathbb{Q}_{p,\mathrm{alg}}(x)$ with residue field $\mathbb{F}_{p}$. Thus $\mathcal{F}$ is large and, by \autoref{thm:main}, there is no uniform universal definition of the valuation ring in $\mathcal{H}(\mathcal{F})$. This shows that $(1^{\forall})$ does not hold in this example.
\end{Example}}
\end{comment}

\subsection{Mixed classes and families of local fields}

We now focus on a special case in which the `unmixed' assumption in part (iii) of \ref{thm:main} can be removed,
and we apply this to draw a conclusion on uniform existential definability of valuation rings in local fields,
continuing the theme of results developed in \cite{Cluckersetal} and \cite{Fehm}.

We work in the existential case, i.e. $Q=\exists$. 
Let $\mathcal{F}$ be an elementary class of $C$-fields.
If $\mathcal{F}$ is assumed to be unmixed then claim (iii) of \ref{thm:main} shows that all seven conditions (i.e. $(0^{\exists}_{e})$, $(1^{'\exists}_{e})$, $(1^{'\exists})$, $(1^{\exists}_{e})$, $(1^{\exists})$, $(2^{\exists}_{e})$, and $(2^{\exists})$) are equivalent. 
On the other hand, \ref{eg:unmixed.necessary.E} shows that the seven conditions may be inequivalent if we do not assume that $\mathcal{F}$ is unmixed.
However, if $\mathcal{F}$ is an elementary class of PAC fields then all seven conditions are necessarily equivalent,
as the following observation shows:

\begin{Proposition}\label{lem:PAC.nearly.unmixed}
Let $\mathcal{F}$ be an elementary class of finite or PAC $C$-fields. 
If $\mathcal{F}$ has embedded residue, then some $F\in\mathcal{F}$ has embedded residue.
\end{Proposition}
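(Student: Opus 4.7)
The plan is to extract a suitable $F\in\mathcal{F}$ directly from the witnesses of embedded residue, by forcing the residue field of the witnessing valuation to be algebraically closed. Unpacking the hypothesis, there are $F_1,F_2\in\mathcal{F}$, a nontrivial valuation $w$ on $F_1$, and a $C$-embedding $F_1w\hookrightarrow F_2$. Since a finite field has torsion multiplicative group, it admits no nontrivial valuation, so $F_1$ cannot be finite. Therefore $F_1$ is PAC.

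The key external input is the Prestel--Ziegler theorem that a PAC field admitting any nontrivial $V$-topology must already be separably closed. Every nontrivial Krull valuation gives rise to a nontrivial $V$-topology, namely its valuation topology, so from the existence of $w$ one concludes that $F_1$ is separably closed. Then \ref{lem:residue.of.sep.closed} gives that the residue field $F_1w$ is algebraically closed. Via the embedding $F_1w\hookrightarrow F_2$, the $C$-field $F_2$ thus contains an algebraically closed $C$-subfield; the compatibility of $C$-structures is automatic, since under the paper's conventions all embeddings are $C$-embeddings and $F_1w$ inherits its $C$-structure by composing the structure homomorphism $C\to F_1$ with the residue map.

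At this point \ref{lem:sep.cl}(1) applies to $F_2$ and yields that $F_2$ has embedded residue. Since $F_2\in\mathcal{F}$, this produces the required single $F\in\mathcal{F}$ with embedded residue. The only substantive step is the Prestel--Ziegler input; once that is quoted, the remainder is a short chain of lemmas already established in the paper, and no additional compactness or ultraproduct machinery is needed. The main obstacle, if any, is simply locating a precise reference for the Prestel--Ziegler statement in the exact form used here.
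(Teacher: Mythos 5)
Your overall plan matches the paper's, and the final chain of lemmas (residue field algebraically closed $\Rightarrow$ $F_2$ contains an algebraically closed $C$-subfield $\Rightarrow$ \ref{lem:sep.cl} $\Rightarrow$ $F_2$ has embedded residue) is exactly right. However, the pivotal step is wrong. There is no theorem asserting that a PAC field admitting a nontrivial $V$-topology (equivalently, a nontrivial valuation) must be separably closed, and the claim is in fact false: for example, for a ``random'' $\sigma\in\mathrm{Gal}(\mathbb{Q})$ the fixed field $\overline{\mathbb{Q}}(\sigma)$ is PAC with absolute Galois group $\hat{\mathbb{Z}}$, hence far from separably closed, yet it is a subfield of $\overline{\mathbb{Q}}$ and so carries nontrivial (restrictions of $p$-adic) valuations. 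The same goes for pseudofinite fields of characteristic zero. The Prestel--Ziegler/Frey-type result you are half-remembering requires the valuation to be \emph{henselian}: a PAC field with a nontrivial henselian valuation is separably closed. But nothing forces $w$ to be henselian here.

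The correct statement, and the one the paper actually invokes, is Frey's theorem (cited as \cite[Corollary 11.5.5]{FriedJarden}): the \emph{residue field} of a nontrivial valuation on a PAC field is algebraically closed. That gives directly that $F_1w$ is algebraically closed, without any claim about $F_1$ itself, and then the rest of your argument (embedding $F_1w\hookrightarrow F_2$, apply \ref{lem:sep.cl}) goes through unchanged. So the repair is simply to replace your ``key external input'' with the right one; once you do that, your proof coincides with the paper's.
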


\begin{proof}
Suppose that $\mathcal{F}$ has embedded residue,
i.e.~there are $F_{1},F_{2}\in\mathcal{F}$ and a nontrivial valuation $v$ on $F_{1}$ such that $F_{1}v$ embeds into $F_{2}$. 
As $v$ is nontrivial, $F_1$ is not finite, hence $F_1$ is PAC,
and so $F_1v$ is an algebraically closed $C$-field, cf.~\ref{lem:residue.of.sep.closed} and \cite[Corollary 11.5.5]{FriedJarden}.
So since $F_1v$ embeds into $F_2$, \autoref{lem:sep.cl} shows that $F_{2}$ has embedded residue. 
\end{proof}

\begin{Corollary}\label{cor:QpZp}
Let $P$ be a set of prime numbers. The following are equivalent.
\begin{enumerate}
\item There exists an $\exists$-$\mathcal{L}_{\mathrm{ring}}$-formula $\phi(x)$ such that $\phi(\mathbb{Q}_{p})=\mathbb{Z}_{p}$ for all $p\in P$.
\item There exists an $\exists$-$\mathcal{L}_{\mathrm{ring}}$-formula $\phi(x)$ such that $\phi(\mathbb{F}_{p}((t)))=\mathbb{F}_{p}[[t]]$ for all $p\in P$.
\item There exists an $\exists$-$\mathcal{L}_{\mathrm{ring}}$-formula $\phi(x)$ such that $\phi(\mathbb{Q}_{p})=\mathbb{Z}_{p}$ and $\phi(\mathbb{F}_{p}((t)))=\mathbb{F}_{p}[[t]]$ for all $p\in P$.
\end{enumerate}
\end{Corollary}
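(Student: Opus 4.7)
Set $\mathcal{F}$ to be the smallest elementary class of $\mathbb{Z}$-fields containing $\{\mathbb{F}_{p}:p\in P\}$; every member of $\mathcal{F}$ is either one of the finite fields $\mathbb{F}_{p}$ (for $p\in P$) or a pseudofinite field arising as a non-principal ultraproduct of the $\mathbb{F}_{p}$, and hence is PAC. For $C=\mathbb{Z}$ condition $(*)$ is automatic. The valued-field classes $\mathcal{K}_{1}:=\{(\mathbb{Q}_{p},v_{p}):p\in P\}$ and $\mathcal{K}_{2}:=\{(\mathbb{F}_{p}((t)),v_{t}):p\in P\}$ both lie in $\mathcal{H}(\mathcal{F})$. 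The implications $(3)\Rightarrow(1)$ and $(3)\Rightarrow(2)$ are trivial. For the converses the strategy is to show that each of $(1)$ and $(2)$ forces $\mathcal{F}$ to have no embedded residue; once that is established, \ref{thm:main}(ii) yields a uniform $\exists$-$\emptyset$-formula defining the valuation ring throughout $\mathcal{H}(\mathcal{F})\supseteq\mathcal{K}_{1}\cup\mathcal{K}_{2}$, which is $(3)$. The reduction works because \ref{lem:PAC.nearly.unmixed} together with \ref{rem:unmixed} applied to each single field's elementary equivalence class shows that for such an $\mathcal{F}$ the notions ``has embedded residue'' and ``has equicharacteristic embedded residue'' coincide, so it suffices to rule out the latter.

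The implication $(2)\Rightarrow$ ``$\mathcal{F}$ has no equicharacteristic embedded residue'' is immediate from \ref{thm:main}(i) applied to the equicharacteristic class $\mathcal{K}_{2}$: $(2)$ is exactly $(0^{\exists}_{e})$, yielding $(2^{\exists}_{e})$.

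For $(1)\Rightarrow$ the same conclusion, I use ultraproducts and Ax--Kochen--Ershov. If $P$ is finite then $\mathcal{F}$ consists only of finite fields, which admit no nontrivial valuations, so the claim is vacuous. Otherwise fix $\phi$ witnessing $(1)$ and any non-principal ultrafilter $\mathcal{U}$ on $P$, and form $(K,v):=\prod_{p}(\mathbb{Q}_{p},v_{p})/\mathcal{U}$ and $(L,w):=\prod_{p}(\mathbb{F}_{p}((t)),v_{t})/\mathcal{U}$. Both are henselian valued fields of residue characteristic $0$ (since $\mathcal{U}$ is non-principal) sharing the residue field $\prod_{p}\mathbb{F}_{p}/\mathcal{U}$ and the value group $\prod_{p}\mathbb{Z}/\mathcal{U}$, and so by Ax--Kochen--Ershov they are $\mathcal{L}_{\mathrm{vf}}$-elementarily equivalent. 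By \L{}o\'s's theorem and $(1)$, $\phi(K)=\mathcal{O}_{v}$; this property is $\mathcal{L}_{\mathrm{vf}}$-expressible as $(\mathrm{SO}_{\phi})\wedge(\mathrm{CO}_{\phi})$ of \ref{def:important.sentences}, so elementary equivalence gives $\phi(L)=\mathcal{O}_{w}$, and \L{}o\'s's theorem again yields $\{p\in P:\phi(\mathbb{F}_{p}((t)))=\mathbb{F}_{p}[[t]]\}\in\mathcal{U}$. As $\mathcal{U}$ was arbitrary, the exceptional set $S:=\{p\in P:\phi(\mathbb{F}_{p}((t)))\neq\mathbb{F}_{p}[[t]]\}$ lies outside every non-principal ultrafilter on $P$ and is therefore finite. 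Hence $\phi$ uniformly defines the valuation ring on the equicharacteristic class $\mathcal{K}'_{2}:=\{(\mathbb{F}_{p}((t)),v_{t}):p\in P\setminus S\}$, and \ref{thm:main}(i) applied to $\mathcal{K}'_{2}$ shows that the associated elementary class $\mathcal{F}'$ has no equicharacteristic embedded residue. Since $P$ is infinite and $S$ finite, the non-principal ultraproducts over $P$ and over $P\setminus S$ produce the same pseudofinite fields, so the pseudofinite parts of $\mathcal{F}$ and $\mathcal{F}'$ agree; the only members of $\mathcal{F}\setminus\mathcal{F}'$ are the finite fields $\{\mathbb{F}_{p}:p\in S\}$, which admit no nontrivial valuation and so cannot witness embedded residue. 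Therefore $\mathcal{F}$ itself has no equicharacteristic embedded residue, completing the argument.

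The main obstacle will be the Ax--Kochen--Ershov transfer combined with the pigeonhole argument forcing $S$ to be finite: one must verify that ``$\phi$ defines the valuation ring'' transfers between $(K,v)$ and $(L,w)$ as an $\mathcal{L}_{\mathrm{vf}}$-property, and then read off from the arbitrariness of $\mathcal{U}$ that no infinite subset of $P$ can be exceptional. Once these are in place, the Main Theorem does the remaining work, absorbing the finitely many exceptional primes through the shared pseudofinite structure of $\mathcal{F}$ and $\mathcal{F}'$.
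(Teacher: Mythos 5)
Your proposal is correct and follows essentially the same route as the paper: the Ax--Kochen transfer to pass from $(1)$ to a cofinite set of $\mathbb{F}_p((t))$'s, then the Main Theorem and \ref{lem:PAC.nearly.unmixed} to upgrade ``no equicharacteristic embedded residue'' to ``no embedded residue'' for this class of finite and pseudofinite (hence PAC) fields, and finally \ref{thm:main}(ii) to conclude $(3)$. You spell out the Ax--Kochen step via ultraproducts and organize the argument as $(1)\Rightarrow(3)$, $(2)\Rightarrow(3)$ rather than the paper's chain $(1)\Rightarrow(2)\Rightarrow(3)$, but the underlying mathematics is identical.
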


\begin{proof}
Clearly $(3)\Longrightarrow(1),(2)$.

$(1)\Longrightarrow (2)$: By the Ax-Kochen transfer principle \cite{AxKochenI}, $\phi(\mathbb{F}_{p}((t)))=\mathbb{F}_{p}[[t]]$ 
for $p\in P_0$, with $P_0$ cofinite in $P$.
By \ref{thm:main}(i), the smallest elementary class $\mathcal{F}_0$ containing all $\mathbb{F}_p$, $p\in P_0$, does not have equicharacteristic embedded residue.
Therefore, also the elementary class $\mathcal{F}:=\mathcal{F}_0\cup\{\mathbb{F}_p:p\in P\setminus P_0\}$ does not have equicharacteristic embedded residue, so \ref{thm:main}(i) shows that (2) holds.

$(2)\Longrightarrow (3)$:
By \ref{thm:main}(i), the smallest elementary class $\mathcal{F}$ containing all $\mathbb{F}_p$, $p\in P$, does not have equicharacteristic embedded residue. Since it consists of fields that are finite or pseudofinite (hence PAC, by \cite[Corollary 11.3.4]{FriedJarden}), \ref{lem:PAC.nearly.unmixed} gives that $\mathcal{F}$ does not have embedded residue.
Therefore, \ref{thm:main}(ii) shows that (3) holds.
\end{proof}

\begin{Remark}
Note that by the Ax-Kochen transfer principle, \ref{cor:QpZp} is well-known if we replace `for all' by `for almost all'.
The implication $(1)\Longrightarrow(2)$ can be derived explicitly by combining this with the results from \cite{AnscombeKoenigsmann}.
We do, however, not know of any other proof of the implication $(2)\Longrightarrow(1)$. 
We remark that one could 
use the explicit proof of  $(1)\Longrightarrow(2)$ to
construct the formula in $(3)$ to satisfy in addition that
$\phi(\mathbb{Q}_{p})=\mathbb{Z}_{p}$ if and only if $\phi(\mathbb{F}_{p}((t)))=\mathbb{F}_{p}[[t]]$.
We will explore this and further questions of uniform definability of valuation rings in local fields
in a forthcoming paper. % \cite{AF4}.
\end{Remark}

\section{Examples and applications}
\label{section:examples}

\subsection{Diophantine valuation rings: examples and counterexamples}

We now explore $\mathbb{Z}$-fields with and without embedded residue.
We begin by collecting all the known examples of fields without embedded residue,
which gives a common generalization of all the results in the literature of fields with nontrivial diophantine henselian valuation rings.

\begin{Proposition}\label{prop:positive.applications}
In each of the following cases, a $\mathbb{Z}$-field $F$ does not have embedded residue:
\begin{enumerate}
\item $F$ is finite,
\item $F$ is PAC and does not contain a separably closed subfield,
\item $F$ is PRC of characteristic zero and does not contain a real closed subfield,
\item $F$ is P$p$C of characteristic zero and does not contain a $p$-adically closed subfield, for some prime number $p$,
\item $F$ is pseudo-classically closed\footnote{See \cite{Pop} for the definition of a PCC field. We note that
the class of PCC fields contains in particular all PRC fields and all P$p$C fields.} of characteristic zero
and does not contain any real closed or $p$-adically closed subfield (for any $p$),
\item $F=\mathbb{Q}$.
\end{enumerate}
\end{Proposition}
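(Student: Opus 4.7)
The plan is to prove each of (1)--(6) by contradiction. Using \autoref{lemma:Schoutens}(1), the hypothesis that $F$ has embedded residue reduces to the existence of a single elementary extension $F \preceq F^*$, a nontrivial valuation $v$ on $F^*$, and a $C$-embedding $\iota \colon F^*v \hookrightarrow F^*$; in each case I aim to derive a contradiction from the stated property of $F$.

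Cases (1)--(5) follow a uniform strategy. In each case, the defining property of $F$ (being finite, or being PAC, PRC, P$p$C, or PCC) is preserved by passage to the elementary extension $F^*$, either because the class is itself elementary or because the property is invariant within the theory of $F$. For each such class, a classical result pins down the residue field of any nontrivial valuation very tightly: for (1), $F^* \cong F$ is finite and hence admits no nontrivial valuation at all; for (2), \cite[Corollary~11.5.5]{FriedJarden} forces $F^*v$ to be algebraically closed, so $\iota(F^*v) \subseteq F^*$ is a separably closed subfield of $F^*$; and in (3)--(5) the corresponding results of Prestel and Pop force $F^*v$ to be real closed, $p$-adically closed, or classically closed, respectively. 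The remaining task in (2)--(5) is to descend the presence of such a special subfield from $F^*$ to $F$; I would accomplish this by invoking, for each class, the fact that containment of such a subfield is an elementary invariant within the class (for instance, for PAC fields it is encoded in the theory of the absolute Galois group, which is known to be an elementary invariant of the PAC field).

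Case (6) requires a separate analysis, since $\mathbb{Q}$ falls into none of the above classes. First, any embedding $\iota \colon F^*v \hookrightarrow F^*$ forces $\mathrm{char}(F^*v) = 0$, which rules out $v|_{\mathbb{Q}}$ being a $p$-adic valuation (as that would give $\mathrm{char}(F^*v) = p$). Hence $v|_{\mathbb{Q}}$ is trivial, $v$ is equicharacteristic zero, and $\mathbb{Q} \subseteq F^*v \hookrightarrow F^*$. The argument must then derive a contradiction from the existence of such a $v$ on an elementary extension of $\mathbb{Q}$. I would exploit the strong first-order rigidity of $\mathbb{Q}$: by Lagrange's four-square theorem the positive cone of the unique ordering is $\emptyset$-definable, and hence $F^*$ inherits a distinguished ordering from $\mathbb{Q}$. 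Comparing this ordering with the one induced on $\iota(F^*v)$ and analysing the convexity of $v$ with respect to it (or, alternatively, invoking a Koenigsmann-style $\emptyset$-definition of $\mathbb{Z}$ in $\mathbb{Q}$ and transferring it to $F^*$) should yield a contradiction. The main obstacle throughout is exactly these descent steps: both the passage from ``$F^*$ contains a subfield of the forbidden type'' to ``$F$ does'' in (2)--(5) and the final contradiction in (6) rely on non-first-order properties being controlled by the fine structure of the relevant elementary theories, and spelling this out carefully will be the technical heart of the proof.
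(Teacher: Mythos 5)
Your high-level plan---work from the definition of embedded residue (equivalently, via \autoref{lemma:Schoutens}(1)), and derive a contradiction from the special structure of the residue field of a nontrivial valuation on a field elementarily equivalent to $F$---is the right one, and cases (1) and (2) are essentially correct (though for (2) the paper's descent is far more elementary than a detour through absolute Galois groups: a separably closed subfield of a field $F_2\equiv F$ contains the separable closure of the prime field, and ``contains the separable closure of the prime field'' is an elementary schema, so it passes to $F$).

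For (3)--(5) there are genuine gaps. First, the claim that in the PRC/P$p$C/PCC cases ``the corresponding results of Prestel and Pop force $F^*v$ to be real closed, $p$-adically closed, or classically closed, respectively'' is not accurate: the correct statement, which the paper extracts from the PCC local--global principle, \cite[Corollary 1.5]{FehmLGP}, and \cite[Theorem 2.9]{Pop}, is a \emph{four-way} disjunction in all three cases---$F_1w$ is finite, separably closed, real closed, or $p$-adically closed---because the henselization can land in any member of the approximating family. Because of this, (3) and (4) do \emph{not} follow uniformly from the same argument as (5): their hypotheses only rule out \emph{one} of the types of subfield, so the paper has to split into PAC/non-PAC subcases and use the embedding of non-PAC PRC (resp.\ P$p$C) fields into real closed (resp.\ $p$-adically closed) fields to exclude the remaining possibilities before (5) applies. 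Second, the descent step you defer to---``containment of such a subfield is an elementary invariant within the class''---is the technical heart, and your proposed tool (encoding in the absolute Galois group) does not obviously handle real closed or $p$-adically closed subfields. The paper's observation is cleaner and concrete: the \emph{algebraic part} (over the prime field) of $F_1w$ is again of the same type (finite/separably closed/real closed/$p$-adically closed), and the isomorphism type of the relative algebraic closure of the prime field is determined by the elementary theory, so such an algebraic subfield would descend to $F$ itself.

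For (6), you identify the right ingredients---Lagrange's four-square theorem gives a $\emptyset$-definable ordering $<_{F_1}$ transferred from $\mathbb{Q}$, and Baer--Krull then makes $\mathfrak{m}_v$ convex---but the actual contradiction is not delivered. The missing step is the paper's use of the density of $\mathbb{Q}$ in $\mathbb{R}$ as the $\forall\exists$-sentence $\forall\varepsilon>0\;\exists a\;(0\leq 2-a^2<\varepsilon)$: applied with $\varepsilon\in\mathfrak{m}_v$ positive, convexity yields $2-a^2\in\mathfrak{m}_v$, hence $(av)^2=2$ in $F_1v$, contradicting $F_1v\hookrightarrow F_2\equiv\mathbb{Q}$ (since $\mathbb{Q}$ has no square root of $2$, an elementary property). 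Your alternative via a Koenigsmann-style definition of $\mathbb{Z}$ in $\mathbb{Q}$ is not what the paper does, and you do not indicate how it would actually produce the contradiction; as written this case is a sketch of a sketch. So while the overall architecture matches the paper's, the proposal is missing the concrete lemmas that make (3)--(6) work.
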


\begin{proof}
(1): The class $\{F\}$ is elementary and does not have embedded residue since finite fields do not admit nontrivial valuations.

(2): The residue field $v$ of any nontrivial valuation $v$ on $F_1\equiv F$ is separably closed, see \cite[Corollary 11.5.5]{FriedJarden},
hence cannot be embedded into any $F_2\equiv F$.

(5): Let $w$ be a nontrivial valuation on some $F_1\equiv F$.
By \cite[Corollary 1.5]{FehmLGP}, also $F_1$ is PCC,
i.e.\ it satisfies a local-global principle with respect to a family $\mathcal{F}$ of separably closed, real closed or $p$-adically closed algebraic extensions. Let $F_1^h$ be a henselization of $F_1$ with respect to $w$.
Then \cite[Theorem 2.9]{Pop} implies that $F_1^h\in\mathcal{F}$, so $F_1w=F_1^hw$ is the residue field of a separably, real or $p$-adically closed field with respect to some henselian valuation and is therefore finite, separably closed, real closed or $p$-adically closed itself.
The same then applies to the algebraic part of $F_1w$.
Thus, since $F$ is by assumption of characteristic zero and does not contain a separably closed, real closed or $p$-adically closed subfield,
there is no embedding of $F_1w$ into any $F_2\equiv F$.

(3): If $F$ is PAC, the claim follows from (2).
If $F$ is PRC but not PAC, then $F$ is contained in a real closed field.
As real closed fields have no $p$-adically closed subfields,
the claim then follows from (5).

(4): If $F$ is PAC, the claim follows from (2).
If $F$ is P$p$C but not PAC, then $F$ is contained in a $p$-adically closed field.
As $p$-adically closed fields have no real closed subfields,
the claim then follows from (5).

(6): Let $v$ be a nontrivial valuation on $F_1\equiv\mathbb{Q}$.
By Lagrange's Four Squares Theorem \cite[Theorem 369]{Hardy-Wright79},
the sums of four squares in $F_1$ form the positive cone of an ordering $<_{F_1}$ 
(which is then necessarily the only ordering of $F_1$), 
so that we have $(\mathbb{Q},<)\equiv(F_1,<_{F_1}$).
If $F_1v$ can be embedded into $F_2\equiv \mathbb{Q}$, then $F_1v$ carries an ordering (namely the restriction of $<_{F_2}$),
from which we deduce by the Baer-Krull Theorem \cite[Theorem 2.2.5]{EP} that $\mathfrak{m}_v$ is convex with respect to $<_{F_1}$.
Since $\mathbb{Q}$ is dense in $\mathbb{R}$ and $(F_1,<_{F_1})\equiv(\mathbb{Q},<)$, we have
$$
 (F_1,<_{F_1})\models\;\forall\varepsilon\!>\!0\;\exists a (0\leq 2-a^{2}<\varepsilon).
$$
So, for $\varepsilon\in\mathfrak{m}_{v}$ with $\varepsilon>_{F_1}0$
there exists $a\in F_1$ such that $0\leq_{F_1}2-a^{2}<_{F_1}\varepsilon$.
Since $\mathfrak{m}_{v}$ is convex, $2-a^{2}\in\mathfrak{m}_{v}$,
so applying the residue map gives $(av)^{2}=2$.
Thus $F_1v\subseteq F_2\equiv\mathbb{Q}$ contains a square root of $2$, a contradiction.
\end{proof}

Combining this with \autoref{cor:single.F} we immediately get the following corollary:

\begin{Corollary}\label{cor:positive.applications}
For each field $F$ in the list of \autoref{prop:positive.applications}, there is an $\exists$-$\mathcal{L}_{\rm ring}$-formula which uniformly defines the valuation ring in $\mathcal{H}(F)$.
\end{Corollary}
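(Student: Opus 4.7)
The plan is to apply \autoref{cor:single.F} with $C=\mathbb{Z}$ to each field $F$ in the list of \autoref{prop:positive.applications}. Since $\mathbb{Z}$ is its own prime ring, it is trivially integral over its prime ring, so condition $(*)$(a) holds automatically for every $\mathbb{Z}$-field $F$. Moreover, as noted in the paragraph preceding \autoref{cor:existential.characterisation.characteristic.zero}, in the case $C=\mathbb{Z}$ the notions of $\exists$-$C$-definable and $\exists$-$\emptyset$-definable coincide, so a uniform $\exists$-$\mathbb{Z}$-definition in $\mathcal{H}(F)$ is precisely what the corollary asks for.

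With this setup in place, the argument reduces to a direct invocation: for each $F$ on the list, \autoref{prop:positive.applications} tells us that $F$ does not have embedded residue, i.e.\ condition $(2^{\exists})$ of \autoref{cor:single.F} is satisfied. The implication $(2^{\exists})\Longrightarrow(1^{\exists})$ of that corollary then yields an $\exists$-$\mathcal{L}_{\mathrm{ring}}$-formula $\phi(x)$ with $\phi(K)=\mathcal{O}_v$ for every $(K,v)\in\mathcal{H}(F)$, which is exactly the desired uniform definition.

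There is no real obstacle here, since all the heavy lifting has been done: the non-trivial combinatorics of embedded residue for the six classes of fields is carried out in \autoref{prop:positive.applications}, and the bridge between the absence of embedded residue and uniform existential definability in $\mathcal{H}(F)$ is supplied by \autoref{cor:single.F}, whose proof in turn uses both the Prestel-type characterisation of Section~\ref{sec:embedded} and the transfer and uniformisation machinery of Section~\ref{sec:uniform}. Thus the only step in the proof of the corollary itself is the verification that $(*)$ holds (immediate since $C=\mathbb{Z}$) and the citation of \autoref{cor:single.F} applied to each item of \autoref{prop:positive.applications} in turn.
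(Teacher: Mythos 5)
Your proof is correct and follows exactly the route the paper takes: the paper states the corollary as an immediate consequence of combining \autoref{prop:positive.applications} (which shows each listed $F$ does not have embedded residue) with \autoref{cor:single.F} (the equivalence $(2^{\exists})\Leftrightarrow(1^{\exists})$ for $C=\mathbb{Z}$, where $(*)$(a) holds trivially). Your additional note that $\exists$-$\mathbb{Z}$-definable coincides with $\exists$-$\emptyset$-definable is a worthwhile clarification that the paper leaves implicit.
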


\begin{Remark}
This corollary was known before in special cases $(K,v)\in\mathcal{H}(F)$ (without the uniformity statement):
For fields $F$ in (1) by \cite[Theorem 2.6]{Fehm} 
(generalizing the earlier special cases $K=\mathbb{Q}_p$ by \cite[p.~303]{Robinson},
$K$ a finite extension of $\mathbb{Q}_p$ by \cite[Theorem 6]{Cluckersetal},
and $K=\mathbb{F}_q((t))$ by \cite[Theorem 1.1]{AnscombeKoenigsmann}),
for fields $F$ in (2) by \cite[Theorem 3.5]{Fehm}
and for fields $F$ in (3) by \cite[Corollary 3.6]{Fehm}.
To the best of our knowledge, it is new for the fields $F$ in (4)-(6).
We note that for certain special cases of (6), for example for $K=\mathbb{Q}((t))$, it follows from more general results in \cite{Prestel}
that the valuation ring is both $\exists\forall$-$\emptyset$-definable and $\forall\exists$-$\emptyset$-definable.

In the forthcoming work \cite{AF3} we extend the method used in the proof of (6) of \autoref{cor:positive.applications} to further study residue fields of valuations on nonstandard number fields. 
In particular, we show that no number field has embedded residue.
\end{Remark}

\begin{Lemma}\label{prp:perfect.hull.embedded.residue}
Let $F$ be a $\mathbb{Z}$-field. 
If the perfect hull $F^{\mathrm{perf}}$ has embedded residue then $F$ has embedded residue.
\end{Lemma}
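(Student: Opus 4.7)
\emph{Plan.} The cornerstone of the argument will be a Frobenius invariance observation: in characteristic $p>0$, quantifier-free $\mathcal{L}_{\mathrm{ring}}$-formulas are invariant under Frobenius, since for $q\in\mathbb{Z}[\vec{x}]$ and any tuple $\vec{a}$ in a field of characteristic $p$ one has $q(\vec{a})^{p^n}=q(\vec{a}^{p^n})$ (the integer coefficients of $q$ lie in $\mathbb{F}_p$ and are Frobenius-fixed). Hence any counterexample $\vec{a}\in F^{\mathrm{perf}}$ to a universal $\mathcal{L}_{\mathrm{ring}}$-sentence $\forall\vec{x}\,\varphi(\vec{x})$ yields the genuine counterexample $\vec{a}^{p^n}\in F$, for $n$ large enough that $\vec{a}^{p^n}\in F$. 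Together with the trivial reverse containment coming from $F\subseteq F^{\mathrm{perf}}$, this gives $\mathrm{Th}_{\forall}(F)=\mathrm{Th}_{\forall}(F^{\mathrm{perf}})$, and a standard model-theoretic fact then shows that every subring of any model of $\mathrm{Th}(F^{\mathrm{perf}})$ embeds (as a ring) into some model of $\mathrm{Th}(F)$. I will call this the Key Fact.

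If $F$ is perfect then $F=F^{\mathrm{perf}}$ and there is nothing to prove, so assume $F$ is imperfect of characteristic $p>0$; in particular $F$, and a fortiori $F^{\mathrm{perf}}$, has transcendentals over $\mathbb{F}_p$. By \ref{lemma:Schoutens}(1) applied to $F^{\mathrm{perf}}$ I extract a witness of embedded residue: an elementary extension $F^{\mathrm{perf}}\preceq G$ (chosen sufficiently saturated), a nontrivial valuation $v$ on $G$, and an embedding $\iota\colon Gv\hookrightarrow G$. In the favourable case where $w:=v|_F$ is nontrivial, $Fw$ is contained in $Gv$, hence via $\iota$ in $G$, and the Key Fact then provides an embedding $G\hookrightarrow F^{\ast}$ for some $F^{\ast}\equiv F$; composing yields $Fw\hookrightarrow F^{\ast}$, so the triple $(F,F^{\ast},w)$ witnesses that $F$ has embedded residue.

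The remaining case is when $v|_F$ (equivalently $v|_{F^{\mathrm{perf}}}$, since $F^{\mathrm{perf}}/F$ is purely inseparable and nontriviality of valuations descends across purely inseparable extensions) is trivial. Here the residue map embeds $F^{\mathrm{perf}}$ into $Gv$, and I would replace $\iota$ by a new embedding $\iota'\colon Gv\hookrightarrow G$ chosen so that the pullback $v\circ\iota'$ is nontrivial on $F^{\mathrm{perf}}\subseteq Gv$. Concretely, pick a transcendental $t\in F^{\mathrm{perf}}$ and an element $b\in G$ with $v(b)>0$ that is transcendental over the prime field; the partial $\mathcal{L}_{\mathrm{ring}}$-assignment $t\mapsto b$ is then consistent, and by saturation of $G$ extends by back-and-forth to a full embedding $\iota'\colon Gv\hookrightarrow G$. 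With $\iota$ replaced by $\iota'$, the pullback of $v$ along the composition $F\hookrightarrow F^{\mathrm{perf}}\hookrightarrow Gv\xrightarrow{\iota'}G$ is nontrivial on $F$, and the argument of the previous paragraph applies verbatim.

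The hard part will be the back-and-forth construction of $\iota'$: one must verify that prescribing a $v$-nontrivial image for a transcendental of $F^{\mathrm{perf}}$ is consistent with the full $\mathcal{L}_{\mathrm{ring}}$-type of $Gv$ over the prime field, so that a complete embedding $\iota'\colon Gv\hookrightarrow G$ can indeed be produced inside the saturated $G$. It is the flexibility in the choice of $\iota$---which enters as part of the witness and is not intrinsic to $F^{\mathrm{perf}}$---together with the availability of transcendental elements of every $v$-value in the saturated $G$, that makes this modification legitimate.
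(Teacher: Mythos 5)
Your strategy is genuinely different from the paper's. The paper proves the contrapositive: assuming $F$ has no embedded residue, \autoref{cor:single.F} provides an $\exists$-$\mathcal{L}_{\rm ring}$-formula $\phi$ defining $\mathcal{O}_{v_t}$ in $K=F((t))$; writing $K^{\rm perf}=\bigcup_n K^{p^{-n}}$ with each $K^{p^{-n}}\cong K$, the same $\phi$ is seen to define $\mathcal{O}_{v_t^{\rm perf}}$ in $K^{\rm perf}$, and since $K^{\rm perf}v_t^{\rm perf}=F^{\rm perf}$, \autoref{cor:single.F} again gives that $F^{\rm perf}$ has no embedded residue. Your argument works directly with the model-theoretic witness of embedded residue from \ref{lemma:Schoutens}(1). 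The Frobenius-invariance observation underlying your Key Fact (so that $\mathrm{Th}_\forall(F)=\mathrm{Th}_\forall(F^{\rm perf})$, hence every substructure of a model of $\mathrm{Th}(F^{\rm perf})$ embeds into a model of $\mathrm{Th}(F)$) is the same phenomenon the paper exploits, just applied at a different level, and your Case 1 (when $v|_F$ is nontrivial) is correct and complete.

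However, Case 2 has a genuine gap. You want to replace the given embedding $\iota\colon Gv\hookrightarrow G$ by a new embedding $\iota'$ with $\iota'(t)=b$ for some prescribed $b\in\mathfrak{m}_v\setminus\{0\}$, and you assert that this extends by back-and-forth inside the saturated $G$. But saturation of $G$ does not support this. The obstruction is the algebraic steps: when extending the partial embedding from a subfield $E\subseteq Gv$ to $E(a)$ with $a$ algebraic over $E$ of minimal polynomial $f$, one must produce a root of $f^{\iota'}$ in $G$. There is no reason for $G$ to contain such a root: $G$ is merely a field (not algebraically closed), saturation only yields realizations of types that are \emph{consistent} with $\mathrm{Th}(G)$ over small parameter sets, and ``this particular polynomial has a root'' is not a consequence of $\mathrm{Th}(G)$. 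The presence of the original embedding $\iota$ gives roots over $\iota(E)$, not over $\iota'(E)$; once you have moved $t$ to $b$, those roots are of no use. In short, the existence of one embedding $Gv\hookrightarrow G$ does not imply the existence of an embedding sending a chosen transcendental to a chosen transcendental, and I do not see how to repair this within your framework --- avoiding exactly this kind of wrestling with the witness is what the paper's detour through the definability characterization of \autoref{cor:single.F} buys.
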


\begin{proof}
If $F$ does not have embedded residue, then by \ref{cor:single.F}
there is an $\exists$-$\mathcal{L}_{\rm ring}$-formula $\phi(x)$ that defines $\mathcal{O}_{v_t}$ in $K:=F((t))$.
Since $K^{\rm perf}=\bigcup_{n\in\mathbb{N}}K^{p^{-n}}$ and $K^{p^{-n}}\cong K$,
the same formula $\phi(x)$ defines the valuation ring of the unique extension $v_t^{\rm perf}$ of $v_t$ to $K^{\rm perf}$.
Since $K^{\rm perf}v_t^{\rm perf}=F^{\rm perf}$, \ref{cor:single.F} gives that $F^{\rm perf}$ does not have embedded residue.
\end{proof}

\begin{Proposition}\label{prop:negative.applications}
A $\mathbb{Z}$-field $F$ has embedded residue in each of the following cases:
\begin{enumerate}
\item $F$ contains the separable closure of the prime field.
\item $F$ admits a nontrivial henselian valuation.
\item $F$ is separably closed, real closed, or $p$-adically closed.
\item $F$ is a proper purely transcendental extension of some subfield $F_0$.
\end{enumerate}
\end{Proposition}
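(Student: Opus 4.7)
The plan is to establish each of the four cases using the criterion in \ref{lemma:Schoutens}(1), together with \ref{lem:sep.cl} and \ref{prp:perfect.hull.embedded.residue}.

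Cases (1) and (4) are direct. For (4), fix $i_0 \in I$, let $F_0' := F_0(x_i : i \in I, i \neq i_0)$ so that $F = F_0'(x_{i_0})$, and take $v$ to be the $x_{i_0}$-adic valuation on $F$; it is nontrivial with residue field canonically identified with the subfield $F_0' \subseteq F$, witnessing embedded residue with $F_1 = F_2 = F$. For (1), the prime field $P$ of $F$ is always perfect, hence $P^{\mathrm{sep}} = P^{\mathrm{alg}}$ is algebraically closed, so the hypothesis $F \supseteq P^{\mathrm{sep}}$ supplies an algebraically closed subfield and \ref{lem:sep.cl} applies.

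For case (2), let $v$ be a nontrivial henselian valuation on $F$, and pass to a sufficiently saturated elementary extension $(F^{*},v^{*}) \succeq (F,v)$ in $\mathcal{L}_{\mathrm{vf}}$, so that $F \preceq F^{*}$ in $\mathcal{L}_{\mathrm{ring}}$. The aim is to produce a nontrivial valuation $w$ on $F^{*}$ admitting a section $F^{*}w \hookrightarrow F^{*}$, so that \ref{lemma:Schoutens}(1) concludes. If $v^{*}$ is equicharacteristic, such a section exists: in characteristic zero by the classical coefficient field theorem for henselian valued fields, and in positive characteristic via the saturation argument used in the proof of \ref{lem:embedding.t} (which rests on \cite[Lemma~2.3]{AnscombeFehm}). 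If $v^{*}$ is of mixed characteristic with residue characteristic $p$, let $\Delta \subseteq v^{*}F^{*}$ be the convex subgroup generated by $v^{*}(p)$; saturation ensures $\Delta \subsetneq v^{*}F^{*}$, and then $w := v^{*}/\Delta$ is a nontrivial henselian coarsening of residue characteristic zero, reducing to the previous subcase.

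Case (3) reduces to (1) or (2). If $F$ is separably closed, then $F^{\mathrm{perf}}$ is algebraically closed, so \ref{lem:sep.cl} yields embedded residue for $F^{\mathrm{perf}}$ and \ref{prp:perfect.hull.embedded.residue} transfers it to $F$. If $F$ is $p$-adically closed, the $p$-adic valuation on $F$ is nontrivial henselian, and (2) applies. If $F$ is real closed, any elementary extension $F \preceq F^{*}$ containing an element larger than every integer carries a nontrivial convex henselian valuation of equicharacteristic zero, so the corresponding subcase of (2) concludes. The main obstacle is the mixed-characteristic subcase of (2): for rank-one examples like $F = \mathbb{Q}_p$, the value group $vF = \mathbb{Z}$ is generated archimedeanly by $v(p)$ and admits no proper equicharacteristic-zero coarsening on $F$ itself, so the passage to a saturated elementary extension, which enlarges $vF$ past the archimedean class of $v(p)$, is indispensable.
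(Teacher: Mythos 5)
Cases (1), (3), and (4) are correct and in line with the paper, though in the separably closed subcase of (3) you take an unnecessary detour: since the prime field is perfect, a separably closed $F$ already contains the algebraic closure of its prime field, so case (1) applies directly without passing through $F^{\mathrm{perf}}$ and \ref{prp:perfect.hull.embedded.residue}.

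Case (2) has a genuine gap in equicharacteristic positive characteristic. The mixed-characteristic coarsening and the characteristic-zero coefficient-field argument are fine, but the claim that a section $F^{*}v^{*}\hookrightarrow F^{*}$ exists ``via the saturation argument used in the proof of \ref{lem:embedding.t}'' does not hold up. That argument lifts a \emph{fixed} field $F$ into a model that is $|F|^{+}$-saturated; here the field to be lifted is the residue field $F^{*}v^{*}$ of the saturated model itself, and since saturation of $(F^{*},v^{*})$ makes the residue sort at least as large as the saturation level, the saturation can never be chosen large enough in advance. More fundamentally, even after a transcendence basis $\mathcal{X}$ of $F^{*}v^{*}$ over $\mathbb{F}_{p}$ has been lifted arbitrarily and the relative separable closure of $\mathbb{F}_{p}(\mathcal{X})$ in $F^{*}v^{*}$ has been lifted by henselianity, the remaining extension is purely inseparable and in general cannot be lifted into $F^{*}$ unless $F^{*}$ is perfect. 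The paper deals with exactly this obstruction by first invoking \ref{prp:perfect.hull.embedded.residue} to reduce to $F$ perfect (noting that the henselian valuation extends uniquely to $F^{\mathrm{perf}}$), and then produces the section $Fv\hookrightarrow F$ not by saturation but by henselianity for the separable-algebraic part and perfectness for the purely inseparable part; $\aleph_{0}$-saturation is used only to make the mixed-characteristic coarsening nontrivial. To repair your proof you should insert the reduction to $F$ perfect at the start of case (2) and replace the appeal to the saturation argument by this transcendence-base/henselianity/perfectness lifting.
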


\begin{proof}
(1) is a restatement of \autoref{lem:sep.cl} in the special case $C=\mathbb{Z}$.

(2): Let $(F,v)$ be a henselian nontrivially valued field.
By \autoref{prp:perfect.hull.embedded.residue}, we may assume that $F$ is perfect.
We can furthermore assume that $(F,v)$ is $\aleph_0$-saturated,
so if $v$ is of mixed characteristic, 
then $v(\mathbb{Z}\setminus\{0\})$ is contained in a proper convex subgroup of $vF$,
hence we can replace $v$ by an equicharacteristic henselian coarsening.
If $\mathcal{X}$ is a transcendence base of $Fv$ over its prime field $\mathbb{F}$,
then the embedding $\mathbb{F}\rightarrow F$ extends to the relative separable closure of $\mathbb{F}(\mathcal{X})$ in $Fv$,
cf.~\cite[Lemma 2.3]{AnscombeFehm}.
Since $F$ is perfect, this extends further to an embedding of $Fv$ into $F$,
hence $F$ has embedded residue.

(3): If $F$ is separably closed, then the claim follows from (1). If $F$ is $p$-adically closed, then the claim follows from (2).
If $F$ is real closed, then any sufficiently saturated elementary extension of $F$ admits a henselian valuation, so the claim follows from (2).

(4): Without loss of generality, $F=F_0(t)$, in which case $Fv_t=F_0\subseteq F$.
\end{proof}

\begin{Corollary}\label{cor:negative.applications}
For each field $F$ in the list of \autoref{prop:negative.applications}, there is no $\exists$-$\mathcal{L}_{\rm ring}$-formula 
which defines the valuation ring in any $(K,v)\in\mathcal{H}_e'(F)$.
\end{Corollary}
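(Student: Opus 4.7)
The corollary is essentially a direct application of the characterization theorem for a single $C$-field, namely \ref{cor:single.F}, combined with \autoref{prop:negative.applications}.

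First I would note that since we are working with $\mathcal{L}_{\rm ring}$-formulas (i.e.\ existential definitions without parameters), this corresponds to taking $C=\mathbb{Z}$. In this case, $C$ is trivially integral over its prime ring (itself), so condition $(*)$(a) holds automatically for any $\mathbb{Z}$-field $F$. In particular, every $F$ appearing in the list of \autoref{prop:negative.applications} satisfies $(*)$, so \ref{cor:single.F} is applicable.

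The main step is then simply the contrapositive of the implication $(0^\exists_e)\Rightarrow(2^\exists)$ from \ref{cor:single.F}: if $F$ has embedded residue, then the valuation ring is not $\exists$-$\mathbb{Z}$-definable (equivalently, not $\exists$-$\emptyset$-definable) in any $(K,v)\in\mathcal{H}_e'(F)$. By \autoref{prop:negative.applications}, each $F$ in the list (whether it contains the separable closure of the prime field, admits a nontrivial henselian valuation, is separably/real/$p$-adically closed, or is a proper purely transcendental extension) has embedded residue. Combining these two facts gives the conclusion immediately.

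The proof is thus a one-line invocation of previously established results; there is no substantive obstacle, since the hard work (\ref{thm:main} and the verification that each class of fields in \autoref{prop:negative.applications} indeed has embedded residue) has already been done. The only point worth mentioning is to be explicit about why $(*)$ is satisfied in the present $C=\mathbb{Z}$ context, so that the reader sees at once that \ref{cor:single.F} applies without further hypotheses.
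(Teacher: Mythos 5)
Your proof is correct and follows exactly the route the paper intends (the corollary is stated without a separate proof precisely because it is the immediate combination of \ref{cor:single.F} and \ref{prop:negative.applications}, mirroring how \ref{cor:positive.applications} follows from \ref{cor:single.F} and \ref{prop:positive.applications}). Your remark that $(*)$(a) holds automatically for $C=\mathbb{Z}$ is a sensible thing to make explicit, and the contrapositive of $(0^\exists_e)\Rightarrow(2^\exists)$ is precisely the statement needed.
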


\begin{Remark}
This corollary was known at least for certain $(K,v)\in\mathcal{H}_e'(F)$
for fields $F$ in (1) by \cite[Remark 3.8]{Fehm},
and for fields $F$ in (2) and (3) by \cite[Observation A.1]{AnscombeKoenigsmann} and \cite[Proposition 4.6 and Example 5.4]{FehmPrestel}.
\end{Remark}

\subsection{Diophantine valuation ideals: Examples and counterexamples, and Pop's large fields}
\label{sec:large}

In this section we drop our general assumption that all fields are $C$-fields.
Let $F$ be a field.
By an $F$-variety we mean a separated scheme $X$ of finite type over ${\rm Spec}(F)$.
For a field extension $E$ of $F$ we denote by 
$$
 X(E)={\rm Hom}_{{\rm Spec}(F)}({\rm Spec}(E),X)
$$ 
the set of $E$-rational points of $X$.
Each $x\in X(E)$ gives a scheme theoretic point $\tilde{x}=x(\mathfrak{p})\in X$, where $\mathfrak{p}$ is the unique prime ideal of $E$,
and when we speak of closure or denseness of a subset $A\subseteq X(E)$ in $X$ 
we in fact mean the closure or denseness of the corresponding set $\tilde{A}=\{\tilde{x}:x\in A\}\subseteq X$.
Recall that $F$ is {\em large} (in the sense of Pop)
if it satisfies any of the following equivalent conditions, cf.~\cite[Proposition 1.1]{Pop96}:

\begin{Proposition}\label{fact:Pop.large}
Let $F$ be a field.
Then the following are equivalent:
\begin{enumerate}
\item Every integral $F$-curve with a smooth $F$-rational point has infinitely many such points.
\item For every smooth integral $F$-variety $X$, the set $X(F)$ is empty or dense in $X$.
\item $F$ is existentially closed in the henselization $F(t)^{h}$. 
\item $F$ is existentially closed in the Laurent series field $F((t))$.
\end{enumerate}
\end{Proposition}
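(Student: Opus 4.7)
The plan is to establish the equivalence of the four conditions by verifying the cycle $(2)\Rightarrow(1)\Rightarrow(3)\Rightarrow(4)\Rightarrow(2)$ (or alternatively splitting off $(3)\Leftrightarrow(4)$ on purely model-theoretic grounds). The model-theoretic equivalence $(3)\Leftrightarrow(4)$ is the easiest: $(4)\Rightarrow(3)$ is immediate since $F(t)^h\subseteq F((t))$ and existential formulas pass up; for $(3)\Rightarrow(4)$ it suffices to show that $F(t)^h$ is itself existentially closed in $F((t))$. This is a consequence of $t$-adic approximation together with Hensel's Lemma: given a tuple $\bar c\in F((t))$ witnessing a quantifier-free polynomial system over $F(t)^h$, one truncates $\bar c$ to a polynomial tuple $\bar c_0\in F(t)^h$ agreeing with $\bar c$ up to high order, and uses the Jacobian criterion in combination with Hensel's Lemma in the henselian field $F(t)^h$ to lift $\bar c_0$ to a true witness.

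For the geometric equivalence, $(2)\Rightarrow(1)$ is immediate by applying $(2)$ to the smooth locus of the curve, which is a one-dimensional integral $F$-variety with the given rational point; density plus one-dimensionality yields infinitely many points. The step $(1)\Rightarrow(2)$ is the most geometric and requires a Bertini-type argument: given a smooth integral $F$-variety $X$, an $F$-rational point $x_0$, and a nonempty open $U\subseteq X$, one produces a smooth integral $F$-curve $C\subseteq X$ passing through $x_0$ and meeting $U$ (for instance by slicing with a suitable pencil of hyperplanes in a projective embedding, chosen to contain $x_0$ and to have smooth one-dimensional intersection hitting $U$); applying $(1)$ to $C$ yields infinitely many $F$-rational points, which are automatically dense in $C$ and hence hit $U$.

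The bridge from geometry to model theory uses the dictionary between existential $\mathcal L_{\mathrm{ring}}(F)$-formulas and affine $F$-varieties. For $(1)\Rightarrow(3)$, an integral $F$-curve $C$ with a smooth $F$-rational point $x_0$ admits, via a local uniformizer at $x_0$, a formal arc ${\rm Spec}(F[[t]])\to C$ giving an $F((t))$-rational, and in fact (by the henselian character of $F(t)^h$ and Hensel applied along the uniformizer) an $F(t)^h$-rational point $\tilde x\in C(F(t)^h)$ lying in the open complement $C\setminus S$ of any prescribed finite set $S$ of $F$-rational points; the existence of $\tilde x$ is an existential statement over $F$, so $(3)$ transfers it down to an $F$-rational point avoiding $S$, producing infinitely many. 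Conversely, $(3)\Rightarrow(1)$ (equivalently $(4)\Rightarrow(2)$) proceeds by taking an $\exists$-$\mathcal L_{\mathrm{ring}}(F)$-sentence $\phi$ satisfied in $F(t)^h$ (resp.\ $F((t))$), interpreting its witness as an $F(t)^h$-rational point of the affine $F$-variety $V$ cut out by its polynomial equations and inequations, passing to the smooth locus $V^{\rm sm}$ (nonempty since a witness in a field is automatically regular for an affine scheme of the appropriate relative dimension, or by replacing $V$ by a desingularization if necessary), producing by $t$-adic reduction an $F$-rational point of $V^{\rm sm}$, and finally using $(2)$ to produce a witness in $F$.

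The main obstacle I anticipate is the Bertini-type step in $(1)\Rightarrow(2)$, which over a field that is not infinite or not algebraically closed requires one to exhibit \emph{rational} smooth curve sections rather than merely generic ones; the standard trick is to work in a local \'etale neighborhood of $x_0$ where one can explicitly produce a smooth curve by intersecting with coordinate hyperplanes through $x_0$ and then spreading out globally, but one must verify that the resulting curve is integral and meets the given open set $U$. The secondary obstacle is the smoothness/desingularization input in $(3)\Rightarrow(1)$: strictly speaking, to apply $(1)$ or $(2)$ one needs a smooth model, and when $F$ is imperfect or of positive characteristic this is a nontrivial input that Pop handles by working directly with smooth curves extracted from the witness data, rather than invoking resolution.
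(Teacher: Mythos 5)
The paper does not prove this proposition: it cites it as \cite[Proposition 1.1]{Pop96} and leaves the proof to Pop. The relevant internal comparison is thus to the paper's own Proposition \ref{prp:large.1}, which generalizes the statement to $C$-fields and whose hard geometric input is Temkin's inseparable local uniformization, not a Bertini argument. Your overall decomposition into a cycle, and your correct handling of $(3)\Leftrightarrow(4)$ and $(2)\Rightarrow(1)$, are sound (though note that what you label ``$(1)\Rightarrow(3)$'' is in fact $(3)\Rightarrow(1)$, and what you label ``$(3)\Rightarrow(1)$, equivalently $(4)\Rightarrow(2)$'' is in fact $(2)\Rightarrow(3)$; despite the mislabeling, the directions you actually argue for do close the cycle \emph{provided} the Bertini step for $(1)\Rightarrow(2)$ goes through).

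The genuine gaps are in two places. First, the Bertini step $(1)\Rightarrow(2)$ is load-bearing in your cycle but is the step you should most want to avoid: over imperfect fields one cannot expect a smooth $F$-rational curve section through a prescribed point, and even over infinite perfect fields the integrality and global smoothness of such a section require care; the standard route (and the one the paper takes in the $C$-field setting) bypasses Bertini by instead proving $(1)\Rightarrow(3)$ directly and then $(4)\Rightarrow(2)$ via a dominating discrete valuation ring with residue field $F$ (cf.\ the Jarden--Roquette lemma cited in the paper). A correct proof of $(1)\Rightarrow(3)$: given an existential witness $\bar c\in(F(t)^h)^n$, the subring $R=F[\bar c]$ is a domain of transcendence degree $\le 1$ over $F$; if it is algebraic it lies in $F$ (since $F$ is relatively algebraically closed in $F(t)^h$), and if it has transcendence degree $1$ the restriction of the $t$-adic valuation to $\operatorname{Frac}(R)$ gives a discrete valuation with residue field $F$, hence a regular (therefore smooth) $F$-rational point on the projective model, to which $(1)$ applies. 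Second, your $(2)\Rightarrow(3)$ argument has a concrete error: the claim ``a witness in a field is automatically regular for an affine scheme of the appropriate relative dimension'' is false (an $F(t)^h$-point of an $F$-variety $V$ need not lie in the smooth locus of $V/F$), the fallback ``replace $V$ by a desingularization'' is unavailable over imperfect fields, and even granting a smooth model the $t$-adic reduction of the witness need not land in the smooth locus (it may specialize to a boundary or singular point). The correct argument again passes through the integral subvariety traced out by the witness, exactly as above, so that the point produced by the residue map is guaranteed to be a smooth $F$-rational point of a suitable curve.
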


For further information on large fields and their relevance in various areas see for example \cite{Jarden, BF, Popsurvey}.
As we will see in \autoref{cor:large}, $F$ is a large field if and only if $F$ is a large $F$-field in the sense of \autoref{def:embedded.residue.large} (where we view $F$ as an $F$-field via the identity map).
Recall that for a $C$-field $F$ we denote by $C_F$ the quotient field of the image of $C\rightarrow F$.
We have to restrict to cases in which sufficiently general versions of resolution of singularities or its local form, local uniformization, have been proven, and here we do not strive for maximal generality:

\begin{Lemma}\label{lem:localuniformization}
Let $F$ be a $C$-field and let $E\subseteq F(t)^h$ be a finitely generated extension of $C_F$ not contained in $F$. 
Then there exists a finite extension $E'$ of $E$ which is the function field of a smooth integral $C_F$-variety $X$ with $X(F)\neq\emptyset$
if one of the following conditions holds:
{\it \begin{enumerate} 
      \item[(a)] $C$ is integral over its prime ring
      \item[(b)] $C$ is a perfect field and $F$ is perfect
      \item[(c)] $F=C_F$
     \end{enumerate}
}
\end{Lemma}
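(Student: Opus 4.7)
The plan is to build a projective model $Y$ of $E$ over $C_F$, produce an $F$-rational point on $Y$ by specializing the embedding $E\hookrightarrow F((t))$, and then apply local uniformization at the resulting valuation of $E$ to reach a smooth model of a suitable finite extension $E'$ of $E$.

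Set $v:=v_t|_E$. Since $v_t$ is trivial on $F$, $v$ is trivial on $C_F$. The relative algebraic closure of $F$ in $F(t)^h$ is $F$ itself (a consequence of henselianness together with the triviality of $v_t$ on $F$), so $E\not\subseteq F$ forces $v$ to be nontrivial and $E/C_F$ to have positive transcendence degree; moreover the residue field $Ev$ embeds into $F(t)^h v_t=F$. Choose a projective integral $C_F$-variety $Y$ with function field $E$, say the projective closure of $\mathrm{Spec}\,C_F[a_1,\dots,a_n]$ for some generators $a_i$ of $E/C_F$. The embedding $E\hookrightarrow F((t))$ yields a morphism $\mathrm{Spec}(F((t)))\to Y$ factoring through the generic point, which by the valuative criterion of properness applied to the valuation ring $F[[t]]$ extends to $\mathrm{Spec}(F[[t]])\to Y$; the image of the closed point is a point $y_0\in Y$ whose residue field embeds into $F$, so $y_0\in Y(F)$, and $y_0$ is the center of $v$ on $Y$.

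Next, I would apply local uniformization at $v$. In cases (a) and (b), $C_F$ is perfect: algebraic over the prime field in (a), and the perfect field $C$ itself in (b). In characteristic zero, Zariski's local uniformization gives a projective model $Y'$ of $E$ on which the center $y'_0$ of $v$ is a smooth $C_F$-point, and $E'=E$ suffices. In positive characteristic, Temkin's inseparable local uniformization produces a finite purely inseparable extension $E'/E$ and a projective model $Y'$ of $E'$ on which the unique extension of $v$ has a smooth $C_F$-center $y'_0$. In case (c) we have $F=C_F$, and $E\subseteq F(t)^h$ forces $\mathrm{trdeg}_F E=1$; moreover $E/F$ is separable since $F((t))/F$ is (by formal smoothness of $F[[t]]/F$). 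Letting $F'$ be the algebraic closure of $F$ in $E$, a finite separable extension of $F$, the extension $E/F'$ is separable of transcendence degree one with $F'$ algebraically closed in $E$, so $E$ is the function field of a unique smooth projective geometrically integral $F'$-curve $Y'$, which is also smooth over $F=C_F$ because $F'/F$ is \'etale, and once more $E'=E$ works.

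In every case $\kappa(y'_0)$ embeds into $Ev\subseteq F$, giving $y'_0\in Y'(F)$, and taking $X$ to be a smooth affine open neighborhood of $y'_0$ in $Y'$ produces a smooth integral $C_F$-variety with function field $E'$ and $X(F)\neq\emptyset$. The principal difficulty is invoking local uniformization in positive characteristic for potentially higher-dimensional $E/C_F$: Temkin's theorem is what forces us to pass to a finite purely inseparable extension $E'$ of $E$, and this is precisely why the conclusion allows $E'\neq E$ rather than demanding $E'=E$.
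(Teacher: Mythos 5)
Your overall architecture matches the paper's: take a projective model of $E$, find the center of the restriction of $v_t$, apply (Temkin's) local uniformization, and extract a smooth affine open. Case (c) is fine (and in fact simplifies: since $F(t)^h/F$ is regular, $F$ is already algebraically closed in $E$, so your $F'$ equals $F$).

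The gap is in your final sentence for cases (a) and (b). You assert that ``in every case $\kappa(y'_0)$ embeds into $Ev\subseteq F$,'' but after invoking Temkin's inseparable local uniformization this is not true in general. Temkin's theorem replaces $E$ by a finite purely inseparable extension $E'$, and the smooth model $Y'$ then carries an $F_0'$-rational point for some finite \emph{purely inseparable} extension $F_0'$ of $F_0=Ev$, not an $F_0$-rational point. In case (b) or in characteristic zero this is harmless because $F$ is perfect, so $F_0'\subseteq F$. But in case (a) with $\mathrm{char}(F)=p>0$, $F$ need not be perfect (e.g.\ $C=\mathbb{Z}$, $F=\mathbb{F}_p(s)$), and $F_0'$ may well fail to embed into $F$. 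The paper closes this gap with a Frobenius argument that your proof omits: in case (a), $C_F$ is algebraic over $\mathbb{F}_p$, so the smooth $C_F$-variety $X$ descends to a finite field $\mathbb{F}_q$ and is therefore fixed by the $q$-power Frobenius; since $(F_0')^{q^k}\subseteq F$ for $k$ large, pushing the $F_0'$-point through a high Frobenius power produces a genuine $F$-rational point of $X$. Without this step the proof does not establish $X(F)\neq\emptyset$ in case (a) when $F$ is imperfect.
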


\begin{proof}
We start with case (c).
In this case, $F(t)^h$ is regular and of transcendence degree $1$ over $F=C_F$, hence $E$ is the function field of a geometrically integral $C_F$-curve $X_0$, which always has a smooth projective model $X$.
The restriction of the $t$-adic valuation to $E$ corresponds to an $F$-rational point on $X$.

In cases (a) and (b), $C_F$ is perfect
and the restriction of the $t$-adic valuation to $E$ gives a place with residue field $F_0$ contained in $F$.
By Temkin's inseparable local uniformization \cite[Theorem 1.3.2]{Temkin} there exists a finite purely inseparable extension $E'$ of $E$ 
such that $E'$ is the function field of a smooth integral $C_F$-variety $X$ with an $F_0'$-rational point, where $F_0'$ is a finite purely inseparable extension of $F_0$.
If (a) holds and ${\rm char}(F)=p>0$, then $X$ is defined over some finite field $\mathbb{F}_q$, hence the $q$-Frobenius fixes $X$;
but $(F_0')^{q^k}\subseteq F$ for sufficiently large $k$, hence $X(F)\neq\emptyset$.
If ${\rm char}(F)=0$ or (b) holds, then $F_0'\subseteq F$, so $X(F)\neq\emptyset$.
\end{proof}

\begin{Proposition}\label{prp:large.1}
Let $F$ be a $C$-field 
and suppose that one of the conditions (a), (b) or (c) of \ref{lem:localuniformization} holds.
Then the following are equivalent:
\begin{enumerate}
\item $F$ is a large $C$-field (see \autoref{def:embedded.residue.large}).
\item For every smooth integral $C_F$-variety $X$, the set $X(F)$ is empty or dense in $X$.
\item $F$ and $F(t)^{h}$ have the same $\exists$-$\mathcal{L}_{\rm ring}(C)$-theory.
\item $F$ and $F((t))$ have the same $\exists$-$\mathcal{L}_{\rm ring}(C)$-theory.
\item There is a $C$-embedding $F(t)^{h}\longrightarrow F^{*}$, for some elementary extension $F\preceq F^*$.
\item There is a $C$-embedding $F((t))\longrightarrow F^{*}$, for some elementary extension $F\preceq F^*$.
\end{enumerate}
\end{Proposition}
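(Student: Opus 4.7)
The plan is to prove the six conditions equivalent through a web of implications, with the geometric condition $(2)$ serving as a bridge between the largeness condition $(1)$ and the model-theoretic conditions $(3)$--$(6)$.

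First, the routine part. The equivalences $(3) \Leftrightarrow (5)$ and $(4) \Leftrightarrow (6)$ are standard: $F$ is existentially closed in a containing $C$-field $L$ if and only if $L$ $C$-embeds into some elementary extension of $F$ (compactness together with the diagram lemma). The inclusions $F \subseteq F(t)^h \subseteq F((t))$ give $(4) \Rightarrow (3)$ and $(6) \Rightarrow (5)$, and $(5) \Rightarrow (1)$ follows because the image of $F(t)^h$ in $F^*$ carries the nontrivial henselian restriction of $v_t$ with residue field $F \equiv F$. For $(1) \Rightarrow (5)$, I would apply \ref{lemma:Schoutens}(2) to obtain $F \preceq F^*$, a subfield $E \subseteq F^*$ and a nontrivial henselian $C$-valuation $v$ on $E$ with $Ev \cong F^*$. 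In case (c), $F = C_F$ embeds canonically into $\mathcal{O}_v$ through the structure map (and $v$ is trivial on this image since it is a field). In cases (a)/(b), hypothesis $(*)$ applies: by \ref{lem:star} the residue map induces an isomorphism $C_E \to C_F$, and following the proof of \ref{lem:embedding.t} (extending a partial section step by step using \cite[Lemma 2.3]{AnscombeFehm} and saturation of $F^*$) one obtains a $C$-embedding $F \hookrightarrow \mathcal{O}_v$ that is a section of the residue map. Choosing $t \in \mathfrak{m}_v \setminus \{0\}$, which is automatically transcendental over $F$, and invoking the universal property of the henselization then yields $F(t)^h \hookrightarrow E \subseteq F^*$.

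The geometric condition $(2)$ enters through \ref{lem:localuniformization}. For $(2) \Rightarrow (3)$: given an existential sentence $\exists \bar{x}\, \psi(\bar{x})$ with witness $\bar{a} \in (F(t)^h)^n$, let $E := C_F(\bar{a})$. If $E \subseteq F$, the witness is already rational; otherwise \ref{lem:localuniformization} yields a finite extension $E'/E$ that is the function field of a smooth integral $C_F$-variety $X$ with $X(F) \neq \emptyset$, whereupon $\psi(\bar{a})$ specializes to hold on a non-empty Zariski-open $U \subseteq X$, and the density provided by $(2)$ delivers an $F$-point of $U$ specializing $\bar{a}$ to a witness in $F^n$. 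Conversely, for $(5) \Rightarrow (2)$: given $x_0 \in X(F)$ smooth and $U \subseteq X$ non-empty open, choose an étale neighbourhood $X' \to \mathbb{A}^d_{C_F}$ of $x_0$ with $x_0 \mapsto 0$ together with a direction $\bar{a} \in F^d$ such that the line $t_i \mapsto a_i t$ escapes $Z := X \setminus U$ (possible for any infinite $F$; if $F$ is finite, all of $(1)$--$(6)$ fail trivially). By henselianness of $F(t)^h$, this lifts through the étale chart to an $F(t)^h$-point of $X$ lying in $U$, and the existential statement ``$X$ has a point in $U$'' transfers down from $F(t)^h$ to $F$ via $(5) \Leftrightarrow (3)$.

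To close the cycle and reach $(4)$ and $(6)$ from $(3)$, I would pass from the henselization to the completion in one of two ways: either by Artin approximation applied to the excellent henselian local ring $F[t]_{(t)}^h$ (any polynomial system with a solution in $F((t))$ has a sufficiently close $t$-adic approximation in $F(t)^h$, with inequalities preserved by approximation beyond the valuation of the nonvanishing polynomials), giving $(3) \Leftrightarrow (4)$ directly; or by extending \ref{lem:localuniformization} to finitely generated subextensions of $F((t))$, applying Temkin's theorem to the induced valuation verbatim, which gives $(2) \Rightarrow (4)$ directly. Either route closes the cycle $(1) \Rightarrow (5) \Rightarrow (3) \Rightarrow (2) \Rightarrow (4) \Rightarrow (6) \Rightarrow (1)$. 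The principal obstacle is this final bridge between $F(t)^h$ and $F((t))$, and it is precisely the reason for the hypothesis $(*)$ or $F = C_F$ in \ref{lem:localuniformization}: these are what guarantee that local uniformization is available in the required form.
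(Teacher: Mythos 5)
Your proposal is substantially correct and covers all six equivalences; the backbone—compactness for $(3)\Leftrightarrow(5)$ and $(4)\Leftrightarrow(6)$, the partial-section construction via \cite[Lemma~2.3]{AnscombeFehm} for $(1)\Rightarrow(5)$, and local uniformization (\ref{lem:localuniformization}) for $(2)\Rightarrow(3)$—matches the paper's proof closely. You diverge in two places.

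First, for the implication from the model-theoretic conditions back to the geometric condition $(2)$, the paper proves $(4)\Rightarrow(2)$: given a smooth point $x\in X(F)$, it base-changes to $X_F$, takes an irreducible component $Y$ through $x$, invokes \cite[Lemma~A.1]{JardenRoquette} to produce a discrete valuation ring with residue field $F$ dominating $\mathcal{O}_{Y,\tilde{x}_F}$, observes that its completion is $F[[t]]$, and uses density of $Y(F(Y))$ in $Y$ to deduce that $(Y\cap X_0)(F((t)))\neq\emptyset$, from which $(4)$ gives the $F$-point. Your version instead proves $(3)\Rightarrow(2)$ via an étale chart $X'\to\mathbb{A}^d$ sending $x_0\mapsto 0$ and a Bertini-style choice of direction $\bar a\in F^d$ whose lifted arc escapes $Z = X\setminus U$. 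This works in spirit, but you have elided the real content: you must verify that the irreducible curve through $x_0$ obtained by lifting the line $L_{\bar a}$ is not contained in $Z'$, and this requires an argument that the family of such curves (indexed by $\mathbb{P}^{d-1}$) sweeps out a dense subset of $X'$ so that a generic $\bar a$ works. There is also a base-of-definition issue your sketch does not address: the étale morphism after centring at $x_0$ lives over $F$, not $C_F$, and one must handle the passage between $X$ over $C_F$ and $X_F$ (the paper does this by explicitly picking an irreducible component of the base change). The Jarden--Roquette route the paper takes sidesteps the Bertini argument entirely and is cleaner.

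Second, for $(3)\Leftrightarrow(4)$ the paper simply cites $F(t)^h\preceq_\exists F((t))$ from \cite[Lemma~4.5]{AnscombeFehm}, whereas you supply an argument via Artin approximation for the henselian local ring $F[t]_{(t)}^h$. This is the standard proof of that existential closure, so here you are not so much taking a different route as unpacking the black box; the claim you need (every polynomial system over $F(t)^h$ soluble in $F[[t]]$ is soluble in $F(t)^h$, with inequalities handled by clearing denominators or approximating to sufficiently high order) is exactly classical Artin approximation and is correct.

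Finally, your handling of case (c) in $(1)\Rightarrow(5)$ is slightly glossed. You assert $F=C_F$ ``embeds canonically into $\mathcal{O}_v$ through the structure map and $v$ is trivial there.'' This is true, but to continue you still need the resulting $C$-embedding $F\to\mathcal{O}_v$ to be a \emph{section of the residue map}, i.e.\ that its composition with $\mathcal{O}_v\to Ev$ is the given identification of $F$ inside $Ev$; this is automatic from $C$-compatibility but should be stated, since the rest of the construction (choosing $s\in\mathfrak{m}_v\setminus\{0\}$ transcendental, extending through the henselization) hinges on it.

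In summary: your proposal is a correct proof. It substitutes an étale/Bertini argument for the paper's discrete-valuation-ring argument in the geometric-to-model-theoretic direction, at the cost of a slightly more delicate generic-choice argument that you would need to spell out; and it replaces a citation for $(3)\Leftrightarrow(4)$ by its standard proof via Artin approximation.
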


\begin{proof}
$(3)\Longleftrightarrow (4)$: This follows from $F(t)^{h}\preceq_{\exists}F((t))$, cf.~\cite[Lemma 4.5]{AnscombeFehm}.

$(3)\Longleftrightarrow (5)$ and $(4)\Longleftrightarrow (6)$: These are simple compactness arguments.

$(5)\Longrightarrow(1)$: This is immediate from the definition \autoref{def:embedded.residue.large},
as $E=F(t)^h$ admits the henselian valuation $v_t$ with residue field $F$.

$(1)\Longrightarrow(5)$:
Suppose that $F$ is a large $C$-field,
i.e.~there exist $C$-fields $F_{1},F_{2}\equiv_{C}F$, a $C$-subfield $E\subseteq F_{1}$, and a nontrivial henselian $C$-valuation $v$ on $E$ with residue field $F_{2}$.
By passing to elementary extensions if necessary, we may assume that $E$, $F_{1}$ and $F_{2}$ are $|F|^{+}$-saturated.
In particular we may assume that both $F_1$ and $F_{2}$ are elementary extensions of the $C$-field $F$,
in particular $C_{F_1}=C_{F_2}=C_F$.
Note that $v$ is trivial on $C_{F_1}$ (cf.~\autoref{rem:star}), 
so the residue map is the identity on $C_{F_1}=C_{F}$.
Since $F/C_{F}$ is separable in each of the cases (a)-(c), 
and $E$ is $|F|^{+}$-saturated, 
the identity $C_F\rightarrow C_{F_1}$ extends to a partial section $f:F\longrightarrow E$ of the residue map, cf.~\cite[Lemma 2.3]{AnscombeFehm};
this means in particular that $f$ is an $\mathcal{L}_{\rm vf}(C)$-homomorphism $(F,v_0)\rightarrow (E,v)$, where $v_0$ is the trivial valuation.
Since $v$ is nontrivial on $E$, there exists $s\in\mathfrak{m}_{v}\setminus\{0\}$,
and $f$ extends to a homomorphism $f:(F(t),v_t)\rightarrow(E,v)$ by $f(t):=s$.
Since $v$ is henselian, 
this extends further to a homomorphism $F(t)^h\rightarrow E\subseteq F_1$.

$(2)\Longrightarrow(3)$:
Suppose that $(2)$ holds and let $\varphi$ be an $\exists$-$\mathcal{L}_{\rm ring}(C)$-sentence.
Without loss of generality, $\varphi$ is of the form $(\exists\mathbf{x})\bigwedge_{i=1}^r f_i(\mathbf{x})=0$ with $f_1,\dots,f_r\in C[X_1,\dots,X_n]$,
so there is some closed subset $X_0\subseteq\mathbb{A}^n_{C_F}$ so that $\varphi$ holds in an extension $E$ of $C_F$ if and only if $X_0(E)\neq\emptyset$.
Trivially, if $X_0(F)\neq\emptyset$, then $X_0(F(t)^h)\neq\emptyset$.
Conversely, let $x\in X_0(F(t)^h)$.
Then $\tilde{x}\in X_0$ is the generic point of an integral $C_F$-variety $X_1\subseteq X_0$.
If $x$ is $F$-rational, i.e.~$x$ factors through ${\rm Spec}(F(t)^h)\rightarrow{\rm Spec}(F)$,
%$\mathbf{a}\in X_0(F)$, 
then $X_0(F)\neq\emptyset$ and we are done.
Otherwise, the residue field $E:=\kappa(\tilde{x})\hookrightarrow F(t)^h$ of $x$ (which is the function field of $X_1$) %$\tilde{C}_F(\mathbf{a})$ 
satisfies the assumptions of \ref{lem:localuniformization},
so there exists a smooth integral $C_F$-variety $X$ with $X(F)\neq\emptyset$ and 
a dominant rational map $X\dashrightarrow X_1$,
i.e.~a non-empty open subvariety $X'$ of $X$ and a morphism $\pi:X'\rightarrow X_1$.
By $(2)$, $X(F)$ is dense in $X$, hence $X'(F)\neq\emptyset$, and thus $X_0(F)\supseteq X_1(F)\supseteq\pi(X'(F))\neq\emptyset$.

$(4)\Longrightarrow(2)$:
Suppose that $(4)$ holds, 
let $X$ be a smooth integral $C_F$-variety with $x\in X(F)$,
and let $X_0\subseteq X$ be a non-empty open subvariety. 
Without loss of generality assume that $X_0$ is affine,
so there exists an $\exists$-$\mathcal{L}_{\rm ring}(C)$-sentence $\varphi$ such that $\varphi$ holds in an extension $E$ of $C_F$
if and only if $X_0(E)\neq\emptyset$.
The base change $X_F:=X\times_{{\rm Spec}(C_F)}{\rm Spec}(F)$ is a smooth $F$-variety,
and $x$ induces an $F$-rational point $x_F$ on $X_F$, which lies on some irreducible component $Y$ of $X_F$.
Note that $Y\cap X_0$ is a non-empty open subset of $Y$.
The local ring $\mathcal{O}_{Y,\tilde{x}_F}$ is contained in a discrete valuation ring $\mathcal{O}$ of the function field $F(Y)$ with residue field $F$, cf.~\cite[Lemma A.1]{JardenRoquette}.
The completion of $\mathcal{O}$ is then $F$-isomorphic to $F[[t]]$.
Since $Y(F(Y))$ is dense in $Y$, so is $Y(F((t)))$, and therefore $(Y\cap X_0)(F((t)))\neq\emptyset$, in particular $F((t))\models\varphi$.
Thus, $(4)$ implies that also $F\models\varphi$, i.e.~$X_0(F)\neq\emptyset$.
\end{proof}

\begin{Corollary}\label{cor:large}
Let $F$ be a field. 
Then $F$ is a large field if and only if $F$ is a large $F$-field.
In particular, if $F$ is a $C$-field which is large as a field, then $F$ is a large $C$-field.
\end{Corollary}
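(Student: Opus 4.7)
The plan is to deduce the corollary directly from \autoref{prp:large.1} by choosing $C = F$, where condition (c) of \autoref{lem:localuniformization} is automatically satisfied (since $C_F = F$), so the full strength of the proposition is available even without assuming $F$ perfect or of a special type.

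First I would prove the ``if and only if'' for a field $F$ viewed as an $F$-field via the identity $F \to F$. Applying \autoref{prp:large.1} in case (c), $F$ is a large $F$-field if and only if $F$ and $F((t))$ have the same $\exists$-$\mathcal{L}_{\rm ring}(F)$-theory (condition (4) of that proposition). Since $F \subseteq F((t))$, this is literally the statement that $F$ is existentially closed in $F((t))$, which is condition (4) of \autoref{fact:Pop.large} characterizing largeness of $F$ in Pop's sense.

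For the ``in particular'' clause, I would simply check that the property of being a large $C$-field only becomes easier as $C$ shrinks, in the following sense. Suppose $F$ is a large field, hence a large $F$-field by the previous paragraph. Then there exist $F_1, F_2 \equiv_F F$, an $F$-subfield $E \subseteq F_2$, and a nontrivial henselian $F$-valuation $w$ on $E$ with $Ew \cong F_1$. Given any $C$-field structure on $F$, i.e.\ a ring homomorphism $C \to F$, we can compose with the structure maps $F \to F_i$ and $F \to E$ to endow $F_1$, $F_2$, and $E$ with $C$-field structures. Since $\mathcal{L}_{\rm ring}(F)$-elementary equivalence refines $\mathcal{L}_{\rm ring}(C)$-elementary equivalence, we still have $F_1, F_2 \equiv_C F$; and since $w$ is nonnegative on (the image of) $F$, it is certainly nonnegative on (the image of) $C$. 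Thus the same data witnesses that $F$ is a large $C$-field.

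There is no real obstacle here: the substantive content has already been packaged in \autoref{prp:large.1}, and the corollary is essentially a matter of unwinding the definitions and checking compatibility of structure maps. The only minor point of care is to recognize that case (c) of \autoref{lem:localuniformization} covers the situation $C = F$ without any perfection or finite-generation hypothesis, which is what allows the argument to apply to an arbitrary field $F$.
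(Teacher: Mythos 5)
Your proposal is correct and matches the paper's proof exactly: both parts invoke \autoref{prp:large.1}(4) with $C=F$ (where case (c) of \autoref{lem:localuniformization} holds) and compare with \autoref{fact:Pop.large}(4), and both observe that largeness as an $F$-field trivially transfers to any $C$-field structure by composing structure maps. You simply spell out the "trivially also as a $C$-field" step in more detail than the paper does.
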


\begin{proof}
The first claim follows by comparing \autoref{fact:Pop.large}(4) and \autoref{prp:large.1}(4) for $C=F$, in which case (c) is satisfied.
The second claim is a direct consequence of this, since if $F$ is large as an $F$-field, then trivially also as a $C$-field.
\end{proof}

\begin{Corollary}
Let $F$ be a field. Then there exists a $\exists$-$\mathcal{L}_{\rm ring}(F)$-formula that defines $\mathfrak{m}_{v_t}$ in $F((t))$ if and only if $F$ is not a large field.
\end{Corollary}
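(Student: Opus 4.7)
The plan is to view $F$ as an $F$-field via the identity, so that $C = F$ and $C_F = F$; then condition (c) of Proposition \ref{prp:large.1} is automatic, and Corollary \ref{cor:large} identifies ``large field'' with ``large $F$-field''. Since the hypothesis $(*)$ of Theorem \ref{thm:main} may fail when $F$ is imperfect, I would avoid invoking the Main Theorem and Corollary \ref{cor:single.F} directly, and instead use only those ingredients of their proofs that remain valid without $(*)$: Proposition \ref{prp:large.1} under condition (c), Lemma \ref{lem:not.large.implies.uniform}, and Lemma \ref{lem:EAOm}.

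For the forward implication, I would suppose that an $\exists$-$\mathcal{L}_{\rm ring}(F)$-formula $\phi(x)$ defines $\mathfrak{m}_{v_t}$ in $F((t))$ and consider the $\exists$-$\mathcal{L}_{\rm ring}(F)$-sentence $\sigma := \exists x\,(\phi(x) \wedge x \neq 0)$. The element $t$ witnesses $\sigma$ in $F((t))$, while any witness $a \in F$ would, by existential sentences passing up from $F$ to $F((t))$, have to lie in $\mathfrak{m}_{v_t} \cap F = \{0\}$, contradicting $a \neq 0$. Thus $\sigma$ separates the $\exists$-$\mathcal{L}_{\rm ring}(F)$-theories of $F$ and $F((t))$, and so the equivalence $(1) \Leftrightarrow (4)$ of Proposition \ref{prp:large.1} forces $F$ not to be a large $F$-field; Corollary \ref{cor:large} then concludes that $F$ is not a large field.

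For the backward implication, I would let $\mathcal{F}$ be the (elementary) class of $F$-fields elementarily equivalent to $F$. If $F$ is not a large field then, by Corollary \ref{cor:large}, $\mathcal{F}$ is not large, and since $\mathcal{F}$ is unmixed (Remark \ref{rem:unmixed}) this is equivalent to $\mathcal{F}$ not being equicharacteristic large. Lemma \ref{lem:not.large.implies.uniform} then supplies a $\forall$-$\mathcal{L}_{\rm ring}(F)$-formula that uniformly defines the valuation ring across $\mathcal{H}_e(\mathcal{F})$, which contains $(F((t)), v_t)$ since its residue field is $F \in \mathcal{F}$. Specializing yields a $\forall$-$\mathcal{L}_{\rm ring}(F)$-definition of $\mathcal{O}_{v_t}$ in $F((t))$, and Lemma \ref{lem:EAOm} converts this into the desired $\exists$-$\mathcal{L}_{\rm ring}(F)$-definition of $\mathfrak{m}_{v_t}$. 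The one subtle point will be to route the argument around $(*)$ as indicated above; beyond that, both directions are immediate from the machinery already in place.
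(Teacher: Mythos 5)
Your proof is correct and follows essentially the same route as the paper's. Both arguments recognize that the hypothesis $(*)$ can fail for imperfect $F$ with $C=F$, and both sidestep this by appealing only to machinery from Section 3 (which never uses $(*)$) together with Proposition \ref{prp:large.1} under condition (c) and Corollary \ref{cor:large}. Your forward direction phrases the paper's argument contrapositively and routes it through Proposition \ref{prp:large.1}$(1)\Leftrightarrow(4)$ rather than invoking Fact \ref{fact:Pop.large}(4) directly, but the core computation---that the existential sentence $\exists x\,(\phi(x)\wedge x\neq 0)$ holds in $F((t))$ while any witness in $F$ would land in $\mathfrak{m}_{v_t}\cap F=\{0\}$---is identical; and your backward direction uses Lemma \ref{lem:not.large.implies.uniform} and Lemma \ref{lem:EAOm} directly, which is exactly what Theorem \ref{thm:universal.characterisation} (cited by the paper) packages up.
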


\begin{proof}
If we assume that $F$ is perfect, then this follows immediately from \autoref{cor:large} and \autoref{cor:single.F}.
Without this assumption, we can argue as follows: 
If $F$ is large, then $F$ is existentially closed in $F((t))$ by \autoref{fact:Pop.large}(4),
so if $\varphi(x)$ is an $\exists$-$\mathcal{L}_{\rm ring}(F)$-formula that defines $\mathfrak{m}_{v_t}$ in $F((t))$,
then $F((t))\models \exists x(x\neq 0\wedge\varphi(x))$
implies that there exists $0\neq x\in F\cap \mathfrak{m}_{v_t}$, a contradiction.
If $F$ is not large, then $F$ is not a large $F$-field by \autoref{cor:large},
hence the valuation ideal is uniformly $\exists$-$F$-definable in $\mathcal{H}_e(F)$ by \autoref{thm:universal.characterisation},
so in particular in $(F((t)),v_t)\in\mathcal{H}_e(F)$.
\end{proof}

\begin{Remark}
We note that in cases (a) and (b), $F(t)^h$ and $F((t))$ have the same $\exists$-$\mathcal{L}_{\rm ring}(C)$-theory
as any $(K,v)\in\mathcal{H}_e'(F)$, by \autoref{cor:transfer}.
Therefore, in this case, (3) and (4) of \autoref{prp:large.1}
allow several further equivalent formulations.
\end{Remark}

\begin{Remark}
We do not know whether the statements in \ref{prp:large.1} are also equivalent to
the following statement analogous to (1) of \ref{fact:Pop.large}:
\begin{enumerate}
\item[$(1')$] Every $C_F$-curve with a smooth $F$-rational point has infinitely many such points.
\end{enumerate}
We do not intend to give a comprehensive study of large $C$-fields here,
but we do want to give one sufficient condition for a $C$-field to be large,
which also serves as an illustration of the differences between large fields and large $C$-fields.
\end{Remark}

\begin{Lemma}\label{lem:large.is.E-C.closed}
Let $E\subseteq F$ be an extension of $C$-fields with the same existential $\mathcal{L}_{\rm ring}(C)$-theory.
Then $E$ is a large $C$-field if and only if $F$ is a large $C$-field.
\end{Lemma}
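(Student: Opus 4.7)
The plan is to use the characterization of largeness from Proposition \ref{prp:large.1}---whose equivalence $(1)\Leftrightarrow(4)$ says that a $C$-field $F$ is large if and only if $F$ and $F((t))$ have the same $\exists$-$\mathcal{L}_{\rm ring}(C)$-theory---and to reduce the problem to propagating the hypothesis from $(E,F)$ to $(E((t)),F((t)))$.

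To achieve this reduction, I would show that if $E$ and $F$ have the same $\exists$-$\mathcal{L}_{\rm ring}(C)$-theory, then so do $E((t))$ and $F((t))$. The inclusion $E((t)) \subseteq F((t))$ gives one containment of existential theories. For the reverse, the hypothesis yields (by the classical model-theoretic characterization) a $C$-embedding $F \hookrightarrow E^\mathcal{V}$ into some ultrapower of $E$. Extending by $t \mapsto t$ and composing produces a chain of $C$-embeddings
$$F(t) \hookrightarrow (E^\mathcal{V})(t) \hookrightarrow (E(t))^\mathcal{V} \hookrightarrow (E(t)^h)^\mathcal{V},$$
valuation-preserving for the $t$-adic valuations, whose final term is henselian as an ultraproduct of henselian valued fields. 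The universal property of henselization, applied to $F(t)$ with its $t$-adic valuation, then extends the composition to a $C$-embedding $F(t)^h \hookrightarrow (E(t)^h)^\mathcal{V}$. Any $\exists$-$\mathcal{L}_{\rm ring}(C)$-sentence true in $F(t)^h$ is thus true in $(E(t)^h)^\mathcal{V}$, and by \L{}o\'{s}'s theorem also in $E(t)^h$. Combining this with $F(t)^h \preceq_\exists F((t))$ and the analogous statement for $E$ from \cite[Lemma 4.5]{AnscombeFehm} (cf.~the proof of Proposition \ref{prp:large.1}), I conclude that $E((t))$ and $F((t))$ have the same $\exists$-$\mathcal{L}_{\rm ring}(C)$-theory.

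Chaining the hypothesis with the just-established existential equivalence $E((t)) \equiv_\exists F((t))$ gives $F \equiv_\exists F((t)) \Leftrightarrow E \equiv_\exists E((t))$, so applying Proposition \ref{prp:large.1}$(1)\Leftrightarrow(4)$ to each of $E$ and $F$ yields $F$ large iff $E$ large. The main obstacle will be verifying rigorously that the chain culminating in $F(t)^h \hookrightarrow (E(t)^h)^\mathcal{V}$ is indeed a $C$-embedding of valued fields compatible with the henselization step, and ensuring applicability of Proposition \ref{prp:large.1}'s equivalence $(1)\Leftrightarrow(4)$, which in that proposition requires one of the conditions (a), (b), (c) of Lemma \ref{lem:localuniformization}.
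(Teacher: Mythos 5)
Your proof is correct, but it takes a genuinely different route from the paper's. The paper invokes the geometric criterion $(2)$ of Proposition \ref{prp:large.1}: since for each smooth integral $C_F$-variety $X$ and each nonempty open $X_0\subseteq X$ the assertion ``$X_0$ has a rational point'' is expressible by an existential $\mathcal{L}_{\rm ring}(C)$-sentence, condition $(2)$ transfers between $E$ and $F$ directly, with no work; the whole proof is three lines. You instead route through criterion $(4)$, which requires showing that $E\equiv_\exists F$ propagates to $E((t))\equiv_\exists F((t))$. Your ultrapower/henselization argument for this is sound: the embedding $F\hookrightarrow E^{\mathcal{V}}$ exists because the hypothesis makes $F$ a model of the universal $\mathcal{L}_{\rm ring}(C)$-theory of $E$, the chain $F(t)\hookrightarrow(E^{\mathcal{V}})(t)\hookrightarrow(E(t))^{\mathcal{V}}\hookrightarrow(E(t)^h)^{\mathcal{V}}$ is $t$-adically valuation-compatible, henselianity of $(E(t)^h)^{\mathcal{V}}$ is preserved by \L{}o\'s, and the universal property then gives $F(t)^h\hookrightarrow(E(t)^h)^{\mathcal{V}}$; together with $F(t)^h\preceq_\exists F((t))$ (and the trivial inclusion $E((t))\subseteq F((t))$ for the converse), this yields $E((t))\equiv_\exists F((t))$. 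So the paper's proof is more economical, while yours is more machinery-heavy but establishes the genuinely stronger intermediate fact that $\equiv_\exists$ is preserved under forming Laurent series. You are also right to flag that either route relies on the equivalence with $(1)$ in Proposition \ref{prp:large.1}, which is proved under conditions (a)/(b)/(c) of Lemma \ref{lem:localuniformization}; the paper's own proof of the lemma is terse on this point, so that caveat applies to both arguments equally.
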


\begin{proof}
Note that $C_E=C_F$ and let $X$ be a smooth integral $C_F$-variety.
For any open subvariety $X_0$ of $X$ (including $X_0=X$),
there is an existential $\mathcal{L}_{\rm ring}(C)$-sentence $\varphi_{X_0}$ such that
for an extension $F'$ of $C_F$, $F'\models\varphi_{X_0}$ if and only if $X_0(F')\neq\emptyset$.
>From this the claim follows immediately.
\end{proof}

\begin{Proposition}\label{lem:PAC.large}
Let $F/E$ be a separable extension of $C$-fields and suppose that $E$ is PAC. Then $F$ is a large $C$-field.
\end{Proposition}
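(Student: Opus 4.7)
The plan is to verify condition (3) of \autoref{prp:large.1} for $F$, namely that $F$ and $F(t)^{h}$ share the same $\exists$-$\mathcal{L}_{\rm ring}(C)$-theory, and then to invoke the general implications $(3)\Rightarrow(5)\Rightarrow(1)$ in the proof of \autoref{prp:large.1}, which do not rely on any of the assumptions (a)--(c) of \autoref{lem:localuniformization}. First I would replace $E$ by its relative algebraic closure $E'\subseteq F$. Since $F/E$ is separable, $E'/E$ is separable algebraic, so by the classical result that every algebraic extension of a PAC field is PAC (cf.~\cite[Corollary 11.2.5]{FriedJarden}) the field $E'$ is PAC. Moreover $F/E'$ remains separable (separability is inherited by subextensions), and $E'$ is by construction algebraically closed in $F$, so $F/E'$ is a regular extension.

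Next I would check that $F(t)^{h}/F$ is regular: it is separable because $F(t)/F$ is separably generated by $t$ and $F(t)^{h}/F(t)$ is separable algebraic, while $F$ is algebraically closed in $F(t)^{h}$ because any element $a\in F(t)^{h}$ algebraic over $F$ has trivial $v_{t}$-valuation, so $F(a)$ is a finite trivially valued subextension of $F(t)^{h}/F$ which therefore embeds into the residue field $F(t)^{h}v_{t}=F$, forcing $a\in F$. Combining with the regularity of $F/E'$ yields that $F(t)^{h}/E'$ is regular as well.

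Now, since PAC fields are existentially closed in every regular extension (a standard characterization, see \cite{FriedJarden}), $E'$ is existentially closed in $F(t)^{h}$. Hence every $\exists$-$\mathcal{L}_{\rm ring}(C)$-sentence $\varphi$ true in $F(t)^{h}$ is true in $E'$, and therefore (since existential sentences pass up under inclusions) true in $F$; the converse implication is immediate from $F\subseteq F(t)^{h}$. So $F$ and $F(t)^{h}$ share the same $\exists$-$\mathcal{L}_{\rm ring}(C)$-theory, and by $(3)\Rightarrow(5)\Rightarrow(1)$ of \autoref{prp:large.1} the field $F$ is a large $C$-field. The hard part will be to confirm that no step of this reduction tacitly invokes one of the hypotheses (a)--(c) of \autoref{lem:localuniformization}; in particular one should check that both the compactness argument for $(3)\Leftrightarrow(5)$ and the direct argument $(5)\Rightarrow(1)$ in the proof of \autoref{prp:large.1} go through without any further assumption on $C$ or $F$.
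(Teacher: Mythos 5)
Your proof is correct and takes a somewhat different route from the paper's, though the two share the same core ingredient: the fact that a PAC field is existentially closed in every regular extension. The paper applies this fact to the pair $\tilde E \subseteq F$, concludes that $\tilde E$ and $F$ have the same $\exists$-$\mathcal{L}_{\rm ring}(C)$-theory, then invokes two auxiliary results: since PAC fields are large (in Pop's sense), \ref{cor:large} upgrades $\tilde E$ to a large $C$-field, and \ref{lem:large.is.E-C.closed} transports largeness along extensions with the same existential theory. Your version instead applies the PAC-in-regular-extension fact to the pair $E' \subseteq F(t)^h$, sandwiches $F$ between them to deduce $\exists\text{-Th}(F)=\exists\text{-Th}(F(t)^h)$, and then reads off largeness from condition (3) of \ref{prp:large.1} via the chain $(3)\Rightarrow(5)\Rightarrow(1)$. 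Your route is a little more self-contained: it bypasses \ref{cor:large} and \ref{lem:large.is.E-C.closed} entirely, and — importantly — sidesteps any dependence on local uniformization. The concern you flag at the end is well placed but resolves favorably: the implications you use from the proof of \ref{prp:large.1} are exactly $(3)\Rightarrow(5)$ (a compactness/embedding argument needing no hypothesis on $C$ or $F$) and $(5)\Rightarrow(1)$ (immediate from \ref{def:embedded.residue.large}, since $F(t)^h\subseteq F^*$ carries the nontrivial henselian $C$-valuation $v_t$ with residue field $F$); neither invokes the local uniformization hypotheses $(a)$--$(c)$, which enter only into $(2)\Rightarrow(3)$ and $(1)\Rightarrow(5)$. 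Your verification that $F(t)^h/F$ is regular (separability from $F(t)/F$ and $F(t)^h/F(t)$, algebraic closedness via the residue field of $v_t$) is likewise correct. In short: same essential idea, different packaging; yours is more direct, the paper's factors the argument into reusable lemmas.
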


\begin{proof}
Let $\tilde{E}$ denote the relative algebraic closure of $E$ in $F$.
Since $E$ is PAC, so is $\tilde{E}$, and since $F/E$ is separable, $F/\tilde{E}$ is regular.
Thus, $\tilde{E}\preceq_{\exists}F$, see \cite[11.3.5]{FriedJarden},
so in particular $\tilde{E}$ and $F$ have the same existential $\mathcal{L}_{\rm ring}(C)$-theory.
Since PAC fields are large, $\tilde{E}$ is a large $C$-field (\autoref{cor:large}).
The conclusion now follows from \autoref{lem:large.is.E-C.closed}.
\end{proof}

We continue by collecting some examples of $\mathbb{Z}$-fields that are or are not large.

\begin{Proposition}\label{prop:positive.large}
In each of the following cases, a $\mathbb{Z}$-field $F$ is not large:
\begin{enumerate}
\item $F$ is finite.
\item $F=\mathbb{Q}$ 
\item $F$ is finitely generated over its prime field.
\end{enumerate}
\end{Proposition}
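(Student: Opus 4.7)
The plan is to reduce each of the three cases to showing that $F$ is not a large field in the sense of Pop. Since $C = \mathbb{Z}$ makes $\mathbb{Z}$-fields into plain fields and condition (a) of $(*)$ is automatic, \autoref{cor:large} together with \autoref{prp:large.1} identify largeness of $F$ as a $\mathbb{Z}$-field with largeness of $F$ as a field, and reduce the task in each case to exhibiting either a smooth integral $F$-variety $X$ with $X(F)$ nonempty but not Zariski dense, or equivalently an existential $\mathcal{L}_{\rm ring}$-sentence separating $F$ from $F(t)^h$.

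For (1), I would argue directly from \autoref{def:embedded.residue.large}(2): the theory of a finite field $F$ is categorical, so any $F_1, F_2 \equiv F$ are isomorphic to $F$, and then any subfield $E \subseteq F_2$ is finite and admits no nontrivial valuation, since its multiplicative group is torsion. For (2), I would use the elliptic curve $E \colon y^2 = x^3 - x$ over $\mathbb{Q}$, whose Mordell-Weil group is exhausted by the full $2$-torsion $\{\infty, (0,0), (\pm 1, 0)\}$ (rank zero by CM theory). On the other hand, Hensel's lemma applied to $X^3 - X - t^2$ at $X = 1$ (where $v_t(1 - 1 - t^2) = 2$ and $v_t(3 \cdot 1 - 1) = 0$) yields a root $x_0 \in \mathbb{Q}(t)^h$ with $v_t(x_0 - 1) \geq 2$; then $(x_0, t) \in E(\mathbb{Q}(t)^h)$ has $x$-coordinate outside $\{0, \pm 1\}$, so the existential sentence $\exists x \exists y\,(y^2 = x^3 - x \wedge x(x^2-1) \neq 0)$ holds in $\mathbb{Q}(t)^h$ but fails in $\mathbb{Q}$.

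The main obstacle is (3), which amounts to Pop's theorem \cite{Pop96} that every field finitely generated over its prime field fails to be large. Its proof is geometric and invokes deep arithmetic input: in characteristic zero, Faltings' theorem over any finitely generated extension of $\mathbb{Q}$ produces a smooth projective curve of genus $\geq 2$ with nonempty but finite set of rational points (for instance the hyperelliptic curve $y^2 = \prod_{i=0}^{4}(x - a_i)$ for distinct integers $a_i$); in positive characteristic and positive transcendence degree, the analogous theorem of Grauert, Samuel, and Manin for non-isotrivial curves plays the same role. In either case one obtains a smooth integral $F$-variety $X$ with $X(F)$ nonempty but not dense in $X$, contradicting \autoref{prp:large.1}(2). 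Strictly speaking (3) subsumes (1) and (2), but the direct elementary arguments remain preferable in those cases.
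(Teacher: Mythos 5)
Your overall plan is framed in a way that does not quite match what the paper's definitions actually give you, and while this has no consequence for cases (1) and (2), it opens a genuine gap in the positive-characteristic part of case (3).

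The precise issue: \autoref{cor:large} establishes that if $F$ is a large field (in Pop's sense) then $F$ is a large $C$-field, but it does \emph{not} assert the converse. So knowing that $F$ is not a large field does not, on its own, tell you that $F$ is not a large $\mathbb{Z}$-field. What is actually needed, per \autoref{prp:large.1}(2) with $C=\mathbb{Z}$, is a smooth integral variety over $C_F=\mathbb{F}$, the \emph{prime field}, with $X(F)$ nonempty and not dense (equivalently, by \autoref{prp:large.1}(3), a \emph{parameter-free} existential sentence separating $F$ from $F(t)^h$). Your ``or equivalently'' slides between the $F$-variety criterion of \autoref{fact:Pop.large}(2) and the $C_F$-variety criterion of \autoref{prp:large.1}(2) as if they were the same, which is unjustified. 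In (1) and (2) this does no harm: (1) is handled directly from \autoref{def:embedded.residue.large} (a correct and pleasantly elementary alternative to the paper, which treats (1) only as a special case of (3)), and in (2) the elliptic curve $y^2=x^3-x$ is over $\mathbb{Q}=C_F$ and the sentence you write down is parameter-free, so \autoref{prp:large.1}(2) or (3) applies. The Hensel/rank-zero computation is also correct.

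For (3), the characteristic-zero part works as you have it: the hyperelliptic curve $y^2=\prod(x-a_i)$ is defined over $\mathbb{Q}=C_F$, and Faltings over finitely generated fields of characteristic $0$ gives finiteness of $X(F)$, so \autoref{prp:large.1}(2) applies. But the positive-characteristic part is a genuine gap. A \emph{non-isotrivial} curve over a function field $F$ is, by its very non-isotriviality, \emph{not} a $\mathbb{F}_p$-variety, so it is not a legitimate test object for \autoref{prp:large.1}(2); and the existential sentence witnessing a ``new'' point would need parameters from $F$. Moreover, Grauert--Manin--Samuel is typically stated for transcendence degree $1$ over the finite field; for $\operatorname{trdeg}>1$ the Mordell statement in positive characteristic needs additional care. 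The paper sidesteps all of this: it chooses a curve $X$ over the prime field $\mathbb{F}$ itself, of genus strictly larger than every genus in a tower $F_0\subseteq\dots\subseteq F_n=F$ (where $F_0$ is the algebraic part of $F$), and then Riemann--Hurwitz forces $X(F)=X(F_0)$; finiteness is then trivial when $F_0$ is finite and is classical Faltings when $F_0$ is a number field. This is both more elementary (no Faltings beyond number fields, no Grauert--Manin--Samuel) and automatically produces a $C_F$-variety, which is exactly what \autoref{prp:large.1}(2) demands. To repair your proof, replace the positive-characteristic argument by the Riemann--Hurwitz reduction, or at least take the test curve to be defined over the prime field and reduce to the algebraic part as the paper does. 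Finally, a small bibliographic point: the attribution of the non-largeness of finitely generated fields to \cite{Pop96} as ``Pop's theorem'' is doubtful; \cite{Pop96} introduces large fields but, as far as I recall, does not prove this statement.
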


\begin{proof}
We prove (3), of which (1) and (2) are special cases.
Let $\mathbb{F}$ denote the prime field of $F$, and $F_0$ the relative algebraic closure of $\mathbb{F}$ in $F$,
and choose a tower of fields $F_0\subseteq\dots\subseteq F_n=F$ such that $F_i=F_{i-1}(X_i)$ 
with a smooth projective geometrically integral $F_{i-1}$-curve $X_i$, for $i=1,\dots,n$.
Choose a smooth projective geometrically integral $\mathbb{F}$-curve $X$ with $X(\mathbb{F})\neq\emptyset$ of genus $g_X>1$ and $g_X>g_{X_i}$ for $i=1,\dots,n$.
Then
$X(F)\supseteq X(\mathbb{F})\neq\emptyset$,
and $X(F)=X(F_n)=\dots=X(F_0)$ by the Riemann-Hurwitz theorem (cf.~\cite[Lemma 6.1.3]{Jarden}).
This latter set of points is finite and not dense, since either $F_0$ is finite,
or $F_0$ is a number field, in which case this follows from Falting's theorem.
Since $\mathbb{Z}_F=\mathbb{F}$, \autoref{prp:large.1}(2) shows that the $\mathbb{Z}$-field $F$ is not large.
\end{proof}

\begin{Corollary}\label{cor:positive.large}
For each field $F$ in the list of \autoref{prop:positive.large}, there is an $\exists$-$\mathcal{L}_{\rm ring}$-formula 
which uniformly defines the valuation ideal in $\mathcal{H}(F)$.
\end{Corollary}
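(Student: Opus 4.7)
The plan is to deduce this as a direct application of the $\forall$-case of our main theorem (more specifically, \autoref{cor:single.F}) combined with the passage from $\forall$-definitions of the valuation ring to $\exists$-definitions of the valuation ideal (\autoref{lem:EAOm}), once we have the ``not large'' property from \autoref{prop:positive.large}.

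Specifically, I would proceed as follows. Let $F$ be any field in the list of \autoref{prop:positive.large}, viewed as a $\mathbb{Z}$-field; so $C = \mathbb{Z}$, and $(*)$ is automatic in this setting (every class of $\mathbb{Z}$-fields satisfies $(*)$). By \autoref{prop:positive.large}, $F$ is not large as a $\mathbb{Z}$-field. Condition $(2^\forall)$ of \autoref{cor:single.F} therefore holds, so by the equivalence with $(1^\forall)$ in that corollary, the valuation ring is uniformly $\forall$-$\mathbb{Z}$-definable in $\mathcal{H}(F)$ — equivalently (since $C=\mathbb{Z}$), uniformly $\forall$-$\emptyset$-definable. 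Applying \autoref{lem:EAOm} to the elementary class $\mathcal{H}(F)$ then converts this uniform $\forall$-definition of $\mathcal{O}_v$ into a uniform $\exists$-$\emptyset$-definition of $\mathfrak{m}_v$, which is exactly the conclusion.

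There is essentially no obstacle here since all the substantive work has been done in \autoref{prop:positive.large} (verifying non-largeness via Riemann--Hurwitz and Faltings) and in the Main Theorem. The only point requiring a brief comment is that one should check the hypotheses of \autoref{cor:single.F} apply in each of the three cases: for $F$ finite, $F = \mathbb{Q}$, or $F$ finitely generated over its prime field, the class of $\mathbb{Z}$-fields elementarily equivalent to $F$ is automatically unmixed, and $(*)$ holds trivially as $C = \mathbb{Z}$ is integral over its prime ring. Hence the corollary follows in two lines once \autoref{prop:positive.large} is in hand.
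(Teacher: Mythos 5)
Your proof is correct and follows essentially the same route as the paper: the paper states \autoref{cor:positive.large} as an immediate consequence of \autoref{prop:positive.large} combined with the $\forall$-case of \autoref{cor:single.F}, using exactly the translation between $\forall$-definitions of $\mathcal{O}_v$ and $\exists$-definitions of $\mathfrak{m}_v$ from \autoref{lem:EAOm} (cf.\ \autoref{rem:EAOm}). Your hypothesis checks ($(*)$ automatic for $C=\mathbb{Z}$, unmixedness of the elementary class of a single $F$) are accurate, though the latter is already built into the proof of \autoref{cor:single.F} and need not be restated.
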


\begin{Remark}
Again this corollary was known before 
for certain fields in (1):
For $K$ a finite extension of $\mathbb{Q}_p$ by \cite[Theorem 6]{Cluckersetal},
and for $K=\mathbb{F}_q((t))$ by \cite[proof of Proposition 3.3]{AnscombeKoenigsmann}).
\end{Remark}

\begin{Proposition}\label{prop:negative.applications.large}
A $\mathbb{Z}$-field $F$ is large in each of the following cases:
\begin{enumerate}
\item $F$ contains the separable closure of the prime field.
\item $F$ admits a nontrivial henselian valuation.
\item $F$ is separably closed, real closed, or $p$-adically closed.
\item $F$ is pseudo-classically closed.
\end{enumerate}
\end{Proposition}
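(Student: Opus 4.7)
The plan is to reduce each case to one of two inputs: either \ref{lem:sep.cl} (for fields containing an algebraically closed subfield), or the classical notion of a large field in the sense of Pop, from which \ref{cor:large} delivers largeness as a $\mathbb{Z}$-field.

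For $(1)$, I would observe that the prime field of $F$ is either $\mathbb{Q}$ or $\mathbb{F}_p$, and in both cases every algebraic extension of the prime field is separable; hence its separable and algebraic closures coincide. Thus $F$ contains an algebraically closed $\mathbb{Z}$-subfield and \ref{lem:sep.cl}(2) applies directly.

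For $(2)$, I would invoke the classical theorem of Pop (cf.~\cite{Pop96}) that every field admitting a nontrivial henselian valuation is large in Pop's sense, and then appeal to \ref{cor:large}. For $(3)$, the separably closed case reduces to $(1)$, and the $p$-adically closed case reduces to $(2)$ via the canonical $p$-adic valuation. For $F$ real closed, I would pass to a sufficiently saturated elementary extension $F^{*}\succeq F$: the convex hull of $\mathbb{Z}$ in $F^{*}$ is a proper convex subring, hence the valuation ring of a nontrivial henselian convex valuation on $F^{*}$, so $F^{*}$ is large by $(2)$. Since largeness as a $\mathbb{Z}$-field depends, by \ref{def:embedded.residue.large}, only on the elementary equivalence class, $F$ itself is large.

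For $(4)$, the plan is to show every PCC field is large in Pop's sense and apply \ref{cor:large}. Given a smooth integral $F$-variety $X$ with a non-empty open subvariety $X_0$ and $X(F)\neq\emptyset$, the local-global principle defining PCC fields reduces the question $X_0(F)\neq\emptyset$ to showing $X_0(F')\neq\emptyset$ for each $F'$ in the defining family of separably closed, real closed, or $p$-adically closed algebraic extensions of $F$. Each such $F'$ is large by $(1)$--$(3)$, and $X(F')\supseteq X(F)$ is non-empty, so the density of rational points on smooth integral varieties over $F'$ forces $X_0(F')\neq\emptyset$. The main obstacle lies in this case: one must invoke the local-global principle in a form that applies to open subvarieties of smooth integral $F$-varieties, thereby yielding density rather than mere existence of rational points. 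The remaining cases reduce cleanly to results already established in the paper or to standard theorems on Pop-large fields.
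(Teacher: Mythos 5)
Your proof of parts (1) and (2) matches the paper's: (1) reduces to \ref{lem:sep.cl}, and (2) reduces to the fact that henselian fields are large in Pop's sense plus \ref{cor:large}. For (3) you go a slightly different route: the paper simply notes that separably, real, and $p$-adically closed fields are all PCC and thus (3) is a special case of (4), whereas you reduce the separably closed case to (1), the $p$-adically closed case to (2), and give a direct saturation-and-convex-valuation argument for the real closed case. Your argument for real closed is correct — the convex hull of $\mathbb{Z}$ in a sufficiently saturated $F^{*}\succeq F$ is a nontrivial henselian valuation ring, and \autoref{def:embedded.residue.large}(2) makes largeness invariant under elementary equivalence — it is just more work than the paper's one-line reduction.

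In part (4) there is a genuine gap, which you flag yourself. The paper does not prove that PCC fields are large; it cites this as a known result (\cite[Example 5.6.4]{Jarden}) and immediately applies \ref{cor:large}. You instead sketch a direct proof via the local-global principle, and you correctly identify the sticking point: you need a version of the PCC local-global principle that applies to (not necessarily projective) smooth geometrically integral varieties such as the open subvariety $X_{0}$, so as to conclude $X_{0}(F)\neq\emptyset$ from $X_{0}(F')\neq\emptyset$ for all $F'$ in the defining family. This can in fact be made to work — openness of $X_{0}$ in a smooth integral $F$-variety with an $F$-rational point makes $X_{0}$ smooth and geometrically integral, and the LGP in the form given in \cite{Pop} applies — but as written your proposal leaves this step unresolved, whereas the paper sidesteps it entirely by citing the literature for largeness of PCC fields. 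If you want a self-contained argument here, you should either carry out the LGP verification carefully or, more simply, cite the largeness of PCC fields as the paper does.
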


\begin{proof}
(1) is a restatement of \autoref{lem:sep.cl} in the special case $C=\mathbb{Z}$.
(2) and (4) follow from \autoref{cor:large} since both henselian fields 
and pseudo-classically closed fields 
are large fields, see e.g.~\cite[Example 5.6.2, Example 5.6.4]{Jarden}.
(3) is a special case of (4).
\end{proof}

\begin{Corollary}\label{cor:negative.applications.large}
For each field $F$ in the list of \autoref{prop:negative.applications.large}, 
there is no $\exists$-$\mathcal{L}_{\rm ring}$-formula which defines the valuation ideal in any $(K,v)\in \mathcal{H}_e'(F)$.
\end{Corollary}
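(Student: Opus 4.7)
The plan is to deduce this directly from the $\forall$-version of \autoref{cor:single.F} together with \autoref{prop:negative.applications.large}, after swapping the roles of valuation ring and valuation ideal via \autoref{lem:EAOm}. At this stage the argument should be purely mechanical; I do not expect any genuine obstacle.

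First I would observe that we are in the situation $C=\mathbb{Z}$, so condition $(*)$(a) is automatically satisfied for every $\mathbb{Z}$-field $F$ (since $\mathbb{Z}$ is trivially integral over its own prime ring). In particular, \autoref{cor:single.F} is applicable to every field $F$ in the list of \autoref{prop:negative.applications.large}. Moreover, as noted after \autoref{cor:existential.characterisation.characteristic.zero}, in this setting $\exists$-$\mathbb{Z}$-definability coincides with $\exists$-$\emptyset$-definability, and the analogous statement holds for $\forall$.

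Next, for each $F$ appearing in \autoref{prop:negative.applications.large}, that proposition asserts that $F$ is a large $\mathbb{Z}$-field, so condition $(2^{\forall})$ of \autoref{cor:single.F} fails. By the equivalence $(0^{\forall}_{e})\Leftrightarrow(2^{\forall})$ established there, also $(0^{\forall}_{e})$ fails: there is no $\forall$-$\mathcal{L}_{\rm ring}$-formula defining $\mathcal{O}_v$ in any $(K,v)\in\mathcal{H}_e'(F)$.

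Finally, I would transfer this to the required negative statement about the valuation ideal by applying the argument of \autoref{lem:EAOm} pointwise. Namely, if some $\exists$-$\mathcal{L}_{\rm ring}$-formula $\chi(x)$ defined $\mathfrak{m}_v$ in a given $(K,v)\in\mathcal{H}_e'(F)$, then the formula
\[
 x=0\;\vee\;\forall y\,(xy=1\rightarrow\neg\chi(y))
\]
would be a $\forall$-$\mathcal{L}_{\rm ring}$-definition of $\mathcal{O}_v$ in that same $(K,v)$, contradicting the conclusion of the previous paragraph. This yields the corollary.
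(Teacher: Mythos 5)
Your proof is correct and follows exactly the route the paper intends (the paper leaves the corollary without an explicit proof, relying on precisely this combination of \autoref{prop:negative.applications.large}, the $\forall$-part of \autoref{cor:single.F}, and the translation of \autoref{lem:EAOm}/\autoref{rem:EAOm}). The only small remark is that you could just as well have invoked \autoref{rem:EAOm} to reinterpret the claim from the outset as the non-$\forall$-$\emptyset$-definability of $\mathcal{O}_v$, but unwinding \autoref{lem:EAOm} pointwise as you do is equally fine.
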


\subsection{Application: Diophantine henselian valuation rings and valuation ideals on a given field}

In this final subsection
we combine our main result with some of the examples that we just collected to 
show that 
any given field admits at most one nontrivial equicharacteristic henselian valuation 
for which the valuation ring or the valuation ideal is diophantine.
For this we use the notion of the {\em canonical henselian valuation} as defined in \cite[\S4.4]{EP}: 
If $H_1(K)$ resp.~$H_2(K)$ denotes the set of henselian valuations on $K$ with residue field not separably closed
resp.~separably closed, then the canonical henselian valuation $v_K$ is the unique coarsest valuation in $H_2(K)$, if $H_2(K)\neq\emptyset$,
and otherwise the unique finest valuation in $H_1(K)$.

\begin{Theorem}\label{thm:classification}
Let $K$ be a field
and $v$ a nontrivial equicharacteristic henselian valuation on $K$.
If $\mathcal{O}_v$ or $\mathfrak{m}_v$ are $\exists$-$\emptyset$-definable,
then
\begin{enumerate}
\item $v$ is the canonical henselian valuation on $K$,
\item $v\in H_{1}(K)$ and $H_{2}(K)=\emptyset$, and
\item no other nontrivial henselian valuation on $K$ has $\exists$-$\emptyset$-definable 
valuation ring or valuation ideal.
\end{enumerate}
\end{Theorem}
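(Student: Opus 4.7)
The plan is to extract strong constraints on the residue field $F := Kv$ from the definability hypothesis, and then to combine these constraints with the classical comparability theorem of F.~K.~Schmidt on independent henselian valuations to control every other nontrivial henselian valuation on $K$.

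First I would invoke \autoref{cor:single.F} (with $C = \mathbb{Z}$): the hypothesis that $\mathcal{O}_v$ is $\exists$-$\emptyset$-definable yields that $F$ does not have embedded residue, while the hypothesis that $\mathfrak{m}_v$ is $\exists$-$\emptyset$-definable yields that $F$ is not large. Inspecting \autoref{prop:negative.applications} and \autoref{prop:negative.applications.large} in turn, in either case $F$ is not separably closed, does not contain the separable closure of its prime field, and admits no nontrivial henselian valuation. I will call this conjunction of properties $(\star)$ and use it throughout.

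To prove (2), I would first observe that $K$ itself cannot be separably closed, since by \autoref{lem:residue.of.sep.closed} the residue field of a henselian valuation on a separably closed field is algebraically closed, contradicting $(\star)$. By F.~K.~Schmidt's theorem on independent henselian valuations, any two nontrivial henselian valuations on such a $K$ are therefore comparable. Suppose $w \in H_2(K)$, i.e.\ $Kw$ is separably closed, and compare $v$ with $w$. If $v$ strictly refines $w$, then $v$ induces a nontrivial valuation on the separably closed field $Kw$ with residue field $F$, so $F$ is algebraically closed by \autoref{lem:residue.of.sep.closed}, contradicting $(\star)$. If $w$ refines $v$, then $w$ induces a nontrivial henselian valuation on $F$, again contradicting $(\star)$. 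And $v = w$ is impossible since $F$ is not separably closed. Hence $H_2(K) = \emptyset$, and since $F$ is not separably closed, $v \in H_1(K)$.

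For (1), since $H_2(K) = \emptyset$ the canonical henselian valuation $v_K$ is by definition the finest element of $H_1(K)$; the same strategy (Schmidt for comparability, $(\star)$ to exclude nontrivial henselian valuations on $F$) shows that $v$ itself is this finest element, so $v = v_K$. For (3), let $w \neq v$ be any nontrivial henselian valuation on $K$ whose ring or ideal is $\exists$-$\emptyset$-definable. By (2) we have $w \in H_1(K)$, and by (1) combined with Schmidt, $w$ is a proper coarsening of $v$; a short residue-characteristic check shows that the coarsening of an equicharacteristic valuation is itself equicharacteristic, so the theorem applies to $w$, and part (1) applied to $w$ forces $w = v_K = v$, a contradiction.

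The main obstacle will be organising the case analysis in Step 2 so that the single negative statement $(\star)$ about $F$ suffices to rule out every possible comparison pattern between $v$ and an arbitrary $w \in H_2(K)$, using only \autoref{lem:residue.of.sep.closed} and Schmidt's independence theorem. Once Step 2 is in place, (1) and (3) follow by running essentially the same case analysis inside $H_1(K)$ and appealing to the parts of the theorem already proven.
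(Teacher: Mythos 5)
Your overall strategy is the same as the paper's: extract from \autoref{cor:single.F} together with \autoref{prop:negative.applications} and \autoref{prop:negative.applications.large} the facts that $F=Kv$ is not separably closed and admits no nontrivial henselian valuation, and then use these facts together with the structure theory of henselian valuations to identify $v$ with $v_K$ and rule out $H_2(K)\neq\emptyset$. The paper gets there faster by citing the known properties of the canonical henselian valuation (every henselian valuation is comparable to any member of $H_1(K)$, $v_K$ is the finest member of $H_1(K)$ when $H_2(K)=\emptyset$), whereas you try to reconstruct that comparability from scratch.

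That reconstruction contains a genuine gap. You assert that ``by F.\ K.\ Schmidt's theorem \dots\ any two nontrivial henselian valuations on such a $K$ are therefore comparable'' once $K$ is not separably closed, and your case analysis for $w\in H_2(K)$ (and likewise for (1) and (3)) relies on this. But Schmidt's theorem only says that two \emph{independent} henselian valuations force $K$ to be separably closed; its contrapositive gives \emph{dependence}, not comparability. A non-separably-closed field can carry incomparable henselian valuations whose finest common coarsening is nontrivial with separably closed residue field: for instance, $K=\overline{\mathbb{Q}}((t))$ is not separably closed, yet the compositions of $v_t$ with two different $p$-adic valuations on $\overline{\mathbb{Q}}$ are henselian, nontrivial, and incomparable. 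So as written your trichotomy (``$v$ refines $w$'', ``$w$ refines $v$'', ``$v=w$'') is not exhaustive, and the same omission recurs in your proofs of (1) and (3).

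The fix is small and brings you back in line with the paper. What is true, and what the paper uses implicitly, is that any henselian valuation whose residue field is not separably closed is comparable to every other henselian valuation on $K$: if $v$ and $w$ were incomparable, their finest common coarsening $u$ would be a nontrivial henselian valuation whose residue field $Ku$ carries two independent henselian valuations and is therefore separably closed by Schmidt, and then $Kv=(Ku)\bar v$ would be algebraically closed by \autoref{lem:residue.of.sep.closed}, contradicting $(\star)$. Once you replace your false comparability claim with this argument (which only needs $v\in H_1(K)$, not that $K$ itself is non-separably-closed), the rest of your case analysis, and hence (1), (2), (3), goes through exactly as you describe.
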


\begin{proof}
We treat the $\exists$-case and the $\forall$-case simultaneously.
By \autoref{cor:single.F}, the $\mathbb{Z}$-field $Kv$ does not have embedded residue resp.~is not large, 
so \autoref{prop:negative.applications}(3) resp.~\autoref{prop:negative.applications.large}(3) shows that $Kv$ cannot be separably closed.
Thus, $v\in H_{1}(K)$.

Furthermore, by \autoref{prop:negative.applications}(2) resp.~\autoref{prop:negative.applications.large}(2), 
$Kv$ cannot admit a nontrivial henselian valuation, so $v$ does not admit any proper henselian refinements.
Thus $v$ is the canonical henselian valuation, so $v_K=v\in H_1(K)$, which implies by definition that $H_{2}(K)=\emptyset$.
Moreover, all other henselian valuations on $K$ are coarsenings of $v$,
in particular also equicharacteristic,
so the argument applies to them as well.
\end{proof}

\begin{Remark}
We remark that all four possibilities of $\mathcal{O}_{v_K}$ and $\mathfrak{m}_{v_K}$ being diophantine or not diophantine can occur.
The following examples of fields $K=F((t))$ with $v_K=v_t$ demonstrate this.
\begin{center}
\begin{tabular}{|c|c|c|c|}
\hline
$F$ & $K$ &  $\mathcal{O}_{v_K}$ diophantine & $\mathfrak{m}_{v_K}$ diophantine\\\hline
$\mathbb{Q}$ & $\mathbb{Q}((t))$ & yes (\autoref{cor:positive.applications}(6)) & yes (\autoref{cor:positive.large}(2)) \\\hline
$\mathbb{Q}(x)$ & $\mathbb{Q}(x)((t))$ & no (\autoref{cor:negative.applications}(4)) & yes (\autoref{cor:positive.large}(3))\\\hline
$\mathbb{Q}^{\rm tr}$ & $\mathbb{Q}^{\rm tr}((t))$ & yes (\autoref{cor:positive.applications}(3)) & no (\autoref{cor:negative.applications.large}(4))\\\hline
$\mathbb{C}$ & $\mathbb{C}((t))$ & no (\autoref{cor:negative.applications}(3)) & no (\autoref{cor:negative.applications.large}(3))\\\hline
\end{tabular}
\end{center}
Here, $\mathbb{Q}^{\rm tr}$ is the field of totally real algebraic numbers, which is a PRC field.
\end{Remark}

We end this work with a discussion of the equicharacteristic assumption in \autoref{thm:classification},
for which we restrict our attention to the valuation ring:
If a $\mathbb{Z}$-field $F$ does not have embedded residue, then the valuation ring is uniformly $\exists$-$\emptyset$-definable in $\mathcal{H}(F)$.
In particular, the valuation ring is uniformly $\exists$-$\emptyset$-definable in the mixed-characteristic henselian valued fields $(K,v)$ with $Kv\equiv F$.
However, in mixed characteristic $(0,p)$ the converse does not hold and there are other reasons why a valuation ring might be $\exists$-$\emptyset$-definable:
For example, if $v(p)$ is minimal positive, then Julia Robinson's formula mentioned in the introduction defines the valuation ring,
and this can easily be extended to valuations with so-called finite initial ramification.
The following example shows that
a field can indeed admit more than one $\exists$-$\emptyset$-definable nontrivial mixed characteristic henselian valuation:

\begin{Example}
Let $F=\mathbb{F}_p((t))^{\rm perf}$ and let $K$ be the field of fractions of the Witt vectors over $F$, see e.g.~\cite[Chapter II \S5]{Serre}.
Then $K$ carries a discrete henselian valuation $u$ with uniformizer $p$ and residue field $F$,
and $\mathcal{O}_u$ is $\exists$-$\emptyset$-definable using Julia Robinson's formula.
But $v_t\circ u$ is a henselian valuation on $K$ with residue field $\mathbb{F}_p$, 
hence $\mathcal{O}_{v_t\circ u}$ is $\exists$-$\emptyset$-definable by \ref{cor:positive.applications}(1).
Thus, all {\em three} henselian valuation rings on $K$ (including the trivial one) are diophantine.
\end{Example}

Since all $\exists$-$\emptyset$-definitions of nontrivial henselian valuation rings in the literature
exploit either properties of the residue field or finite initial ramification,
it seems natural to suspect that non-embedded residue and finite initial ramification 
are the only two reasons why such a valuation ring can be diophantine.
Since at most one of the henselian valuations on a field
can have residue field without embedded residue, and at most one of them can have finite initial ramification,
we would like to pose the following question:

\begin{Question}
Do all fields admit {\em at most three} diophantine henselian valuation rings?
\end{Question}

\section*{Acknowledgements}

\noindent
The authors would like to thank
Hans Schoutens for suggesting Lemma \ref{lemma:Schoutens},
and Florian Pop for suggesting Proposition \ref{prop:positive.applications}(5).
They would also like to thank Immanuel Halupczok, Franziska Jahnke, Jochen Koenigsmann, Dugald Macpherson, Sebastian Petersen, and Alexander Prestel for frequent helpful discussions and encouragement.


\begin{thebibliography}{CDLM13}

\bibitem[AF15]{AnscombeFehm}
Sylvy Anscombe and Arno Fehm.
\newblock The existential theory of equicharacteristic henselian valued fields.
\newblock arXiv:1501.04522 [math.LO], 2015.

\bibitem[AF16]{AF3}
Sylvy Anscombe and Arno Fehm.
\newblock Residue fields of valuations on nonstandard number fields.
\newblock Manuscript, 2016.

\bibitem[AK14]{AnscombeKoenigsmann}
Will Anscombe and Jochen Koenigsmann.
\newblock An existential $\emptyset$-definition of $\mathbb{F}_q[[t]]$ in $\mathbb{F}_q((t))$.
\newblock {\em J.\ Symbolic Logic} 79:1336--1343, 2014.

\bibitem[AK65]{AxKochenI}
James Ax and Simon Kochen.
\newblock Diophantine problems over local fields I.
\newblock {\em Amer.\ J.\ Math.} 87(3):605--630.

\bibitem[BF13]{BF}
Lior Bary-Soroker and Arno Fehm.
\newblock Open problems in the theory of ample fields.
\newblock In D. Bertrand, Ph. Boalch, J.-M. Couveignes and P. D\`ebes (eds.), Geometric and differential Galois theory,  {\em S\'eminaires \& Congr\`es} 27, 2013. 

\bibitem[Ch84]{Chatzidakis}
Zo\'{e} Chatzidakis.
\newblock Model Theory of Profinite Groups.
\newblock Ph.\,D.~Thesis, Yale, 1984.

\bibitem[CK90]{CK}
C.C. Chang and H.J. Keisler.
\newblock {\em Model Theory}.
\newblock Elsevier, 1990.

\bibitem[CDLM13]{Cluckersetal}
Raf Cluckers, Jamshid Derakhshan, Eva Leenknegt, and Angus Macintyre.
\newblock Uniformly defining valuation rings in henselian valued fields with finite or pseudo-finite residue fields.
\newblock {\em Annals of Pure and Applied Logic} 164:1236--1246, 2013.

\bibitem[Den78]{Denef}
Jan Denef.
\newblock The diophantine problem for polynomial rings and fields of rational functions.
\newblock {\em Trans.\ Amer.\ Math.\ Soc.} 242:391--399, 1978.

\bibitem[EP05]{EP}
Antonio J. Engler and Alexander Prestel.
\newblock {\em Valued Fields}.
\newblock Springer, 2005.

\bibitem[Feh13]{FehmLGP}
Arno Fehm.
\newblock Elementary local-global principles for fields.
\newblock {\em Annals of Pure and Applied Logic} 164:989--1008, 2013.

\bibitem[Feh15]{Fehm}
Arno Fehm.
\newblock Existential $\emptyset$-definability of henselian valuation rings.
\newblock {\em J.\ Symbolic Logic} 80(1):301--307, 2015.

\bibitem[FP15]{FehmPrestel}
Arno Fehm and Alexander Prestel.
\newblock Uniform definability of henselian valuation rings in the Macintyre language.
\newblock {\em Bull. London Math. Soc.} 47:693--703, 2015.

\bibitem[FJ08]{FriedJarden}
Michael D. Fried and Moshe Jarden.
\newblock {\em Field Arithmetic}.
\newblock Third Edition. Springer, 2008.

\bibitem[HW79]{Hardy-Wright79}
G. H. Hardy and E. M. Wright.
\newblock {\em An Introduction to the Theory of Numbers}.
\newblock Fifth Edition. Oxford Science Publications, 1979.

\bibitem[Hol95]{Holly95}
J.E. Holly.
\newblock Canonical forms of definable subsets of algebraically closed and real closed valued fields.
\newblock {\em J.\ Symb.\ Logic} 60:843--860, 1995.

\bibitem[Jar11]{Jarden}
Moshe Jarden.
\newblock {\em Algebraic Patching}.
\newblock Springer, 2011.

\bibitem[JR80]{JardenRoquette}
Moshe Jarden and Peter Roquette.
\newblock The Nullstellensatz over $\mathfrak{p}$-adically closed fields.
\newblock {\em J.\ Math.\ Soc.\ Japan} 32(3):425--460, 1980.

\bibitem[Koe16]{Koenigsmann}
Jochen Koenigsmann.
\newblock Defining $\mathbb{Z}$ in $\mathbb{Q}$.
\newblock {\em Annals of Math.} 183(1):73--93, 2016.

\bibitem[Kol08]{Kollar}
J\'anos Koll\'ar.
\newblock Diophantine subsets of function fields of curves.
\newblock {\em Algebra and Number Theory} 2(3):299--311, 2008.

\bibitem[Kuh04]{Kuhlmann04a}
Franz-Viktor Kuhlmann.
\newblock Value groups, residue fields and bad places of rational function fields.
\newblock {\em Trans. Amer. Math. Soc} 356:4559--4600, 2004.

\bibitem[Lan64]{LangAlgebraicGeometry}
Serge Lang.
\newblock {\em Introduction to Algebraic Geometry}.
\newblock Interscience Publishers, 1964.

\bibitem[Pop03]{Pop}
Florian Pop.
\newblock Classically projective groups and pseudo classically closed fields.
\newblock In: F.-V. Kuhlmann, S. Kuhlmann, M. Marshall (Eds.), {\em Valuation Theory and Its Applications}, Fields Inst. Commun., vol.II, American Mathematical Society, 2003, pp. 251--283.

\bibitem[Pop96]{Pop96}
Florian Pop.
\newblock Embedding problems over large fields.
\newblock {\em Annals of Math.} 144(1):1--34, 1996.

\bibitem[Pop14]{Popsurvey}
Florian Pop.
\newblock Little survey on large fields -- old \& new.
\newblock In: A. Campillo, F.-V. Kuhlmann, B. Teissier (Eds.), {\em Valuation Theory in Interaction}, European Mathematical Society 2014.

\bibitem[Pre15]{Prestel}
Alexander Prestel.
\newblock Definable henselian valuation rings.
\newblock To appear in {\em J.\ Symbolic Logic}, 2015.

\bibitem[PD11]{PrestelDelzell}
Alexander Prestel and Charles N. Delzell.
\newblock {\em Mathematical Logic and Model Theory}.
\newblock Springer, 2011.

\bibitem[Rob65]{Robinson}
Julia Robinson.
\newblock The decision problem for fields.
\newblock In: {\em Symposium on the Theory of Models}, pp.\ 299--311. North-Holland, 1965.

\bibitem[Ser79]{Serre}
Jean-Pierre Serre.
\newblock {\em Local fields.}
\newblock Graduate Texts in Mathematics, 67. Springer-Verlag, 1979.

\bibitem[She71]{Shelah}
Saharon Shelah.
\newblock Every two elementarily equivalent models have isomorphic ultrapowers.
\newblock {\em Israel Journal of Mathematics} 10(2):224--233, 1971.

\bibitem[Shl03]{Shlapentokh}
Alexandra Shlapentokh.
\newblock On diophantine definability and decidability in some infinite totally real extensions of $\mathbb{Q}$.
\newblock {\em Trans.\ Amer.\ Math.\ Soc.} 356(8):3189--3207, 2003.

\bibitem[Sti09]{Sti}
Henning Stichtenoth.
\newblock {\em Algebraic Function Fields and Codes}.
\newblock Springer, 2009.

\bibitem[Tem13]{Temkin}
Michael Temkin.
\newblock Inseparable local uniformization.
\newblock {\em Journal of Algebra} 373:65--119, 2013.
\end{thebibliography}
\end{document}